\newcommand\mlnode[1]{\fbox{{\renewcommand{\arraystretch}{1.2}\begin{tabular}{@{}c@{}}#1\end{tabular}}}} 
\renewcommand\le\leqslant
\renewcommand\ge\geqslant
\newcommand{\pushright}[1]{\ifmeasuring@#1\else\omit\hfill$\displaystyle#1$\fi\ignorespaces}
\setlist[itemize]{align=parleft,left=0pt..18pt,topsep=5pt,itemsep=2pt,label={\(-\)}}
\setlist[enumerate]{align=parleft,left=0pt..18pt,topsep=5pt, itemsep=2pt}
\newcommand{\fg}[0]{\mathfrak{g}}
\newcommand{\fn}{\mathfrak{n}}
\newcommand{\fh}{\mathfrak{h}}
\newcommand{\fT}{\mathfrak{T}}
\newcommand{\fD}{\mathfrak{D}}
\newcommand{\kf}{\kappa}
\newcommand{\bC}[0]{\mathbb{C}}
\newcommand{\bZ}[0]{\mathbb{Z}}
\newcommand{\add}{\dotplus}
\newcommand{\ot}{\otimes}
\newcommand{\fot}{\,\overline{\otimes}\,}
\newcommand{\uot}{\,\widetilde{\otimes}\,}
\DeclareMathOperator{\Hom}{Hom}
\DeclareMathOperator{\Aut}{Aut}
\DeclareMathOperator{\tr}{tr}
\DeclareMathOperator{\tOp}{t}
\newcommand{\dual}[1]{#1^{\vee}}
\DeclareMathOperator{\spanOp}{span}
\newcommand{\Span}{\mathord{\spanOp}}
\DeclareMathOperator{\adOp}{ad}
\newcommand{\ad}{\mathord{\adOp}}
\DeclareMathOperator{\imOp}{im}
\newcommand{\im}{\mathord{\imOp}}
\numberwithin{equation}{section}
\renewcommand*{\theequation}{%
  \ifnum\value{section}>0 %
    \thesection.%
  \fi
    \arabic{equation}%
}
\newtheorem{thm}{Theorem}[section]
\theoremstyle{plain}
\newtheorem{corollary}[thm]{Corollary}
\newtheorem{lemma}[thm]{Lemma}
\newtheorem{proposition}[thm]{Proposition}
\newtheorem{theorem}[thm]{Theorem}
\newtheorem{maintheoremcounter}{Theorem}
\theoremstyle{plain}
\newtheorem{maincorollary}[maintheoremcounter]{Corollary}
\newtheorem{maintheorem}[maintheoremcounter]{Theorem}
\newtheoremstyle{example_style}
  {5pt} 
  {5pt} 
  {} 
  {} 
  {\itshape} 
  {.} 
  {10pt} 
  {} 
\theoremstyle{example_style}
\newenvironment{example}
  {\pushQED{\qed}\examplex}
  {\popQED\endexamplex}
  \crefname{examplex}{Example}{Examples}
\newenvironment{remark}
  {\pushQED{\qed}\remarkx}
 {\popQED\endremarkx}
\theoremstyle{definition}
\title{Topological Lie bialgebra structures and their classification
over \( \fg[\![x]\!] \).}
\author{Raschid Abedin}
\address[R.A.]{Institut für Mathematik,
Universität Paderborn, 33098 Paderborn,
Germany}
\email{rabedin@math.upb.de}
\author{Stepan Maximov}
\address[S.M.]{Department of Mathematical Sciences, 
Chalmers University of Technology  and the University of Gothenburg, 412 96 Gothenburg, Sweden
}
\email{maximov@chalmers.se}
\author{Alexander Stolin}
\address[A.S.]{Department of Mathematical Sciences, 
Chalmers University of Technology  and the University of Gothenburg, 412 96 Gothenburg, Sweden
}
\email{astolin@chalmers.se}
\author{Efim Zelmanov}
\address[E.Z.]{Department of Mathematics, UCSD, 9500 Gilman Drive, La Jolla, CA 92093-0112, USA}
\email{ezelmano@math.ucsd.edu}
\date{\today}
\begin{document}

\maketitle

\begin{abstract}
This paper is devoted to a classification of topological
Lie bialgebra structures
on the Lie algebra $\mathfrak{g}[\![x]\!]$,
where $ \mathfrak{g} $ is a finite-dimensional simple Lie algebra
over an algebraically closed field $ F $ of characteristic $ 0 $.

We introduce the notion of a topological
Manin pair $(L, \mathfrak{g}[\![x]\!])$ and 
present their classification by relating
them to trace extensions of 
\( F[\![x]\!] \).
Then we recall the
classification of topological doubles of Lie bialgebra
structures on $\mathfrak{g}[\![x]\!]$
and view the latter as a special case
of the classification of Manin pairs.

The classification of topological doubles
states that up to some notion of equivalence
there are only three non-trivial doubles.
It is proven that topological
Lie bialgebra structures on $\mathfrak{g}[\![x]\!]$ are in bijection
with certain Lagrangian Lie subalgebras
of the corresponding doubles. 
We then attach algebro-geometric data to such Lagrangian subalgebras
and, in this way, obtain a classification of all topological Lie bialgebra structures with non-trivial doubles.
When $F = \mathbb{C}$ the classification becomes explicit.
Furthermore, this result enables us to classify formal
solutions of the classical Yang-Baxter equation.

\end{abstract}

\tableofcontents

\section{Introduction}
Let \(\fg\) be a finite-dimensional simple Lie algebra over an algebraically closed field \(F\) of characteristic 0. 
Topological Lie bialgebra structures on \(\fg[\![x]\!]\)
arise naturally as completions of several important 
classical Lie bialgebras, e.g.\
any Lie bialgebra structure on
\( \fg[x] \)
or \( \fg[x,x^{-1}] \) extends to
\( \fg[\![x]\!] \).
Moreover, these structures
are closely related to
Hopf-algebra deformations of the universal enveloping algebra of \(\fg\); see \cite{etingof_Kazhdan_quantization_I,etingof_Kazhdan_quantization_II}.
This makes them important in the theory of integrable systems and quantum groups. This paper is dedicated to the classification of topological Lie bialgebra structures on \(\fg[\![x]\!]\).

Let us describe one possible
approach to the classification of Lie bialgebra structures.
Recall that a Lie bialgebra over \( F \) is a vector space  \( L \) over \( F \)
(not necessarily finite-dimensional)
which is equipped with a Lie algebra structure \( [\cdot \, ,\cdot] \colon L \ot L \to L \)
and a Lie coalgebra structure \( \delta \colon L \to L \ot L \)
subject to the compatibility condition
\begin{equation}\label{eq:compatibility}
\delta([x,y]) = [x \ot 1 + 1 \ot x, \delta(y)] - [y \ot 1 + 1 \ot y, \delta(x)]\qquad \forall x,y \in L.
\end{equation}
Each Lie bialgebra structure
\( \delta \) on \( L \) is
associated with the object
\( \fD(L, \delta) \)
called the classical double of
\( \delta \).
It is the vector space
\( L \add \dual{L} \)
equipped with the bilinear form
\begin{equation}\label{eq:classical_form}
  B(x + f, y + g) \coloneqq f(y) + g(x) \qquad \forall x,y \in L, \ \forall f,g \in \dual{L},
\end{equation}
and the commutator map
\begin{equation}\label{eq:classical_bracket}
  [x,f] \coloneqq -f\circ \ad_x + (f \ot 1)(\delta(x)) \qquad \forall x \in L, \ \forall f \in \dual{L}.
\end{equation}
Having a Lie bialgebra structure
\( \delta \) on \( L \)
we can obtain new Lie bialgebra structures by a procedure called
twisting.
More explicitly, 
for any skew-symmetric tensor
\( s \in L \ot L \) such that
\begin{equation}\label{eq:classical_twist_condition_intro}
\textnormal{CYB}(s) = \textnormal{Alt}((\delta \ot 1)s),
\end{equation}
where
\(
\textnormal{CYB}(s) \coloneqq [s^{12}, s^{13}] + [s^{12}, s^{23}] + [s^{13}, s^{23}]
\)
and
\(
  \textnormal{Alt}(a \ot b  \ot  c) \coloneqq a \ot b \ot c + b \ot c \ot a + c \ot a \ot b,
\)
the linear map
\( \delta_s \coloneqq \delta + ds \)
defines another Lie bialgebra
structure on \( L \).
Skew-symmetric tensors satisfying
\cref{eq:classical_twist_condition_intro}
are called twists of \( \delta \).
One important property of the
double \( \fD(L, \delta) \) is its
invariance under twisting, i.e.\
\[ \fD(L, \delta) = \fD(L, \delta + ds) \] for any twist
\( s \) of \( \delta \).
For this reason doubles are sometimes called twisting classes.
This observation leads to the following simple classification scheme for Lie bialgebra structures on \( L \):
\begin{enumerate}
\item Classify all possible classical doubles \( \fD(L, \delta) = L \add \dual{L} \);

\item Choose a representative \( \delta \) inside each twisting class and describe all its twists.
By \cite[Theorem 2.4]{Raschid_Stepan_classical_twists}
this is equivalent to describing
Lagrangian Lie subalgebras \( W \subset \fD(L, \delta) \)
such that 
\[ \fD(L, \delta) = L \add W \ \text{ and } \
\dim(W + \dual{L})/(W \cap \dual{L}) < \infty. \]
\end{enumerate}

The second step of the scheme,
in the most interesting cases, is equivalent to the 
classification of certain \( r \)-matrices, i.e.\
solutions to the classical Yang-Baxter equation (CYBE)
\begin{equation}\label{eq:2_cybe_intro}
[r^{12}(x_1, x_2), r^{13}(x_1, x_3)] +
    [r^{12}(x_1, x_2), r^{23}(x_2, x_3)] +
    [r^{13}(x_1, x_3), r^{23}(x_2, x_3)] = 0.
\end{equation}
These were thoroughly studied by Belavin and Drinfeld
\cite{BD_first,BD_second} and by Stolin \cite{Stolin_rational_sln, Stolin_rational}.

\begin{example}
It was shown in 
\cite{Stolin_some_remarks} 
that the classical double
\(\fD \coloneqq \fD(\fg,\delta)\) of a Lie bialgebra structure \(\delta\) on \(\fg\)
is of the form \(\fg \otimes A\) for a two-dimensional unital associative commutative \(F\)-algebra \(A\). Consequently,
\[\fD \cong \fg\times \fg \, \text{ or } \, \fD \cong \fg[x]/x^2\fg[x] \] 
as a Lie algebra and
in both cases there is 
essentially only one possibility
for the bilinear form. 
Moreover, Whitehead's Lemma implies that in both cases \(\delta = dr\) for a constant solution 
\(r\in \fg \otimes \fg\) of the CYBE \cref{eq:2_cybe_intro}.
The isomorphism
\(\fD \cong \fg[x]/x^2\fg[x]\) holds if and only if \(r\) is skew-symmetric. 
Therefore, the classification of Lie bialgebra structures with \(\fD \cong \fg \times \fg\) (resp.\ \(\fD \cong \fg[x]/x^2\fg[x]\)) 
is equivalent to the classification of constant non-skew-symmetric (resp.\ skew-symmetric) \( r \)-matrices.
These classifications can be found in e.g.\  \cite{chari_pressley}.
\end{example}

It was proven in 
\cite{Stolin_Zelmanov_Montaner}
that any Lie bialgebra structure \( \delta \)
on \( \fg[x] \) satisfies
\begin{equation}\label{eq:inclusion_cont}
 \delta(x^n \fg[x]) \subseteq (x,y)^{n-1}(\fg \ot \fg)[x,y].
\end{equation}
In particular,
if we equip \( \fg[x] \) with
the \( (x)\)-adic topology
and \( \fg[x] \ot \fg[y] \cong (\fg \otimes \fg)[x,y]\) with
the projective
tensor product topology,
both the Lie bracket and the Lie
cobracket become continuous.
Therefore, they posses
unique continuous extensions
\begin{equation}
    [\cdot \, ,\cdot] \colon 
    (\fg \ot \fg)[\![x,y]\!]
    \to \fg[\![x]\!]
    \,
    \text{ and } \,
    \widehat{\delta} \colon
    \fg[\![x]\!] \to 
    (\fg \ot \fg)[\![x,y]\!].
\end{equation}
In general
\(\widehat{\delta}(\fg[\![x]\!]) \not\subseteq \fg[\![x]\!] \ot \fg[\![y]\!]\) and hence
it is not a Lie bialgebra
in the sense above.

A topological Lie bialgebra structure
on \( \fg[\![x]\!] \)
is a linear map 
\(
    \fg[\![x]\!] \to 
    (\fg \ot \fg)[\![x,y]\!]
\)
having properties similar
to the ones of \( \widehat{\delta} \) above.
As in the classical case,
a topological Lie bialgebra structure on \(\fg[\![x]\!]\)
gives rise to a topological
double -- topological
analogue of a classical double.
Let 
\[
\fg[\![x]\!]' \coloneqq \{f \colon \fg[\![x]\!] \to F \mid f(x^n\fg[\![x]\!]) = 0 \, \textnormal{ for some } n\in\mathbb{Z}_{+}\}.
\] 
be the space of continuous functionals on \(\fg[\![x]\!]\).
Then the topological double
of a topological Lie bialgebra structure 
\( \delta \) on \( \fg[\![x]\!] \)
is the vector space \( \fD = \fg[\![x]\!] \add \fg[\![x]\!]' \) with the continuous
commutator map defined similar to
\cref{eq:classical_bracket}
and separately continuous bilinear form \cref{eq:classical_form}.

The classification of topological
Lie bialgebra structures
on \( \fg[\![x]\!] \)
can be done using a similar scheme:
\begin{enumerate}
\item Classify all possible topological doubles \( \fD(\fg[\![x]\!], \delta) = \fg[\![x]\!] \add \fg[\![x]\!]' \);

\item Choose a representative \( \delta \) inside each twisting class and describe all its topological twists. By
\cref{thm: W <-> t <-> T correspondence}
this problem is
equivalent to classifying Lagrangian Lie subalgebras \( W \subset \fD(\fg[\![x]\!], \delta) \)
such that 
\( 
\fD(\fg[\![x]\!], \delta) = \fg[\![x]\!] \add W.
\)
\end{enumerate}

The first step of this classification
is done in \cite{Stolin_Zelmanov_Montaner}: it is shown that up to some
notion of equivalence
there are only three non-trivial
topological doubles \(\fD_1, \fD_2\) and \(\fD_3\). More explicitly,
\(\fD_i \coloneqq \fg(\!(x)\!) \times \fg[x]/x^{i-1}\fg[x]\) for \(i \in \{1,2,3\}\) with the bilinear form
\begin{equation}
    B_i((f_1,[f_2]),(g_1,[g_2])) = \textnormal{coeff}_{i-2}\{\kappa(f_1(x),g_1(x))\} - \textnormal{coeff}_{i-2}\{\kappa(f_2(x),g_2(x))\},
\end{equation}
where \(f_1,g_1 \in \fg(\!(x)\!) \), \(f_2,g_2 \in \fg[x]\) and \(\kappa \colon \fg(\!(x)\!) \times \fg(\!(x)\!) \to F(\!(x)\!)\) 
is the \(F(\!(x)\!)\)-bilinear
extension of the Killing form on \(\fg\).

The main result of this paper
is the completion of step (2) 
in the above-mentioned classification scheme.
We achieve this result
by using the connection between topological Lie bialgebra structures
on \(\fg[\![x]\!]\) and formal \(r\)-matrices, i.e.\ series 
\begin{equation}%
\label{eq:formal_rmatrices_intro}
    r(x,y) = \frac{s(y)\Omega}{x-y} + g(x,y) \in
    (\fg \ot \fg)(\!(x)\!)[\![y]\!],
\end{equation}
where \(s \in F[\![y]\!]\), \(g \in (\fg \otimes \fg)[\![x,y]\!]\) and
\(\Omega \in \fg \otimes \fg\) is the quadratic Casimir element,
solving \cref{eq:2_cybe_intro}. 
More precisely, we prove that
the assignment \(r \mapsto dr\),
where
\begin{equation}\label{eq:intro_dr}
    dr(f) \coloneqq [f(x) \fot 1 + 1 \fot f(y), r(x,y)],
\end{equation}
establishes a bijection between 
these objects.
The reflection of the classification 
of topological doubles
on the side of formal
\( r \)-matrices gives an
unexpected restriction on the
series \( s(y) \).
To be exact, there are no 
formal \( r \)-matrices of the form \cref{eq:formal_rmatrices_intro}
with a non-zero
\(s \in y^3 F[\![y]\!]\).

Following \cite{abedin2021geometrization}, to each Lagrangian Lie subalgebra
of \( \fD_i \) complementary to
\( \fg[\![x]\!] \)
we assign an algebro-geometric datum.
Then, using 
general methods of algebraic
geometry, 
we obtain significant restrictions
on formal \(r\)-matrices associated to 
Lagrangian Lie subalgebras of \( \fD_i \):

\begin{itemize}[align=parleft, left=0pt..3em]
    \item[\(i \nolinebreak = \nolinebreak 1\colon\)] The \(r\)-matrices are subject to a generalization of the Belavin-Drinfeld trichotomy:
    they are either of elliptic, trigonometric or rational type;
    
    \item[\(i \nolinebreak = \nolinebreak 2\colon\)] The corresponding \(r\)-matrices turn out to be
    quasi-trigonometric;
    
    \item[\(i \nolinebreak = \nolinebreak 3\colon\)] The associated formal
    \(r\)-matrices are of quasi-rational
    type.
\end{itemize}

\begin{figure}[H]
    \centering
    \includegraphics[scale = 0.25]{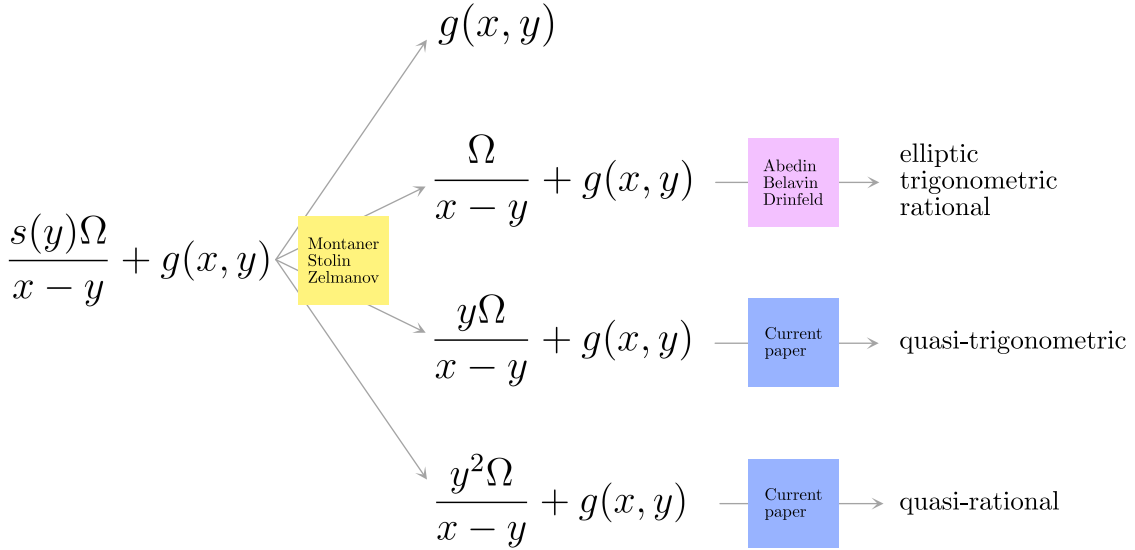}
    \caption{Restrictions on formal
    \(r\)-matrices}
    \label{fig:restrictions}
\end{figure}

When \(F = \mathbb{C}\), 
the above-mentioned five classes of
formal \(r\)-matrices
are completely classified in the available literature: Elliptic and trigonometric \(r\)-matrices are classified in \cite{BD_first}; Rational \(r\)-matrices are classified in \cite{Stolin_rational_sln,Stolin_rational};
Quasi-trigonometric \(r\)-matrices are described in \cite{Pop_Stolin_Lagrangian_quasi_trig} 
and independently in \cite{Raschid_Stepan_classical_twists};
Finally, quasi-rational \(r\)-matrices are classified in \cite{the_4_structure}.
Note that most of these descriptions
are valid for an arbitrary algebraically closed field of characteristic \(0\) without 
any adjustments.

\subsection{Structure of the paper}
\Cref{sec:Top_Lie_bialg} starts
with preliminary results
on linearly topologized
vector spaces and their completions as well as on linear
tensor product topologies.
Using these results we then
introduce the
notion of a topological Lie bialgebra structure
on an arbitrary linearly topologized Lie algebra \(L\) with continuous Lie bracket. 

\Cref{sec:review_MSZ} starts with the
the classification of Manin pairs
\( (L, \fg[\![x]\!]) \) 
via trace extensions of \( F[\![x]\!] \).
Then we review
the classification of topological
doubles over \(\fg[\![x]\!]\) from \cite{Stolin_Zelmanov_Montaner}
in the setting of Manin pairs.
In the process we generalize one of
the pivotal steps in the classification
\cite{Stolin_Zelmanov_Montaner}
and obtain
the following result.

\begin{maintheorem}
    Let \(L\) be a Lie algebra with a non-degenerate invariant symmetric bilinear form and \(A\) be a unital associative commutative reduced
    \(F\)-algebra. 
    If \(L\) contains a coisotropic subalgebra of the form \(\fg \otimes A\), then \(L \cong \fg \otimes \widetilde{A}\) for some associative 
    commutative \(F\)-algebra extension \( \widetilde{A} \supseteq A\). 
\end{maintheorem}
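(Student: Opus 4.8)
The plan is to reconstruct $L$ from its structure as a module over the simple Lie algebra $\fg = \fg\ot 1 \subseteq \fg\ot A$ acting by the adjoint representation, the multiplicity space of this module becoming the desired algebra $\widetilde{A}$. Write $\mathfrak{k} \coloneqq \fg\ot A$ and let $B$ denote the invariant form on $L$. First I would analyse the restriction $B|_{\mathfrak{k}}$. Since $\fg$ is simple, Schur's lemma gives $\Hom_{\fg}(\fg\ot\fg,F)=F\cdot\kf$, so every $\ad(\fg)$-invariant symmetric form on $\fg\ot A$ is of the shape $\kf\ot\phi$ for a symmetric bilinear form $\phi$ on $A$; invariance under all of $\mathfrak{k}$ forces $\phi$ to be associative, i.e.\ $\phi(ab,c)=\phi(a,bc)$. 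Its radical $J\coloneqq\{a\in A:\phi(a,A)=0\}$ is then an ideal of $A$, and $\fg\ot J$ is precisely the radical $\mathfrak{k}\cap\mathfrak{k}^{\perp}$ of $B|_{\mathfrak{k}}$. The coisotropy hypothesis $\mathfrak{k}^{\perp}\subseteq\mathfrak{k}$ now yields $\mathfrak{k}^{\perp}=\fg\ot J$.

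Next I would use non-degeneracy of $B$ to pin down $L$ as a $\fg$-module. The assignment $\ell\mapsto B(\ell,-)|_{\mathfrak{k}}$ is an $\ad(\fg)$-equivariant map $L\to\mathfrak{k}^{\ast}$ with kernel $\mathfrak{k}^{\perp}$, so $L/\mathfrak{k}^{\perp}$ embeds $\fg$-equivariantly into $\mathfrak{k}^{\ast}\cong\fg^{\ast}\ot A^{\ast}\cong\fg\ot A^{\ast}$, using that $\fg$ is finite dimensional and self-dual. Both $\fg\ot A^{\ast}$ and $\fg\ot J$ are direct sums of copies of the adjoint representation, hence so are their submodules; by Whitehead's lemma $H^1(\fg,\fg)=0$ (and the fact that the finite-dimensional module $\fg$ is a compact object, so that $\Hom$ and $\mathrm{Ext}^1$ commute with the relevant direct sums) the extension $0\to\fg\ot J\to L\to L/\mathfrak{k}^{\perp}\to 0$ splits and $L$ is adjoint-isotypic. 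Setting $\widetilde{A}\coloneqq\Hom_{\fg}(\fg,L)$, evaluation yields a $\fg$-module isomorphism $\Psi\colon\fg\ot\widetilde{A}\xrightarrow{\ \sim\ }L$, and the inclusion $\mathfrak{k}\subseteq L$ identifies $A$ with a subspace of $\widetilde{A}$.

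It then remains to transport the bracket through $\Psi$ and recognise a commutative associative product. Because the bracket of $L$ is $\ad(\fg)$-equivariant (this is the Jacobi identity with one argument in $\fg\ot 1$) and antisymmetric, it is a sum of equivariant maps $\fg\ot\fg\to\fg$ tensored with bilinear maps on $\widetilde{A}$. For $\fg$ not isomorphic to $\mathfrak{sl}_n$ with $n\ge 3$ one has $\Hom_{\fg}(\fg\ot\fg,\fg)=F\cdot[\,\cdot\,,\cdot\,]$, so the transported bracket equals $[\,\cdot\,,\cdot\,]\ot\beta$ with $\beta$ a symmetric product on $\widetilde{A}$; the Jacobi identity of $L$ becomes associativity of $\beta$, the element $1\in A$ becomes a unit because $\fg\ot 1$ acts by $\ad$, and $\beta$ restricts to the product of $A$ because $\mathfrak{k}$ is a subalgebra. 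Thus $\widetilde{A}$ is a commutative associative unital extension of $A$, as required, and here reducedness is not yet needed.

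The main obstacle is the remaining case $\fg=\mathfrak{sl}_n$ with $n\ge 3$, where $\Hom_{\fg}(S^2\fg,\fg)$ is additionally spanned by the symmetric ``$d$-symbol'' $\ast$. The transported bracket then acquires a term $\ast\ot\gamma$ with $\gamma$ an antisymmetric map on $\widetilde{A}$, so that a priori $\widetilde{A}$ is only associative and possibly non-commutative (such brackets occur as commutators in $\mathrm{Mat}_n(\widetilde{A})$). I expect the crux to be proving $\gamma\equiv 0$. The plan is to note that invariance of $B$ forces the trilinear form $\psi(\gamma(\cdot,\cdot),\cdot)$, where $B=\kf\ot\psi$, to be totally antisymmetric, while $\gamma$ vanishes identically on the commutative subalgebra $A$ (and $\gamma(1,\cdot)=0$ from the unit computation). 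Feeding this into the Jacobi identity and using the classical identities relating $[\,\cdot\,,\cdot\,]$ and $\ast$ for $\mathfrak{sl}_n$, I would aim to show that a non-zero $\gamma$ forces non-trivial nilpotent elements into $A$; reducedness of $A$ then excludes them, giving $\gamma\equiv 0$ and the commutativity of $\widetilde{A}$. This is the single step in which the reducedness hypothesis is essential.
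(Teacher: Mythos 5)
Your reconstruction of the \(\fg\)-module structure of \(L\) is correct and takes a genuinely different route from the paper. The paper argues inside \(U(\fg)\): setting \(P = \textnormal{Ann}(\fg)\), it uses invariance of \(B\) and coisotropy to get \(P\cdot L \subseteq (\fg\ot A)^{\perp}\) and \(P^{2}\cdot L = 0\), then Noetherianity of \(U(\fg)\) to make every orbit \(U(\fg)\cdot d\) finite-dimensional, and a separate lemma (Schur plus Jacobson density) to identify each simple constituent with the adjoint module. You instead identify \(\mathfrak{k}^{\perp} = \fg \ot J\) via coisotropy, embed \(L/\mathfrak{k}^{\perp}\) into \(\mathfrak{k}^{\ast} \cong \fg\ot A^{\ast}\), and split the extension \(0 \to \fg\ot J \to L \to L/\mathfrak{k}^{\perp} \to 0\) using \(\textnormal{Ext}^{1}_{U(\fg)}(\fg,\fg)=0\) together with compactness of \(\fg\) as a finitely generated module over the Noetherian ring \(U(\fg)\). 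Both arguments are sound and land on \(L \cong \fg\ot\widetilde{A}\) with \(A \subseteq \widetilde{A}\); yours is arguably cleaner, since it replaces the annihilator and density analysis with standard semisimplicity and a Whitehead-type Ext vanishing.

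The genuine gap is the case \(\fg = \mathfrak{sl}(n,F)\), \(n \ge 3\) --- which is not a corner case but the whole reason this theorem appears in the paper (the earlier proof it refines breaks down exactly there) --- and your treatment of it is an unexecuted plan containing one concrete error. First, the assertion that \(\widetilde{A}\) is ``a priori only associative and possibly non-commutative'' is unjustified and false for \(n = 3\): translating the Jacobi identity of \(L\) through the two-dimensional space \(\Hom_{\fg}(\fg\ot\fg,\fg)\) yields an algebra that is only \emph{alternative} when \(n=3\) (the paper cites Allison--Benkart--Yun for this), so every subsequent commutator manipulation must be run through Artin's theorem. Second, the crux you ``would aim to show'' is precisely where all the work lies, and the ingredients you list do not finish it by themselves. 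What the paper actually does is the chain: (i) the total antisymmetry of \(\psi(\gamma(\cdot,\cdot),\cdot)\), the vanishing \(\gamma|_{A\times A}=0\), and coisotropy in the form \(A^{\perp_{\tOp}} \subseteq A\) give \([A,\widetilde{A}] \subseteq A\); (ii) then \(0 = [q,[q,f^{2}]] = 2[q,f]^{2}\) (this uses (i), the commutativity of \(A\), and Artin's theorem), so reducedness gives \([A,\widetilde{A}] = 0\); (iii) with \(A\) now central, \(\tOp(p[f,g]) = \tOp([pf,g]) = 0\) gives \([\widetilde{A},\widetilde{A}] \subseteq A^{\perp_{\tOp}} \subseteq A\), whence \([f,g]^{2} = [[f,g]f,g] = [[f,gf],g] = 0\) and reducedness kills all commutators; (iv) finally, a unital commutative alternative algebra in characteristic \(\neq 3\) is associative. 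Your guiding intuition --- that a nonzero \(\gamma\) would force nilpotents into \(A\) --- is exactly the mechanism: commutators land in \(A\) and square to zero. But establishing those two facts is the substance of the proof, and your proposal does not contain it.
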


In \Cref{sec:topological_twists,sec:formal_r_matrices}
we introduce topological twists --
certain elements in 
\( (\fg \ot \fg )[\![x,y]\!] \)
that allow us to obtain new topological
Lie bialgebra structures on
\( \fg[\![x]\!] \) from a given
one with the same topological
double.
We relate these objects
to Lagrangian Lie subalgebras
of \( \fD_i \), \( i \in \{ 1,2,3\} \) and formal
\( r \)-matrices. 
The main results can be summarized as follows.

\begin{maintheorem}
There is a bijection between 
\begin{enumerate}
    \item Topological Lie bialgebra structures on \(\fg[\![x]\!]\) with
    the topological double \(\fD_i\);
    
    \item Formal \(r\)-matrices
    of the form
    \( y^{i-1}\Omega/(x-y) + g(x,y) \)
    and
    
    \item Lagrangian Lie subalgebras of \(\fD_i\) complementary to \(\fg[\![x]\!]\).
\end{enumerate}
Moreover, these bijections preserve
the equivalences.
\end{maintheorem}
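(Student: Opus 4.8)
The plan is to assemble the triple bijection from two more basic correspondences. The map from (2) to (1) will be \(r \mapsto dr\) with \(dr\) given by \cref{eq:intro_dr}, and the map from (1) to (3) will be the topological Manin-triple correspondence \cref{thm: W <-> t <-> T correspondence} specialized to the double \(\fD_i\). Concretely, given a Lagrangian complement \(W\) of \(\fg[\![x]\!]\) in \(\fD_i\), the triple \((\fD_i, \fg[\![x]\!], W)\) is a topological Manin triple, and transporting the Lie bracket of \(W\) through the identification \(W \cong \fg[\![x]\!]'\) induced by the form \(B_i\) yields a cobracket on \(\fg[\![x]\!]\); this is exactly the content of \cref{thm: W <-> t <-> T correspondence}, so (1)\(\leftrightarrow\)(3) is immediate once (1)\(\leftrightarrow\)(2) is settled. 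I will therefore concentrate on the equivalence (1)\(\leftrightarrow\)(2).

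First I would check that \(r \mapsto dr\) takes formal \(r\)-matrices to topological Lie bialgebra structures. A direct computation shows that \(dr\) always satisfies the topological analogue of the compatibility condition \cref{eq:compatibility} and that its co-Jacobi identity is equivalent to \(r\) solving the classical Yang-Baxter equation \cref{eq:2_cybe_intro}; continuity is built into the shape \cref{eq:formal_rmatrices_intro}. Injectivity follows because the operator \(f \mapsto [f(x)\fot 1 + 1\fot f(y), \cdot\,]\) annihilates only invariant tensors, which cannot account for the principal part \(s(y)\Omega/(x-y)\); hence \(r\) is recovered from \(dr\).

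The crux --- and the step I expect to be the main obstacle --- is to recover the isomorphism type of the double of \(dr\) from the coefficient \(s(y)\), thereby forcing \(s(y) = y^{i-1}\). Here I would compute \(\fD(\fg[\![x]\!], dr)\) explicitly: the analogue of the bracket \cref{eq:classical_bracket} together with \(dr\) realizes the continuous dual \(\fg[\![x]\!]'\) inside \(\fg(\!(x)\!)\) through the reproducing kernel \(s(y)\Omega/(x-y)\), and the resulting invariant form \cref{eq:classical_form} coincides with \(B_i\), whose definition uses \(\textnormal{coeff}_{i-2}\), exactly when \(s\) vanishes to order \(i-1\) at \(y=0\). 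The residue constraints imposed by \cref{eq:2_cybe_intro} rigidify the principal part further, normalizing the leading coefficient and absorbing the higher-order terms of \(s\) into \(g\), so that \(s(y) = y^{i-1}\) on the nose. Matching against the classification recalled in the introduction --- only \(\fD_1, \fD_2, \fD_3\) occur and no structure has \(0 \neq s \in y^3 F[\![y]\!]\) --- confirms that the three admissible orders \(i-1 \in \{0,1,2\}\) are precisely the three doubles.

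Surjectivity of (2)\(\to\)(1) onto structures with double \(\fD_i\) is the coboundary statement: regarding a topological Lie bialgebra structure \(\delta\) as a \(1\)-cocycle for \(\fg[\![x]\!]\) valued in the completed tensor square, the Whitehead-type vanishing underlying the classification shows \(\delta = dr\) once the target is enlarged by a single polar term, and by the previous paragraph this term is \(y^{i-1}\Omega/(x-y)\) precisely when the double is \(\fD_i\). Finally I would check that \(r \mapsto dr\) and the Manin-triple assignment are equivariant for the relevant equivalences --- generated on (1) by the continuous automorphisms of \(\fg[\![x]\!]\) (coordinate changes \(x \mapsto \varphi(x)\) together with pointwise \(\fg\)-gauge), on (3) by the induced automorphisms of \(\fD_i\) fixing \(\fg[\![x]\!]\), and on (2) by the corresponding action on formal \(r\)-matrices; since both assignments are natural, equivariance is a formal verification. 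Combining (1)\(\leftrightarrow\)(2) with (1)\(\leftrightarrow\)(3) then yields the asserted triple bijection preserving equivalences.
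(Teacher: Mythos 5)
Your overall architecture is the paper's own: \(r \mapsto dr\) for (2)\(\to\)(1) (\cref{lem:rmat_defines_delta}) and the twist/Lagrangian correspondence \cref{thm: W <-> t <-> T correspondence} for (1)\(\leftrightarrow\)(3). The fatal problem lies in the step you yourself flag as the crux. You claim that the "residue constraints imposed by" the CYBE "rigidify the principal part, \dots\ normalizing the leading coefficient and absorbing the higher-order terms of \(s\) into \(g\), so that \(s(y)=y^{i-1}\) on the nose." Both halves of this are false. The decomposition \(r = s(y)\Omega/(x-y) + g(x,y)\) with \(g\) regular is unique --- every term of \(\Omega/(x-y)=\sum_{k\ge 0} x^{-k-1}y^{k}\,\Omega\) carries a negative power of \(x\) --- so \(s\) is an invariant of \(r\) and nothing can be "absorbed into \(g\)". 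And CYBE does not normalize \(s\): for any \(\psi \in \Aut_0 F[\![x]\!]\) the series \(\Omega/(\psi(x)-\psi(y))\) solves CYBE and has \(s(y)=1/\psi'(y)\), a non-constant unit whenever \(\psi \neq \id\). Bringing \(s\) to \(y^{i-1}\) genuinely requires a coordinate transformation and a scaling (\cref{thm:restriction_on_powers}); these change the bilinear form on the double, hence move you out of class (1), and in the multiplicity-two case such a normalization can even be impossible (the paper's Corollary C: the obstruction is the coefficient of \(y^3\)). The correct mechanism, which your setup has within reach but never uses, is the twist correspondence itself: \(\delta\) has double \(\fD_i\) precisely when \(\delta = \delta_i + ds\) for a topological twist \(s \in (\fg\ot\fg)[\![x,y]\!]\); then \cref{lem:delta_s_r_s_correspondence} together with the kernel argument (\([f\fot 1 + 1\fot f, h\Omega]=0\) for all \(f\) forces \(h=0\)) identifies the corresponding \(r\)-matrix as \(-r_i+s\), whose principal part is \(y^{i-1}\Omega/(x-y)\) up to sign automatically, because \(s\) is regular --- no normalization is performed and none is needed. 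Conversely, that every \(r\)-matrix with \(s=y^{i-1}\) yields a structure whose double is \(\fD_i\) is also not free: the paper proves it by explicitly constructing the Lagrangian subalgebra \(W_r \subset \fg\ot A(i-1,0)\) out of the two expansions of \(r\) and verifying that the resulting Manin triple is the double of \(dr\) (\cref{subsec:W_r_from_r}); your "reproducing kernel" sentence gestures at this computation but substitutes the false normalization claim for it.

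Three smaller gaps. First, well-definedness of \(dr\) as a cobracket needs the skew-symmetry of \(r\) itself; in the paper this is a genuine argument (the Belavin--Drinfeld trick inside the proof of \cref{lem:rmat_defines_delta}), not part of a routine verification of compatibility and co-Jacobi. Second, your surjectivity paragraph appeals to a "Whitehead-type vanishing"; no such cohomological statement is available or used here --- surjectivity of \(r \mapsto dr\) onto class (1) is exactly the statement that every structure with double \(\fD_i\) is a twist of \(\delta_i\), which rests on the classification of doubles from \cite{Stolin_Zelmanov_Montaner}, not on coboundary-triviality of cocycles. Third, preservation of equivalences is not a purely formal naturality check: \cref{thm:equivalence} requires an actual series computation to match \((\phi\times[\phi])(W_s)=W_{\tilde s}\) with \((\phi\fot\phi)(-r_i+s)=-r_i+\tilde s\), because \(W_r\) is assembled from coefficients of two different expansions of \(r\).
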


Viewing the classification of 
topological doubles
\cite{Stolin_Zelmanov_Montaner}
through the prism of the above-mentioned
connections, we obtain an interesting
result on the side of formal
\(r\)-matrices.

\begin{maincorollary}
If the series 
\[
\frac{s(y)\Omega}{x-y} + g(x,y) \in
    (\fg \ot \fg)(\!(x)\!)[\![y]\!],
\]
with \( s \in F[\![y]\!] \)
and \( g \in (\fg \ot \fg)[\![x,y]\!], \)
solves CYBE, then \(s = 0\) or \(s\) has a zero in \(y = 0\) of multiplicity either \( 0, 1 \) or \(2\).
Furthermore, after an appropriate equivalence transformation 
\( s \in \{0, 1, y, y^2 \} \).
The existence of such a transformation in
case of multiplicity \(2 \)
is equivalent to the fact that
the coefficient of \(y^3\) in
\(s(y) \) vanishes.
\end{maincorollary}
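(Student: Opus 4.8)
The plan is to deduce the statement from the classification of topological doubles in \cite{Stolin_Zelmanov_Montaner} by passing through the equivalence-preserving bijections of Theorem~B. Given a solution $r(x,y)=s(y)\Omega/(x-y)+g(x,y)$ of CYBE \cref{eq:2_cybe_intro} of the form \cref{eq:formal_rmatrices_intro}, I would first attach to it the topological Lie bialgebra structure $\delta \coloneqq dr$ defined in \cref{eq:intro_dr} and form its topological double $\fD \coloneqq \fD(\fg[\![x]\!],\delta)$. Since the double is a twisting invariant and since replacing $g$ by another regular tensor only twists $\delta$, the isomorphism type of $\fD$ depends on $r$ only through its polar part $s(y)\Omega/(x-y)$, and the whole argument may be run on the level of equivalence classes.

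The key step is to read off $\fD$ from $s$. Inside $\fD$ the subalgebra $\fg[\![x]\!]=\fg\ot F[\![x]\!]$ is Lagrangian, hence coisotropic, so Theorem~A yields $\fD\cong\fg\ot\widetilde A$ for an extension $\widetilde A\supseteq F[\![x]\!]$. Tracing the pairing that $r$ induces between the continuous functionals $\fg[\![x]\!]'$ and $\fg(\!(x)\!)$, I would identify $\widetilde A$: away from $y=0$ the nonvanishing residue $s$ makes the embedding behave like the standard rational one, while at the origin a zero of $s$ of order $m$ produces a finite-dimensional defect of dimension $m\cdot\dim\fg$ concentrated there, the defect vanishing exactly when $s=0$. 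Comparing with $\fD_i=\fg(\!(x)\!)\times\fg[x]/x^{i-1}\fg[x]$, whose finite factor has dimension $(i-1)\dim\fg$, this shows $\fD\cong\fD_{m+1}$, and $\fD$ trivial precisely when $s=0$.

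Now the classification of \cite{Stolin_Zelmanov_Montaner} forces $m+1\in\{1,2,3\}$, i.e.\ $m\in\{0,1,2\}$ unless $s=0$; this is the first assertion and in particular rules out nonzero $s\in y^3F[\![y]\!]$. For the normal forms I would invoke the bijections of Theorem~B: a formal $r$-matrix whose double is $\fD_i$ is equivalent to the canonical one with $s=y^{i-1}$. The cleanest way to make this explicit, and the conceptual core of the last assertion, is to observe that under a change of the spectral parameter $y\mapsto\phi(y)$ with $\phi(0)=0$ the residue transforms by $s\mapsto (s\circ\phi)/\phi'$, i.e.\ exactly as the coefficient of the formal vector field $s(y)\,\partial_y$, while the admissible rescalings $y\mapsto\lambda y$ act by $c_k\mapsto\lambda^{k-1}c_k$ on the Taylor coefficients $c_k$ of $s$. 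Thus the reduction of $s$ to $y^m$ becomes the formal classification of a vector field with a zero of order $m$ at the origin.

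For $m=0$ and $m=1$ this classification has no modulus (a unit is flattened to a constant, a simple zero is linearized), so the reduction to $s=1$ and $s=y$ is unobstructed. The main obstacle is $m=2$: writing $s=c_2y^2+c_3y^3+\cdots$ with $c_2\neq 0$, the formal normal form of $s\,\partial_y$ is $y^2\partial_y+a\,y^3\partial_y$, where the residue invariant $a$ is a nonzero multiple of $c_3/c_2^{2}$ and cannot be removed by any admissible transformation; hence $s$ is reducible to $y^2$ if and only if $c_3=0$, which is precisely the last assertion. Since the double-classification argument of the second paragraph already places $\fD\cong\fD_3$ and therefore, via Theorem~B, forces $r$ to be equivalent to the canonical representative with $s=y^2$, the invariant $a$ must in fact vanish, so the normal form $s\in\{0,1,y,y^2\}$ is attained in every case. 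The delicate points on which everything hinges are the identification of $\widetilde A$ from the local behaviour of $s$ and the verification that $c_3$ is genuinely the \emph{only} higher-order obstruction --- both most transparently handled on the side of the doubles, where matching the form $B_3$ (extraction of the coefficient of $x^1$) pins down the order to which $s$ must agree with $y^2$.
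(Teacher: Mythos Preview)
Your overall strategy matches the paper's: pass from $r$ to $dr$, invoke the classification of topological doubles from \cite{Stolin_Zelmanov_Montaner}, and then use the equivalence-preserving bijections (your ``Theorem~B'') to constrain $s$. The vector-field reformulation in your last two paragraphs is exactly what appears in the paper (see \cref{thm:restriction_on_powers} and the remark following it): the normalizing transformation $\psi$ is a solution of $y^m\psi'(y)=s(\psi(y))$, which for $m=2$ exists iff the coefficient of $y^3$ in $s$ vanishes, and the existence is then forced by the double classification.

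The gap is your second paragraph. You assert that by ``tracing the pairing'' one identifies $\fD\cong\fD_{m+1}$ directly from the order $m$ of $s$, via a dimension count of a ``defect''. The paper never establishes this for general $s$; the explicit computation of the double (\cref{subsec:W_r_from_r}) is carried out only when $s(y)=y^m$ exactly. For an arbitrary $s$ with a zero of order $m$, the double is $\fg\otimes A$ for some trace extension $A$ whose trace form depends on \emph{all} of $s$, not merely on $m$; a priori one obtains only $A\sim A(n,\alpha)$ for some $n$ and some sequence $\alpha$, and showing $n=m$ together with $\alpha\sim 0$ is precisely the content one is trying to prove. The paper therefore runs the argument in the opposite order: it applies the abstract classification (\cref{rem:four_doubles}) to conclude that $\fD(\fg[\![x]\!],\xi\,dr)\cong\fg\otimes A(n,0)^{(\varphi)}$ for some $n\in\{0,1,2\}$ and some $\varphi\in\Aut_0F[\![x]\!]$, and only then compares singular parts after the substitution $\psi=\varphi^{-1}$ to obtain $\xi\, s(\psi(y))/\psi'(y)=y^n$, whence $n$ equals the order of $s$. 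Your step~4 would need an independent argument (essentially redoing the construction of $W_r$ for arbitrary $s$) to be self-contained; without it, the deduction $m\le 2$ in your step~5 and the appeal to Theorem~B in step~8 are circular.
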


            
            

\Cref{sec:commensurable_twists}
is about a natural subclass
of topological twists,
called commensurable twists.
These are topological twists
inside \( (\fg \ot \fg)[x,y] \).
Commensurable twists turn out to be
in bijection with rational, quasi-trigonometric and quasi-rational
\( r \)-matrices. The classifications
of these \(r \)-matrices are already known
and presented in this section.

In \Cref{sec:classification}
we state the main classification
results for topological Lie
bialgebra structures on \( \fg[\![x]\!] \). More precisely,
we show that up to a certain notion of equivalence
there are six classes of formal \(r\)-matrices:
degenerate (i.e.\ \(s(y) = 0\)), elliptic,
trigonometric, rational, quasi-trigonometric and quasi-rational.
In the case \( F = \bC \) the
descriptions of these classes become explicit.

\begin{maintheorem}
Let \(r \) be a  formal \( r \)-matrix
corresponding to a Lie bialgebra structure with the double \( \fD_i \),
\( i \in \{1,2,3\} \).
Then
\begin{itemize}[align=parleft, left=0pt..3em]
    \item[\(i \nolinebreak = \nolinebreak 1\colon\)] \(r\) is equivalent to a Taylor series 
    expansion at \( y = 0\) of either
    an elliptic or a trigonometric
    or a rational \(r\)-matrix;
    
    \item[\(i \nolinebreak = \nolinebreak 2\colon\)] \(r\) is equivalent
    to a quasi-trigonometric \(r\)-matrix;
    
    \item[\(i \nolinebreak = \nolinebreak 3\colon\)] \(r\) is equivalent
    to a quasi-rational \(r\)-matrix.
\end{itemize}
\end{maintheorem}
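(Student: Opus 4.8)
The plan is to reduce the statement to a geometric classification and then to match the outcome against the already-known lists of \(r\)-matrices. First I would invoke the bijection between topological Lie bialgebra structures with double \(\fD_i\), formal \(r\)-matrices \(y^{i-1}\Omega/(x-y) + g(x,y)\), and Lagrangian Lie subalgebras \(W \subset \fD_i\) complementary to \(\fg[\![x]\!]\), together with the fact that this bijection respects the relevant equivalences. This reduces the theorem to classifying such \(W\) up to equivalence and reading off the associated \(r\)-matrix; by the corollary on admissible \(s\) we may take \(s(y) = y^{i-1}\), so that \(i \in \{1,2,3\}\) corresponds to \(s \in \{1, y, y^2\}\). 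To each \(W\) I would then attach, following \cite{abedin2021geometrization}, an algebro-geometric datum: a torsion-free coherent sheaf of Lie algebras on an algebraic curve \(X\) equipped with a trivialization over the formal neighborhood of a marked smooth point, out of which the \(r\)-matrix is recovered by a residue pairing and \(\fg[\![x]\!]\) is recovered as the sections near the marked point.

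The heart of the argument is to pin down the type of \(X\) in terms of \(i\). I expect \(X\) to be a (possibly singular or degenerate) curve of arithmetic genus one, with the behavior at the marked point dictated by the vanishing order \(i-1\) of \(s\) at \(y=0\). For \(i=1\) the residue of \(r\) at \(x=y\) is the full Casimir \(\Omega\), and the non-degeneracy of the induced bilinear form together with a Riemann--Roch/genus estimate should force \(X\) to be a Weierstrass cubic; the three possibilities for its singular point---smooth, a node, or a cusp---then yield respectively the elliptic, trigonometric and rational \(r\)-matrices. For \(i=2\) and \(i=3\) the extra vanishing of \(s\) rigidifies the local datum at the marked point, and the same geometric constraints should force the degenerate (nodal, resp.\ cuspidal) configurations underlying the quasi-trigonometric, resp.\ quasi-rational, families. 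Here I would use the structural result identifying a coisotropic subalgebra of the form \(\fg \otimes A\) inside the double with \(\fg \otimes \widetilde{A}\) to control the algebra of global sections and thereby bound the genus and exclude reducible or higher-genus configurations.

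Once the geometric type is fixed in each case, the final step is to recognize the recovered \(r\)-matrix on a known list. Concretely, I would take the relevant classified family---elliptic and trigonometric from \cite{BD_first} and rational from \cite{Stolin_rational_sln, Stolin_rational} for \(i=1\), quasi-trigonometric from \cite{Pop_Stolin_Lagrangian_quasi_trig, Raschid_Stepan_classical_twists} for \(i=2\), and quasi-rational from \cite{the_4_structure} for \(i=3\)---form its Taylor expansion at \(y=0\), verify that this expansion is a formal \(r\)-matrix with double \(\fD_i\), and then appeal to the injectivity in the bijection above to conclude that an arbitrary \(r\) with double \(\fD_i\) is equivalent to such an expansion. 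Because both the bijection and the geometrization preserve equivalence, matching geometric \emph{types} upgrades to matching \(r\)-matrices up to equivalence.

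The main obstacle will be the second paragraph: extracting global control of \(X\)---in particular the genus-one bound and the exclusion of reducible or degenerate curves outside the expected trichotomy---from the purely local and algebraic data carried by the double \(\fD_i\). This requires translating the residue order \(i-1\) into the precise local type of the sheaf at the marked point and combining it with the finiteness and coisotropy constraints, which is delicate. A secondary difficulty is the last step: verifying that the abstractly reconstructed \(r\)-matrix is genuinely \emph{equivalent} to a representative on the known list, rather than merely of the same type; this relies on the equivalence-preserving property of the bijection and on the explicit normal forms recorded in \cite{BD_first, Stolin_rational, Pop_Stolin_Lagrangian_quasi_trig, the_4_structure}.
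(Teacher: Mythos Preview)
Your outline for \(i=1\) matches the paper: geometrize \(W\) to a sheaf on an irreducible plane cubic and read the trichotomy elliptic/nodal/cuspidal as elliptic/trigonometric/rational.

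For \(i \in \{2,3\}\) there is a real gap. You expect the curve attached to \(W\) to be a nodal (resp.\ cuspidal) cubic, forced by the vanishing order of \(s\) at the marked point; that is not what happens. The Lagrangian \(W \subseteq \fg(\!(x)\!) \times \fg[x]/x^{i-1}\fg[x]\) has projections \(W_\pm\), and the geometrization scheme applies only to the lattice \(W_+ \subseteq \fg(\!(x)\!)\). A computation with the multiplier algebra gives \(M_{W_+} = F[x^{-1}]\), so the resulting curve is \(\mathbb{P}^1\), not a genus-one curve. The singular cubic \(X\) enters only as an auxiliary device: one fixes a normalization \(\nu\colon \mathbb{P}^1 \to X\) with \(X\) nodal (\(i=2\)) or cuspidal (\(i=3\)) and glues \(\nu_*\mathcal{W}\) with the finite-dimensional data \(W_-\) over the singular point to build a sheaf \(\mathcal{A}\) on \(X\). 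The Lagrangian condition on \(W\) is precisely what makes the Killing form of \(\mathcal{A}\) descend from \(\nu_*\mathcal{O}_{\mathbb{P}^1}\) to \(\mathcal{O}_X\), after which cohomological vanishing forces \(\mathcal{A}|_q \cong \fg\) at every smooth point. The payoff is not ``\(r\) has nodal/cuspidal type'' but the sharper statement that, after a formal gauge, \(W_+ \subseteq \fg[x,x^{-1}]\) and hence \(W\) is \emph{commensurable} with \(W_i\). By the correspondence between commensurable subalgebras and polynomial twists, the \(r\)-matrix is then literally quasi-trigonometric or quasi-rational; your proposed final ``match by injectivity'' step is unnecessary and would not by itself suffice, since sharing a geometric type does not a priori give equivalence within that type. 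Finally, the \(\fg \otimes \widetilde{A}\) structure theorem you invoke plays no role in this part of the argument---it is used to classify doubles, not twists---and the genus bound comes instead from Riemann--Roch applied to \(\mathcal{L}\) and \(\mathcal{L}^*\) using that both have vanishing \(\textnormal{H}^1\).
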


The algebro-geometric
proof of the classification is 
given in \cref{sec:proof}.
The case \( i = 1 \) was already thoroughly studied in
\cite{abedin2021geometrization}.
We present a sketch of its proof given there.
The cases \( i = 2 \) and \( i = 3 \)
are new and described in full detail.

We use many different notions of equivalence throughout the paper.
\Cref{sec:appendix} collects
all of these notions and can be used
as a cheat sheet while reading the work.

\subsection*{Acknowledgments}
The work of the first named author was supported by the DFG project Bu--1866/5--1. 

\section{Topological Lie bialgebras}\label{sec:Top_Lie_bialg}
We start with a brief review of linearly topologized vector spaces
and then introduce the notion of a topological Lie bialgebra. 
For a detailed exposition on linearly topologized vector spaces
we refer to \cite{Kothe}.

Let \( F \) be a field of characteristic \( 0 \) equipped with the discrete topology.
All vector spaces are considered over that field.

\subsection{Linearly topologized vector spaces}
A vector space \( V \) is said to be \emph{linearly topologized}
if it is endowed with a topology \( \fT \) satisfying the following
conditions
\begin{enumerate}
\item Vector addition \( V \times V \to V \) and
scalar multiplication \( F \times V \to V \)
are (jointly) continuous;
\item \( (V, \fT) \) is a Hausdorff topological space;
\item \( \fT \) is translation invariant and 
\( 0 \in V \) admits a fundamental system of neighbourhoods consisting
of subspaces of \( V \).
\end{enumerate}
Such a topology \( \fT \) will be referred to as 
\emph{linear topology}.

\begin{example}
Any vector space equipped with the discrete topology
is a linearly topologized vector space.
The converse is true for finite-dimensional vector spaces:\
any finite-dimensional linearly topologized vector space
is necessarily discrete.
\end{example}

We refer to complete linearly topologized vector spaces
simply as \emph{linearly complete}.
Any linearly topologized vector space \( (V, \fT) \)
can be embedded into a smallest linearly complete
vector space \( (\widehat{V}, \widehat{\fT}) \).
This space is unique up to topological isomorphism
and it is called \emph{the completion} of \( V \).
It is common to represent the completion \( \widehat{V} \)
as the following limit
\begin{equation}\label{eq:completion_as_limit}
\begin{aligned}
  \widehat{V} = \lim_{i \in I} V / N_i,
\end{aligned}
\end{equation}
where \( \{ N_i \}_{i \in I} \subseteq{\fT} \) is
a fundamental system of neighbourhoods of \( 0 \in V \)
and each quotient \( V / N_i \) is equipped with the
discrete topology.
The set \( \{ \textnormal{Cl}_{\widehat{V}}(N_i) \}_{i \in I} \)
forms a fundamental system of neighbourhoods of \( 0 \in \widehat{V} \). Here \(\textnormal{Cl}_{\widehat{V}}(N_i)\) denotes the closure of \(N_i\) in \(\widehat{V}\).

\begin{example}
Any discrete vector space \( V \) is automatically complete. Let us equip the algebraic dual \( \dual{V} \) with the weak topology, i.e. the linear topology with \( \{ U^\perp \mid U \subseteq V \text{ is finite-dimensional} \} \) as a fundamental system of neighbourhoods of \( 0 \in \dual{V} \), where \(U^\bot = \{f \in \dual{V}\mid f|_U = 0\}\).
Then \(\dual{V}\) is also
a linearly complete vector space.
In particular, this implies that both \( F[x] \) and
\( F[\![x]\!] = \dual{F[x]} \) are linearly complete spaces.
The weak topology on \( F[\![x]\!] \) is precisely the \( (x) \)-adic topology.
\end{example}

\subsection{Linear tensor product topologies}\label{sec:tensor_prod_topology}
Let  \( V \) and \( W \) be linearly topologized vector spaces.
Their usual tensor product \( V \ot W \) has no canonical linear topology
on it. 

In this work we are interested in the
finest linear topology on \( V \ot W \) making
the tensor product map \( \ot \colon V \times W \to V \ot W \) continuous.
More explicitly, we declare a subspace
\( U \subseteq V \ot W \) open if it satisfies the following three conditions
\begin{itemize}
\item There are open subspaces \( V_0 \subseteq V \) and \( W_0 \subseteq W \)
such that \( V_0 \ot W_0 \subseteq U \);
\item For every \( v \in V \) there exists an open subspace \( W_0 \subseteq W \)
such that \( v \ot W_0 \subseteq U \);
\item For every \( w \in W \) there exists an open subspace   \( V_0 \subseteq V \)
such that \( V_0 \ot w \subseteq U \).
\end{itemize}
The corresponding topology is referred to as \emph{projective tensor product topology} or \emph{the \( \fot \)-topology}.
The completion of \( V \ot W \) with respect to it is denoted by
\( V \fot W \).
The universal property of the \( \fot \)-topology immediately
implies that the unique factorization \( \bar{f} \colon V \ot W \to X \)
of any continuous bilinear map
\( f \colon V \times W \to X  \) is again continuous.

Even though we have an explicit description of open sets
in the \( \fot \)-topology, it is not easy to work with them
explicitly. For that reason we introduce another auxiliary
tensor product topology.
It is the finest linear topology
on \( V \ot W \) making the tensor product map
\( \ot \colon V \times W \to V \ot W \) uniformly continuous.
We call it \emph{the \( \uot \)-topology}. 
Suppose that \( \{ N_i \}_{i \in I} \) and \( \{ M_j \}_{j \in J} \)
are fundamental systems of neighbourhoods of 0 in \( V \) and \( W \) respectively.
Then the \( \uot \)-topology is defined by declaring
\begin{equation}\label{eq:0_neighbourhoods_in_uot} 
\{ N_i \ot W + V \ot M_j \}_{i \in I, j \in J}
\end{equation}
to be the fundamental system of neighbourhoods of \( 0 \in V \ot W \).
As with the first topology, we write \( V \uot W \) for the completion of \( V \ot W \)
with respect to this topology.
Using the representation \cref{eq:completion_as_limit}
we get the identity
\begin{equation}\label{eq:Second_topology_completion_limit}
\begin{aligned}
  V \uot W = \lim_{\substack{i \in I \\ j \in J}} V / N_i \ot W / M_j.
\end{aligned}
\end{equation}

\begin{remark}
It is clear that the first topology is finer than the
second one. 
However, when both \( V \) and \( W \) are
discrete we get the homeomorphisms
\begin{equation}\label{eq:two_topologies_coincide}
V \fot W \cong V \uot W \ \text{ and } \ \dual{V} \fot \dual{W} \cong \dual{V} \uot \dual{W} \cong \dual{(V \ot W)},
\end{equation}
where the algebraic duals are equipped with the weak topology;
see e.g.\ \cite[Lemma 24.17 and Corollary 24.25]{Bergman_Hausknecht}. 
Later we work primarily with the \( \fot \)-topology and the
isomorphisms \cref{eq:two_topologies_coincide} allow us to use the explicit 
description \cref{eq:0_neighbourhoods_in_uot} of \( 0 \)-neighbourhoods.
\end{remark}

\subsection{Topological Lie bialgebras}\label{subsec:Top_Lie_bialg}
From now on all tensor products of linearly topologized vector spaces
are taken by default with the \( \fot \)-topology.

A \emph{topological Lie coalgebra} is a pair
\( (L, \delta) \), where \( L \)
is a linearly topologized vector space and
\( \delta \colon L \to L \fot L \) is a continuous linear map
satisfying the conditions
\begin{equation}\label{eq:toplogical_Lie_coalgebra_axioms}
\begin{aligned}
  \delta(x) + \overline{\tau} \delta(x) = 0 \ \text{ and } \
  \overline{\textnormal{Alt}}((\delta \fot 1)\delta(x)) = 0,
\end{aligned}
\end{equation}
where 
\begin{equation}\label{eq:extenstions_of_key_maps}
\begin{aligned}
  \overline{\tau} &\colon L \fot L \to L \fot L, \\
  \overline{\textnormal{Alt}} &\colon L \fot L \fot L \to L \fot L \fot L, \\
  \delta \fot 1 &\colon L \fot L \to L \fot L \fot L
\end{aligned}
\end{equation}
stand for the unique continuous extensions of the continuous linear maps
\begin{equation}\label{eq:key_maps}
\begin{aligned}
  \tau &\colon L \ot L \to L \ot L, && x \ot y \mapsto y \ot x, \\
  \textnormal{Alt} &\colon L \ot L \ot L \to L \ot L \ot L, && x \ot y \ot z \mapsto x \ot y \ot z + y \ot z \ot x + z \ot x \ot y, \\
  \delta \ot 1 &\colon L \ot L \to (L \fot L) \ot L, && x \ot y \mapsto \delta(x) \ot y.
\end{aligned}
\end{equation}
We call a continuous \( F \)-linear map \( \varphi \colon L_1 \to L_2 \)
a \emph{morphism of topological Lie coalgebras} if
\begin{equation}\label{eq:top_coalgebra_morphism}
  (\varphi \fot \varphi)\delta_1 = \delta_2 \varphi.
\end{equation}

In a similar vein we define a \emph{topological Lie algebra}
as a linearly topologized vector space \( L \) together with
a continuous Lie bracket \( [\cdot \, ,\cdot] \colon L \times L \to L \).
A \emph{morphism between topological Lie algebras} is a
continuous Lie algebra morphism.

\begin{remark}
Note that by definition of the \( \fot \)-topology
we have an isomorphism between the space of continuous
bilinear maps \( L \times L \to L \) and
the space of continuous linear maps
\( L \ot L \to L \).
Moreover, when \( L \) is complete
these spaces are also isomorphic to the space of
continuous linear maps \( L \fot L \to L \).
\end{remark}

A \emph{topological Lie bialgebra} consists of the
following datum
\begin{itemize}
\item A topological Lie algebra \( (L, [\cdot \, ,\cdot]) \);
\item A topological Lie coalgebra \( (L, \delta) \);
\item A linear topology on the space \( L' \) of continuous linear functionals \( L \to F \);
\end{itemize}
And this datum is subject to the following conditions
\begin{enumerate}
\item The restriction of the dual map \( \delta' \colon (L \fot L)' \to L'  \)
to the subset \( L' \ot L' \subseteq (L \ot L)' = (L \fot L)' \) is again continuous;
\item The following compatibility condition holds
\[ \delta([x,y]) = (\ad_x \fot 1 + 1 \fot \ad_x) \delta(y) -
                  (\ad_y \fot 1 + 1 \fot \ad_y) \delta(x)
                  \qquad
               \]
for all \(x,y\in L\).
\end{enumerate}
When there is no ambiguity we simply write \( (L, \delta) \) to denote a topological Lie bialgebra. 
A  map \( \varphi \) between two topological Lie bialgebras is a \emph{topological Lie bialgebra morphism}
if it is a morphism of both topological Lie algebra and topological Lie coalgebra structures such that \( \varphi' \)
is continuous.

\begin{remark}\label{rem:bracket_notation}
In the following we use the notation \([x \fot 1,t]\coloneqq (\ad_x \fot 1)t\) and \([1 \fot x,t] \coloneqq (1 \fot \ad_x)t\) for all \(x\in L\) and \(t \in L\fot L\). For instance, the compatibility condition in (2) then reads 
\[ \delta([x,y]) = [x \fot 1 + 1 \fot x, \delta(y)] -
                  [y \fot 1 + 1 \fot y, \delta(x)]
                  \qquad
              \]
              for all \(x,y \in L\).
\end{remark}

\begin{remark}%
\label{rem:scaling_bialgebras}
If \( \delta \colon L \to L \fot L \) is a topological Lie bialgebra structure
on \( L \), then so is \( \xi \delta \) for any \( \xi \in F \).
It is natural to not distinguish structures that differ only by a non-zero scalar multiple.
Therefore, we call two topological Lie bialgebras \( (L_1, \delta_1) \) and
\( (L_2, \delta_2) \) \emph{equivalent} if there is a constant \( \xi \in F^{\times} \)
such that \( (L_1, \delta_1) \) is isomorphic to \( (L_2, \xi \delta_2) \).
We adopt the notation \[ (L_1, \delta_1) \sim (L_2, \delta_2)  \] to denote
equivalent topological Lie bialgebras.
\end{remark}

\subsection{Topological Manin triples and doubles}\label{sec:top_Man_triples}
A triple \( (L, L_+, L_-) \) is a called a 
\emph{topological Manin triple} if
\( L \) is a topological Lie algebra
equipped with a separately continuous invariant non-degenerate
symmetric bilinear form \( B \colon L \times L \to F \)
such that
\( L_{\pm} \) are isotropic topological Lie subalgebras of \( L \)
and \( L = L_+ \add L_- \).

We say that two topological Manin triples \( (L, L_+, L_-) \)
and \( (M, M_+, M_-) \) are \emph{isomorphic} if there is
an isomorphism of topological Lie algebras 
\( \varphi \colon L \to M \) such that
\begin{equation}
\varphi(L_\pm) = M_\pm 
\
\text{ and }
\
B_L(x,y) = B_M(\varphi(x), \varphi(y))
\qquad
               \forall x,y \in L.
\end{equation}

Let \( (L, \delta) \) be a topological Lie bialgebra.
Consider the triple \( (L \add L', L, L') \) with the form
\begin{equation}\label{eq:form_topological_double}
\begin{aligned}
  B(x + f, y + g) \coloneqq f(y) + g(x) \qquad \forall x,y \in L, \ \forall f,g \in L',
\end{aligned}
\end{equation}
and the bracket
\begin{equation}\label{eq:Lie_bracket_topological_double}
\begin{aligned}
  [x,f] \coloneqq -f\circ\ad_x + (f \fot 1)(\delta(x)) \qquad \forall x \in L, \ \forall f \in L'.
\end{aligned}
\end{equation}
In the classical (i.e.\ when all vector spaces are discrete) finite-dimensional
framework the construction above gives a bijection between Lie bialgebras
and Manin triples.
If we allow \( L \) to be infinite-dimensional, the triple \( (L \add L', L, L') \) is still
a Manin triple, but the converse direction is no longer true: not every Manin triple is of this form.
Passing further from the classical framework to the topological one
we entirely lose this connection:
the triple \( (L \add L', L, L')  \) is not a topological Manin triple in general.
The first subtlety is that the bracket
\cref{eq:Lie_bracket_topological_double} may not be well-defined if
\( L \) is not complete. The second problem is that it may not be continuous.
However, in the most important cases for us the triple \( (L \add L', L, L') \)
is indeed a topological Manin triple. 
The space \( L \add L' \) with the topological Lie algebra structure
and the form mentioned above is then denoted by
\( \fD(L, \delta) \) or simply by \( \fD \) and called
\emph{the topological double} of \( (L, \delta) \).

Suppose \( (L, L_+, L_-) \) is a topological
Manin triple and we have a topology on \( L'_+ \) such that
\begin{enumerate}
\item The dual map
\( [\cdot \, ,\cdot]'_{L_-} \colon L'_- \to (L_- \ot L_-)' \) 
restricts to a continuous map
\( \delta \colon L_+ \to L_+ \fot L_+ \), where \( L_+ \) is viewed
as a subset of \( L'_- \) through the bilinear form \( B \) on \( L \);
\item The restriction of \( \delta' \) to \( L'_+ \ot L'_+ \) is continuous,
\end{enumerate}
then \( (L_+, \delta) \) is a topological Lie bialgebra
\emph{defined} by the triple \( (L, L_+, L_-) \). 

\begin{remark}%
\label{rem:delta_as_dual_of_cobracket}
As in the classical case the first condition is equivalent to the following:
there exists a continuous linear map
\( \delta \colon  L_+ \to L_+ \fot L_+ \) such that
\( B^{\fot 2}(\delta(x), y \ot z) = B(x, [y,z]) \) for all \( x \in L_+ \) and \( y,z \in L_- \).
Here \( B^{\fot 2} \) is the unique continuous extension
\( L_+ \fot L_+ \to F \) of the continuous map
\( B(\cdot \, , y \ot z) = B(\cdot \, , y)B(\cdot\, , z) \colon L_+ \ot L_+ \to F \).
\end{remark}

\begin{remark}
Suppose \( (L, L_+, L_-) \) and \( (M, M_+, M_-) \) are two isomorphic topological
Manin triples.
If \( (L, L_+, L_-) \) together with a topology on \( L'_+ \) defines a topological
Lie bialgebra \( (L_+, \delta_L) \), then \( (M, M_+, M_-) \) together with the induced
topology on \( M'_+ \) defines a topological Lie bialgeba \( (M_+, \delta_M) \).
Moreover, if \( \varphi \colon L \to M \) is the isomorphism between the two Manin triples, then
its restriction \( \varphi |_{L_+} \colon L_+ \to M_+ \) is an isomorphism
of topological Lie bialgebras.
\end{remark}

\begin{remark}\label{rem:equivalence_Manin_triples_top_algebras}
Note that if a Manin triple \( (L, L_+, L_-) \) with a form \( B \)
defines a Lie bialgebra structure \( (L_+, \delta) \),
then the same triple \( (L, L_+, L_-) \) with the form \( \xi B \),
\( \xi \in F^{\times} \), defines the Lie bialgebra structure
\( (L_+, \xi^{-1} \delta) \).
Since we have identified Lie bialgebras that differ by a non-zero scalar multiple
(see \cref{rem:scaling_bialgebras}) it is natural to identify
Manin triples whose forms differ by a non-zero scalar multiple as well.
Taking that into account, we call two Manin triples  \( (L, L_+, L_-) \) and
\( (M, M_+, M_-) \) with forms \( B_L \) and \( B_M \) respectively
\emph{equivalent} if there is \( \xi \in F^{\times} \)
such that \( (L, L_+, L_-) \) is isomorphic to
\( (M, M_+, M_-) \) with the form \( \xi B_M \).
Such an equivalence of Manin triples is denoted by  \( (L, L_+, L_-) \sim  (M, M_+, M_-)\).


\end{remark}

\section{Classification of topological doubles on \(\fg[\![x]\!]\)}%
\label{sec:review_MSZ}
In this section we review the classification of topological doubles on \(\fg[\![x]\!]\) achieved in \cite{Stolin_Zelmanov_Montaner}
within our framework of topological Lie bialgebras. 
One of the pivotal steps in the derivation of this result is the fact that these doubles are of the form \(\fg \ot A\) for a so-called trace extension \(A\) of \(F[\![x]\!]\). 
However, the proof presented in  \cite{Stolin_Zelmanov_Montaner}
is not applicable for
\(\fg = \mathfrak{sl}(n,F)\) with 
\( n \ge 3 \). 
In this section we refine the proof
to include all simple Lie algebras
as well as
show that the result is still valid when
\( F[\![x]\!] \) is replaced by any
associative commutative reduced unital \( F \)-algebra.

\subsection{Extension of scalars of simple Lie algebras}\label{sec:extension_of_scalars}
Throughout this section \(F\) denotes 
an arbitrary field of characteristic 0. 
Let us recall that a finite-dimensional (not necessarily associative or Lie) algebra \(\mathfrak{a}\) over \(F\) is called \emph{central} if its \emph{centroid} 
\begin{equation}
    \Gamma_F(\mathfrak{a}) \coloneqq \{f \in \textnormal{End}_F(\mathfrak{a})\mid af(b) = f(ab) = f(a)b,\textnormal{ for all }a,b\in \mathfrak{a}\}
\end{equation}
coincides with scalar multiples
of the identity map. 
If \(\mathfrak{a}\) is simple, 
the centroid is a finite field extension of \(\mathfrak{a}\). In particular, any finite-dimensional simple \(F\)-algebra is central if \(F\) is algebraically closed.

\begin{lemma}%
\label{lem:Ann(g)V=0}
 Let
 \( \fg \) be a finite-dimensional 
 central simple Lie algebra over \( F \).
 Define
\begin{equation*}
 \textnormal{Ann}(\fg) \coloneqq \{ x \in U(\fg) \mid x \cdot \fg = 0 \},
\end{equation*}
 where \( U(\fg) \) is the universal
 enveloping algebra of \( \fg \).
 If \( V \) is a finite-dimensional irreducible
 \( \fg \)-module and \( \textnormal{Ann}(\fg) V = 0\), then
 \( V \cong \fg \) as \( \fg \)-modules.
\end{lemma}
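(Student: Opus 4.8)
The plan is to analyze the image of the universal enveloping algebra under the adjoint representation and identify it with a full matrix algebra, so that the hypothesis \( \textnormal{Ann}(\fg)V = 0 \) forces \( V \) to be a module over this matrix algebra. Let \( \rho \colon U(\fg) \to \End_F(\fg) \) denote the algebra homomorphism extending the adjoint action \( x \mapsto \ad_x \). By definition \( \ker \rho = \textnormal{Ann}(\fg) \), so \( \rho \) descends to an injection \( U(\fg)/\textnormal{Ann}(\fg) \hookrightarrow \End_F(\fg) \), and the whole argument reduces to showing this map is in fact an isomorphism onto \( \End_F(\fg) \cong M_n(F) \), where \( n = \dim_F \fg \).

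First I would note that \( \fg \), viewed as a \( \fg \)-module via the adjoint action, is irreducible: its submodules are exactly the ideals of \( \fg \), and \( \fg \) is simple. Next, the key computation is to identify the commutant \( \End_{\fg}(\fg) \) of this module with the centroid \( \Gamma_F(\fg) \). An endomorphism \( f \) lies in the commutant iff \( f([x,y]) = [x, f(y)] \) for all \( x,y \), and then using skew-symmetry of the bracket, \( f([x,y]) = -f([y,x]) = -[y,f(x)] = [f(x),y] \), so \( f \) automatically satisfies both centroid conditions. Hence \( \End_{\fg}(\fg) = \Gamma_F(\fg) \), and since \( \fg \) is central this equals \( F \cdot \id \).

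With the commutant identified as \( F \), I would invoke the Jacobson density theorem — equivalently Burnside's theorem, since the commutant is the ground field: a finite-dimensional algebra acting faithfully and irreducibly on a finite-dimensional space with scalar commutant surjects onto the full endomorphism ring. Applying this to \( \rho(U(\fg)) \subseteq \End_F(\fg) \) acting on \( \fg \) (the action being faithful by construction of the image) gives \( \rho(U(\fg)) = \End_F(\fg) \), whence \( \bar\rho \colon U(\fg)/\textnormal{Ann}(\fg) \xrightarrow{\ \sim\ } \End_F(\fg) \cong M_n(F) \). Under this isomorphism the adjoint module \( \fg \) corresponds to the natural column module \( F^n \).

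Finally, the hypothesis \( \textnormal{Ann}(\fg)V = 0 \) says precisely that the \( U(\fg) \)-action on \( V \) factors through \( U(\fg)/\textnormal{Ann}(\fg) \cong M_n(F) \); since \( U(\fg) \)-submodules and \( M_n(F) \)-submodules of \( V \) coincide, \( V \) is a simple \( M_n(F) \)-module. As \( M_n(F) \) is simple Artinian, it has a unique simple module up to isomorphism, namely \( F^n \cong \fg \), so \( V \cong \fg \) as \( M_n(F) \)-modules and therefore as \( \fg \)-modules. The only genuinely delicate point is the use of centrality to pin the commutant down to \( F \) rather than a nontrivial field extension: this is exactly where the hypothesis that \( \fg \) is \emph{central} simple (as opposed to merely simple) is indispensable, and it is what makes the density/Burnside step output the full matrix algebra over the base field.
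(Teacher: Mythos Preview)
Your proof is correct and follows essentially the same route as the paper: both hinge on the Jacobson density theorem together with centrality to identify \(U(\fg)/\textnormal{Ann}(\fg)\) with \(\End_F(\fg)\cong M_n(F)\). Your endgame---invoking the uniqueness of the simple \(M_n(F)\)-module---is in fact slightly cleaner than the paper's, which instead argues that \(\textnormal{Ann}(\fg)=\textnormal{Ann}(V)\) and then compares copies of \(\fg\) and \(V\) inside products of cyclic quotients.
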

\begin{proof}
Let \( \{ I_\alpha \}_{\alpha = 1}^d  \) be a basis of \( \fg \) and
\( O_{U(g)}(I_\alpha) \) be the orbit of \( I_\alpha \)
under the action of \( U(\fg) \). Since \( \fg \) is simple
we have \( O_{U(g)}(I_\alpha) =  \fg \).
The kernel of the surjective \( U(\fg) \)-module homomorphism 
\begin{equation}
\begin{aligned}
  U(\fg) &\longrightarrow O_{U(g)}(I_\alpha) = \fg \\
  x &\longmapsto x \cdot I_\alpha
\end{aligned}
\end{equation}
is exactly \( \textnormal{Ann}(I_\alpha) \coloneqq \{ x \in U(\fg) \mid x \cdot I_\alpha = 0 \} \).
Therefore, \( U(\fg) / \textnormal{Ann}(I_\alpha) \cong \fg \) and hence
\( \textnormal{Ann}(I_\alpha) \) is a maximal left ideal of \( U(\fg) \).
A similar argument shows
that if \( \{ v_i \}_{i = 1}^{\ell} \) 
is a basis of \( V \), then
\( \textnormal{Ann}(v_i) \)
are maximal left ideals of \( U(\fg) \)
and \( U(\fg) / \textnormal{Ann}(v_i) \cong V \).

Consider the \( U(\fg) \)-module homomorphism
\begin{equation}
\begin{aligned}
    U(\fg) &\longrightarrow \textnormal{End}_{F}(\fg) \\
    x &\longmapsto x \cdot (-).
\end{aligned}
\end{equation}
By Schur's lemma, Jacobson density theorem and the centrality of \(\fg\)
this map is surjective with kernel
\(\textnormal{Ann}(\fg) \). This means
that we have an isomorphism of
\( U(\fg) \)-modules
\( U(\fg) / \textnormal{Ann}(\fg) \cong
\textnormal{End}_{F}(\fg)\) 
and, consequently,
\( \textnormal{Ann}(\fg)  \)
is a maximal
two-sided ideal.
Moreover, \( \textnormal{Ann}(V) \)
is also a two-sided ideal and by our assumption we have
the following inclusions
\begin{equation}
    \textnormal{Ann}(\fg) \subseteq 
    \textnormal{Ann}(V) \subsetneq U(\fg)
\end{equation}
that imply the equality
\( \textnormal{Ann}(\fg) = \textnormal{Ann}(V) \).

Combining all previous observations
we obtain the following relations
between \( \fg \)-modules
\begin{equation*}
  \prod_{\alpha = 1}^d \fg \cong 
  \prod_{\alpha = 1}^d U(\fg) / \textnormal{Ann}(I_\alpha) \cong
  U(\fg) / \textnormal{Ann}(\fg) =
  U(\fg) / \textnormal{Ann}(V) \subseteq
  \prod_{i = 1}^\ell U(\fg) / \textnormal{Ann}(v_i) \cong
  \prod_{i = 1}^\ell V,
\end{equation*}
which now imply the desired isomorphism 
\( V \cong \fg \).
\end{proof}

\begin{theorem}%
\label{thm:form_of_doubles}
  Let \( \fg \) be a central simple Lie algebra over \(F\) and
  \( A \) be a reduced unital associative commutative \( F \)-algebra.
  Equip the tensor product \( \fg \ot A \)
  with the Lie algebra bracket
  \begin{equation*}
    [a \ot f, b \ot g] = [a,b] \ot fg.    
  \end{equation*}
  Let \(L\) be a Lie algebra equipped with a non-degenerate symmetric invariant
  bilinear form \(B\) such that \(\fg \otimes A \subseteq L\) 
  is a coisotropic subalgebra. Then, as a \( \fg \)-module,
  \(L \cong \fg \otimes \widetilde{A}\) for a unital associative commutative
  algebra extension \( \widetilde{A} \supseteq A \).
\end{theorem}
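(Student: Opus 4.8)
The plan is to regard \(L\) as a module over \(\fg=\fg\otimes 1\) via the adjoint action and to show that, as such, it is a locally finite direct sum of copies of the adjoint module; the commutative algebra \(\widetilde A\) then emerges as the multiplicity space \(\Hom_\fg(\fg,L)\), and the Lie bracket and form of \(L\) get transported to an algebra structure and a trace form on it.

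First I would record two consequences of the hypotheses. Set \(W:=\fg\otimes A\). Invariance of \(B\) makes each \(\ad_x\), \(x\in\fg\), skew-adjoint, so \(B\) is \(\fg\)-invariant and \(W^{\perp}\) is a \(\fg\)-submodule; a short computation with invariance also gives \([W,W^{\perp}]\subseteq W^{\perp}\), so \(W^{\perp}\) is an ideal of \(W\). Since \(\fg\) is central simple and \([\fg,\fg]=\fg\), every \(\fg\)-submodule of \(\fg\otimes A\) has the form \(\fg\otimes U\) for a subspace \(U\subseteq A\), and the ideals are exactly those \(\fg\otimes I\) with \(I\subseteq A\) an ideal. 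Coisotropy of \(W\) means \(W^{\perp}\subseteq W\), so \(W^{\perp}=\fg\otimes I\).

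Next I would establish that \(L\) is a semisimple \(\fg\)-module all of whose constituents are isomorphic to \(\fg\). The pairing \(\ell\mapsto B(\ell,-)|_{W}\) defines a \(\fg\)-equivariant map \(\Phi\colon L\to \dual{W}\) (with \(\dual{W}\) carrying the contragredient action) whose kernel is \(W^{\perp}\). As \(\fg\) is finite-dimensional, \(\dual{W}\cong \dual{\fg}\otimes \dual{A}\cong \fg\otimes \dual{A}\) is locally finite and isotypic of type \(\fg\); hence \(L/W^{\perp}\) embeds into it, while \(W^{\perp}\subseteq W=\fg\otimes A\) is likewise locally finite and \(\fg\)-isotypic. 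Because the class of locally finite \(\fg\)-modules is closed under extensions — Whitehead's lemma kills \(\textnormal{Ext}^1\) between finite-dimensional modules, and a finite-dimensional module is finitely presented over the Noetherian algebra \(U(\fg)\), so \(\textnormal{Ext}^1(N,-)\) commutes with the directed colimits exhibiting local finiteness — the whole of \(L\) is locally finite, hence semisimple by Weyl's theorem. To identify the constituents I would invoke \cref{lem:Ann(g)V=0}: for \(u\in\textnormal{Ann}(\fg)\) the skew-adjointness of \(\ad\) together with coisotropy force \(\ad_u(L)\subseteq W^{\perp}\) and \(\ad_u(W^{\perp})=0\), so \(\ad_u^2=0\) on \(L\); on any finite-dimensional irreducible constituent \(V\) the image of \(\textnormal{Ann}(\fg)\) is then a two-sided ideal of the simple ring \(\End_{\End_\fg(V)}(V)\) consisting of nilpotents, hence zero, so \(\textnormal{Ann}(\fg)V=0\) and \cref{lem:Ann(g)V=0} gives \(V\cong\fg\). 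Thus \(L\cong\fg\otimes\widetilde A\) as \(\fg\)-modules, with \(\widetilde A:=\Hom_\fg(\fg,L)\), and the inclusion \(W\hookrightarrow L\) induces an inclusion \(A\hookrightarrow\widetilde A\).

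Finally I would transport the bracket and the form through this isomorphism. Since the space of invariant symmetric bilinear forms on the central simple \(\fg\) is one-dimensional, \(B(a\otimes f,b\otimes g)=\kappa(a,b)\beta(f,g)\) for a unique symmetric nondegenerate \(\beta\) on \(\widetilde A\); equivariance of the bracket and Schur's lemma write \([a\otimes f,b\otimes g]=[a,b]\otimes\mu(f,g)+d(a,b)\otimes\nu(f,g)\), where the bracket and a symmetric invariant map \(d\colon\fg\otimes\fg\to\fg\) span \(\Hom_\fg(\fg\otimes\fg,\fg)\), with \(\mu\) symmetric and \(\nu\) skew. Matching against the given bracket on \(W\) shows that \(\mu\) restricts to the product of \(A\) (with unit \(1_A\)) and \(\nu|_{A\otimes A}=0\), while the Jacobi identity projected onto the antisymmetric channel forces \(\mu\) to be associative, so \((\widetilde A,\mu,1_A)\) is a commutative associative unital algebra containing \(A\). \textbf{The main obstacle is the term }\(\nu\): the map \(d\) is nonzero precisely for \(\fg=\mathfrak{sl}(n,F)\) with \(n\ge 3\), which is exactly the case left open in \cite{Stolin_Zelmanov_Montaner}. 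To eliminate it I would use that bracketing with \(\fg\otimes 1\) must reproduce the adjoint action, whence \(\nu(1_A,\cdot)=0\), together with the Jacobi identity projected onto the channels governed by the quadratic identities relating the symbols \(f_{abc}\) and \(d_{abc}\) of \(\mathfrak{sl}(n,F)\); combined with \(\nu|_{A\otimes A}=0\) and the nondegeneracy of \(\beta\), these constraints should force the totally antisymmetric trilinear form \((f,g,h)\mapsto\beta(\nu(f,g),h)\) to vanish, i.e.\ \(\nu=0\). This gives \([a\otimes f,b\otimes g]=[a,b]\otimes\mu(f,g)\) and hence the desired isomorphism \(L\cong\fg\otimes\widetilde A\) of Lie algebras.
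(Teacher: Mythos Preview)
Your \(\fg\)-module argument is sound and runs parallel to the paper's: you replace its explicit finite-generation bookkeeping for \(P=\textnormal{Ann}(\fg)\) by the cleaner observation that locally finite \(\fg\)-modules in characteristic \(0\) are closed under extensions (via \(\textnormal{Ext}^1\) commuting with filtered colimits over the Noetherian ring \(U(\fg)\)), and then identify each irreducible constituent with \(\fg\) via \cref{lem:Ann(g)V=0} after noting that \(P\) acts on \(L\) with square zero. This is a legitimate variant.

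The genuine gap is in your last paragraph, precisely in the \(\mathfrak{sl}(n,F)\), \(n\ge 3\), case the paper is written to repair. First, the assertion that the Jacobi identity forces \(\mu\) to be associative does not survive the presence of the symmetric channel: what Jacobi actually yields (see \cite{Allison_Benkart_Yun}) is that the \emph{single} product \(f\cdot g\coloneqq \mu(f,g)+\nu(f,g)\) on \(\widetilde A\) is associative (alternative for \(n=3\)); your \(\mu\) is then only the Jordan product of \((\widetilde A,\cdot)\), which is not associative unless \(\cdot\) is already commutative. Second, your plan to annihilate \(\nu\) via the \(f\)--\(d\) identities and nondegeneracy of \(\beta\) remains a hope, and---tellingly---never invokes the hypothesis that \(A\) is reduced. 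The paper does not try to kill \(\nu\) directly: it accepts the a priori noncommutative algebra \((\widetilde A,\cdot)\), builds a trace \(\tOp\) from \(B\), and argues in stages: coisotropy gives \([A,\widetilde A]\subseteq A^{\perp_{\tOp}}\subseteq A\); the identity \(0=[q,[q,f^2]]=2[q,f]^2\) for \(q\in A\), \(f\in\widetilde A\) then uses reducedness of \(A\) to force \([A,\widetilde A]=0\); hence \([\widetilde A,\widetilde A]\subseteq A\subseteq Z(\widetilde A)\), whence \([f,g]^2=[[f,gf],g]=0\) and reducedness of \(A\) again gives \([f,g]=0\). Your shortcut bypasses the one hypothesis that carries the argument.
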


\begin{proof}
The action of \( \fg \cong \fg \ot 1 \subseteq \fg \ot A \) on \( L \)
extends uniquely to an action of \( U(\fg) \) on \( L \).
Define
\(
    P \coloneqq \textnormal{Ann}(\fg) = \{ x \in U(\fg) \mid x \cdot \fg = 0 \}.
\)
Let 
\begin{equation}
\begin{aligned}
i \colon U(\fg) &\longrightarrow U(\fg) \\
x_1\dots x_n &\longmapsto (-1)^n x_n \dots x_1
\end{aligned}
\end{equation}
be the antipode map and
\( \kf \) be the Killing form on \( \fg \).
By invariance of Killing form we have
\(
    \kf(i(P) \cdot \fg, \fg) = 
    \kf(\fg, P \cdot \fg) = 0,
\)
implying that \( i(P) \subseteq P \).
Similarly, by the invariance of \(B\),
we obtain
\begin{equation}
B(P \cdot L, \fg \ot A) = B(L, i(P) \cdot (\fg \ot A)) = 0.
\end{equation}
Therefore, \( P \cdot L \subseteq (\fg \ot A)^\bot \subseteq \fg \otimes A \) and \( P^2 \cdot L = 0 \).
Since \( U(\fg)\) is Notherian we can write
\begin{equation}
 P = U(\fg)p_1 + \dots + U(\fg)p_m.   
\end{equation}
The ideal \( P \) can be viewed as
the kernel of \( U(\fg) \)-module
homomorphism
\( U(\fg) \to \textnormal{End}_F(\fg) \) giving
\( \text{dim}(U(\fg)/P) = \ell < \infty \). Write 
\(
    U(\fg) = P \add \Span_F\{u_1, \dots, u_\ell\}
\)
as a vector space.
Then for any
\( u \in U(\fg) \) exists 
\(x_i, x_{ij} \in \textnormal{U}(\fg)\) and 
\(\mu_t,\lambda_k \in F\) such that 
\[
u = \sum_{i = 1}^m x_i p_i+\sum_{k = 1}^\ell \lambda_k u_k 
\, \textnormal{ and } \, 
x_i = \sum_{j = 1}^m x_{ij} p_j + \sum_{t = 1}^\ell\mu_t u_t.
\]
Thus, for all  \( d \in L \)
we have
\begin{equation}%
\label{eq:finite_orbit}
\begin{aligned}
    u \cdot d &=
    \left(\sum_{i=1}^m x_i p_i
    + \sum_{k=1}^\ell \lambda_k u_k \right) \cdot d \\
    &= \left( \sum_{i=1}^m 
    \left\{
    \sum_{j=1}^{m} x_{ij} p_j
    + \sum_{t=1}^\ell \mu_t u_t
    \right\} p_i
    + \sum_{k=1}^\ell \lambda_k u_k \right) \cdot d \\
    &= 
    \underbrace{\sum_{i=1}^m \sum_{j=1}^m (x_{ij} p_j p_i) \cdot d}_{=0}
    + \sum_{i=1}^m \sum_{t=1}^\ell \mu_t (u_t p_i)\cdot d
    + \sum_{k=1}^\ell \lambda_k u_k \cdot d.
\end{aligned}
\end{equation}
Taking other \( u \in U(\fg) \)
amounts to changing
constants \( \mu_t, \lambda_k \in F\)
in the very last line of
\cref{eq:finite_orbit}.
For that reason we have the inclusion
\begin{equation}
    U(\fg) \cdot d \subseteq \Span_F\{
    u_k \cdot d, \, (u_k p_i) \cdot d
    \mid k \in \{1, \dots, \ell\}, 
    i \in \{1, \dots, m \} \}
\end{equation}
for any \( d \in L \). In particular, \(\textnormal{U}(\fg)\cdot d\) is a finite-dimensional \(\textnormal{U}(\fg)\)-module and thus semi-simple.  If we now consider \( I = \{ d \in L \mid \textnormal{U}(\fg)\cdot d \textnormal{ is simple}\} \) we can write
\begin{equation}%
\label{eq:L_as_sum}
    L = \sum_{d \in I} U(\fg) \cdot d.
\end{equation} 
Since \( P \cdot (U(\fg) \cdot d) \) 
is a \( U(\fg) \)-submodule of 
\( U(\fg) \cdot d \), 
it is either \( 0 \)
or \( U(\fg) \cdot d \) itself for all \(d \in I\).
The latter equality is impossible,
because otherwise
\(0 = P^2 \cdot (U(\fg) \cdot d)
= U(\fg) \cdot d \neq 0\).
Thus
\( P \cdot (U(\fg) \cdot d) = 0\)
and \cref{lem:Ann(g)V=0} gives
isomorphisms of \( \fg \)-modules
\( U(\fg) \cdot d \cong \fg \) for all \(d \in I\).
We call two indices \(d,d' \in I\)
equivalent \( d \sim d' \)
if \(U(\fg) \cdot d = U(\fg) \cdot d' \).
Removing duplicates from \cref{eq:L_as_sum}
we get
\begin{equation}
    L = \bigoplus_{[d] \in I/\sim} U(\fg) \cdot d \cong \bigoplus_{[d] \in I/\sim} \fg \cong \fg \ot \widetilde{A},
\end{equation} 
for some vector space \( \widetilde{A} \).
Moreover, by choosing appropriate representatives
in each equivalence class
we can, without loss of generality, assume
\( A \subseteq \widetilde{A} \).

The next step is to equip
\( \widetilde{A} \) with a unital
commutative associative algebra structure such that
\( A \) becomes a subalgebra of
\(\widetilde{A}\).
When \( \fg \) is not of type 
\( \mathfrak{sl}(n, F), n \geq 3 \),
we have \( \textnormal{Hom}_{\fg\textnormal{-Mod}}(\fg \ot \nolinebreak \fg, \fg) = \Span_F\{ [\cdot \, ,\cdot] \} \). Repeating the proof
of \cite[Proposition 2.2]{Zelmanov_Benkart}
we obtain the desired algebra structure.
Assume 
\( \fg = \mathfrak{sl}(n, F), n \geq 3  \).
The space
\( \Hom_{\fg\textnormal{-Mod}}(\fg \ot \fg, \fg) \) is now generated
by two \( \fg \)-module homomorphisms, namely the Lie bracket
\( [\cdot \, , \cdot] \) and the map
\[ 
  a \ot b \mapsto a \circ b \coloneqq ab + ba - \frac{\tr(ab+ba)}{n}I_{n}.
\]
The Lie algebra structure on \( L \)
induces a unital algebra structure
on \( \widetilde{A} \):
the multiplication is now given by
\begin{equation}
\begin{aligned}
[a \ot f, b \ot g] &= [a,b] \ot \frac{1}{2}(fg + gf) + (a \circ b) \ot \frac{1}{2}(fg - gf).
\end{aligned}
\end{equation} 
Furthermore, the algebra \( \widetilde{A} \)
is alternative for \( n = 3 \) and associative for \( n > 3 \); for details see e.g.\ \cite{Allison_Benkart_Yun}. 
The following argument requires only the alternativity property and hence works
in both cases.

Let us denote the induced bilinear form
on \( \fg \ot \widetilde{A} \)
with the same letter \( B \).
Define the linear functional \( \tOp \colon \widetilde{A} \to F \) by
\[ \textnormal{t}(f) \coloneqq B(e_{12} \ot 1, e_{21} \ot f), \]
where \( e_{ij} \) is the \( (n \times n) \)-matrix having
\( 1 \) in position \( (i,j) \) as its only non-zero entry. Observe that \([e_{ij},e_{jk}] = e_{ik} = e_{ij}\circ e_{jk}\) for all pairwise different \(1 \le i,j,k \le n\).
By the invariance and the symmetry of \( B \) we obtain
\[ \tOp(fg) = B(e_{ij} \ot f, e_{ji} \ot g) = \tOp(gf)\]
for all \( 1 \le i \neq j \le n \) and \(f,g \in \widetilde{A}\).
We put
\begin{equation}
    A^{\perp_{\tOp}} \coloneqq \{ 
    f \in \widetilde{A} \mid 
    \tOp(Af) = 0
    \}.
\end{equation}
Since \( \fg \ot A \) is a coisotropic
subalgebra of \( L \) we have
\( A^{\perp_{\tOp}} \subseteq A\).
Again, using the invariance and the symmetry
of \( B \) we get the following chain
of identities for any
\( p,q \in A \) 
and \( f \in \widetilde{A} \)
\begin{equation}
    \tOp(p(qf)) = \tOp((pq)f) = 
    \tOp((qp)f) = \tOp(f(qp)) =
    \tOp((fq)p) = \tOp(p(fq)).
\end{equation}
In particular, this means
\( \tOp(p[q,f]) = 0 \) for all
\( p,q \in A \) and
\( f \in \widetilde{A} \).
Therefore, 
\( [A, \widetilde{A}] \subseteq A^{\perp_{\tOp}} \subseteq A \).

Now we proceed by showing that \( A \)
 lies in the center of \( \widetilde{A} \).
 For that observe that for any
 two elements \( q \in A \)
 and \( f \in \widetilde{A} \)
 we have
 \begin{equation}
 \begin{aligned}
 0 &= [q, [q, f^2]] \\
&= [q, qf^2 - f^2q + fqf - fqf] \\
&= [q, [q,f]f + f[q,f]] \\
&= 2[q,f]^2.
 \end{aligned}
 \end{equation}
Here we implicitly used Artin's theorem
stating that any subalgebra of
an alternative algebra generated by
two elements is associative.
By assumption \( A \) 
has no non-trivial
nilpotent elements implying that
\([A, \widetilde{A}] = 0\).

Finally, since \(\textnormal{char}(F) = 0\)
the commutative center of 
\( \widetilde{A} \) lies in the associative
center of \( \widetilde{A} \).
Therefore, for all \( p \in A \)
and \( f,g \in \widetilde{A} \)
we have
\( \tOp(p[f,g]) = \tOp([pf,g]) = 0 \)
showing that \( [\widetilde{A}, \widetilde{A}] \subseteq A \subseteq Z(\widetilde{A}). \)
Commutativity of \( \widetilde{A} \)
now follows from the equality
\begin{equation}
    [f,g]^2 = [[f,g]f, g] = [[f,gf], g] = 0.
\end{equation}
The proof is now complete because
any unital commutative alternative
\( F \)-algebra with \( \textnormal{char}(F)  \neq 3\) is automatically associative.
\end{proof}

To classify topological Lie bialgebras
or doubles it is important
to understand how isomoprhisms
of a Lie algebra of the form
\( \fg \ot A \) may look like.
The following theorem gives
us control over such maps.





\begin{theorem}%
\label{thm:splitting_aut}
 Let \( \mathfrak{a} \) be a finite-dimensional
 central simple (not necessarily associative or Lie) \(F\)-algebra and \( A \) be
 a unital associative commutative \( F \)-algebra. Then 
 any \( F \)-algebra automorphism \( \varphi \) of \( \mathfrak{a} \ot A \)
 is a composition of an \( F \)-algebra automorphism of \( A \) and 
 an \( A \)-algebra automorphism of \( \mathfrak{a} \ot A \). More precisely, we have \(\textnormal{Aut}_{F\textnormal{-Alg}}(\mathfrak{a} \otimes A) = \textnormal{Aut}_{F\textnormal{-Alg}}(A) \ltimes \textnormal{Aut}_{A\textnormal{-Alg}}(\mathfrak{a}\otimes A)\).
 \end{theorem}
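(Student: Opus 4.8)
The plan is to single out the \emph{centroid} $\Gamma_F(\mathfrak{a} \ot A)$ as an intrinsic copy of $A$ inside $\End_F(\mathfrak{a} \ot A)$, to let an arbitrary $F$-algebra automorphism act on it by conjugation, and to use the resulting automorphism of $A$ as the factor in $\Aut_{F\textnormal{-Alg}}(A)$; dividing it out will leave an $A$-linear automorphism.

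First I would compute the centroid. For $s \in A$ let $\mu_s \in \End_F(A)$ denote multiplication by $s$ and set $\iota(s) \coloneqq \id_{\mathfrak{a}} \ot \mu_s$; then $\iota \colon A \to \End_F(\mathfrak{a} \ot A)$ is an injective algebra homomorphism whose image lies in $\Gamma_F(\mathfrak{a} \ot A)$. For the reverse inclusion, recall that for the finite-dimensional central simple algebra $\mathfrak{a}$ the multiplication algebra $M(\mathfrak{a})$, i.e.\ the subalgebra of $\End_F(\mathfrak{a})$ generated by all left and right multiplications, equals $\End_F(\mathfrak{a})$; this follows from Schur's lemma, the Jacobson density theorem and centrality exactly as in the proof of \cref{lem:Ann(g)V=0}. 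Since $A$ is unital, inserting the unit in the second tensor slot shows $\End_F(\mathfrak{a}) \ot \id_A \subseteq M(\mathfrak{a} \ot A)$, so any $T \in \Gamma_F(\mathfrak{a} \ot A)$ commutes with $E \ot \id_A$ for all $E \in \End_F(\mathfrak{a})$. Writing $T$ in blocks with respect to a basis of $\mathfrak{a}$ and imposing commutation with the matrix units forces $T = \id_{\mathfrak{a}} \ot S$ for a single $S \in \End_F(A)$; commutation with the remaining generators $L_a \ot \mu_p$ then gives $S \mu_p = \mu_p S$ for all $p$, so $S$ is $A$-linear and hence $S = \mu_{S(1)}$ because $A$ is commutative and unital. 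Thus $\Gamma_F(\mathfrak{a} \ot A) = \iota(A) \cong A$.

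Next I would produce the two factors. Any $\varphi \in \Aut_{F\textnormal{-Alg}}(\mathfrak{a} \ot A)$ normalizes the centroid, since conjugation by an algebra automorphism permutes the multiplication operators; hence $\gamma \mapsto \varphi \gamma \varphi^{-1}$ restricts to an automorphism of $\iota(A) \cong A$, giving a unique $\psi \in \Aut_{F\textnormal{-Alg}}(A)$ with $\varphi\, \iota(s)\, \varphi^{-1} = \iota(\psi(s))$. The lift $\widetilde{\psi} \coloneqq \id_{\mathfrak{a}} \ot \psi$ is an $F$-algebra automorphism of $\mathfrak{a} \ot A$ (it respects $(a \ot p)(b \ot q) = ab \ot pq$), and a direct check gives $\widetilde{\psi}\, \iota(s)\, \widetilde{\psi}^{-1} = \iota(\psi(s))$ as well. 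Therefore $\varphi' \coloneqq \widetilde{\psi}^{-1} \varphi$ acts trivially on the centroid, so it commutes with every $\iota(s)$ and is $A$-linear; that is, $\varphi' \in \Aut_{A\textnormal{-Alg}}(\mathfrak{a} \ot A)$ and $\varphi = \widetilde{\psi}\, \varphi'$.

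Finally I would read off the semidirect product. The assignment $\Phi \colon \varphi \mapsto \psi$ is a group homomorphism $\Aut_{F\textnormal{-Alg}}(\mathfrak{a} \ot A) \to \Aut_{F\textnormal{-Alg}}(A)$ because conjugation on the centroid is; its kernel consists precisely of the automorphisms fixing $\iota(A)$ pointwise, i.e.\ the $A$-linear ones $\Aut_{A\textnormal{-Alg}}(\mathfrak{a} \ot A)$, and $\psi \mapsto \id_{\mathfrak{a}} \ot \psi$ is a homomorphic section of $\Phi$. A split short exact sequence then yields $\Aut_{F\textnormal{-Alg}}(\mathfrak{a} \ot A) = \Aut_{F\textnormal{-Alg}}(A) \ltimes \Aut_{A\textnormal{-Alg}}(\mathfrak{a} \ot A)$. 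The main obstacle is the centroid computation in the first step: one must avoid the false identification of $\End_F(\mathfrak{a} \ot A)$ with a tensor product of endomorphism algebras when $A$ is infinite-dimensional, and instead argue blockwise using a basis of the finite-dimensional $\mathfrak{a}$, together with the central simplicity that makes its multiplications fill out all of $\End_F(\mathfrak{a})$.
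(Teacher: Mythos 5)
Your proof is correct, and it reaches the decomposition by a genuinely different route than the paper. Both arguments share the same skeleton: realize the copy \( \tilde{A} = \{ \gamma_a \mid a \in A \} \) of \( A \) inside \( \End_F(\mathfrak{a} \ot A) \) (your \( \iota(A) \)) as an intrinsic invariant of the algebra \( \mathfrak{a} \ot A \), conclude that conjugation by any \( \varphi \) preserves it, read off \( \psi \in \Aut_{F\textnormal{-Alg}}(A) \), and peel it off to leave an \( A \)-linear automorphism. The paper, however, never computes the centroid: it proves conjugation-stability of \( \tilde{A} \) by showing \( M_A(\mathfrak{a} \ot A) = M_n(A) \) (from \( M_F(\mathfrak{a}) = M_n(F) \), the same Schur--Jacobson-density input you use) and then evaluating a multilinear central polynomial \( c_n \) of \( M_n(F) \) on the multiplication algebra, which yields \( c_n(M_n(A), \dots, M_n(A)) = A \cdot I_{n \times n} = \tilde{A} \); since conjugation permutes multiplication operators, this polynomial image is stable. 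Your argument replaces this PI-theoretic ingredient --- the existence of multilinear central polynomials for \( M_n(F) \) is a nontrivial theorem (Formanek, Razmyslov) --- by the identification \( \tilde{A} = \Gamma_F(\mathfrak{a} \ot A) \), established through an elementary commutant computation: commutation with the matrix units in \( \End_F(\mathfrak{a}) \ot \id_A \) forces \( T = \id_{\mathfrak{a}} \ot S \), and commutation with \( L_a \ot \mu_p \) forces \( S = \mu_{S(1)} \). This is more self-contained, and it makes the stability conceptually transparent, since the centroid is by definition the commutant of the multiplication operators and hence preserved by every automorphism; you also correctly sidestep the trap of identifying \( \End_F(\mathfrak{a} \ot A) \) with \( \End_F(\mathfrak{a}) \ot \End_F(A) \), which fails for infinite-dimensional \( A \), by working blockwise in \( M_n(\End_F(A)) \). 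Finally, your explicit splitting of the exact sequence spells out the semidirect-product assertion slightly more completely than the paper's closing factorization \( \varphi = (\varphi \circ f) \circ f^{-1} \).
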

\begin{proof}
 For any element \( a \in A \) we define
 \( \gamma_a \in \textnormal{End}_{A\textnormal{-Mod}}(\mathfrak{a} \ot A) \) by
 \( \gamma_a(x \ot b) = x \ot ab \). The set of all such endomorphism
 \( \tilde{A} \coloneqq \{ \gamma_a \mid a \in A \} \)
 is an \( F \)-algebra isomorphic to \( A \). We now show that
 for any \( \varphi \in \textnormal{Aut}_{F\textnormal{-Alg}} (\mathfrak{a} \ot A) \) we have
 \begin{equation}\label{eq:phi_A_phi}
   \varphi^{-1} \tilde{A} \varphi = \tilde{A}.
 \end{equation}
 Let \( n \coloneqq \dim_F(\mathfrak{a}) \), chose a basis of \(\mathfrak{a}\) so that we can identify \(\textnormal{End}_{F\textnormal{-Mod}}(\mathfrak{a})\) with the space of \((n\times n)\)-matrices \(\textnormal{M}_n(F)\) and let
 \( M_F(\mathfrak{a}) \) be the multiplication algebra of \( \mathfrak{a} \),
 i.e.\ the subalgebra of \(M_n(F) \) 
 generated by left and right multiplications in \( \mathfrak{a} \).
 Since \( \mathfrak{a} \) is central, the
 centroid \( \Gamma_F(\mathfrak{a}) \) of \( \mathfrak{a} \) is precisely the
 field \( F \) itself. Then by \cite[Chapter X, Theorem 4]{Jacobson_Lie_algebras}
 we have \(  M_F(\mathfrak{a}) = M_n(F) \) and, consequently,
 \( M_A(\mathfrak{a} \ot A) = M_n(A) \).
 Let \( c_n(X_1, X_2, \dots, X_N) \) be a multilinear central
 polynomial for \( M_n(F) \). Then \( c_n(M_n(F), M_n(F), \dots, M_n(F)) = F \cdot I_{n \times n} \)
 and
 \begin{equation}\label{eq:central_poly_magic}
 \begin{aligned}
     c_n(M_A(\mathfrak{a} \ot A), M_A(\mathfrak{a} \ot A), \dots, M_A(\mathfrak{a} \ot A)) &= c_n(M_n(A), M_n(A), \dots, M_n(A)) \\
    &= A \cdot I_{n \times n} = \tilde{A}.
 \end{aligned}
 \end{equation}
 The left-hand side of \cref{eq:central_poly_magic}
 is stable under conjugation by 
 \( \varphi \in \textnormal{Aut}_{F\textnormal{-Alg}} (\mathfrak{a} \ot A) \)
 implying the identity \cref{eq:phi_A_phi}.

 Let us now fix some \( \varphi \in \textnormal{Aut}_{F\textnormal{-Alg}} (\mathfrak{a} \ot A) \)
 and take \( \psi \in \textnormal{Aut}_{F\textnormal{-Alg}}(A) \) such that
 \begin{equation}
 \varphi^{-1} \gamma_a \varphi = \gamma_{\psi(a)}
 \end{equation}
 for all \( a \in A \). Such an automorphism exists because of \cref{eq:phi_A_phi}.
 Define \( f \in \textnormal{Aut}_{F\textnormal{-Alg}} (\mathfrak{a} \ot A) \)
 by \( f(x \ot a) = x \ot \psi(a) \). Then \( \varphi = (\varphi \circ f) \circ f^{-1} \),
 where \( \varphi \circ f \in \textnormal{Aut}_{A\textnormal{-Alg}} (\mathfrak{a} \ot A) \),
 which completes the proof.
\end{proof}

\subsection{Trace extensions of \( F[\![x]\!] \) and topological Manin pairs} \label{sec:manin_pairs}
Henceforth, \(F\) is an algebraically closed field of characteristic 0 and \(\fg\) is a finite-dimensional simple Lie algebra over \( F \).
Consider the Lie algebra \( \fg[\![x]\!] \coloneqq \fg \ot F[\![x]\!]  \)
with the bracket \( [a \ot f, b \ot g] \coloneqq [a, b] \ot fg \).
Endowing \( \fg[\![x]\!] \) with the \( (x) \)-adic topology or, equivalently, with the
weak topology it becomes a topological Lie algebra. 

Let \( L \) be another Lie algebra over
\( F \).
We call \((L,\fg[\![x]\!])\) a \emph{topological Manin pair} if 
\begin{enumerate}
    \item \(L\) is a Lie algebra equipped with a non-degenerate symmetric invariant bilinear form \(B\);
    \item \(\fg[\![x]\!] \subseteq L\) is a Lagrangian subalgebra with respect to \(B\);
    \item for any continuous functional \(T \colon \fg[\![x]\!] \to F\) exists an element \(f \in L\) such that \(T = B(f,-)\).
\end{enumerate}
This notion is closely related to the
notion of trace extensions of
\(F[\![x]\!]\) introduced and
classified in \cite{Stolin_Zelmanov_Montaner}. 
Let us recall its definition.
Endow \( F[\![x]\!] \) with the
\( (x) \)-adic topology.
A unital commutative associative 
\( F \)-algebra extension \( A \supseteq F[\![x]\!] \), 
equipped with a continuous linear map \( \tOp \colon A \to F \),
is called
a \emph{trace extension} of \( F[\![x]\!] \) if
\begin{enumerate}
\item the bilinear form \( (f,g) \mapsto t(fg) \) is non-degenerate;
\item \( F[\![x]\!]^{\perp_{\tOp}} \coloneqq \{ f \in A \mid \tOp(F[\![x]\!] f) = 0 \} = F[\![x]\!] \);
\item for any continuous linear functional \( T \colon F[\![x]\!] \to F \) there exists
an \( f \in A \) such that \( T(p) = \tOp(pf) \)
for all \( p \in F[\![x]\!] \).
\end{enumerate}
The above-mentioned relation is then
presented in the following lemma.
\begin{lemma}
Let \((L,\fg[\![x]\!])\) be a topological Manin pair and \(B\) be the bilinear form of \(L\).
Then there exists a trace extension
\( (A, \tOp) \) of \( F[\![x]\!] \)
such that
\begin{equation}
L \cong \fg \otimes A \ \text{ and } \
B(a \ot f, b \ot g) = \kf(a,b) \tOp(fg),
\end{equation}
where \( \kf  \) is
the Killing form on \( \fg \).
\end{lemma}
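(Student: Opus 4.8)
The plan is to apply the structural result \Cref{thm:form_of_doubles} directly and then verify that the algebra $\widetilde{A}$ produced by it, together with a suitable trace functional, satisfies the three axioms of a trace extension. The three defining properties of a topological Manin pair match up almost verbatim with the hypotheses of \Cref{thm:form_of_doubles}: condition (2), that $\fg[\![x]\!]$ is Lagrangian, in particular says that $\fg[\![x]\!] = \fg \otimes F[\![x]\!]$ is coisotropic; and $F[\![x]\!]$ is a reduced unital associative commutative $F$-algebra. Since $\fg$ is simple over the algebraically closed field $F$, it is central simple, so the hypotheses of \Cref{thm:form_of_doubles} hold with $A = F[\![x]\!]$. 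The theorem then yields a unital associative commutative algebra extension $\widetilde{A} \supseteq F[\![x]\!]$ with $L \cong \fg \otimes \widetilde{A}$ as $\fg$-modules.

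The next step is to promote this $\fg$-module isomorphism to a Lie-algebra isomorphism carrying the bracket on $L$ to the bracket $[a \otimes f, b \otimes g] = [a,b] \otimes fg$ on $\fg \otimes \widetilde{A}$. Inspecting the proof of \Cref{thm:form_of_doubles}, the algebra structure on $\widetilde{A}$ is constructed precisely so that the Lie bracket on $L$ is transported to this standard form (this uses $\Hom_{\fg\textnormal{-Mod}}(\fg \otimes \fg, \fg) = \Span_F\{[\cdot\,,\cdot]\}$ when $\fg$ is not of type $\mathfrak{sl}_n$, $n \ge 3$, and the refined analysis with the symmetrized product $a \circ b$ together with the commutativity argument otherwise). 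So I would simply invoke that $L \cong \fg \otimes \widetilde{A}$ as Lie algebras, with $F[\![x]\!] \subseteq \widetilde{A}$ as a subalgebra.

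Next I would define the trace functional. Using $L \cong \fg \otimes \widetilde{A}$ and the non-degeneracy of $B$, the invariant symmetric bilinear forms on $\fg \otimes \widetilde{A}$ are controlled by the (essentially unique) invariant form on $\fg$; concretely, since $\kf$ is the unique invariant symmetric form on the simple algebra $\fg$ up to scalar, $B$ must factor as $B(a \otimes f, b \otimes g) = \kf(a,b)\,\tOp(fg)$ for a linear functional $\tOp \colon \widetilde{A} \to F$ determined by $B$. I would justify this by fixing $a, b$ with $\kf(a,b) \ne 0$ and using invariance under $\ad_{\fg}$ to show the coefficient of $\kf(a,b)$ depends only on $fg$, then define $\tOp(h) \coloneqq \kf(a,b)^{-1} B(a \otimes f, b \otimes g)$ for any factorization $h = fg$ and check consistency. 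Continuity of $\tOp$ follows from axiom (3) of the Manin pair (the functional $B(a \otimes f, b \otimes \cdot)$ restricted to $\fg[\![x]\!]$ is continuous).

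Finally I would verify the three trace-extension axioms, which now translate cleanly: non-degeneracy of $(f,g) \mapsto \tOp(fg)$ is equivalent to non-degeneracy of $B$ on $\fg \otimes \widetilde{A}$; the equality $F[\![x]\!]^{\perp_{\tOp}} = F[\![x]\!]$ is exactly the statement that $\fg \otimes F[\![x]\!]$ is Lagrangian rather than merely coisotropic (here the coisotropy gives $\supseteq$ while the Lagrangian condition gives the reverse inclusion); and the surjectivity axiom (3) is the translation of axiom (3) of the Manin pair via the form. \textbf{The main obstacle} I anticipate is the clean extraction of the scalar functional $\tOp$ from $B$ — i.e.\ rigorously showing that every invariant symmetric form on $\fg \otimes \widetilde{A}$ has the product shape $\kf \otimes \tOp$ — together with checking that $\tOp$ lands in $F[\![x]\!]^{\perp_{\tOp}} = F[\![x]\!]$ forces the trace to be continuous and the extension to be genuine; the rest is a matter of matching definitions.
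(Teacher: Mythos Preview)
Your proposal is correct and follows essentially the same route as the paper, which simply cites \Cref{thm:form_of_doubles} together with \cite[Lemma 2.3]{Stolin_Zelmanov_Montaner}; the latter is precisely the extraction of the scalar functional $\tOp$ from $B$ and the verification of the trace-extension axioms that you outline. One small clarification: to define $\tOp(h)$ for an arbitrary $h\in\widetilde{A}$ (not just products) use the unit, setting $\tOp(h)\coloneqq\kf(a,b)^{-1}B(a\otimes 1,\,b\otimes h)$, and then invariance gives $B(a\otimes f,b\otimes g)=\kf(a,b)\,\tOp(fg)$ as you want.
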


The proof is straightforward and
obtained by combining 
\cref{thm:form_of_doubles} with \cite[Lemma 2.3]{Stolin_Zelmanov_Montaner}.


\begin{example}%
\label{ex:A(na)}
Let \( n \ge 1 \) and \( \alpha = (\alpha_i \in F \mid -\infty < i \le n-2) \) be an arbitrary sequence.
Consider the algebra
\begin{align*}
  A(n, \alpha) \coloneqq F(\!(x)\!) \oplus F[x]/(x^n),
\end{align*}
and the functional \( \tOp \colon A(n, \alpha) \to F\), given by
\begin{align*}
&\tOp(x^{n-1}) \coloneqq 1, \  \tOp(x^i) \coloneqq \alpha_i  \text{ for } i \le n-2, \\
&\tOp([x]^{n-1}) \coloneqq -1, \ \tOp([x]^{i}) \coloneqq -\alpha_i \text{ for } 0 \le i \le n-2 \text{ when } n \ge 2.
\end{align*}
Then \((A(n,\alpha),t)\) is a trace extension of \( F[\![x]\!] \), where the latter is identified with the image of the inclusion \( F[\![x]\!] \to A(n, \alpha) \), \( f \mapsto (f,[f])  \).
Here \( [f] \) is the equivalence class of \( f \) in \(F[\![x]\!]/(x^n) = F[x]/(x^n) \).
\end{example}
\begin{example}%
\label{ex:A(0)}
Let \( \alpha = (\alpha_i \mid -\infty < i \le -2) \) be an arbitrary
sequence in F. Then the algebra
\( F(\!(x)\!) \) with 
functional \( \tOp \), defined by
\( \tOp(x^{-1}) \coloneqq 1 \)
and \( \tOp(x^i) \coloneqq \alpha_i  \text{ for } i \le -2 \),
is a trace extension of \( F[\![x]\!] \)
denoted by \( A(0, \alpha) \).
Later we implicitly identify
\( F(\!(x)\!) \) with
\( F(\!(x)\!) \oplus \{0 \} \) in order to write \(A(n,\alpha) = F(\!(x)\!) \oplus F[x]/(x^n)\) for all \(n \ge 0\).
\end{example}

\begin{example}\label{ex:A(inf)}
The algebra \( A(\infty) \coloneqq \sum_{i \ge 0} Fa_i + F[\![x]\!] \)
with multiplication
\begin{align*}
  a_i a_j \coloneqq 0, \ a_ix^j \coloneqq a_{i-j} \text{ for } i \ge j \text{ and } a_i x^j \coloneqq 0 \text{ otherwise},
\end{align*}
and the functional \( \tOp \colon A \to F \),
defined by \( \tOp(a_0) \coloneqq 1 \), \( \tOp(a_i) \coloneqq 0 \), \( i \ge 1 \) and
\( \tOp(F[\![x]\!]) \coloneqq 0 \),
is a trace extension called 
\emph{the trivial extension of \( F[\![x]\!] \)}.
\end{example}

Two trace extensions \( (A, \tOp) \) and \( (A', \tOp') \)
are called \emph{equivalent} if there exists an algebra isomorphism \( T \colon A \to A' \)
identical on \( F[\![x]\!] \) and a non-zero scalar \( \xi \in F \)
such that \( \tOp'(T(a)) = \xi \tOp(a) \) for any \( a \in A \).
In this case we write \( (A, \tOp) \sim (A', \tOp') \).

\begin{proposition}[Proposition 2.9, \cite{Stolin_Zelmanov_Montaner}]%
\label{prop:possible_extensions}
Let \( (A, \tOp) \) be a trace extension of \( F[\![x]\!] \).
Then either \( (A, \tOp) \sim  A(\infty) \) or \( (A, \tOp) \sim A(n, \alpha) \)
for some \( n \ge 0 \) and some
sequence \( \alpha = (\alpha_i \in F \mid -\infty < i \le n-2) \).
\end{proposition}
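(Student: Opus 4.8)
The plan is to extract everything from the pairing that the three axioms impose. Write \( \Phi \colon A \to F[\![x]\!]' \) for the map \( f \mapsto \tOp(f\cdot{-})|_{F[\![x]\!]} \) into the space of continuous functionals on \( F[\![x]\!] \). Axiom (2) says \( \ker\Phi = F[\![x]\!]^{\perp_{\tOp}} = F[\![x]\!] \) and axiom (3) says \( \Phi \) is surjective, so \( \Phi \) induces a vector-space isomorphism \( A/F[\![x]\!] \cong F[\![x]\!]' \). Since \( \tOp(xf\,p) = \tOp(f\,xp) \), the map \( \Phi \) is \( F[\![x]\!] \)-linear, intertwining multiplication by \( x \) on \( A \) with the down-shift on \( F[\![x]\!]' = \bigoplus_{i\ge 0} Fe_i \), where \( e_i(x^j) = \delta_{ij} \), \( x\cdot e_i = e_{i-1} \) and \( x\cdot e_0 = 0 \). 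In particular \( A/F[\![x]\!] \) is a uniserial, \( x \)-divisible module whose \( x \)-kernel is one-dimensional, and \( \tOp \) vanishes on \( F[\![x]\!] \) because \( 1 \in F[\![x]\!] = F[\![x]\!]^{\perp_{\tOp}} \). I would record these facts first, since the whole structure is then controlled by how lifts of the \( e_i \) multiply.

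The decisive invariant is the divisible part \( B \coloneqq \bigcap_{m\ge 0} x^m A \), an ideal of \( A \), together with \( J \coloneqq B \cap F[\![x]\!] \). As \( J \) is an ideal of the complete discrete valuation ring \( F[\![x]\!] \), it equals either \( 0 \) or \( x^n F[\![x]\!] \) for a unique \( n \ge 0 \); this integer (or the symbol \( \infty \) for \( J = 0 \)) is exactly the parameter separating the two normal forms. The heart of the argument is to convert divisibility into ring structure. When \( J = x^n F[\![x]\!] \) with \( n \) finite, the inclusion \( x^n \in B \subseteq x^{n+1}A \) yields a relation \( x^n = x^{n+1}b \), and a direct computation shows that \( e \coloneqq x^n b^n \) is an idempotent; it is nonzero because \( x^n \neq 0 \) in \( A \), and nontrivial for \( n \ge 1 \) because \( x \) is not invertible. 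It splits \( A = eA \op (1-e)A \) as a product of rings, with \( x \) invertible on \( eA = B \) and nilpotent of order exactly \( n \) on \( (1-e)A \).

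The remaining content of that case is to identify the two factors exactly. Non-degeneracy of \( (f,g)\mapsto\tOp(fg) \) together with the size isomorphism \( A/F[\![x]\!] \cong F[\![x]\!]' \) forces \( eA \) to be a single unramified copy of \( F(\!(x)\!) \), ruling out several factors, a proper field extension, or hidden nilpotents; and it forces \( (1-e)A \), an Artinian local \( F \)-algebra with \( x \)-nilpotency index \( n \), to be \( F[x]/(x^n) \). This produces \( A \cong F(\!(x)\!) \oplus F[x]/(x^n) \). When instead \( J = 0 \), the divisible part meets \( F[\![x]\!] \) trivially, so \( B \hookrightarrow A \twoheadrightarrow A/F[\![x]\!] \cong F[\![x]\!]' \) is injective; one shows it is also surjective (the fully divisible quotient lifts into \( B \), using completeness of \( F[\![x]\!] \)), so \( A = F[\![x]\!] \op B \) with \( B \cong F[\![x]\!]' \) as an \( F[\![x]\!] \)-module, and \( B^2 = 0 \) by non-degeneracy of \( \tOp \) and the absence of a nonzero idempotent guaranteed by \( J = 0 \). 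This recovers the trivial extension \( A(\infty) \).

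Finally, with the algebra pinned down, I would determine \( \tOp \) up to equivalence. Non-degeneracy, the self-duality \( F[\![x]\!]^{\perp_{\tOp}} = F[\![x]\!] \), and continuity leave precisely the freedom recorded in the statement: the values \( \tOp(x^i) = \alpha_i \) for \( i \le n-2 \) are unconstrained, the values on the \( F[x]/(x^n) \) factor are then forced to be \( -\alpha_i \) so that \( F[\![x]\!] \), embedded diagonally as \( f \mapsto (f,[f]) \), is isotropic, and \( \tOp(x^{n-1}) \) may be rescaled to \( 1 \) by the scalar \( \xi \) permitted in the definition of equivalence. This gives \( (A,\tOp) \sim A(n,\alpha) \), and likewise \( (A,\tOp) \sim A(\infty) \) in the degenerate case. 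I expect the main obstacle to be the identification step of the third paragraph: promoting the purely module-theoretic divisibility of \( B \) to the precise statement that its divisible factor is a single unramified \( F(\!(x)\!) \) (and, in the degenerate case, that \( B \) is square-zero), since this is exactly where non-degeneracy of \( \tOp \) and the size isomorphism must be used jointly to forbid larger, multiple, or ramified divisible pieces.
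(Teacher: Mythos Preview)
The paper does not give a proof of this proposition; it is quoted from \cite{Stolin_Zelmanov_Montaner} with no argument supplied here, so there is no in-paper proof to compare against.

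Your plan is sound and will succeed. Two of your steps, however, are more delicate than they need to be. In the $J=0$ case your path to $B^2=0$ via ``no nontrivial idempotent'' is not closed as written: you constructed an idempotent \emph{from} $J\neq 0$, not the converse, and it is not clear how the hypothetical $e=\lambda^{-1}a_0$ actually contradicts $J=0$. The conclusion is in fact immediate for a structural reason: once $A=F[\![x]\!]\oplus B$ with $B\cong F[\![x]\!]'$, the $F[\![x]\!]$-module $B$ is both $x$-torsion and $x$-divisible, hence $B\otimes_{F[\![x]\!]}B=0$, and every $F[\![x]\!]$-bilinear product $B\times B\to A$ vanishes. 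As for the identification you flag as the obstacle, it is also cleaner than you expect. Your surjectivity claim gives $A=B+F[\![x]\!]$, so $A/B\cong F[\![x]\!]/J=F[x]/(x^n)$; once you check $B=eA$ (invertibility of $ex$ gives $eA\subseteq B$, while $b\in B\subseteq x^nA$ forces $(1-e)b=0$), this identifies $(1-e)A$ instantly. For $eA=B$: since $x$ acts invertibly, $B$ is already an $F(\!(x)\!)$-module, and tensoring the short exact sequence $0\to x^nF[\![x]\!]\to B\to F[\![x]\!]'\to 0$ with the flat $F[\![x]\!]$-algebra $F(\!(x)\!)$ annihilates the torsion quotient and yields $B\cong F(\!(x)\!)$ as a one-dimensional $F(\!(x)\!)$-algebra. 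This single dimension count rules out multiple factors, field extensions, and nilpotents simultaneously.
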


\begin{remark}
The equivalence \( T \colon A \to A' \) of two
trace extensions \( (A, \tOp) \) and \( (A', \tOp') \)
does not extend in general to an isomorphism of Lie algebras
\( \fg \ot A \) and \( \fg \ot A' \) with forms
\( B(a \ot f, b \ot g) \coloneqq \kf(a,b) \tOp(fg) \) and
\( B'(a \ot f', b \ot g') \coloneqq \kf(a,b) \tOp'(f' g') \) respectively.
Indeed, by definition of equivalence for trace extensions we have
\begin{align*}
  B'(a \ot T(f), b \ot T(g)) = \xi B(a \ot f, v \ot g).
\end{align*}
In other words, \( T \) does not intertwine the corresponding
bilinear forms, but it gives an isomorphism between
\((\fg \ot A, \xi B)\) and \((\fg \ot A', B')\).
\end{remark}

\begin{corollary}\label{cor:possible_extensions}
Let \((L,\fg[\![x]\!])\) be a topological Manin pair with the
bilinear form \(B\).
Then there exists a non-zero \(\xi \in F\) such that either \((L,\xi B) \cong \fg \otimes A(\infty)\) or \((L,\xi B) \cong \fg \otimes A(n,\alpha)\) for some \(n \ge 0\) and some sequence \( \alpha = (\alpha_i \in F \mid -\infty < i \le n-2) \).
\end{corollary}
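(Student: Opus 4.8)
The plan is to assemble the corollary directly from three ingredients already in place: the lemma identifying a topological Manin pair with $\fg \otimes A$ for a trace extension $(A, \tOp)$, the classification of trace extensions in \cref{prop:possible_extensions}, and the observation (recorded in the remark following that proposition) that an equivalence of trace extensions rescales the associated bilinear form by a nonzero constant. No genuinely new computation is needed; the only point demanding care is tracking the scalar $\xi$ and the fact that the equivalence $T$ rescales, rather than preserves, the form.

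First I would apply the lemma relating Manin pairs to trace extensions to the given pair $(L, \fg[\![x]\!])$. This produces a trace extension $(A, \tOp)$ of $F[\![x]\!]$ together with an isomorphism $L \cong \fg \otimes A$ under which the form $B$ on $L$ corresponds to the form $B_A(a \otimes f, b \otimes g) = \kf(a,b)\tOp(fg)$ on $\fg \otimes A$. Next, \cref{prop:possible_extensions} supplies an equivalence of trace extensions $(A, \tOp) \sim (A', \tOp')$, where $A'$ is either $A(\infty)$ or $A(n, \alpha)$ for a suitable $n \ge 0$ and sequence $\alpha$. By definition this equivalence consists of an $F$-algebra isomorphism $T \colon A \to A'$ fixing $F[\![x]\!]$ pointwise together with a scalar $\xi \in F^{\times}$ satisfying $\tOp'(T(a)) = \xi\,\tOp(a)$ for all $a \in A$.

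I would then transport this data along $\id_{\fg} \otimes T \colon \fg \otimes A \to \fg \otimes A'$. Since $T$ is multiplicative, one computes, exactly as in the remark after \cref{prop:possible_extensions}, that for all $a,b \in \fg$ and $f,g \in A$
\[
B_{A'}\bigl((\id \otimes T)(a \otimes f),\,(\id \otimes T)(b \otimes g)\bigr) = \kf(a,b)\,\tOp'(T(fg)) = \xi\,\kf(a,b)\,\tOp(fg) = \xi\, B_A(a \otimes f, b \otimes g).
\]
Thus $\id \otimes T$ is a Lie algebra isomorphism intertwining the form $\xi B_A$ with the canonical form $B_{A'}$ on $\fg \otimes A'$; that is, $(\fg \otimes A, \xi B_A) \cong \fg \otimes A'$. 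Composing with the identification from the first step yields $(L, \xi B) \cong (\fg \otimes A, \xi B_A) \cong \fg \otimes A'$, which is precisely the claim.

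The step deserving the most attention is keeping the rescaling consistent, and this is a bookkeeping point rather than a real obstacle. Because an equivalence of trace extensions need not preserve $\tOp$ on the nose, the resulting isomorphism identifies $\fg \otimes A'$ (with its canonical form) not with $(L, B)$ but with $(L, \xi B)$; this is exactly why the statement must permit an overall nonzero scalar $\xi$, and it is the content that distinguishes the corollary from a verbatim restatement of \cref{prop:possible_extensions}.
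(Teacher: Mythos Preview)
Your proof is correct and follows exactly the route the paper intends: apply the lemma identifying $(L,B)$ with $\fg\otimes A$ for a trace extension $(A,\tOp)$, invoke \cref{prop:possible_extensions} to get an equivalence $(A,\tOp)\sim A(\infty)$ or $A(n,\alpha)$, and use the remark after that proposition to see that this equivalence becomes an isomorphism $(L,\xi B)\cong \fg\otimes A'$ for the appropriate scalar $\xi$. The paper leaves this corollary unproved precisely because it is this immediate combination.
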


Let \( (A, \tOp) \) be a trace extension of \( F[\![x]\!] \).
One can produce new trace extensions from the given one in the following way: 
let \( \varphi \) be an algebra automorphism of \( A \) such that
\( \varphi(F[\![x]\!]) = F[\![x]\!] \), then
\( (A, \tOp)^{(\varphi)} \coloneqq (A, t \circ \varphi ) \)
is another trace extension of \( F[\![x]\!] \). 

We call automorphisms of \(F[\![x]\!]\) given by
\(x \mapsto a_1x + a_2 x^2 + \dots\), \(a_i \in F\), \(a_1 \neq 0\),
\emph{coordinate transformations}.  
Note that coordinate transformations
\(x \mapsto x + a_2x^2 + \dots \)
form a group under substitution which we denote by \(\Aut_0 F[\![x]\!] \).

\begin{lemma}
A coordinate transformation \( \varphi \in \Aut_0 F[\![x]\!] \) induces
an automorphism of \[ A(n, \alpha) = F(\!(x)\!) \oplus F[x]/(x^n) \] by
\( f/g \mapsto \varphi(f)/\varphi(g) \) and 
\( [x] \mapsto [\varphi(x)] \).

Moreover, there exists another sequence
\( \beta = (\beta_i \in F \mid -\infty < i \le n-2) \)
such that \( A(n, \alpha)^{(\varphi)} = A(n, \beta) \).
\end{lemma}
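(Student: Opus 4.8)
The plan is to verify the lemma in two stages: first that $\varphi$ genuinely induces an $F$-algebra automorphism of $A(n,\alpha)$ preserving the embedded copy of $F[\![x]\!]$ set-wise, so that the construction recalled just before the lemma guarantees $(A(n,\alpha),\tOp\circ\varphi)$ is again a trace extension; and second to compute $\tOp\circ\varphi$ on a topological generating set and read off $\beta$ directly. For the first stage, recall that $A(n,\alpha)=F(\!(x)\!)\oplus F[x]/(x^n)$ is a direct product of $F$-algebras and that $F(\!(x)\!)$ is the fraction field of $F[\![x]\!]$. Since $\varphi(x)=x+a_2x^2+\cdots$ has valuation $1$, the rule $f/g\mapsto\varphi(f)/\varphi(g)$ extends $\varphi$ to a valuation-preserving field automorphism of $F(\!(x)\!)$; and since $\varphi(x)^n=x^n\cdot(\text{unit})$ generates $(x^n)$, the map $\varphi$ carries $(x^n)$ onto itself and descends to an automorphism $[x]\mapsto[\varphi(x)]$ of $F[\![x]\!]/(x^n)=F[x]/(x^n)$, whose inverse is induced by $\varphi^{-1}\in\Aut_0 F[\![x]\!]$. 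The direct product of these two maps is the asserted automorphism of $A(n,\alpha)$, and it sends the diagonal element $(f,[f])$ to $(\varphi(f),[\varphi(f)])$, hence preserves the embedded $F[\![x]\!]$.

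For the second stage I would first record the key fact that $\tOp$ annihilates the embedded $F[\![x]\!]$: this is forced by $F[\![x]\!]\subseteq F[\![x]\!]^{\perp_{\tOp}}=F[\![x]\!]$ together with $1\in F[\![x]\!]$. In particular, for $j\ge n$ the embedded power $x^j$ equals $(x^j,0)$, so $\tOp$ kills $x^j$ on the $F(\!(x)\!)$-summand for all $j\ge n$; applied to any Laurent series, $\tOp$ therefore collapses to the finite sum of its coefficients in degrees $\le n-1$. Writing $\varphi(x)^i=\sum_{j\ge i}c_{ij}x^j$ with $c_{ii}=a_1^i=1$, I then define $\beta_i\coloneqq\tOp(\varphi(x)^i)=\sum_{i\le j\le n-2}c_{ij}\alpha_j+c_{i,n-1}$ for $i\le n-2$, a well-defined finite element of $F$.

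Finally, since $\tOp\circ\varphi$ and the standard functional $\tOp_\beta$ of $A(n,\beta)$ are both continuous and linear, it suffices to match them on the powers $x^i$ $(i\in\bZ)$ and $[x]^i$ $(0\le i\le n-1)$, which topologically span $A$. On the $F(\!(x)\!)$-summand, $(\tOp\circ\varphi)(x^i)=\beta_i$ for $i\le n-2$ by definition; $(\tOp\circ\varphi)(x^{n-1})=c_{n-1,n-1}=a_1^{n-1}=1$, where the normalization $a_1=1$ built into $\Aut_0 F[\![x]\!]$ is precisely what makes this value equal $1$ on the nose; and $(\tOp\circ\varphi)(x^i)=0$ for $i\ge n$ because every monomial of $\varphi(x)^i$ then has degree $\ge n$. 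On the nilpotent summand the same expansion together with the relation $\tOp([x]^j)=-\tOp(x^j)$ (valid for $0\le j\le n-1$) gives $(\tOp\circ\varphi)([x]^i)=-\beta_i$ for $0\le i\le n-2$ and $-1$ for $i=n-1$. These are exactly the defining values of $\tOp_\beta$, so $\tOp\circ\varphi=\tOp_\beta$ and hence $A(n,\alpha)^{(\varphi)}=A(n,\beta)$. The computation is essentially bookkeeping; the one genuinely delicate point is keeping the two summands synchronized and observing that the built-in antisymmetry of $\tOp$ across $F(\!(x)\!)$ and $F[x]/(x^n)$ is what guarantees the twisted functional lands in the same family rather than in a more general trace functional, while the restriction to $\Aut_0 F[\![x]\!]$ (as opposed to a general coordinate transformation with $a_1\ne1$) is what upgrades the conclusion from equivalence to literal equality.
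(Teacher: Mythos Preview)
Your proof is correct and follows essentially the same approach as the paper's: define $\beta_i \coloneqq (\tOp_\alpha\circ\varphi)(x^i)$ and verify that $\tOp_\alpha\circ\varphi$ agrees with the standard functional $\tOp_\beta$ on the generators $x^i$ and $[x]^i$, using the built-in antisymmetry $\tOp_\alpha([x]^j) = -\tOp_\alpha(x^j)$ and the vanishing $\tOp_\alpha(x^j)=0$ for $j\ge n$. Your write-up is simply more explicit than the paper's (which dismisses the automorphism claim as ``clear'' and handles the functional computation in three lines), and your closing remarks on why $a_1=1$ yields equality rather than mere equivalence, and why the antisymmetry keeps the twisted functional in the $A(n,\beta)$ family, are useful clarifications not spelled out in the original.
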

\begin{proof}
Let \( \varphi = x + a_2x^2 + a_3 x^3 + \dots \in \Aut_0 F[\![x]\!] \).
The first part of the statement is clear. Denote the induced
automorphism with the same letter \( \varphi \).
Consider the trace extension
\( \tOp \coloneqq \tOp_\alpha \circ \, \varphi \),
where \( \tOp_\alpha \) is the linear functional given
by the sequence \( \alpha \).
Since \( \tOp_\alpha(x^k) = 0 \) for \( k \ge n \)
we have \( \tOp(x^{n-1}) = 1 \) and a well defined sequence \( \beta \)
given by
\[ \beta_{i} \coloneqq \tOp (x^{i}), \ i \le n-2.  \]
Moreover, when \( n \ge 1 \)
we see that \( \tOp([x]^{n-1}) = \tOp_\alpha([\varphi(x)]^{n-1}) = \tOp_\alpha([x]^{n-1}) = -1 \)
and for \( 0 \le i \le n-2 \) we compute
\( \tOp([x]^{i}) = \tOp_\alpha([\varphi(x)]^{i}) = -\tOp_\alpha(\varphi(x)^{i}) = -\beta_i \).
Therefore \( \tOp \) is given by the sequence \( \beta \) and
\( A(n, \alpha)^{(\varphi)} = A(n, \beta) \) as we wanted.
\end{proof}

It was mentioned in
\cite[Section 2]{Stolin_Zelmanov_Montaner}
that by applying
\( \varphi \in \textnormal{Aut}_0 F[\![x]\!] \)
to a trace extension
\( A(n, \alpha), 0 \le n \le 2, \)
we get a full control over
sequence \( \alpha \). Namely, we can
make it into a zero sequence with 
\( \alpha_0 \neq 0 \).
The following proposition extends
these facts to \( n \ge 3 \).

\begin{proposition}
\label{prop:manin_pairs}
Let \(n \ge 0\) and \( \alpha = (\alpha_i \in F \mid -\infty < i \le n-2) \) be a sequence. There exists a  \(\varphi \in \textnormal{Aut}_0F[\![x]\!]\) such that \(A(n,\alpha) \cong A(n,\beta)^{(\varphi)}\), where \(\beta\) is the sequence satisfying \(\beta_i = 0\) for all \( i \neq 0\) and \(\beta_0 = \alpha_0\).
\end{proposition}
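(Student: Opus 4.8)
The plan is to reinterpret the trace on the \( F(\!(x)\!) \)-summand as a residue pairing and thereby turn the statement into a normal-form problem for formal meromorphic \( 1 \)-forms. By the previous lemma, applying any \( \varphi \in \Aut_0 F[\![x]\!] \) to \( A(n,\beta) \) produces \( A(n,\gamma) \), where the sequence \( \gamma \) is determined by \( \gamma_i = (\tOp_\beta \circ \varphi)(x^i) \) and \( \tOp_\beta \) is the functional attached to \( \beta \); since the trace extension \( A(n,\gamma) \) is determined by its sequence \( \gamma \), it suffices to find \( \varphi \) with \( \gamma = \alpha \). First I would note, directly from \cref{ex:A(na)}, that for \( f = \sum_i f_i x^i \) in the \( F(\!(x)\!) \)-summand
\[
  \tOp_\alpha(f) = \operatorname{res}_{x=0}\bigl(x^{-n} h(x) f(x)\,dx\bigr), \qquad h(x) \coloneqq \sum_{j \ge 0} \alpha_{n-1-j}\, x^{j} \in F[\![x]\!],
\]
where \( \operatorname{res} \) denotes the coefficient of \( x^{-1} \); here \( h(0) = \alpha_{n-1} = 1 \) and \( \operatorname{res}\bigl(x^{-n}h(x)\,dx\bigr) = \alpha_0 \). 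Writing \( \omega_\alpha \coloneqq x^{-n}h(x)\,dx \) and letting \( \omega_\beta \) be the form attached to the normalized sequence \( \beta \) (so \( \omega_\beta = (x^{-n} + \alpha_0 x^{-1})\,dx \) for \( n \ge 2 \), and \( \omega_\beta = x^{-n}\,dx \) for \( n \le 1 \)), a change of variables together with the invariance of the formal residue shows that \( \gamma = \alpha \) is equivalent to the single equation \( \varphi^{*}\omega_\alpha = \omega_\beta \).

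I would then solve \( \varphi^{*}\omega_\alpha = \omega_\beta \) by a coefficient recursion. Writing \( \varphi(x) = x + \sum_{k \ge 2} a_k x^{k} \) and using \( \varphi^{*}\omega_\alpha = \varphi(x)^{-n}h(\varphi(x))\varphi'(x)\,dx \), the equation becomes, after clearing the factor \( x^{-n} \), the power-series identity
\[
  \Bigl(\tfrac{\varphi(x)}{x}\Bigr)^{-n} h(\varphi(x))\,\varphi'(x) = 1 + \alpha_0\, x^{\,n-1}
\]
(for \( n \ge 2 \); for \( n \le 1 \) the right-hand side is \( 1 \)). All three factors on the left have constant term \( 1 \), so the order \( x^{0} \) holds automatically. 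Comparing the coefficient of \( x^{k} \) for \( k \ge 1 \), the highest-index unknown that appears is \( a_{k+1} \): it enters through \( \varphi'(x) \) with coefficient \( k+1 \) and through \( (\varphi(x)/x)^{-n} \) with coefficient \( -n \), while \( h(\varphi(x)) \) contributes only \( a_2, \dots, a_k \). Hence the order \( x^{k} \) equation has the shape
\[
  (k+1-n)\,a_{k+1} + R_k(a_2,\dots,a_k) =
  \begin{cases}
    \alpha_0, & k = n-1,\\
    0, & \text{otherwise},
  \end{cases}
\]
for a universal polynomial \( R_k \).

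The main obstacle will be the order \( k = n-1 \), where the coefficient \( k+1-n \) of the leading unknown \( a_n \) vanishes. For every \( k \ne n-1 \) one has \( k+1-n \ne 0 \), so \( a_{k+1} \) is determined recursively from the lower coefficients, yielding a well-defined series \( \varphi \in \Aut_0 F[\![x]\!] \). At \( k = n-1 \) the unknown \( a_n \) drops out and the equation reduces to matching the coefficient of \( x^{-1}\,dx \), i.e.\ to \( \operatorname{res}(\varphi^{*}\omega_\alpha) = \alpha_0 \); but by invariance of the formal residue \( \operatorname{res}(\varphi^{*}\omega_\alpha) = \operatorname{res}(\omega_\alpha) = \alpha_0 \), so this identity holds automatically for any admissible lower-order choices, and one may set \( a_n \coloneqq 0 \). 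This is precisely the mechanism that forces the residue \( \alpha_0 \) to be the only surviving datum. For \( n \le 1 \) the special index \( k = n-1 \) lies outside the range \( k \ge 1 \), so no obstruction occurs and the recursion goes through verbatim. The resulting \( \varphi \) satisfies \( \varphi^{*}\omega_\alpha = \omega_\beta \), equivalently \( A(n,\beta)^{(\varphi)} = A(n,\alpha) \), proving the claim.
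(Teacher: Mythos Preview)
Your proof is correct. Both you and the paper reduce to a coefficient recursion determining \(\varphi(x)=x+\sum_{k\ge 2}a_k x^k\), and in both cases one finds that exactly one coefficient (your \(a_n\), the paper's \(u_{n-1}\)) remains free while the rest are fixed. The difference is in packaging and in how the one potential obstruction is dispatched. The paper works directly with the conditions \(\tOp_\alpha(u^k)=0\) for \(k\notin\{0,n-1\}\), treating positive and negative powers of \(u\) in two separate inductions with explicit formulas; the values at \(k=0\) and \(k=n-1\) are then checked by hand. Your reformulation \(\varphi^*\omega_\alpha=\omega_\beta\) collapses everything into a single power-series identity, and the invariance of the formal residue under \(\Aut_0 F[\![x]\!]\) transparently explains both why the coefficient of \(a_n\) drops out at order \(x^{n-1}\) and why the resulting constraint is automatically satisfied---in other words, why \(\alpha_0\) is the sole invariant. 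This is a genuinely cleaner argument: it identifies the proposition with the classical fact that formal meromorphic \(1\)-forms with a pole of order \(n\) at \(0\) are classified up to change of variable by their residue, whereas the paper's computation is more elementary but obscures this structure.
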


\begin{proof}
The cases \( n \in \{ 0, 1, 2\} \)
are considered in \cite{Stolin_Zelmanov_Montaner}.
Assume \( n>2 \).
Denote by \(\tOp\) the trace form corresponding
to \( A(n, \alpha) \).
We want to find a
\(u = x(1 + u_1 x + \dots ) \in F[\![x]\!]\) 
such that \(\tOp(u^k) = 0\) for all \(k \in \mathbb{Z}\setminus\{0,n-1\}\).
We do that by defining the coefficients \( u_1, u_2, \dots \) inductively.

We start by considering the
positive powers of \( u\).
Since \(\tOp(x^k) = 0\) for \(k\ge n\), the equality \(\tOp(u^k) = 0\) is automatic for \(k \ge n\). 
Next, we observe that
\begin{equation}
    \tOp(u^{n-2}) =
    \tOp(x^{n-2} + (n-2)u_1 x^{n-1}) = 
    \alpha_2 + (n-2)u_1
\end{equation}
and defining \( u_1 \coloneqq - \alpha_{n-2}/(n-2) \)
we get the desired \( \tOp(u^{n-2}) = 0 \). For \(n = 3\) this concludes the positive powers of \(u\).
Assume now that for \(n > 3\) we have fixed
\(u_1, \dots, u_{k-1} \),
\( 2 \le k \le n-2 \),
such that
\(\tOp(u^{n-2}) = \dots = \tOp(u^{n-k}) = 0\).
Using the convention \(u_0 \coloneqq 1\) we can write
\begin{equation}
    u^{n-k-1} = x^{n-k-1}\sum_{j = 0}^\infty \sum_{\substack{\ell_1+\dots+\ell_{n-k-1} = j\\ \ell_1,\dots,\ell_{n-k-1} \ge 0}}u_{\ell_1}\dots u_{\ell_{n-k-1}}x^j.
\end{equation}
Therefore, letting \(\alpha_{n-1} \coloneqq 1\) we get
\begin{equation}
    \tOp(u^{n-k-1}) = \sum_{j = 0}^{k} \sum_{\substack{\ell_1+\dots+\ell_{n-k-1} = j\\ \ell_1,\dots,\ell_{n-k-1} \ge 0}}u_{\ell_1}\dots u_{\ell_k}\alpha_{j + n-k-1}.
\end{equation}
Defining
\begin{equation}
    u_k \coloneqq - \frac{1}{n-k-1}\sum_{j = 0}^k \sum_{\substack{\ell_1+\dots+\ell_{n-k-1} = j\\ 0\le \ell_1,\dots,\ell_{n-k-1} < k}}u_{\ell_1}\dots u_{\ell_{n-k-1}}\alpha_{j + n-k-1}.
\end{equation}
we obtain the identity
\( \tOp(u^{n-k-1}) = 0 \)
completing the induction step
for positive powers of \( u \).

Now we proceed with negative powers
of \( u \).
For the base case, we notice that
\begin{equation}
\begin{aligned}
\tOp(u^{-1}) &= 
    \tOp(x^{-1} - u_1  - (u_2 - u_1^2)x - \dots - (u_n - p(u_1, \dots, u_{n-1}))x^{n-1}) \\
    &= \alpha_1 - u_1 \alpha_0 - 
    (u_2 - u_1^2) \alpha_1 - \dots
    - (u_n - p(u_1, \dots, u_{n-1})),
\end{aligned}
\end{equation}
where \( p \) is a polynomial
depending only on
\( u_1, \dots, u_{n-1} \).
In other words, 
if we fix \( u_1, \dots, u_{n-2} \) as above and choose an arbitrary
\( u_{n-1} \in F \), then
we can put
\[ u_{n} \coloneqq \alpha_1 - u_1 \alpha_0 - 
    (u_2 - u_1^2) \alpha_1 - \dots +  p(u_1, \dots, u_{n-1}) \]
and obtain \( \tOp(u^{k}) = 0\)
for \( -1 \le k \le n-2 \) and \( k \neq 0 \).
For the inductive step, write \(v \coloneqq u^{-1} \coloneqq x^{-1}(1 + v_1x + \dots)  \in x^{-1}F[\![x]\!]\). 
The identity \(1 = uv\) implies
\(\sum_{j = 0}^k u_jv_{j-k} = 0\) for all \(k \in \mathbb{Z}_{> 0}\).
Therefore, \(u_1,\dots,u_k\) uniquely determine \(v_1,\dots,v_k\) and vice versa.
We have 
\begin{equation}
    v^k = x^{-k}\sum_{j = 0}^\infty \sum_{\substack{\ell_1+\dots+\ell_k = j\\ \ell_1,\dots,\ell_k \ge 0}}v_{\ell_1}\dots v_{\ell_k}x^j
\end{equation}
and, as a consequence, 
\begin{equation}
    \tOp(u^{-k}) = \sum_{j = 0}^{n-1+k} \sum_{\substack{\ell_1+\dots+\ell_k = j\\ \ell_1,\dots,\ell_k \ge 0}}v_{\ell_1}\dots v_{\ell_k}\alpha_{j - k}.
\end{equation}
In particular,
if parameters
\( u_1, \dots, u_{n-2}, \dots, u_{n+k}, \)
\( k \ge 0 \) are fixed in
such a way, that
\( \tOp(u^{m}) = 0 \)
for \( -k-1 \le m \le n-2 \),
\( m \neq 0 \)
then by defining
\begin{equation}
v_{n+k+1} \coloneqq -\frac{1}{k+2}\sum_{j = 0}^{n+k+1} \sum_{\substack{\ell_1+\dots+\ell_{k+2} = j\\ 0\le \ell_1,\dots,\ell_{k+2} < n+k+1}}v_{\ell_1}\dots v_{\ell_{k+2}}\alpha_{j -k-2},
\end{equation}
we get the identities
\( \tOp(u^{m}) = 0 \)
for all \( -k-2 \le m \le n-2 \),
\( m \neq 0 \).
This concludes the proof.
\end{proof}

Combining \cref{cor:possible_extensions} and \cref{prop:possible_extensions} results in the following statement.

\begin{corollary}\label{rem:Manin_pairs}
Let \((L,\fg[\![x]\!])\) be a topological Manin pair and \(B\) be the bilinear form on \(L\).
Then there exists a \(\varphi \in \textnormal{Aut}_0 F[\![x]\!]\) such that either \((L,B) \cong \fg \otimes A(\infty)^{(\varphi)}\) or \((L,B) \cong \fg \otimes A(n,\alpha)^{(\varphi)}\) for some \(n \ge 0\) and some sequence \( \alpha = (\alpha_i \in F \mid -\infty < i \le n-2) \) satisfying \(\alpha_i = 0\) for all \(i \neq 0\).
\end{corollary}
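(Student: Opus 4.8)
The plan is to deduce the statement directly from the two structural results established above, with no new analytic input: \cref{cor:possible_extensions}, which identifies a topological Manin pair with \(\fg \otimes A(\infty)\) or \(\fg \otimes A(n,\alpha)\) after rescaling the form, and \cref{prop:manin_pairs}, which uses a coordinate transformation to annihilate every entry of the sequence \(\alpha\) except \(\alpha_0\). The only genuine work is the bookkeeping needed to transport isomorphisms of Manin pairs through each step and to keep track of how the bilinear form is affected.

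First I would invoke \cref{cor:possible_extensions} to obtain a non-zero \(\xi \in F\) together with an isomorphism of pairs \((L, \xi B) \cong \fg \otimes A(\infty)\) or \((L, \xi B) \cong \fg \otimes A(n,\alpha)\) for some \(n \ge 0\) and some sequence \(\alpha\). In the first case there is nothing left to normalize, and the claim holds with \(\varphi = \id\) since \(A(\infty)^{(\id)} = A(\infty)\). In the second case I would apply \cref{prop:manin_pairs} to produce a \(\varphi \in \Aut_0 F[\![x]\!]\) and a normalized sequence \(\beta\), satisfying \(\beta_i = 0\) for \(i \ne 0\) and \(\beta_0 = \alpha_0\), with \(A(n,\alpha) \cong A(n,\beta)^{(\varphi)}\) as trace extensions. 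Since the form on \(\fg \otimes A(n,\beta)^{(\varphi)}\) is \(B(a \ot f, b \ot g) = \kf(a,b)\,(\tOp_\beta \circ \varphi)(fg)\), the induced map \(\id \ot \varphi\) upgrades this to an isomorphism of Manin pairs \(\fg \otimes A(n,\alpha) \cong \fg \otimes A(n,\beta)^{(\varphi)}\); composing it with the isomorphism from the previous step yields \((L, \xi B) \cong \fg \otimes A(n,\beta)^{(\varphi)}\).

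The last point to address is the overall scalar \(\xi\). Since throughout the paper Manin pairs whose forms differ by a non-zero scalar are identified (see \cref{rem:equivalence_Manin_triples_top_algebras}), the relation \((L, \xi B) \cong \fg \otimes A(n,\beta)^{(\varphi)}\) is precisely the asserted statement for \((L, B)\), now with the normalized sequence \(\beta\) in place of \(\alpha\). I do not expect any real obstacle here: the hard content — the inductive construction of the coordinate change \(u = x(1 + u_1 x + \cdots)\) that simultaneously kills all moments \(\tOp(u^k)\) for \(k \ne 0, n-1\), both for positive and for negative \(k\) — has already been carried out in \cref{prop:manin_pairs}, so the present statement is a formal consequence and the only care needed is in composing the isomorphisms and tracking the scalar.
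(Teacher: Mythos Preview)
Your proposal is correct and follows exactly the paper's approach: the corollary is stated there as an immediate consequence of combining \cref{cor:possible_extensions} with \cref{prop:manin_pairs}, and you have spelled out precisely that combination together with the bookkeeping on the scalar \(\xi\). One minor remark: since \cref{prop:manin_pairs} in fact yields an equality \(A(n,\alpha) = A(n,\beta)^{(\varphi)}\) of trace extensions (same underlying algebra, with \(t_\alpha = t_\beta \circ \varphi\)), the isomorphism of Manin pairs at that step is simply the identity rather than \(\id \otimes \varphi\).
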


\begin{remark}%
\label{rem:B_i}
Let us explicitly describe the bilinear
forms on 
\begin{equation}
    \fg \otimes A(i-1,0) = \fg(\!(x)\!) \times \fg[x]/x^{i-1}\fg[x],   
\end{equation}
where \( i \in \mathbb{Z}_{>0} \).
We start with the bilinear form \(\mathcal{K}_i\colon \fg(\!(x)\!) \times \fg(\!(x)\!) \to F\) 
defined by
\begin{equation}
    \mathcal{K}_i(f,g) \coloneqq \textnormal{res}_{x = 0}\{x^{1-i}\kappa(f(x),g(x))\} = \textnormal{coeff}_{i-2}\{\kappa(f(x),g(x))\}, 
\end{equation}
where \( \kf \) stands for the Killing form of \(\fg(\!(x)\!)\) over \(F(\!(x)\!)\).
Observe that \(\mathcal{K}_i(x^{i-1}\fg[x],x^{i-1}\fg[x]) = 0\). Therefore, 
\begin{equation}
    \mathcal{K}_i([f],[g]) \coloneqq \mathcal{K}_i(f,g)
\end{equation}
gives a well-defined bilinear form
on \(\fg[x]/x^{i-1}\fg[x] = \fg[\![x]\!]/x^{i-1}\fg[\![x]\!]\).
The bilinear form \(B_i\) on \(\fg \otimes A(i-1,0)\) is now defined by
\begin{equation}
    B_i((f_1,[f_2]),(g_1,[g_2])) \coloneqq \mathcal{K}_i(f_1,g_1) - \mathcal{K}_i(f_2,g_2)
\end{equation}
for all \(f_1,g_1 \in \fg(\!(x)\!)\) and \(f_2,g_2 \in \fg[\![x]\!]\).
\end{remark}

\begin{remark}
Manin pairs are related to so-called quasi-Poisson structures in a similar fashion as Manin triples are related to Poisson-Lie groups via their connection to Lie bialgebra structures. These quasi-Poisson structures are of independent interest, as they capture interesting phenomena which go beyond the realm of usual Poisson structures; see e.g. \cite{alekseev_kosmann}. Interestingly, contrary to Poisson case, it is open if the quasi-Poisson structures associated to Manin pairs can be appropriately quantized; see e.g. \cite[Remark in Section 16.2.2]{etingof_schiffmann}. 
\end{remark}

\subsection{Topological Lie bialgebra structures on \( \fg[\![x]\!] \)}
As before, we equip the Lie algebra
\( \fg[\![x]\!] \)
with the \( (x) \)-adic
topology.
The continuous dual
\[ \fg[\![x]\!]' = \{ \text{linear maps } f \colon \fg[\![x]\!] \to F \mid f(x^n \fg[\![x]\!]) = 0 \text{ for some } n \in \bZ_{\ge 0}   \} \]
of \(\fg[\![x]\!]\) is isomorphic as a vector space to the space of polynomials with coefficients in \( \fg \).
We endow it with the discrete topology and, to avoid any confusion, denote
it by \( \fg\{ x \} \).
According to our definition in \cref{sec:Top_Lie_bialg}
a topological Lie bialgebra structure on \( \fg[\![x]\!] \) is a continuous
linear map
\[
  \delta \colon \fg[\![x]\!] \to \fg[\![x]\!] \fot \fg[\![y]\!] \cong (\fg \ot \fg)[\![x,y]\!]
\]
such that
\begin{enumerate}
\item the dual map \( \delta' \colon \fg\{ x \} \ot \fg\{ y \} \cong ((\fg \ot \fg)[\![x,y]\!])' \to \fg\{ x \} \) is
a Lie bracket on \( \fg\{ x \} \) and
\item\label{it:comp_condition} the compatibility condition \( \delta([f,g]) = [f \fot 1 + 1 \fot f, \delta(g)] -
                  [g \fot 1 + 1 \fot g, \delta(f)] \) holds for all
                  \( f,g \in \fg[\![x]\!] \).
\end{enumerate}

\begin{remark}
We omitted the continuity condition on \( \delta' \)
because \( \fg\{ x \} \) and hence \( \fg\{ x \} \ot \fg\{ y \} \)
are discrete.
Moreover, repeating the proof of
\cite[Lemma 4.1]{Stolin_Zelmanov_Montaner}
we can easily show that the compatibility condition
guarantees the inclusion
\[ \delta(x^n \fg[\![x]\!]) \subseteq (x,y)^{n-1} (\fg \ot \fg)[\![x,y]\!] \]
for any positive integer \( n \) and thus automatically implies
the continuity of \( \delta \).
Therefore, our definition of a topological Lie bialgebra
structure on \( \fg[\![x]\!] \)
coincides with the
one given in \cite{Stolin_Zelmanov_Montaner}.
\end{remark}

The following result shows that, as in the classical case,
topological Lie bialgebra structures on \( \fg[\![x]\!] \)
produce topological Manin triples.

\begin{lemma}\label{lem:top_bialg_gives_manin_triple_taylor}
Let \( (\fg[\![x]\!], \delta) \) be a topological Lie bialgebra.
The corresponding triple
\[
\left(\fg[\![x]\!] \add \fg\{ x \}, \fg[\![x]\!], \fg\{ x \} \right)
\]
described in \cref{sec:top_Man_triples} is a topological Manin triple,
i.e.\ the Lie bracket on \( \fg[\![x]\!] \add \fg\{ x \} \) and the form \( B \),
defined in \cref{eq:form_topological_double,eq:Lie_bracket_topological_double},
are continuous and separately continuous respectively.
\end{lemma}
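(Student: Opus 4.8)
The plan is to first fix the linear topology on the double $\fD=\fg[\![x]\!]\add\fg\{x\}$ and then test both assertions against a fundamental system of neighbourhoods of $0$. Since $\fg\{x\}$ is discrete, the direct-sum topology on $\fD$ has $\{x^n\fg[\![x]\!]\}_{n\ge0}$---viewed as subspaces of $\fD$ with vanishing $\fg\{x\}$-component---as such a fundamental system; equivalently, a subspace of $\fD$ is open iff it contains some $x^n\fg[\![x]\!]$, and a linear map out of $\fD$ is continuous iff it annihilates some $x^n\fg[\![x]\!]$. Note in particular that every basic neighbourhood $U=x^N\fg[\![x]\!]$ has trivial $\fg\{x\}$-component, a point that will drive the estimates below.

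Separate continuity of $B$ is the quick half. For fixed $a=f+\phi$ with $f\in\fg[\![x]\!]$, $\phi\in\fg\{x\}$, the functional $B(a,-)\colon g+\psi\mapsto\phi(g)+\psi(f)$ agrees with $\phi$ on $x^n\fg[\![x]\!]$; since $\phi$ is continuous it kills some $x^K\fg[\![x]\!]$, whence $B(a,-)$ does too and is continuous. Symmetry of $B$ yields continuity of $B(-,a)$, so $B$ is separately continuous.

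The substance is continuity of the bracket, i.e. $\fot$-continuity of $\fD\ot\fD\to\fD$. By the description of open subspaces in the $\fot$-topology it is enough to check, for each target $U=x^N\fg[\![x]\!]$, the three conditions: (a) $[x^M\fg[\![x]\!],\,x^M\fg[\![x]\!]]\subseteq U$ for $M$ large; (b) for each fixed $a\in\fD$ there is an $M$ with $[a,\,x^M\fg[\![x]\!]]\subseteq U$; and (c) the mirror of (b), which follows by skew-symmetry. Writing the bracket of $(f,\phi)$ and $(g,\psi)$ out, its $\fg[\![x]\!]$-component is $[f,g]+(\psi\fot1)\delta(f)-(\phi\fot1)\delta(g)$ and its $\fg\{x\}$-component is $-\psi\circ\ad_f+\phi\circ\ad_g+[\phi,\psi]$. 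For (a) both arguments lie in $x^M\fg[\![x]\!]$ with $\phi=\psi=0$, so the $\fg\{x\}$-component vanishes and the $\fg[\![x]\!]$-component reduces to $[f,g]\in x^{2M}\fg[\![x]\!]\subseteq U$ once $2M\ge N$.

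The crux is (b), where the cross terms must be controlled and the main obstacle lies. Fix $a=f+\phi$, take $g\in x^M\fg[\![x]\!]$ with $\psi=0$, and let $K$ be such that $\phi$ vanishes on $x^K\fg[\![x]\!]$. The $\fg\{x\}$-component then reduces to $\phi\circ\ad_g$; as $[g,h]\in x^M\fg[\![x]\!]$ for every $h$, this functional is zero once $M\ge K$, matching the trivial $\fg\{x\}$-part of $U$. In the $\fg[\![x]\!]$-component, $[f,g]\in x^M\fg[\![x]\!]$ is harmless, and the delicate term is $(\phi\fot1)\delta(g)$; here I would invoke the inclusion $\delta(x^M\fg[\![x]\!])\subseteq(x,y)^{M-1}(\fg\ot\fg)[\![x,y]\!]$ recalled earlier. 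Expanding $\delta(g)=\sum_{i+j\ge M-1}c_{ij}x^iy^j$ and contracting the $x$-slot against $\phi$, only summands with $i<K$ survive, forcing $j\ge M-K$, so $(\phi\fot1)\delta(g)\in x^{M-K}\fg[\![x]\!]$. Taking $M\ge\max(K,N+K)$ places the entire $\fg[\![x]\!]$-component in $x^N\fg[\![x]\!]=U$, which verifies (b), gives (c) by skew-symmetry, and completes the proof.
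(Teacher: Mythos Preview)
Your proof is correct and follows essentially the same approach as the paper: both reduce the continuity of the bracket to controlling the cross term \((\phi\fot 1)\delta(g)\) via the inclusion \(\delta(x^M\fg[\![x]\!])\subseteq (x,y)^{M-1}(\fg\ot\fg)[\![x,y]\!]\), and both handle separate continuity of \(B\) by noting that a fixed \(\phi\in\fg\{x\}\) annihilates some \(x^K\fg[\![x]\!]\). Your write-up is more explicit in organizing the verification around the three defining conditions of the \(\fot\)-topology and in spelling out the full bracket on the double, whereas the paper's proof is terser and jumps straight to the single nontrivial estimate \([x^m\fg[\![x]\!],g]\subseteq x^n\fg[\![x]\!]\); the mathematical content is the same.
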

\begin{proof}
Fix some arbitrary \( X,Y \in \fg[\![x]\!] \) and
\( f,g \in \fg\{ x \} \).
The separate continuity follows from the identity
\[
B\left(X + f, (Y + x^{\deg(f)+1} \fg[\![x]\!]) + g \right) = B(X + f, Y + g).
\]
To prove the continuity of the Lie bracket it is enough
to show that for \( 0 \)-neighbourhood \( x^n \fg[\![x]\!] \subseteq  \fg[\![x]\!] \add \fg\{ x \}\)
there is a \( 0 \)-neighbourhood \( x^m \fg[\![x]\!] \subseteq \fg[\![x]\!] \) such that
\[
\left[ X + x^m \fg[\![x]\!], g \right] \subseteq [X,g] + x^n \fg[\![x]\!].
\]
Let \( m \coloneqq \max \{2\deg(g) + 2, 2n \} \), then 
\[
\underbrace{g\left([x^m \fg[\![x]\!], \cdot]\right)}_{= 0} +
\underbrace{(g \fot 1)\delta(x^m \fg[\![x]\!])}_{\subseteq x^n \fg[\![x]\!]} \subseteq x^n \fg[\![x]\!],
\]
providing the continuity of the bracket.
\end{proof}

\begin{remark}\label{rem:doubles_in_series_case}
Using the homeomorphisms \cref{eq:two_topologies_coincide} 
it is not hard to show that we have the following converse
to \cref{lem:top_bialg_gives_manin_triple_taylor}.
Let \( M = (L, L_+, L_-) \) be a topological Manin triple 
with form \( B \) such that
\begin{enumerate}
\item \( L_- \) is equipped with the discrete topology,
\item As a topological Lie algebra \( L_+ \cong \fg[\![x]\!] \) and finally
\item \( B \) identifies \( L'_+ \) with \( L_- \).
\end{enumerate}
Then \( M \) defines a unique topological Lie bialgebra structure \(\delta\) 
on \( \fg[\![x]\!] \) and \(M\) is isomorphic to \((\fD(\fg[\![x]\!],\delta),\fg[\![x]\!],\fg\{x\})\) as a topological Manin triple.
Consequently, the classification of topological
Lie bialgebras on \( \fg[\![x]\!] \) coincides with the classification
of topological Manin triples with the above-mentioned properties.
Note that each such Manin
triple also gives rise to a
topological Manin pair \((L,L_+)\) in the sense of \cref{sec:manin_pairs}.
\end{remark}

\subsection{Classification of topological doubles}\label{sec:classification_of_doubles_MSZ}
In this section we recall the classification of topological doubles of \(\fg[\![x]\!]\) from \cite{Stolin_Zelmanov_Montaner}, which can be understood as a refinement of \cref{rem:Manin_pairs}.
Indeed, a topological Manin pair \((L,\fg[\![x]\!])\) is a topological double of \(\fg[\![x]\!]\) if and only if there exists a Lagrangian subalgebra \(W \subseteq L\) complementary to \(\fg[\![x]\!]\). If we choose a representation \(L \cong \fg \otimes A(n,\alpha)^{(\varphi)}\) according to \cref{rem:Manin_pairs}, the existence of such a \(W\) implies that \(0 \le n \le 2\) and \(\alpha_0 = 0\). Thus, \cref{rem:Manin_pairs} takes the following form in this setting.

\begin{theorem}[Proposition 2.10 \cite{Stolin_Zelmanov_Montaner}]%
\label{thm:limitations_on_n}
Let \( D = \fg \ot A(n, \alpha) \) for some \( n \ge 0 \) and a
sequence \( \alpha = (\alpha_i \in F \mid -\infty < i \le n-2) \).
Then
\begin{enumerate}
\item \( D \) contains a Lagrangian Lie subalgebra \( W \) such that
\( D = \fg \ot F[\![x]\!] \add W \) if and only if \( 0 \le n \le 2 \) and
\item in this case there is a unique \( \varphi \in \Aut_0 F[\![x]\!] \)
such that \( A(n, \alpha) \sim A(n, 0)^{(\varphi)} \).
\end{enumerate}
\end{theorem}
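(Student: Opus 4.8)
The plan is to reduce to a normal form for the trace and then analyze the two factors separately. By \cref{prop:manin_pairs} there is a coordinate transformation \( \varphi \in \Aut_0 F[\![x]\!] \) with \( A(n,\alpha) \cong A(n,\beta)^{(\varphi)} \), where \( \beta_i = 0 \) for \( i \neq 0 \) and \( \beta_0 = \alpha_0 \). Since \( A(n,\alpha) \) and \( A(n,\beta) \) share the same underlying algebra \( F(\!(x)\!) \oplus F[x]/(x^n) \) and \( \varphi \) induces a Lie algebra automorphism of \( \fg \ot A(n,\alpha) \) that preserves \( \fg[\![x]\!] \) and carries the form \( B_\beta \) to \( B_\alpha \), the existence of a Lagrangian complement \( W \) is the same question for \( \alpha \) and for \( \beta \). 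I may therefore assume \( \alpha_i = 0 \) for all \( i \neq 0 \), and the whole statement reduces to showing that such a \( W \) exists if and only if \( 0 \le n \le 2 \) and \( \alpha_0 = 0 \); granting this, part (2) follows at once, with \( \varphi \) the transformation just produced, since \( \alpha_0 = 0 \) forces \( \beta = 0 \) and hence \( A(n,\alpha) \cong A(n,0)^{(\varphi)} \).

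A structural observation that organizes everything is that, because \( A(n,\alpha) = F(\!(x)\!) \oplus F[x]/(x^n) \) is a direct product of algebras and the trace pairs the two factors trivially, the double splits as an \emph{orthogonal} direct sum of ideals
\[
\fg \ot A(n,\alpha) = \underbrace{\fg(\!(x)\!)}_{=:\,G} \;\perp\; \underbrace{\fg \ot F[x]/(x^n)}_{=:\,J},
\]
with \( G \) and \( J \) each carrying a nondegenerate invariant form and with \( \fg[\![x]\!] \) embedded as the graph of the truncation \( f \mapsto [f] \). The piece \( J \) is a finite-dimensional quadratic Lie algebra built on the local ring \( R = F[x]/(x^n) \), whose maximal ideal \( \mathfrak m = (x) \) satisfies \( \mathfrak m^{n} = 0 \) and \( (\fg \ot \mathfrak m^{j})^{\perp} = \fg \ot \mathfrak m^{\,n-j} \) inside \( J \).

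For the ``if'' direction I would exhibit the complement. When \( n = 0 \) the double is just \( G = \fg(\!(x)\!) \) and \( W = x^{-1}\fg[x^{-1}] \) is a Lie subalgebra complementary to \( \fg[\![x]\!] \); it is isotropic because \( \tOp(x^{i}) = \delta_{i,-1} \) and a product \( x^{-a}x^{-b} \) with \( a,b \ge 1 \) never contributes an \( x^{-1} \) term. For \( n = 1 \) and \( n = 2 \) (with \( \alpha_0 = 0 \)) the required \( W \) is no longer a product of the two factors; here I would take the standard (quasi-)trigonometric, respectively (quasi-)rational, Lagrangian subalgebra, or simply invoke the explicit constructions of \cite{Stolin_Zelmanov_Montaner}, the only point being that at least one such \( W \) exists.

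The heart of the theorem, and where I expect the main obstacle, is the ``only if'' direction: assuming a Lagrangian subalgebra \( W \) with \( \fg \ot A(n,\alpha) = \fg[\![x]\!] \add W \), I must force \( n \le 2 \) and \( \alpha_0 = 0 \). I would study how \( W \) sits relative to the radical filtration of \( J \), using the orthogonal projection onto \( J \) together with the identities \( (W \cap U)^{\perp} = W + U^{\perp} \) (for the relevant submodules \( U \)) that come from \( W^{\perp} = W \) and \( \fg[\![x]\!]^{\perp} = \fg[\![x]\!] \). The delicate point is that \( W \) need \emph{not} be invariant under \( \ad_{\fg \ot 1} \) — the (quasi-)trigonometric Lagrangian subalgebras are genuinely not \( \fg \)-homogeneous — so a naive decomposition of \( W \) into \( \fg \)-isotypic components is unavailable, and the simplicity of \( \fg \) has to enter through \cref{lem:Ann(g)V=0}, which constrains every graded piece of the double to be a sum of copies of the adjoint module. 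A more transparent route, which I would pursue in parallel, is to relate the existence of \( W \) to a topological Lie bialgebra structure \( \delta \) on \( \fg[\![x]\!] \) with double \( \fg \ot A(n,\alpha) \) as in \cref{rem:doubles_in_series_case}, and to read the bound \( n \le 2 \) off the co-Jacobi identity for \( \delta \) as a classical-Yang--Baxter-type constraint capping the order of the pole at \( x = y \); this is precisely the constraint that the later algebro-geometric analysis renders sharp, so I anticipate that making this step self-contained is the genuinely hard part. Finally, for the uniqueness of \( \varphi \) in part (2), I would show that a coordinate transformation fixing the normalized trace of \( A(n,0) \) must be the identity, which follows from the injective dependence of the normalized sequence on \( \varphi \) recorded in the lemma preceding \cref{prop:manin_pairs}.
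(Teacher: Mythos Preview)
The paper does not prove this theorem; it is stated as a citation of \cite[Proposition 2.10]{Stolin_Zelmanov_Montaner} and used as input for the rest of the paper. So there is no in-paper proof to compare against, and I can only assess your proposal on its own merits.

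Your reduction via \cref{prop:manin_pairs} is correct, and the ``if'' direction is fine: the explicit Lagrangian complements you allude to are exactly the subalgebras \(W_1, W_2, W_3\) written down in \cref{subsec:realization_of_doubles_polynomials}. Your reformulation --- after normalization, \(W\) exists iff \(n \le 2\) and \(\alpha_0 = 0\) --- also matches the paper's own informal summary just above the theorem.

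The genuine gap is the ``only if'' direction, and you acknowledge this yourself. Neither of your two sketches is carried to a conclusion. Worse, the second sketch is circular in this paper's logical structure: the CYBE pole bound you want to invoke is precisely \cref{thm:restriction_on_powers}, whose proof goes through \cref{rem:four_doubles}, which in turn rests on \cref{thm:limitations_on_n}. So you cannot use it here. The first sketch (filtration analysis of how \(W\) meets the nilpotent factor \(J\)) is closer to a viable direct argument, but you would need to actually produce the obstruction for \(n \ge 3\); invoking \cref{lem:Ann(g)V=0} is not obviously relevant, since the issue is not the \(\fg\)-module structure of \(W\) but whether a Lagrangian splitting of \(\fg[x]/(x^n)\) into graph pieces can close under the bracket. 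For \(n = 2\), you also need to show that \(\alpha_0 \neq 0\) obstructs the existence of \(W\), which you state but do not argue.

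Your uniqueness sketch for part (2) is too thin. The lemma preceding \cref{prop:manin_pairs} only says that each \(\varphi\) yields \emph{some} sequence \(\beta\); it does not record injectivity. What you actually need is that \(A(n,0)^{(\varphi)} \sim A(n,0)\) forces \(\varphi = \id\). This does hold --- the algebra automorphism \(T\) in the definition of \(\sim\) must be the identity (it fixes the diagonal \(F[\![x]\!]\), hence each factor), the scalar \(\xi\) is pinned to \(1\) by evaluating at \(x^{n-1}\), and then the conditions \(\tOp_0(\varphi(x)^{-k}) = 0\) for \(k \ge 2\) determine the coefficients of \(\varphi\) inductively --- but this argument needs to be written out, not pointed at.
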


Let us note that there is no ambiguity in the topologies of the respective objects according to the following result.
\begin{lemma}%
\label{lem:limitations_topoology}
Let \(\delta_1\) and \(\delta_2\) be two topological Lie bialgebra structures
on \(\fg[\![x]\!]\). If there is a Lie algebra isomorphism \(\varphi\) between
the corresponding topological doubles
\(\fg[\![x]\!] \add \fg\{x\}_1\) and \(\fg[\![x]\!] \add \fg\{x\}_2\)
that is identical on \(\fg[\![x]\!]\) and intertwines the forms, 
then \(\varphi\) is a homeomorphism.
\end{lemma}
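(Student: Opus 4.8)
The plan is to reduce everything to the observation that the topology of each double $\fD_k = \fg[\![x]\!] \add \fg\{x\}_k$ ($k=1,2$) is completely determined by its $\fg[\![x]\!]$-part, on which $\varphi$ acts as the identity. First I would recall, exactly as in the proof of \cref{lem:top_bialg_gives_manin_triple_taylor}, that since $\fg\{x\}$ carries the discrete topology, a fundamental system of neighbourhoods of $0$ in each $\fD_k$ is given by the subspaces $\{x^n\fg[\![x]\!]\}_{n\ge 0}$, viewed inside the common subspace $\fg[\![x]\!]$. In particular both doubles carry literally the same family of basic $0$-neighbourhoods, all contained in $\fg[\![x]\!]$; the differing Lie brackets (encoded by $\delta_1,\delta_2$) and the forms play no role in the topology.

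Next I would record two elementary consequences of $\varphi$ being a bijection that restricts to the identity on $\fg[\![x]\!]$. On the one hand $\varphi$ fixes every element of $x^n\fg[\![x]\!] \subseteq \fg[\![x]\!]$, so $\varphi(x^n\fg[\![x]\!]) = x^n\fg[\![x]\!]$ for each $n$. On the other hand, since $\varphi$ maps $\fg[\![x]\!]$ onto $\fg[\![x]\!]$ identically and is bijective on the whole double, its inverse $\varphi^{-1}$ is again the identity on $\fg[\![x]\!]$, and hence also satisfies $\varphi^{-1}(x^n\fg[\![x]\!]) = x^n\fg[\![x]\!]$.

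Finally I would invoke the standard criterion that a linear map between linearly topologized vector spaces is continuous if and only if the preimage of each member of a fundamental system of $0$-neighbourhoods contains a $0$-neighbourhood. For $\varphi$ the preimage $\varphi^{-1}(x^n\fg[\![x]\!])$ contains $x^n\fg[\![x]\!]$ by the previous step, so $\varphi$ is continuous; the identical argument applied to $\varphi^{-1}$ shows $\varphi^{-1}$ is continuous. Together these yield that $\varphi$ is a homeomorphism.

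The only genuine point requiring care, and hence the \emph{main obstacle} such as it is, is the first step: one must verify that the topology on a topological double really is the one with $0$-neighbourhood basis $\{x^n\fg[\![x]\!]\}$, i.e.\ that the discrete factor $\fg\{x\}$ contributes nothing and that no finer topology is intended. Once this is settled the statement is purely formal; in particular the hypotheses that $\varphi$ be a Lie-algebra isomorphism and intertwine the forms are not actually needed for the topological conclusion, only that $\varphi$ is a linear bijection fixing $\fg[\![x]\!]$ pointwise.
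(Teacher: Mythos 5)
Your proposal is correct and takes essentially the same approach as the paper's proof: both arguments rest on the fact that the topology of each double is the product topology in which \(\fg\{x\}_k\) is discrete, so that \(\{x^n\fg[\![x]\!]\}_{n\ge 0}\) is a fundamental system of \(0\)-neighbourhoods common to both doubles, sitting inside \(\fg[\![x]\!]\) and fixed pointwise by \(\varphi\). The only difference is packaging: you reduce to continuity at \(0\) of \(\varphi\) and \(\varphi^{-1}\) via translation invariance, while the paper verifies directly that the two product topologies induced by the decompositions \(\fg[\![x]\!]\add\fg\{x\}_1\) and \(\fg[\![x]\!]\add\varphi\left(\fg\{x\}_2\right)\) have the same open sets --- the same computation in a different wrapper.
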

\begin{proof}
Let \(\fD\) be the topological double of \(\delta_1\).
As a Lie algebra it has two decompositions into Lagrangian Lie subalgebras
coming from \(\delta_1\) and \(\delta_2\)
\[
\fD = \fg[\![x]\!] \add \fg\{x\}_1 = \fg[\![x]\!] \add \varphi\left(\fg\{x\}_2\right).
\]
These decompositions give rise to two different product topologies generated by
\begin{align*}
\fT_1 &\coloneqq \left\{ U \times \{f\} \mid U \text{ is open in } \fg[\![x]\!] \text{ and } f \in \fg\{x\}_1 \right\}, \\
\fT_2 &\coloneqq \left\{ U \times \{g\} \mid U \text{ is open in } \fg[\![x]\!] \text{ and } g \in \varphi\left(\fg\{x\}_2\right) \right\}.
\end{align*}
For any \(U \times \{f\} \in \fT_1 \) we can find \(n \in \mathbb{Z}_+\),
\(u, u' \in \fg[\![x]\!]\) and \(g' \in \varphi\left(\fg\{x\}_2\right)\)
such that \(u + x^n \fg[\![x]\!] \subseteq U\) and \(f = u' + g'\).
Then \((u + u' + x^n \fg[\![x]\!]) \times \{ g' \} \) is an element in \(\fT_2\)
that is contained in \(U \times \{f\} \).
Similarly, for any element \(V\) in \(\fT_2\) there is an element in \(\fT_1\) contained in \(V\).
Therefore, the topologies generated by \(\fT_1 \) and \(\fT_2\)
are equal.
\end{proof}

\begin{remark}\label{rem:four_doubles}
Combining \cref{lem:limitations_topoology,thm:limitations_on_n} we see that
for any topological Lie bialgebra \( (\fg[\![x]\!], \delta) \) with the
topological double \( \fD(\fg[\![x]\!], \delta) \)
there exist a scalar \( \xi \in F^{\times} \) and either
\begin{itemize}
\item An \( F[\![x]\!] \)-linear isomorphism of Lie algebras
\( \fD(\fg[\![x]\!], \xi \delta) \cong \fg \ot A(\infty) \) identical on \(\fg[\![x]\!]\)
and intertwining the corresponding forms
or
\item An integer \( 0 \le n \le 2 \), a series (change of variable)
\( \varphi \in \Aut_0 F[\![x]\!] \)
and an \( F[\![x]\!] \)-linear isomorphism of Lie algebras 
\( \fD(\fg[\![x]\!], \xi \delta) \cong \fg \ot A(n, 0)^{(\varphi)} \)
identical on \(\fg[\![x]\!]\) and
intertwining the corresponding forms.
\end{itemize}
Moreover, there are unique (independent of \(\delta\) and \(\xi\))
topologies on \(\fg \ot A(\infty)\)
and \( \fg \ot A(n, 0)^{(\varphi)} \) making the above-mentioned
isomorphisms into homeomorphisms. 
We will see in
\cref{subsec:realization_of_doubles_polynomials} 
that all these Lie algebras with forms
can indeed be realized as topological doubles
of some topological Lie bialgebra structures on 
\( \fg[\![x]\!] \).
\end{remark}

\section{Topological twists of \( (\fg[\![x]\!], \delta) \)}%
\label{sec:topological_twists}
Given a classical Lie bialgebra structure \( \delta \colon L \to L \ot L \)
on a Lie algebra \( L \) we can obtain new Lie bialgebra structures
by means of so called twisting. More precisely, any skew-symmetric
tensor \( s \in L \ot L \), satisfying 
\begin{equation}%
\label{eq:extra_twist_condition}
    \textnormal{CYB}(s) = \textnormal{Alt}((\delta \ot 1)s),
\end{equation}
gives rise to a twisted Lie bialgebra structure \( \delta_s \coloneqq \delta + ds \) on \( L \).
Here \( ds(a) \coloneqq [a \ot 1 + 1 \ot a, s] \), \(\textnormal{Alt}\) was defined in \cref{eq:key_maps}
and
\begin{equation}
    \textnormal{CYB}(s) \coloneqq [s^{12},s^{13}] + [s^{12},s^{23}] + [s^{13},s^{23}],
\end{equation}
is the classical Yang-Baxter equation,
where e.g.\ \([(a \otimes b)^{13},(c\otimes d)^{23}] = a \otimes c \otimes [b,d]\).
Since the classical doubles for \( \delta_s \) and \( \delta \) are equal
this procedure can be used for classification of Lie bialgebra structures
within a fixed double \( \fD \).

We now adopt this procedure to our topological
setting by introducing topological twists
for a topological Lie bialgebra
structure on \( \fg[\![x]\!] \).
These are just certain elements in
\( (\fg \ot \fg)[\![x,y]\!] \)
satisfying a condition similar to
\cref{eq:extra_twist_condition}. 
In subsequent sections, 
by classifying topological twists,
we obtain
a full description of all topological
Lie bialgebra structures \(\delta\) on \( \fg[\![x]\!] \) with
doubles \( \fD(\fg[\![x]\!], \delta) \cong \fg \otimes A(i-1,0)\), where \(i \in \{ 1, 2, 3 \} \).
\subsection{Topological twists}\label{subsec:topological_twists}

We say that an element \( s \in \fg[\![x]\!] \fot \fg[\![y]\!] = \fg[\![x]\!] \uot \fg[\![y]\!] = (\fg \ot \fg)[\![x,y]\!] \)
is a \emph{topological twist} of a topological Lie bialgebra \( (\fg[\![x]\!], \delta) \)
if
\begin{enumerate}
\item \( s + \overline{\tau}(s) = 0 \);
\item \( \overline{\textnormal{CYB}}(s) = \overline{\textnormal{Alt}}((\delta \fot 1)s) \).
\end{enumerate}

\begin{remark}
For a topological Lie algebra \(L\) the function \( \overline{\textnormal{CYB}} \) is the unique continuous extension
of the continuous composition \[ L \ot L \xrightarrow{\textnormal{CYB}} L \ot L \ot L \xhookrightarrow{\phantom{\textnormal{CYB}}} L \fot L \fot L.  \] 
To see that \( \textnormal{CYB} \) is continuous, 
note that e.g.\ \(s \mapsto [s^{12},s^{13}]\) is the composition 
of the diagonal map 
\(L \otimes L \to (L \otimes L) \times (L \otimes L)\), 
the canonical map \((L \otimes L) \times (L \otimes L) \to L^{\otimes 4}\), 
the endomorphism of \(L^{\otimes 4}\),
which switches the second and third factors,
and \([\cdot \, ,\cdot] \otimes 1 \otimes 1 \colon L^{\otimes 4} \to L^{\otimes 3}\). All of these maps are continuous, so \(s \mapsto [s^{12},s^{13}]\) is too. Similar arguments apply to \(s \mapsto [s^{12},s^{23}]\) and \(s \mapsto [s^{13},s^{23}]\).

\end{remark}

Let \( s \in (\fg \ot \fg)[\![x,y]\!] \) be a topological twist of \( \delta \). Define the linear map
\( ds \colon \fg[\![x]\!] \to (\fg \ot \fg)[\![x,y]\!] \) by 
\( f \mapsto [f \fot 1 + 1 \fot f,s]. \)
As in the classical case the linear functional 
\( \delta_s \coloneqq \delta + ds \) is again a topological Lie bialgebra 
structure on \( \fg[\![x]\!] \).

We say that two topological twists \( s_1 \) and \( s_2 \)
of \( \delta \) are \emph{formally isomorphic}, if there is an
\( F[\![x]\!] \)-linear isomorphism \( \phi \)
of topological Lie bialgebras \( (\fg[\![x]\!], \delta + ds_1) \)
and \( (\fg[\![x]\!], \delta + ds_2) \).

\begin{remark}
Observe, that by 
\cref{thm:splitting_aut}
any \(F\)-linear automorphism
of \( \fg[\![x]\!] \) is a composition
of an \( F[\![x]\!] \)-linear
automorphism of \( \fg \) and 
an \( F \)-linear automorphism of
\( F[\![x]\!] \). 
It is not hard to prove that any
\( F \)-linear automorphism of
\( F[\![x]\!] \) is given by
\begin{equation}%
\label{eq:coordinate_transformation_mapping}
x \longmapsto a_1 x + a_2 x^2 + \dots
\end{equation}
for some non-zero \(a \in F \).
These observations immediately imply
that
\begin{enumerate}
    \item Any automorphism of \( \fg[\![x]\!] \) is automatically
    continuous with respect to the \((x)\)-adic topology as a composition of continuous automorphisms and
    
    \item Applying coordinate
    transformations \cref{eq:coordinate_transformation_mapping}
    and scaling to the
    isomorphism classes of topological
    twists, we obtain all possible
    Lie bialgebra structures on
    \( \fg[\![x]\!] \).
\end{enumerate}
Therefore,
we have reduced the classification of
topological Lie bialgebras
to the classification of
topological twists.
\end{remark}


\subsection{Lagrangian Lie subalgebras}
Topological twists of \( \delta \)
are closely related to
Lagrangian Lie subalgebras of \( \fD(\fg[\![x]\!], \delta) \). 
The following
theorem describes this relation.

\begin{theorem}%
\label{thm: W <-> t <-> T correspondence}
  Let \( (\fg[\![x]\!], \delta) \) be a topological Lie bialgebra with the
  classical double \( \fD \).
  Then there are the following one-to-one correspondences:
  
 \begin{tikzpicture}[every node/.style={inner sep=.15cm,outer sep=0cm}]
  \node[] (A) {\mlnode{
    Topological twists of \( \delta \), i.e. \\
    skew-symmetric tensors \( s \in \fg[\![x]\!] \fot \fg[\![y]\!] \) \\
    satisfying \( 
    \overline{\textnormal{CYB}}(s)
    =
    \overline{\textnormal{Alt}}\left((\delta \fot 1)s\right) \)
  }};
  \node[below left=1cm and -2cm of A] (B) {\mlnode{
    Lagrangian \\
    Lie subalgebras \( W \subseteq \fD \) \\
    complementary to \( \fg[\![x]\!] \)
  }};
    \node[below right=0.8cm and -2cm of A] (C) {\mlnode{
    Linear maps 
    \( T \colon \fg\{ x \} \longrightarrow \fg[\![x]\!] \) such that \\
    for all \( p_1, p_2, p_3 \in \fg\{ x \} \) \\
    \( B(Tp_1, p_2) + B(p_1, Tp_2) = 0 \) and \\
    \( B([Tp_1 - p_1, Tp_2 - p_2], Tp_3 - p_3) = 0 \) 
  }};
  
\draw[<->] (A) edge (B) (B) edge (C) (C) edge (A);
\end{tikzpicture}  
\end{theorem}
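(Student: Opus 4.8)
The plan is to route everything through the observation that a subspace $W \subseteq \fD = \fg[\![x]\!] \add \fg\{x\}$ complementary to $\fg[\![x]\!]$ is the same thing as the graph of a linear map. Concretely, the projection $\fD \to \fg\{x\}$ along $\fg[\![x]\!]$ restricts to a linear isomorphism $W \xrightarrow{\sim} \fg\{x\}$, so there is a unique linear map $T \colon \fg\{x\} \to \fg[\![x]\!]$ with $W = \{\, Tp - p \mid p \in \fg\{x\} \,\}$; the assignment $W \mapsto T$ is a bijection between complements of $\fg[\![x]\!]$ and \emph{arbitrary} linear maps $\fg\{x\} \to \fg[\![x]\!]$ (no continuity is required, since $\fg\{x\}$ is discrete). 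The correspondence $(B) \leftrightarrow (C)$ then amounts to matching the two defining properties of $W$ with the two conditions on $T$.

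For the form condition I would use that $\fg[\![x]\!]$ is Lagrangian and $\fg\{x\}$ isotropic, so that on $\fg[\![x]\!] \times \fg\{x\}$ the form $B$ is the canonical pairing and $B(Tp - p, Tq - q) = -\big(B(Tp,q) + B(p,Tq)\big)$. Hence $W$ is isotropic iff $T$ is skew-adjoint, and, because an isotropic complement of a Lagrangian subspace is forced by non-degeneracy of $B$ to equal its own orthogonal, an isotropic $W$ is automatically Lagrangian. Once $W = W^\perp$, the subalgebra condition $[W,W] \subseteq W$ is equivalent to $B([W,W],W) = 0$; evaluating on $w_i = Tp_i - p_i$ gives exactly the cubic identity $B([Tp_1 - p_1, Tp_2 - p_2], Tp_3 - p_3) = 0$ of $(C)$.

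For $(A) \leftrightarrow (C)$ I would turn a skew tensor into a skew-adjoint operator by contraction. Using the identifications of \cref{sec:tensor_prod_topology} (in particular \cref{eq:two_topologies_coincide} and $\fg[\![x]\!]' = \fg\{x\}$), pairing the second leg of $s \in \fg[\![x]\!] \fot \fg[\![y]\!]$ against $p \in \fg\{x\}$ through $B$ gives a well-defined map $T_s \colon \fg\{x\} \to \fg[\![x]\!]$, and $s + \overline{\tau}(s) = 0$ translates into skew-adjointness of $T_s$; conversely a skew-adjoint $T$ reconstructs $s$, so $s \leftrightarrow T$ is a bijection refining the finite-dimensional isomorphism between skew tensors and skew maps $\dual L \to L$. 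The heart of the matter is to show that under this bijection the twist equation $\overline{\textnormal{CYB}}(s) = \overline{\textnormal{Alt}}\big((\delta \fot 1)s\big)$ is equivalent to the same cubic identity appearing in $(C)$. I would prove this by pairing both sides against an arbitrary $p_1 \ot p_2 \ot p_3 \in \fg\{x\}^{\ot 3}$ via $B^{\fot 3}$, using \cref{rem:delta_as_dual_of_cobracket} to realize $\delta$ as the dual of the bracket of $\fg\{x\}$ and the mixed bracket \cref{eq:Lie_bracket_topological_double} to feed $\delta$ and $\ad$ into the computation. Expanding $B([Tp_1 - p_1, Tp_2 - p_2], Tp_3 - p_3)$ into its eight terms and separating the purely-$T$ part (which reproduces $\overline{\textnormal{CYB}}(s)$ paired with $p_1 \ot p_2 \ot p_3$) from the mixed part (which reproduces $\overline{\textnormal{Alt}}((\delta \fot 1)s)$) then yields the equivalence term by term, exactly as in \cite[Theorem 2.4]{Raschid_Stepan_classical_twists}.

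I expect the main obstacle to be the topological bookkeeping rather than this eight-term algebra, which is formally identical to the classical case. The points needing care are: that the contraction $s \mapsto T_s$ is genuinely well-defined on the completed tensor product and surjects onto all linear maps $\fg\{x\} \to \fg[\![x]\!]$; that pairing against $\fg\{x\}^{\ot 3}$ separates points of $\fg[\![x]\!] \fot \fg[\![x]\!] \fot \fg[\![x]\!]$, so that the equality of the two tensor identities may legitimately be tested against functionals; and that each of the extended maps $\overline{\textnormal{CYB}}$, $\overline{\textnormal{Alt}}$ and $\delta \fot 1$ is being applied on the completions where it is defined. All three reduce to the identifications $\fot \cong \uot$ and $\dual V \fot \dual W \cong \dual{(V \ot W)}$ established in \cref{sec:tensor_prod_topology}.
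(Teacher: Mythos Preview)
Your proposal is correct and follows the same route as the paper, which likewise defers the eight-term algebra to \cite[Theorem 2.4]{Raschid_Stepan_classical_twists} and \cite[Theorem 7]{Karolinsky_Stolin} and identifies the topological bookkeeping as the only new ingredient. The paper spends its entire proof on the one step you dispatch by the abstract duality $\dual{V}\fot\dual{W}\cong\dual{(V\ot W)}$---namely reconstructing $s$ from $T$---and gives instead an explicit formula $s = -\sum_i Tp_i \ot Tv_i$ built from a carefully ordered basis of $\im(T)$ so that only finitely many terms contribute in each degree, thereby exhibiting convergence in $(\fg\ot\fg)[\![x,y]\!]$ by hand.
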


\begin{proof}
The proof is done by repeating the arguments in
\cite[Theorem 2.4]{Raschid_Stepan_classical_twists}
and \cite[Theorem 7]{Karolinsky_Stolin} within our topological setting.
The only step that deserves a clarification is the construction of a
topological twist \( s \)
from a linear map \( T \colon \fg\{ x \} \to \fg[\![x]\!] \).

Let \( \{ Tp_i \}_{i \ge 1} \) be a basis for \( \im(T) \).
We write \( \textnormal{mindeg}(Tp_i) \) for the minimal exponent of \( x \)
contained in \( Tp_i \) and \( (Tp_i)_m \) for the \( m \)-th degree part of \( Tp_i \),
i.e.\ \( (\sum_{i \ge 0} a_i \ot x^i)_m = a_m \ot x^m \).
Since a basis of \( \im(T) \) is at most countable, we can without loss of generality assume
that for any non-negative integer \( m \) there is a non-negative integer \( N_m \)
such that
\[ \begin{cases}
\textnormal{mindeg}(Tp_i) \le m & i \le N_m; \\
\textnormal{mindeg}(Tp_i) > m & i > N_m; \\
\{ (Tp_i)_m \mid N_{m-1} < i \le N_m \} & \text{is a linearly independent set.}
\end{cases} \]

Now let us construct a dual set \( \{ v_i \} \subseteq \fg\{ x \} \).
For each \( m \ge 0 \) and \( N_{m-1} < i \le N_{m} \)
define \( v_i' \coloneqq a_i x^m \),
where \( a_i \in \fg \) are chosen in such a way that 
\[ B(Tp_j, v_i') = \begin{cases}
1 & N_{m-1} < i = j \le N_{m}; \\
0 & N_{m-1} < i \neq j \le N_{m}.
\end{cases} \]
For each \( m \ge 0 \) and \( N_{m-1} < i \le N_{m} \)
we recursively define \[ v_i \coloneqq v'_i - \sum_{k = 1}^{N_{m-1}} B(Tp_k, v_i')v_k. \]
The corresponding twist is then defined as
\( s \coloneqq - \sum_{i} Tp_i \ot Tv_i \). Since for any 
\( m \in \mathbb{Z}_{\ge 0} \) there is at most \( N_m \) elements
\( Tp_i \) containing \( x^m \) this is a well-defined
element of \( (\fg \ot \fg)[\![x,y]\!] \).
Moreover, we have
\[ Tp_k = \sum_{i} B(v_i, Tp_k)Tp_i = -\sum_{i} B(Tv_i, p_k) Tp_i \]
for all \( k \). Because \( T \) is completely defined by its action
on \( \{ p_i \} \) we have the equality
\begin{equation*}
    T = -\sum_{i} B(Tv_i, \cdot) Tp_i.
\end{equation*} 
\end{proof}

\begin{remark}\label{rem:construction_of_Ws}
Let \(s = \sum_{k = 0}^\infty \sum_{\alpha = 1}^n s_{k,\alpha} \otimes I_\alpha y^k \in (\fg \otimes \fg)[\![x,y]\!]\) be a classical twist of \(\delta\), for some basis \(\{I_\alpha\}_1^n\) of \(\fg\). The corresponding 
\(T \colon \fg\{x\} \to \fg[\![x]\!]\) is defined by \begin{equation}
Tw \coloneqq \sum_{k = 0}^\infty \sum_{\alpha = 1}^n B(w,I_\alpha x^k)s_{k,\alpha},    
\end{equation}
where \(B\) is the bilinear form of \(\fD(\fg[\![x]\!],\delta)\). The Lagrangian Lie subalgebra associated to \(s\) is given by 
\begin{equation}
    W \coloneqq \{Tw-w\mid w \in \fg\{x\}\}.
\end{equation}
\end{remark}

We say that two topological Lie bialgebras
\( (\fg[\![x]\!], \delta) \) and \( (\fg[\![x]\!], \tilde{\delta}) \)
are \emph{in the same twisting class} if there is an \( F[\![x]\!] \)-linear isomorphism of
topological Lie algebras \( \varphi \colon \fD(\fg[\![x]\!], \delta) \to \fD(\fg[\![x]\!], \tilde{\delta}) \)
intertwining the corresponding forms and such that the diagram
\[
\begin{tikzcd}
 & \fg[\![x]\!] \arrow[hookrightarrow]{dr} \arrow[hookrightarrow]{dl} \\
\fD(\fg[\![x]\!], \delta) \arrow{rr}[swap]{\varphi} && \fD(\fg[\![x]\!], \tilde{\delta})
\end{tikzcd}
\]
commutes.
It is clear that if \( (\fg[\![x]\!], \delta) \) and \( (\fg[\![x]\!], \tilde{\delta}) \)
are in the same twisting class, then \( \delta \) is completely 
determined by a Lagrangian topological Lie subalgebra
\( W \coloneqq \varphi(\fg\{ x \}) \subseteq \fD(\fg[\![x]\!], \tilde{\delta}) \).
Combining
\cref{thm: W <-> t <-> T correspondence,rem:delta_as_dual_of_cobracket}, 
we get the following statement.
\begin{lemma}
  If \( (\fg[\![x]\!], \delta) \) and \( (\fg[\![x]\!], \tilde{\delta}) \) are in the same
  twisting class, then there exists a topological
  twist \( s \in (\fg \ot \fg)[\![x,y]\!] \)
  of \( \delta \) such
  that \( \tilde{\delta} = \delta + ds \).
  Conversely, if \( \tilde{\delta} = \delta + ds \),
  for a topolgoical twist \( s \),
  then \( \fD(\fg[\![x]\!], \delta) = \fD(\fg[\![x]\!], \tilde{\delta}) \).
\end{lemma}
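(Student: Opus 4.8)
The plan is to prove both implications by shuttling between the three equivalent pictures supplied by \cref{thm: W <-> t <-> T correspondence} --- topological twists, complementary Lagrangian subalgebras, and skew maps \( T \) --- and the reconstruction of a cobracket from a Manin triple recorded in \cref{rem:delta_as_dual_of_cobracket}. The decisive fact is that an isomorphism of topological Manin triples which restricts to the \emph{identity} on \( \fg[\![x]\!] \) (and preserves \( B \)) forces the two defined bialgebra structures to coincide, not merely to be isomorphic; this is exactly the content of the remark following \cref{rem:delta_as_dual_of_cobracket}.

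For the forward implication, assume \( (\fg[\![x]\!], \delta) \) and \( (\fg[\![x]\!], \tilde\delta) \) lie in the same twisting class, witnessed by an \( F[\![x]\!] \)-linear isomorphism \( \varphi \colon \fD(\fg[\![x]\!], \delta) \to \fD(\fg[\![x]\!], \tilde\delta) \) that fixes \( \fg[\![x]\!] \) pointwise and intertwines the forms. First I would set \( W \coloneqq \varphi^{-1}(\fg\{ x \}) \subseteq \fD(\fg[\![x]\!], \delta) \). Since \( \varphi^{-1} \) is a Lie algebra isomorphism fixing \( \fg[\![x]\!] \) and preserving \( B \), and \( \fg\{ x \} \) is a Lagrangian Lie subalgebra of \( \fD(\fg[\![x]\!], \tilde\delta) \) complementary to \( \fg[\![x]\!] \), the subspace \( W \) inherits all three properties inside \( \fD(\fg[\![x]\!], \delta) \); in particular the choice of \( \varphi^{-1} \) (rather than \( \varphi \)) is what produces a twist of \( \delta \) directly. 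Applying \cref{thm: W <-> t <-> T correspondence} to \( \delta \) then yields a topological twist \( s \) of \( \delta \) whose associated structure \( \delta + ds \) is the one defined by the Manin triple \( (\fD(\fg[\![x]\!], \delta), \fg[\![x]\!], W) \). Finally, \( \varphi \) restricts to an isomorphism of this Manin triple onto \( (\fD(\fg[\![x]\!], \tilde\delta), \fg[\![x]\!], \fg\{ x \}) \) that is the identity on \( \fg[\![x]\!] \); since \cref{rem:delta_as_dual_of_cobracket} recovers the cobracket from \( B^{\fot 2}(\delta(x), y \ot z) = B(x, [y,z]) \) and \( \varphi \) fixes each \( x \) while preserving \( B \) and transporting brackets, this recovery returns the same map on both sides, so \( \delta + ds = \tilde\delta \).

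For the converse I would run the dictionary in reverse. Given a topological twist \( s \) of \( \delta \) with \( \tilde\delta = \delta + ds \), \cref{thm: W <-> t <-> T correspondence} attaches to \( s \) a Lagrangian Lie subalgebra \( W \subseteq \fD(\fg[\![x]\!], \delta) \) complementary to \( \fg[\![x]\!] \), and \( \tilde\delta \) is precisely the structure defined by \( (\fD(\fg[\![x]\!], \delta), \fg[\![x]\!], W) \). Viewing \( W \) with the discrete topology --- legitimate because \( B \) identifies \( W \) with the discrete space \( \fg[\![x]\!]' = \fg\{ x \} \) --- this triple meets the hypotheses of \cref{rem:doubles_in_series_case}, which then identifies it, through an isomorphism that is the identity on \( \fg[\![x]\!] \) and preserves \( B \), with \( (\fD(\fg[\![x]\!], \tilde\delta), \fg[\![x]\!], \fg\{ x \}) \). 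Such an isomorphism is exactly a witness that the two doubles coincide as twisting classes, which is the assertion \( \fD(\fg[\![x]\!], \delta) = \fD(\fg[\![x]\!], \tilde\delta) \).

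The single point that genuinely demands care --- and which I expect to be the main obstacle --- is the compatibility invoked in both directions: that the structure \( \delta + ds \) coming from a twist \( s \) is \emph{literally} the structure defined by the corresponding Manin triple \( (\fD(\fg[\![x]\!], \delta), \fg[\![x]\!], W) \). This is embedded in the proof of \cref{thm: W <-> t <-> T correspondence}, but I would make explicit the bookkeeping showing that the identity-on-\( \fg[\![x]\!] \) normalization survives every passage between twists, Lagrangians, and triples, since it is this normalization --- not a bare isomorphism --- that pins the two cobrackets together and makes the two implications exact converses.
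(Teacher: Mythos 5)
Your proposal is correct and takes essentially the same approach as the paper: the paper obtains this lemma precisely by combining \cref{thm: W <-> t <-> T correspondence} with \cref{rem:delta_as_dual_of_cobracket} (and \cref{rem:doubles_in_series_case} for the converse), which is exactly the dictionary you spell out, including the key normalization that an isomorphism of Manin triples which is the identity on \( \fg[\![x]\!] \) forces the defined cobrackets to coincide rather than merely be isomorphic. The only cosmetic difference is directional: you pull \( \fg\{x\} \) back via \( \varphi^{-1} \) to get a Lagrangian complement in \( \fD(\fg[\![x]\!],\delta) \) and hence a twist of \( \delta \) directly, whereas the paper pushes \( \fg\{x\} \) forward via \( \varphi \) into \( \fD(\fg[\![x]\!],\tilde{\delta}) \).
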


Let \( \phi \) be an \( F[\![x]\!] \)-linear
automorphism of \( \fg[\![x]\!] \),
then \( (\phi \times [\phi]) \)
is a Lie algebra automorphism of
\( \fg \ot A(n, 0) = \fg(\!(x)\!) \times \fg[x]/x^n \fg[x] , n \ge 0\).
Here we use the same
notation \( \phi \) for the unique extension
of \( \phi \) to an automorphism of \( \fg(\!(x)\!) \)
and
\( [\phi] \) for the induced
map on \( \fg[x]/x^n \fg[x] \)
given by
\( [a \ot x^k] \mapsto [\phi(a) \ot x^k] \).
We call two Lagrangian Lie subalgebras
\( W \text{ and } \widetilde{W} \subset \fD(\fg[\![x]\!], \delta_i) \), \( i \in \{ 1,2,3 \} \),
\emph{formally isomorphic} if there
is an \( F[\![x]\!] \)-linear automorphism 
\( \phi \) of \( \fg[\![x]\!] \) such that
\begin{equation}
    \widetilde{W} = (\phi \times [\phi])(W).
\end{equation}

\section{Formal \(r\)-matrices}%
\label{sec:formal_r_matrices}

In this section we show that there is a deep connection between topological Lie bialgebra
structures on \(\fg[\![x]\!]\) and formal \(r\)-matrices leading to important observations
for both structures.
For instance, we use formal \(r\)-matrices to
show that all topological Lie algebras with forms
described in \cref{rem:four_doubles} 
are obtainable as topological doubles of certain 
topological Lie bialgebra structures on \(\fg[\![x]\!]\). 
Furthermore, Lagrangian Lie subalgebras determined
by topological twists have a very useful description
in terms of formal \( r \)-matrices, 
which is used in the study of equivalences. 
On the other hand, the classification of topological
doubles, results in unexpected restrictions on the form of formal \(r\)-matrices. 

\subsection{Formal \(r\)-matrices and topological Lie bialgebra structures on \(\fg[\![x]\!]\).}
Let \( \{ I_\alpha \}_{1}^{n} \) be an orthonormal basis for
\( \fg \) with respect to the Killing form \( \kf \) on it. Recall that the quadratic Casimir element 
is defined as \(\Omega \coloneqq \sum_{\alpha = 1}^n I_\alpha \ot I_\alpha \in \fg \ot \fg\).
Consider the formal Yang's \( r \)-matrix
\begin{equation}
  r_{\textnormal{Yang}}(x,y) \coloneqq \frac{\Omega}{x-y} = \sum_{k = 0}^\infty \sum_{\alpha = 1}^n x^{-k-1} I_\alpha \ot y^k I_\alpha \in (\fg \ot \fg)(\!(x)\!)[\![y]\!].
\end{equation}
A series \( r \in (\fg \ot \fg)(\!(x)\!)[\![y]\!] \) of the form
\begin{equation}\label{eq:standard_form_formal_r_matrix}
  r(x,y) = s(y)r_{\textnormal{Yang}}(x,y) + g(x,y),
\end{equation}
where \( s \in F[\![y]\!] \) and \( g \in (\fg \ot \fg)[\![x,y]\!] \) is called a \emph{formal \( r \)-matrix}
if it solves the formal classical Yang-Baxter equation (CYBE)
\begin{equation*}
    \overline{\textnormal{CYB}}(r) \coloneqq [r^{12}(x_1, x_2), r^{13}(x_1, x_3)] +
    [r^{12}(x_1, x_2), r^{23}(x_2, x_3)] +
    [r^{13}(x_1, x_3), r^{23}(x_2, x_3)] = 0.
\end{equation*}
Here for instance
\begin{equation}
    [a^{12}(x_1,x_3),b^{23}(x_2,x_3)] \coloneqq \sum_{j,k = 0}^\infty \sum_{\alpha,\beta = 1}^n a_{j,\alpha}(x_1) \otimes [x_2^jI_\alpha, b_{k,\beta}(x_2)] \otimes I_\beta x_3^k
\end{equation}
for all \(a = \sum_{j = 0}^\infty \sum_{\alpha= 1}^n a_{j,\alpha}(x) \otimes y^jI_\alpha\) and \( b = \sum_{j = 0}^\infty \sum_{\alpha= 1}^n b_{j,\alpha}(x) \otimes y^jI_\alpha \) in \( (\fg \ot \fg)(\!(x)\!)[\![y]\!]\).
The other two commutators are defined in a similar way. 
As the notation suggests, \(\overline{\textnormal{CYB}}\) coincides with the map defined in \cref{sec:topological_twists} for \(r \in (\fg \otimes \fg)[\![x,y]\!]\).

\begin{remark}
We can equivalently define a 
formal \( r \)-matrix as a
series
\[
  r(x,y) = s(x,y)r_{\textnormal{Yang}}(x,y) + g(x,y) \in (\fg \ot \fg)(\!(x)\!)[\![y]\!],
\]
where \( s \in F[\![x,y]\!] \) and \( g \in (\fg \ot \fg)[\![x,y]\!] \). The equivalence of two
definitions follows from
the equality 
\begin{equation}
    r(x,y) = s(y,y) r_{\textnormal{Yang}}(x,y)  +\underbrace{\frac{s(x,y)-s(y,y)}{x-y}}_{\in F[\![x,y]\!]} \Omega + g(x,y).
\end{equation}
\end{remark}

The connection between formal \(r\)-matrices and topological Lie bialgebra structures is based on the following observation.

\begin{lemma}\label{lem:rmat_defines_delta}
For any formal \(r\)-matrix \( r \in (\fg \ot \fg)(\!(x)\!)[\![y]\!] \) the formula
\begin{equation}\label{eq:standard_bialgebra_def}
  dr(f) \coloneqq [f \fot 1 + 1 \fot f, r]
\end{equation}
defines a topological Lie bialgebra structure 
\( dr \) on \(\fg[\![x]\!]\).
\end{lemma}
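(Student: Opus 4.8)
The plan is to check, one condition at a time, the requirements that turn a linear map $\fg[\![x]\!] \to (\fg \ot \fg)[\![x,y]\!]$ into a topological Lie bialgebra structure: that $dr$ is well defined with image inside $(\fg \ot \fg)[\![x,y]\!]$ (and not merely inside the larger space $(\fg \ot \fg)(\!(x)\!)[\![y]\!]$ where $r$ itself lives), that it is continuous, that $(dr)'$ is a Lie bracket on $\fg\{x\}$ (equivalently, that $dr$ is co-skew and co-Jacobi), and that $dr$ satisfies the compatibility condition. The only genuinely new point compared with the classical theory is the first one, so I would begin there. Writing $r = s(y) r_{\textnormal{Yang}} + g$ with $g \in (\fg \ot \fg)[\![x,y]\!]$, the summand $[f \fot 1 + 1 \fot f, g]$ visibly stays in $(\fg \ot \fg)[\![x,y]\!]$, so everything reduces to the Yang term. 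There I would invoke invariance of $\Omega$ to obtain
\[
[f \fot 1 + 1 \fot f,\; s(y)\tfrac{\Omega}{x-y}] = \tfrac{s(y)}{x-y}\,(1 \fot \ad_{f(y)-f(x)})\Omega ,
\]
and then use $f(y) - f(x) \in (x-y)\fg[\![x,y]\!]$ to cancel the pole, leaving an element of $(\fg \ot \fg)[\![x,y]\!]$. This is the key computation.

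Next, the compatibility condition is essentially free: $dr$ is a coboundary for the adjoint action of $\fg[\![x]\!]$ on $(\fg \ot \fg)(\!(x)\!)[\![y]\!]$, namely $dr(f) = (\ad_f \fot 1 + 1 \fot \ad_f)r$, and every coboundary is a $1$-cocycle. Concretely, the identity $dr([f,g]) = [f \fot 1 + 1 \fot f, dr(g)] - [g \fot 1 + 1 \fot g, dr(f)]$ is just the module law $[f,g]\cdot r = f\cdot(g\cdot r) - g\cdot(f\cdot r)$ applied to $r$. This identity holds already in the ambient space where $r$ lives, and by the previous step both sides land in $(\fg \ot \fg)[\![x,y]\!]$, so it holds there. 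Continuity of $dr$ is then automatic from the inclusion $dr(x^n \fg[\![x]\!]) \subseteq (x,y)^{n-1}(\fg \ot \fg)[\![x,y]\!]$ that compatibility guarantees, as recorded earlier in the section.

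It remains to show $(dr)'$ is a Lie bracket, that is, that $dr$ is co-skew and co-Jacobi; both are the formal, continuous incarnations of classical $r$-matrix identities. For co-Jacobi I would establish the standard identity expressing $\overline{\textnormal{Alt}}((dr \fot 1)dr(f))$ as the adjoint action of $f$ on $\overline{\textnormal{CYB}}(r)$, so that $\overline{\textnormal{CYB}}(r) = 0$ forces the co-Jacobiator to vanish. For co-skewness I would use that $dr(f) + \overline{\tau}\,dr(f) = d(r + \overline{\tau} r)(f)$, reducing the claim to $d(r + \overline{\tau} r) = 0$; this holds because $\overline{\textnormal{CYB}}(r) = 0$ together with the Yang pole structure constrains the symmetric part $r + \overline{\tau} r$ to the invariant shape $\tfrac{\chi(x)-\chi(y)}{x-y}\Omega$, which is precisely the kernel of $d$ on symmetric tensors (here one uses $(\fg \ot \fg)^{\fg} = F\Omega$ and the triviality of the centre of $\fg$).

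The main obstacle, and the part deserving the most care, is transporting these classical finite-dimensional computations into the completed topological tensor products $(\fg \ot \fg)(\!(x)\!)[\![y]\!]$ and its triple analogue: one must check that the bracket, $\overline{\tau}$, $\overline{\textnormal{Alt}}$ and the action $f \mapsto \ad_f \fot 1 + 1 \fot \ad_f$ are continuous, so that identities verified on the dense polynomial subspace (where they reduce to the ordinary constant-coefficient $r$-matrix calculus) extend by continuity. The pole-cancellation step and the symmetric-part analysis feeding co-skewness are exactly where the special form $s(y)\Omega/(x-y) + g$ of a formal \rmt is used.
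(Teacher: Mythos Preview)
Your plan is sound on most points and matches the paper: the pole-cancellation for well-definedness, the cocycle observation for compatibility, and the identity expressing \(\overline{\textnormal{Alt}}((dr\fot 1)dr(f))\) as the action of \(f\) on \(\overline{\textnormal{CYB}}(r)\) for co-Jacobi are exactly what the paper does. The gap is in co-skewness. You correctly reduce to \(d(r + \overline{\tau}r) = 0\), but then claim that elements of the shape \(\tfrac{\chi(x)-\chi(y)}{x-y}\Omega\) lie in \(\ker d\). They do not: the kernel of \(d\) on \((\fg\ot\fg)[\![x,y]\!]\) is trivial. Taking \(f = a \in \fg\) forces any element of \(\ker d\) to be of the form \(h(x,y)\Omega\), and then \(f = ax\) gives
\[
[ax\fot 1 + 1\fot ay,\,h(x,y)\Omega] = (x-y)\,h(x,y)\,[a\ot 1,\Omega],
\]
which vanishes only for \(h = 0\) (the paper uses this very computation later, in the proof of \cref{thm:restriction_on_powers}). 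Your intuition comes from the finite-dimensional picture, where \(\ker d = (\fg\ot\fg)^{\fg} = F\Omega\); here the acting algebra is \(\fg[\![x]\!]\), and invariance under \(\ad_{ax^k}\) for \(k\ge 1\) is a genuine additional constraint that kills everything.

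So you must prove that the symmetric part of \(r\) actually vanishes, not merely that it is \(\fg\)-invariant. The paper does this by the Belavin--Drinfeld trick (adapting \cite[Proposition~4.1]{BD_first}): swap \(x_1\leftrightarrow x_2\) and apply \(\tau\ot 1\) to \(\overline{\textnormal{CYB}}(r)=0\), add the result to the original CYBE to obtain \([\,r^{12}-\overline{r}^{\,12},\,r^{13}+r^{23}\,]=0\) with \(\overline{r}(x,y)\coloneqq s(x)r_{\textnormal{Yang}}(x,y)-\tau g(y,x)\), multiply by \((x_1-x_3)\) and set \(x_1=x_3\) to extract \([\,r^{12}-\overline{r}^{\,12},\,\Omega^{13}\,]=0\), and conclude \(r=\overline{r}\) using \(\sum_\alpha \ad(I_\alpha)^2=\id\). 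This nontrivial use of the CYBE and the pole is the missing ingredient in your sketch.
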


\begin{proof} 
Consider a formal \( r \)-matrix
\(r(x,y) = s(y)r_{\textnormal{Yang}}(x,y) + g(x,y)\),
for some \( s \in F[\![y]\!] \) and \( g \in (\fg \ot \fg)[\![x,y]\!] \). First of all, observe that the invariance of \(\Omega\) implies that for all \(f = \sum_{j = 0}^\infty f_jx^j\in \fg[\![x]\!]\)
\begin{align*}
    [f\fot 1 + 1 \fot f,r_{\textnormal{Yang}}] = \sum_{j = 0}^\infty\Bigg( \underbrace{\frac{x^j-y^j}{x-y}}_{\in F[x,y]}[f_j\otimes 1,\Omega] + \frac{y^j}{x-y} \underbrace{[f_j \otimes 1 + 1 \otimes f_j,\Omega]}_{= 0}\Bigg)
\end{align*}
is an element of \((\fg \otimes \fg)[\![x,y]\!]\). Therefore, \(dr \colon \fg[\![x]\!] \to (\fg \otimes \fg)[\![x,y]\!]\) is well-defined.

Let us now prove that \( dr\) is skew-symmetric.
For that it is enough to prove
the skew-symmetry of \( r \).
The latter result is obtained by
repeating the arguments from 
the proof of
\cite[Proposition 4.1]{BD_first}. 
Namely, by definition \(\overline{\textnormal{CYB}}(r) \in (\fg \otimes \fg \otimes \fg)(\!(x_1)\!)(\!(x_2)\!)[\![x_3]\!]\).
Swapping variables \(x_1 \) and \(x_2\) 
and applying the \(F(\!(x_1)\!)(\!(x_2)\!)[\![x_3]\!]\)-linear extension of \(\tau \otimes 1\) to \(\overline{\textnormal{CYB}}(r)\) results in 
\begin{equation}
    -[\overline{r}^{12}(x_1, x_2), r^{23}(x_2, x_3)]
    -[\overline{r}^{12}(x_1, x_2), r^{13}(x_1, x_3)] +
    [r^{23}(x_2, x_3), r^{13}(x_2, x_3)] = 0,
\end{equation}
where \(\overline{r}(x,y) \coloneqq s(x)r_{\textnormal{Yang}}(x,y)-\tau g(y,x)\). Adding this to \(\overline{\textnormal{CYB}}(r)\) yields
\begin{equation}
    [r^{12}(x_1,x_2) - \overline{r}^{12}(x_1, x_2), r^{13}(x_1,x_3) + r^{23}(x_2, x_3)] = 0.
\end{equation}
Multiplying with \((x_1-x_3)\) and putting \(x_1 = x_3\) results in 
\begin{equation}
    [r^{12}(x_1,x_2) - \overline{r}^{12}(x_1, x_2),\Omega^{13}] = 0.
\end{equation}
Using the fact that \(\sum_{\alpha = 1}^n\textnormal{ad}(I_\alpha)^2 = 1\), we obtain \(r = \overline{r}\) by applying the map \(a \otimes b \otimes c \mapsto [a,c]\otimes b\) coefficientwise. 

It remains to prove the co-Jacobi identity for \(\delta\). 
For any \( f \in \fg[\![x]\!] \)
and \( a \in (\fg \ot \fg \ot \fg)(\!(x_1)\!)(\!(x_2)\!)[\![x_3]\!] \) we define 
\begin{equation}
    f \cdot a \coloneqq [f \fot 1 \fot 1 + 1 \fot f \fot 1 + 1 \fot 1 \fot f, a].
\end{equation}
Then 
\begin{equation}%
\label{eq:f_applied_to_r12_13}
\begin{aligned}
    f \cdot [r^{12}(x_1, x_2), r^{13}(x_1, x_3)] = \sum_{j,k = 0}^\infty \sum_{\alpha,\beta = 1}^n&\left( [f(x_1),[r_{j,\alpha}(x_1),r_{k,\beta}(x_1)]] \otimes x_2^jI_{\alpha} \otimes x_3^kI_{\beta} \right. \\&\left.+[r_{j,\alpha}(x_1),r_{k,\beta}(x_1)] \otimes [f(x_2),x_2^jI_{\alpha}] \otimes x_3^kI_{\beta} \right.\\&\left.+[r_{j,\alpha}(x_1),r_{k,\beta}(x_1)] \otimes x_2^jI_{\alpha} \otimes [f(x_3),x_3^kI_{\beta}] \right).
\end{aligned}
\end{equation}
Using the identity
\[
[f(x_1),[r_{j,\alpha}(x_1),r_{k,\beta}(x_1)]] = [[f(x_1),r_{j,\alpha}(x_1)],r_{k,\beta}(x_1)] + [r_{j,\alpha}(x_1),[f(x_1),r_{k,\beta}(x_1)]]
\] we can rewrite
\cref{eq:f_applied_to_r12_13}
in the form
\begin{equation}\label{eq:first_action}
    f \cdot [r^{12}(x_1, x_2), r^{13}(x_1, x_3)] = [dr(f)^{12}(x_1,x_2),r^{13}(x_1,x_3)] + [r^{12}(x_1,x_2),dr(f)^{13}(x_1,x_3)].
\end{equation}
Similarly, we have
\begin{equation}%
\label{eq:other_two_actions}
    \begin{aligned}
    f \cdot [r^{12}(x_1, x_2), r^{23}(x_2, x_3)] &= [dr(f)^{12}(x_1,x_2),r^{23}(x_2,x_3)] + [r^{12}(x_1,x_2),dr(f)^{23}(x_2,x_3)],\\
    f \cdot [r^{13}(x_1, x_3), r^{23}(x_2, x_3)] &= [dr(f)^{13}(x_1,x_3),r^{23}(x_2,x_3)] + [r^{13}(x_1,x_3),dr(f)^{23}(x_2,x_3)].
    \end{aligned}
\end{equation}
Summing up left and right 
sides of \cref{eq:other_two_actions,eq:first_action}
respectively and using
relations of the form
\begin{equation}
    [r^{12}(x_1,x_2),dr(f)^{13}(x_1,x_3) + dr(f)^{23}(x_2,x_3)] = - (dr \fot 1)dr(f)(x_1,x_2,x_3)
\end{equation}
we obtain the desired equality
\(
0 = f \cdot \overline{\textnormal{CYB}}(r) = 
   - \overline{\textnormal{Alt}}((dr \fot 1) dr(f))
\).
\end{proof}

\subsection{Formal
\( r \)-matrices
and Lagrangian Lie
subalgebras}%
\label{subsec:W_r_from_r}

Consider a formal
\( r \)-matrix 
of the form
\begin{equation}%
\label{eq:ym_r_matrix}
r(x,y) = 
\frac{y^m \Omega}{x-y} + \widetilde{g}(x,y)
= \sum_{k = 0}^\infty \sum_{\alpha = 1}^n x^{-k-1} I_\alpha \ot y^{k+m} I_\alpha  + \widetilde{g}(x,y),
\end{equation}
for some \( m \in \mathbb{Z}_{\ge 0} \)
and 
\(  \widetilde{g} \in (\fg \ot \fg)[\![x,y]\!] \). 
According to \cref{lem:rmat_defines_delta}
it defines a Lie bialgebra
structure \( dr \) 
on \( \fg[\![x]\!] \).
The goal of this section is
two-fold: First,
we want to prove that
\( \fD(\fg[\![x]\!], dr)
\sim \fg \ot A(m, 0)\).
More precisely,
\( \fD(\fg[\![x]\!], dr)
\cong \fg \ot A(m, 0)\)
as Lie algebras and the bilinear
form on \( \fD \) corresponds
to \( -B_{m+1} \) on 
\( \fg \ot A(m, 0) \);
Secondly, we want to construct
the Lagrangian Lie subalgebra
\( W_r \subset \fg \ot A(m, 0) \)
corresponding to \( dr \).

\paragraph{\textbf{Construction of \( W_r \)}}
Using the given \( r \)-matrix \( r \)
we now construct a Lagrangian
Lie subalgebra \( W_r \subset \fg \ot A(m, 0) = \fg(\!(x)\!) \times \fg[x]/x^m\fg[x] \)
such that
\( \fg \ot A(m, 0) = \fg[\![x]\!] \add W_r \). 
We recall
that \(\fg \ot A(m, 0)\) is equipped with the bilinear form \(B_{m+1}\) defined in \cref{rem:B_i}.

We start by writing
\begin{equation}\label{eq:gxy_expansion}
g(x,y) 
= \sum_{k = 0}^{\infty} \sum_{\alpha = 1}^n  p^{\alpha}_k (x) \ot I_\alpha y^k.
\end{equation}
Substituting this into \cref{eq:ym_r_matrix} we obtain
\begin{equation}\label{eq:Taylor_at_y}
r(x,y) =
\sum_{k = 0}^{m-1} \sum_{\alpha = 1}^n p^{\alpha}_k(x) \ot I_\alpha y^k + 
\sum_{k = m}^{\infty} \sum_{\alpha = 1}^n \left(I_\alpha x^{m-k-1} + p^{\alpha}_k(x) \right) \ot I_\alpha y^k.
\end{equation}
Similarly, representing \( (x-y)^{-1} \) as series \( - \sum_{k = 0}^\infty x^k y^{-k-1} \) we get another formal \( r \)-matrix in \( (\fg \ot \fg)(\!(y)\!)[\![x]\!] \):
\begin{equation}\label{eq:Taylor_at_x}
\begin{aligned}
r(x,y) =
&-\sum_{k = 0}^{\infty} \sum_{\alpha = 1}^n I_\alpha x^{m + k} \ot I_\alpha y^{-k-1} \\
&-\sum_{k = 0}^{m-1} \sum_{\alpha=1}^n \left( I_\alpha x^{m-k-1} + p^\alpha_k(x) \right) \ot I_\alpha y^k
+\sum_{k = m}^\infty \sum_{\alpha = 1}^n p^\alpha_k(x) \ot I_\alpha y^k.
\end{aligned}
\end{equation}
We define \( W_r \) by combining the coefficients in front of \( I_\alpha y^k, k \ge 0 \)
in \cref{eq:Taylor_at_y,eq:Taylor_at_x}, namely
\begin{equation}
\begin{aligned}
W_r \coloneqq \Span_F \big\{w_{k,\alpha} \big| \, k \ge 0, \ 1\le \alpha \le n \big\}\subset \fg(\!(x)\!) \times \fg[x]/x^m\fg[x],
\end{aligned}
\end{equation}
where 
\begin{equation}
    w_{k,\alpha} = \begin{cases}
     \left(p^\alpha_k(x), - I_\alpha [x]^{m - k - 1} + p^\alpha_k([x])\right)&0 \le k \le m-1, \ 1 \le \alpha \le n,
     \\
     \left(I_\alpha x^{-k+m-1} + p^\alpha_{k}(x), p^\alpha_{k}([x]) \right)&k \ge m, \ 1 \le \alpha \le n ;
    \end{cases}
\end{equation}
Here \( [x] \) denotes the equivalence class of \( x \) in \( F[x]/(x^m) \).
It is clear that \( W_r \) is complementary to the diagonal embedding
\( \Delta \coloneqq \{(f,[f])\mid f\in \fg[\![x]\!]\} \)
of \( \fg[\![x]\!] \) into \( \fg(\!(x)\!) \times \fg[x]/x^m\fg[x] \).
Let \(\{b_{k,\alpha} \coloneqq (I_\alpha x^k, I_\alpha [x]^k) \mid 1 \le \alpha \le n \text{ and } k \ge 0 \}\) 
be a topological basis
for \( \Delta \).
Then by direct calculation we see
that
\begin{equation}\label{eq:topological_basis_duality}
    B_{m+1}(w_{j,\alpha},b_{k,\beta}) = \delta_{jk}\delta_{\alpha\beta}  
\end{equation}
for all \(j,k\ge 0\) and \(1\le \alpha,\beta \le n\).

\paragraph{\textbf{Lagrangian property of \( W_r \)}}
Let us expand each series \( p^\alpha_k(x) \) in
\cref{eq:gxy_expansion} as
\begin{equation}
p^{\alpha}_k(x) = \sum_{s = 0}^\infty \sum_{\beta = 1}^n c^{\alpha, \beta}_{k, s} I_\alpha x^s.
\end{equation}
The skew-symmetry of \( r(x,y) \) gives restrictions on the coefficients
\( c^{\alpha, \beta}_{k, s} \). More precisely,
\begin{align*}
  g(x,y) + \tau(g(y,x)) 
  &= \sum_{i,j = 0}^\infty \sum_{\alpha, \beta = 1}^n (c^{\alpha, \beta}_{i,j} + c^{\beta,\alpha}_{j,i}) I_\alpha x^i \ot I_\beta y^j \\
  &= \frac{(x^m - y^m)\Omega}{x-y} \\
  &= \sum_{k = 0}^{m-1} \sum_{\alpha = 1}^n I_\alpha x^{m-k-1} \ot I_\alpha y^{k}.
\end{align*}
Therefore, we have
\begin{equation}
c^{\alpha, \beta}_{i,j} + c^{\beta, \alpha}_{j,i} = \delta_{\alpha,\beta} \delta_{i+j,m-1}.
\end{equation}
Using these identities we can show that \( W_r \) is isotropic by proving \(B_{m+1}(w_{j,\alpha},w_{k,\beta}) = 0\) for all \(k,j \ge 0\) and \(1\le \alpha,\beta \le n\). 
For example, when \(j,k \le m-1\) we get
\begin{align*}
  B_{m+1} \Big((p^\alpha_i(x), - I_\alpha [x]^{m - i - 1} + p^\alpha_i([x])), \, &(p^\beta_j(x), - I_\beta [x]^{m - j - 1} + p^\beta_j([x])) \Big) \\
  &= c^{\alpha, \beta}_{i,j} + c^{\beta, \alpha}_{j,i} - \underbrace{\kappa(I_\alpha, I_\beta)}_{= \delta_{\alpha,\beta}} \underbrace{\textnormal{coeff}_{m-1}\left( x^{(m-i-1)+(m-j-1)}  \right)}_{= \delta_{i+j,m-1}} \\
  &= 0.
\end{align*}
The isotropy of \( W_r \) combined with \( \Delta \add W_r = \fg(\!(x)\!) \times \fg[x]/x^m\fg[x] \)
implies that \( W_r \) is Lagrangian.
\paragraph{\textbf{An \( r \)-matrix in \( (\fg \ot \fg)[\![x,y]\!]/(x^m, y^m) \)}}
We denote the first sum in \cref{eq:Taylor_at_x} by \( r_- \)
and the other two by \( r_+ \), i.e.
\begin{equation}
\begin{aligned}
r(x,y) =
&\overbrace{-\sum_{k = 0}^{\infty} \sum_{\alpha = 1}^n I_\alpha x^{m + k} \ot I_\alpha y^{-k-1}}^{\eqqcolon r_-} \\
&\underbrace{-\sum_{k = 0}^{m-1} \sum_{\alpha=1}^n \left( I_\alpha x^{m-k-1} + p^\alpha_k(x) \right) \ot I_\alpha y^k
+\sum_{k = m}^\infty \sum_{\alpha = 1}^n p^\alpha_k(x) \ot I_\alpha y^k}_{\eqqcolon r_+}.
\end{aligned}
\end{equation}
The Classical Yang-Baxter equation for \( r \) then writes as
\begin{align*}
  0 = \overline{\textnormal{CYB}}(r) = \overline{\textnormal{CYB}}(r_+) + &\overbrace{\overline{\textnormal{CYB}}(r_-) 
  + [r_+^{12}, r_-^{13}] + [r_-^{12}, r_+^{13}] + [r_+^{12}, r_-^{23}]}^{\text{negative powers in one of the components}} \\
  &+ \underbrace{[r_+^{13}, r_-^{23}] + [r_-^{13}, r_+^{23}] + [r_-^{12}, r_+^{23}]}_{\text{each tensor contains } x^k, k \ge m}.
\end{align*}
It can be considered as an element in \( ( \fg \ot \fg \ot \fg)(\!(y)\!)(\!(z)\!)[\![x]\!] \).
Projecting it first onto \( ( \fg \ot \fg \ot \fg)[\![x, y, z]\!] \) and then
taking the quotient by \( (x^m, y^m, z^m) \)
we obtain the equality \( \overline{\textnormal{CYB}}(r_+) = 0 \).
In particular, \( r_+([x], [y]) \) is a formal \( r \)-matrix
in \( (\fg \ot \fg)[\![x,y]\!]/(x^m, y^m) \).

\paragraph{\textbf{Subalgebra property of \( W_r \)}}
The previous result implies that
\begin{align*}
  R\coloneqq \sum_{j = 0}^\infty \sum_{\alpha = 1}^n w_{j,\alpha} \otimes b_{j,\alpha} \in 
  (W \ot \fg) [\![ \, (y,[y]) \, ]\!]
\end{align*}
is an \( r \)-matrix, i.e.\ it 
formally solves the CYBE because
both right and left components
of \( R \) do that.
The fact that \( W_r \) is a subalgebra now follows from the identity
\([R^{12}, R^{13}] = -[R^{12} + R^{13}, R^{23}]\) 
that can be rewritten as
\begin{align}\label{eq:CYBE_in_R}
   \sum_{j,k = 0}^\infty \sum_{\alpha,\beta = 1}^n [w_{j,\alpha},w_{k,\beta}] \otimes b_{j,\alpha} \otimes b_{k,\beta} = -\sum_{j = 0}^\infty \sum_{\alpha = 1}^n w_{j,\alpha} \otimes dR(b_{j,\alpha}) \in (W \ot \fg \ot \fg) [\![ \, (x_2, [x_2]), (x_3, [x_3]) \, ]\!].
\end{align}
Here \(dR \colon \Delta \to \Delta \fot \Delta\) is defined by \(
    dR(f) = [f \fot 1 + 1 \fot f,R]
    \)
for all \(f \in \Delta\). Observe that the diagonal embedding \(\fg[\![x]\!] \to \Delta\) defines an isomorphism \((\fg[\![x]\!],dr) \cong (\Delta,dR)\) of topological Lie bialgebras.

\paragraph{\textbf{The topological double of \(dr\)}} 
Applying 
\begin{equation}
    B^{\fot 3}_{m+1}(b_{j_1,\alpha_1}\otimes w_{j_2,\alpha_2}\otimes w_{j_3,\alpha_3},-)    
\end{equation}
to \cref{eq:CYBE_in_R} and using \cref{eq:topological_basis_duality} results in
\begin{align*}
    B^{\fot 2}_{m+1}(dR(b_{j_1,\alpha_1}),w_{j_2,\alpha_2} \otimes w_{j_3,\alpha_3}) = - B_{m+1}(b_{j_1,\alpha_1},[w_{j_2,\alpha_2}, w_{j_3,\alpha_3}])
\end{align*}
for all \(j_1,j_2,j_3 \ge 0\) and \(1 \le \alpha_1,\alpha_2,\alpha_3 \le n\).
This implies that 
\begin{align*}
    B^{\fot 2}_{m+1}(dR(f),v \otimes w) = -B_{m+1}(f,[v, w])
\end{align*}
for all \(f\in\Delta\) and \(v,w\in W_r\). It is easy to see that \(\fg[\![x]\!]' \cong \Delta' \cong W_r\) using \(B_{m+1}\). \Cref{rem:doubles_in_series_case} now states that
\begin{equation}
    (\fD(\fg[\![x]\!],-dr),\fg[\![x]\!],\fg[\![x]\!]') \cong (\fg(\!(x)\!)\times \fg[x]/x^m\fg[x],\Delta,W_r).  
\end{equation}

\subsection{Realization of all topological doubles}\label{subsec:realization_of_doubles_polynomials}
\label{sec:realization_polynomials}
It is well-known that the following four series 
\begin{equation}%
\label{eq:four_main_r_matrices}
\begin{aligned}
  r_0(x,y) \coloneqq 0, \ r_1(x,y) \coloneqq \frac{\Omega}{x-y}, \ r_2(x,y) \coloneqq \frac{y\Omega}{x-y} + r_{\text{DJ}}, \ r_3(x,y) \coloneqq \frac{xy\Omega}{x-y} = \frac{y^2\Omega}{x-y}+y\Omega
\end{aligned}
\end{equation}
are formal \(r\)-matrices. Here \( r_{\text{DJ}}\in \fg \ot \fg \)
is the Drinfeld-Jimbo classical \( r \)-matrix with respect to a triangular decomposition \(\fg = \fn_+ \dotplus \fh \dotplus \fn_-\). 
Therefore, the discussion in \cref{subsec:W_r_from_r} implies that \(\delta_0 \coloneqq -dr_0 = 0\) and \(\delta_{m+1}\coloneqq -dr_{m+1}\)
define topological Lie bialgebra structures on \(\fg[\![x]\!]\) with topological doubles 
\begin{align}\label{eq:doubles_realized}
  \fD(\fg[\![x]\!], \delta_0) \cong \fg \ot A(\infty) \ \text{ and } \
  \fD(\fg[\![x]\!], \delta_{m+1}) \cong \fg \ot A(m, 0) = \fg(\!(x)\!)\times \fg[x]/x^{m}\fg[x] 
\end{align}
for \(m \in \{0,1,2\}\). These isomorphisms are identical on the images of \( \fg[\![x]\!] \) under the respective natural embeddings.

Using the construction
of \( W_r \) presented in
\cref{subsec:W_r_from_r}
we can explicitly describe
the Lagrangian
Lie subalgebras \(W_i \)
defining
\( \delta_i \), \( i \in \{0,1,2,3\} \):
\begin{itemize}
\setlength\itemsep{1em}
    \item \(W_0 \coloneqq \sum_{j \ge 0}\fg \otimes a_j \subseteq \fg \otimes A(\infty)\), where the elements \(a_j \in A(\infty)\) were defined in \cref{ex:A(inf)};
    
    \item \(W_1 \coloneqq x^{-1}\fg[x^{-1}] \subseteq \fg(\!(x)\!) \cong \fg(\!(x)\!) \times \{0\}\);
    
    \item \( W_2 \coloneqq \{(a,b) \in (\fn_+ \dotplus \fh \dotplus x^{-1}\fg[x^{-1}]) \times (\fh\dotplus\fn_-)\mid a + b \in (\fn_+ \dotplus \fn_-)\} \subseteq \fg(\!(x)\!) \times \fg \)
    where \(\fg = \fn_+ \dotplus \fh \dotplus \fn_-\)
    is the triangular decomposition
    used for the construction
    of \(r_{\textnormal{DJ}}\);
    
    \item \(W_3 \coloneqq  \fg[x^{-1}] \times [x]\fg \subseteq \fg(\!(x)\!)\times \fg[x]/x^2\fg[x]\). 
\end{itemize}

Let \( \varphi \in \Aut_0 F[\![x]\!] \) be a coordinate transformation and \(m \in \{0,1,2\}\).
Then the map \( (\varphi \fot \varphi) \circ \delta_{m+1} \circ \varphi^{-1} \)
is a topological Lie bialgebra structure on \(\fg[\![x]\!]\)
with topological double isomorphic to
\(\fg \ot A(m, 0)^{(\varphi)}\).
Therefore, all Lie algebras (with forms) mentioned in
\cref{rem:four_doubles} are realizable as
topological doubles of topological Lie bialgebra structures
on \( \fg[\![x]\!] \).

\subsection{Formal \(r\)-matrices and topological twists.}
The following statements 
describe how the conditions
on \( s \in (\fg \ot \fg)[\![x,y]\!] \) for being
a topological twist
look like on the side
of formal \(r\)-matrices.

\begin{lemma}%
\label{lem:delta_s_r_s_correspondence}
Let \(r\) be a formal
\(r\)-matrix and \( \delta \coloneqq dr \) be the
corresponding Lie bialgebra
structure on \( \fg[\![x]\!] \).
Then \( s \in (\fg \ot \fg)[\![x,y]\!] \) is a topological
twist of \( \delta \) if
and only if \( r_s \coloneqq r + s \) is a formal \(r\)-matrix.
Moreover, if this is the case,
then \( \delta_s = dr_s \).
\end{lemma}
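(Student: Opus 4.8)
The plan is to reduce the statement to a single bilinear identity relating the mixed terms of \(\overline{\textnormal{CYB}}(r+s)\) to \(\overline{\textnormal{Alt}}\bigl((dr \fot 1)s\bigr)\). First I observe that, since \(r\) is of the standard form \cref{eq:standard_form_formal_r_matrix} and the candidate twist satisfies \(s \in (\fg \ot \fg)[\![x,y]\!]\), adding \(s\) alters only the regular summand \(g\); thus \(r_s = r + s\) has the same singular part and is again of standard form. Consequently \(r_s\) is a formal \(r\)-matrix precisely when \(\overline{\textnormal{CYB}}(r_s) = 0\). Since \(\overline{\textnormal{CYB}}\) is homogeneous of degree two in its argument, expanding by bilinearity of the bracket gives
\[
\overline{\textnormal{CYB}}(r+s) = \overline{\textnormal{CYB}}(r) + M + \overline{\textnormal{CYB}}(s),
\]
where \(\overline{\textnormal{CYB}}(r) = 0\) and \(M\) collects the six mixed commutators \([r^{12},s^{13}] + [s^{12},r^{13}] + [r^{12},s^{23}] + [s^{12},r^{23}] + [r^{13},s^{23}] + [s^{13},r^{23}]\).

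The heart of the argument is the identity \(M = -\overline{\textnormal{Alt}}\bigl((dr \fot 1)s\bigr)\). To prove it I would write \(r = \sum_\mu a_\mu \ot b_\mu\) and \(s = \sum_\nu u_\nu \ot v_\nu\) as coefficientwise finite sums of simple tensors, compute \((dr \fot 1)s = \sum_\nu dr(u_\nu) \ot v_\nu\) using \(dr(u_\nu) = [u_\nu \fot 1 + 1 \fot u_\nu, r]\), apply \(\overline{\textnormal{Alt}}\) (the cyclic sum over the three tensor legs), and match its six resulting terms against the six terms of \(-M\) one at a time. Four of these matchings follow from antisymmetry of the Lie bracket together with the skew-symmetry \(s = -\overline{\tau}(s)\), while the remaining two additionally use the skew-symmetry of \(r\), which holds because \(r\) is a formal \(r\)-matrix by \cref{lem:rmat_defines_delta}; notably, no Jacobi identity is required. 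Since \(\overline{\textnormal{CYB}}\), \(\overline{\textnormal{Alt}}\) and \(dr \fot 1\) are continuous, it suffices to verify this identity coefficientwise, each coefficient being a finite-dimensional computation, and then pass to the completion.

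Granting the identity, both directions are formal. If \(s\) is a topological twist, then \(s\) is skew and \(\overline{\textnormal{CYB}}(s) = \overline{\textnormal{Alt}}\bigl((dr \fot 1)s\bigr) = -M\), so \(\overline{\textnormal{CYB}}(r_s) = \overline{\textnormal{CYB}}(s) + M = 0\); with the standard-form observation this makes \(r_s\) a formal \(r\)-matrix. Conversely, if \(r_s\) is a formal \(r\)-matrix, then \(\overline{\textnormal{CYB}}(r_s) = 0\) and \(r_s\) is skew (again by \cref{lem:rmat_defines_delta}); since \(r\) is skew, \(s = r_s - r\) is skew, giving the first twist condition, and \(0 = \overline{\textnormal{CYB}}(r_s) = \overline{\textnormal{CYB}}(s) - \overline{\textnormal{Alt}}\bigl((dr \fot 1)s\bigr)\) gives the second. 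The final assertion is immediate from bilinearity of the bracket: \(\delta_s = \delta + ds = dr + ds\), and \(dr(f) + ds(f) = [f \fot 1 + 1 \fot f, r + s] = dr_s(f)\) for every \(f \in \fg[\![x]\!]\), whence \(\delta_s = dr_s\).

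The step I expect to be the main obstacle is the bookkeeping in the central identity: carefully tracking the three tensor legs through \(\overline{\textnormal{Alt}}\) and through the six mixed commutators, and making sure that the two expansions of \(1/(x-y)\) entering the various \(r^{ij}\) are used with consistent conventions, so that all terms genuinely lie in one common completed space in which the coefficientwise comparison is valid. Once the slots are aligned, the algebra collapses to antisymmetry of the bracket and the skew-symmetries of \(r\) and \(s\).
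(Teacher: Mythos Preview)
Your proposal is correct and follows essentially the same route as the paper: both hinge on the single identity \(\overline{\textnormal{CYB}}(r+s) = \overline{\textnormal{CYB}}(r) + \overline{\textnormal{CYB}}(s) - \overline{\textnormal{Alt}}\bigl((dr \fot 1)s\bigr)\), after which the equivalence and the formula \(\delta_s = dr_s\) are immediate. Your treatment is in fact slightly more careful than the paper's, since you make explicit that the mixed-term identity uses the skew-symmetry of \(s\) and you secure this in the converse direction via \(s = r_s - r\) before invoking it.
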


\begin{proof}
The first part of the statement follows from the
formal equality in \( (\fg \ot \fg \ot \fg)(\!(x)\!)(\!(y)\!)[\![z]\!] \)
\begin{equation}
    \overline{\textnormal{CYB}}(r + s) = 
    \overline{\textnormal{CYB}}(r) + \overline{\textnormal{CYB}}(s) -
    \overline{\textnormal{Alt}}((\delta \fot 1)s).
\end{equation}
Indeed,
\(\overline{\textnormal{CYB}}(r) = 0\) implies that 
\(\overline{\textnormal{CYB}}(r + s) = 0\) is equivalent to \(\overline{\textnormal{CYB}}(s)
= \overline{\textnormal{Alt}}((\delta \fot 1)s)\).
    
The second part follows by
definition of \( ds \), namely
\begin{equation}
\begin{aligned}
      (\delta_s)(f)(x,y) &= \delta(f)(x,y) +
      ds(f)(x,y) \\
      &= [f(x) \fot 1 + 1 \fot f(y), r(x,y)] +
      [f(x) \fot 1 + 1 \fot f(y), s(x,y)] \\
      &= [f(x) \fot 1 + 1 \fot f(y), r_s(x,y)].
\end{aligned}
\end{equation}
\end{proof}

\begin{remark}
Let \( s \) be a topological tiwst of \( \delta_i \). By
straightforward calculations
one can prove that
the Lagrangian Lie subalgebra 
\( W_s \) associated to \(s\) in \cref{thm: W <-> t <-> T correspondence} (see \cref{rem:construction_of_Ws}) coincides with \(W_{-r_i + s}\) presented in \cref{subsec:W_r_from_r}. Therefore,
when \( i \) is fixed
we use the notations
\( W_s \) and \(W_{-r_i + s} \)
interchangeably.
\end{remark}

\begin{corollary}%
\label{cor:bijection_between_matrices_and_W}
  The assignment \(r \mapsto W_r\) presented in \cref{subsec:W_r_from_r} defines a bijection between Lagrangian Lie
  subalgebras \( W \subset \fD(\fg[\![x]\!], \delta_i), i \in \{0,1,2,3\} \) complementary to \(\fg[\![x]\!]\), and
  formal \( r \)-matrices of the form
  \( r = r_i + s \).
\end{corollary}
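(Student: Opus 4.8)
The plan is to obtain the asserted bijection by composing two bijections that are already available and then identifying the composite with the direct assignment $r \mapsto W_r$. First I would fix $i \in \{0,1,2,3\}$ and recall from \cref{lem:rmat_defines_delta} and \cref{subsec:realization_of_doubles_polynomials} that $\delta_i = -dr_i$ is a topological Lie bialgebra structure whose double is $\fD(\fg[\![x]\!],\delta_i)$. By \cref{thm: W <-> t <-> T correspondence} the assignment $s \mapsto W_s$ is a bijection from topological twists $s$ of $\delta_i$ onto the Lagrangian Lie subalgebras of $\fD(\fg[\![x]\!],\delta_i)$ complementary to $\fg[\![x]\!]$. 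On the other hand, \cref{lem:delta_s_r_s_correspondence}, applied with the base $r$-matrix $r_i$, shows that $s$ is a topological twist of $dr_i$ precisely when $r_i + s$ is again a formal $r$-matrix; since $s$ ranges over all of $(\fg \ot \fg)[\![x,y]\!]$, this yields a bijection between such twists and formal $r$-matrices of the form $r = r_i + s$.

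The only bookkeeping point is the sign $\delta_i = -dr_i$, which relates the twists of $\delta_i$ appearing in \cref{thm: W <-> t <-> T correspondence} to the twists of $dr_i$ appearing in \cref{lem:delta_s_r_s_correspondence}. This is harmless: because each summand of $\overline{\textnormal{CYB}}(s)$ is bilinear in $s$ one has $\overline{\textnormal{CYB}}(-s) = \overline{\textnormal{CYB}}(s)$, while $\overline{\textnormal{Alt}}(((-\delta)\fot 1)(-s)) = \overline{\textnormal{Alt}}((\delta \fot 1)s)$ and skew-symmetry is preserved under $s \mapsto -s$; hence $s$ is a twist of $\delta$ if and only if $-s$ is a twist of $-\delta$. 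Thus twists of $\delta_i$ correspond bijectively to twists of $dr_i$ via $s' \mapsto -s'$, and composing with the previous paragraph gives a bijection between Lagrangian subalgebras $W \subset \fD(\fg[\![x]\!],\delta_i)$ complementary to $\fg[\![x]\!]$ and formal $r$-matrices $r = r_i + s$.

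It then remains to identify the subalgebra $W_s$ produced by \cref{thm: W <-> t <-> T correspondence} with the subalgebra $W_r$ built directly in \cref{subsec:W_r_from_r} for $r = r_i + s$; this is exactly the content of the remark immediately preceding the statement. Granting this identification, $r \mapsto W_r$ is the composite of two bijections and is therefore itself a bijection, which is the claim. I expect the main obstacle to be precisely this identification $W_s = W_r$, which the remark asserts holds "by straightforward calculation." To make it rigorous I would start from the explicit formula $Tw = \sum_{k \ge 0}\sum_{\alpha} B(w, I_\alpha x^k)\, s_{k,\alpha}$ of \cref{rem:construction_of_Ws}, write $W_s = \{\, Tw - w \mid w \in \fg\{x\}\,\}$, and compare the resulting generators with the explicit generators $w_{k,\alpha}$ of $W_r$ read off from \cref{eq:Taylor_at_y,eq:Taylor_at_x}, using the duality relation \cref{eq:topological_basis_duality} to match coefficients.

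A secondary point, requiring separate but entirely analogous treatment, is the case $i = 0$, where $r_0 = 0$ and the double is $\fg \ot A(\infty)$ rather than $\fg \ot A(m,0)$. Here the negative-power part of $r$ is absent, so the series $r = s \in (\fg \ot \fg)[\![x,y]\!]$ has no polar term and $W_r$ is produced by the same coefficient-matching scheme carried out inside $\fg \ot A(\infty)$; the isotropy, complementarity, and subalgebra arguments of \cref{subsec:W_r_from_r} go through verbatim after discarding the terms indexed by $k \ge m$.
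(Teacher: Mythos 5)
Your proposal is correct and follows essentially the same route as the paper, which treats this corollary as an immediate consequence of \cref{thm: W <-> t <-> T correspondence}, \cref{lem:delta_s_r_s_correspondence} (applied with base $r$-matrix $\pm r_i$), and the remark identifying $W_s$ with the construction of \cref{subsec:W_r_from_r}. Your explicit handling of the sign bookkeeping ($\delta_i = -dr_i$, twists of $\delta$ versus $-\delta$, and the resulting identification $W_s = W_{-r_i+s} = W_{r_i - s}$) and your flagging of the $i=0$ case are both sound and consistent with what the paper leaves implicit.
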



\subsection{Reinterpretation of the classification of doubles}
The classification of classical doubles has interesting consequences for the form of formal \(r\)-matrices.

\begin{theorem}%
\label{thm:restriction_on_powers}
Let \( r \) be a formal \( r \)-matrix 
of the form 
\begin{equation}\label{eq:rmatrix_theorem}
r(x,y) = \frac{s(y) \Omega}{x-y} + g(x,y)
\end{equation}
for some \(s \in x^m F[\![x]\!]^\times\) and \(g \in (\fg \otimes \fg)[\![x,y]\!]\).
Then \( m \in \{ 0, 1, 2 \} \) and there exists \(\psi \in xF[\![x]\!]^\times\), \(\xi \in F^\times\) and \(\widetilde{g} \in (\fg \otimes \fg)[\![x,y]\!]\) such that
\begin{equation}
    \xi r(\psi(x),\psi(y)) = \frac{y^m\Omega}{x-y} + \widetilde{g}(x,y).
\end{equation}
\end{theorem}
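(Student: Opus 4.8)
The plan is to avoid a direct assault on the classical Yang--Baxter equation and instead route everything through the dictionary between formal $r$-matrices and topological doubles assembled in \cref{subsec:W_r_from_r,rem:four_doubles}. By \cref{lem:rmat_defines_delta} the series $r$ defines a topological Lie bialgebra structure $\delta \coloneqq dr$ on $\fg[\![x]\!]$, so I would begin with its topological double $\fD(\fg[\![x]\!], dr)$ and invoke the classification recorded in \cref{rem:four_doubles}. This produces a scalar $\xi \in F^{\times}$ and either an $F[\![x]\!]$-linear, form-preserving isomorphism $\fD(\fg[\![x]\!], \xi\, dr) \cong \fg \ot A(\infty)$, or an integer $0 \le n \le 2$ together with a coordinate change $\varphi \in \Aut_0 F[\![x]\!]$ and such an isomorphism onto $\fg \ot A(n,0)^{(\varphi)}$, in either case identical on $\fg[\![x]\!]$.

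Next I would transfer this normalisation to the $r$-matrix side. Scaling the cobracket is scaling the matrix, $\xi\, dr = d(\xi r)$, and applying $\varphi$ to the bialgebra amounts to the substitution $r(x,y) \mapsto r(\psi(x), \psi(y))$ for a $\psi \in xF[\![x]\!]^{\times}$ built from $\varphi$, exactly as in \cref{subsec:realization_of_doubles_polynomials}. The key consequence is that the transformed series $\widetilde r(x,y) \coloneqq \xi\, r(\psi(x), \psi(y))$ is again a formal $r$-matrix whose double is now $\fg \ot A(n,0)$ in standard form, so $d\widetilde r$ and the standard structure $\delta_{n+1}$ share the same double and hence lie in the same twisting class. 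The lemma on twisting classes in \cref{sec:topological_twists} then writes $d\widetilde r = \delta_{n+1} + ds$ for a topological twist $s$, and \cref{lem:delta_s_r_s_correspondence} upgrades this to an equality of matrices modulo an ad-invariant term; in particular
\[
\widetilde r(x,y) = \frac{y^{n}\Omega}{x-y} + \widetilde g(x,y), \qquad \widetilde g \in (\fg \ot \fg)[\![x,y]\!],
\]
after absorbing a possible sign into $\xi$.

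It then remains to match $n$ with $m$. Writing $\psi(x) - \psi(y) = (x-y)q(x,y)$ with $q$ a unit in $F[\![x,y]\!]$, and rewriting the singular part via the second description of formal $r$-matrices (the remark after their definition), one sees that the leading coefficient of $\widetilde r$ equals $\xi\, s(\psi(y))/\psi'(y)$. Since $\psi \in xF[\![x]\!]^{\times}$ has order $1$ and $\psi'$ is a unit, this coefficient has the same order of vanishing $m$ at $y=0$ as $s$; comparing with the displayed form forces $n = m$ and hence $m \in \{0,1,2\}$. The trivial double $\fg \ot A(\infty)$ is excluded because it would make $d\widetilde r$ vanish, forcing the entire singular part of $\widetilde r$ to be zero, which is impossible for $s \ne 0$.

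The step I expect to be the main obstacle is the translation in the second paragraph: matching the abstract, form-preserving isomorphism delivered by \cref{rem:four_doubles} with the concrete operations of scaling and substitution on $r$, and verifying that the transformed double is literally $\fg \ot A(n,0)$ in standard form, so that the twisting-class machinery and \cref{lem:delta_s_r_s_correspondence} apply verbatim. I would stress that the delicate point one might anticipate for $m=2$ --- that a coordinate change can reduce the leading coefficient to $y^{2}$ only when the residue of $1/s$, i.e.\ the coefficient of $y^{3}$ in $s$, vanishes --- requires no separate treatment here: the existence of a complementary Lagrangian, automatic because $r$ solves the CYBE, is precisely what the classification converts into this residue condition.
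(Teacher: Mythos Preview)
Your approach is the paper's: pass to the topological double via \cref{lem:rmat_defines_delta}, normalise it using \cref{rem:four_doubles}, transport the normalisation back to the $r$-matrix by a substitution $\psi$ and a scalar $\xi$, and then read off the singular part. The paper runs exactly this outline.

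There is one genuine elision. From $d\widetilde r = \delta_{n+1} + ds$ and \cref{lem:delta_s_r_s_correspondence} you only get $d\widetilde r = d(-r_{n+1}+s)$, i.e.\ that $\widetilde r$ and $-r_{n+1}+s$ have the same coboundary; the difference could be any $\fg$-invariant $h(x,y)\Omega$ with $h \in F(\!(x)\!)[\![y]\!]$, and a singular $h$ would wreck the leading-coefficient comparison in your third paragraph. You cannot appeal to injectivity of $r \mapsto dr$ either, since \cref{rem:r->dr_is_bijection} records that as a \emph{consequence} of the theorem you are proving. The paper closes this by testing against $f = ax$: once $f=a\in\fg$ gives the form $h\Omega$, the relation $[ax\fot 1 + 1\fot ay, h\Omega] = (x-y)h(x,y)[a\ot 1,\Omega] = 0$ forces $h=0$. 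The same step is what actually rules out $A(\infty)$: that double only tells you $d\widetilde r = ds'$ for some regular $s'$ (not that $d\widetilde r = 0$, as you wrote), and you still need $h=0$ to conclude $\widetilde r$ has no singular part. With $h=0$ in hand, your order-of-vanishing argument $\xi\,s(\psi(y))/\psi'(y) = y^{n}$ is exactly the paper's endgame and gives $n=m$.
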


\begin{proof}
\Cref{lem:rmat_defines_delta} implies that \(r\) defines a topological Lie bialgebra structure \(dr \)
on \(\fg[\![x]\!]\).
By \cref{rem:four_doubles} there is a constant \(\xi \in F^\times\),
an integer \(0 \le m \le 2\), a coordinate transformation
\(\varphi \in \textnormal{Aut}_{0}F[\![x]\!]\) and an \(F[\![x]\!]\)-linear
isomorphism
\(\fD(\fg[\![x]\!], \xi \delta) \cong \fg \ot A(m, 0)^{(\varphi)}  \)
identical on \(\fg[\![x]\!]\).
This means that \( \xi (\varphi^{-1} \fot \varphi^{-1}) dr \ \varphi = \delta_{m+1} + dt\) for some topological twist \(t\).
Put \( \psi \coloneqq \varphi^{-1} \). Since \(\delta_{m+1} + dt\)
is given by the \(r\)-matrix \(-r_{m+1} + t\) we have
\begin{equation}
    \begin{split}
        &[f(x) \fot 1 + 1 \fot f(y),\xi r(\psi(x),\psi(y))] =  \xi (\psi \fot \psi) dr \ \varphi(f) \\&= (\delta_{m+1} + dt)(f) = [f(x) \fot 1 + 1 \fot f(y),-r_{m+1}(x,y) + t(x,y)]
    \end{split}
\end{equation}
Putting \(f = a \in \fg\) implies that 
\begin{equation}
    \xi r(\psi(x),\psi(y))   +r_{m+1}(x,y) - t(x,y) = h(x,y) \Omega
\end{equation}
for some \(h \in F(\!(x)\!)[\![y]\!]\), since the \(\fg\)-invariant elements of \(\fg \otimes \fg\) are spanned by \(\Omega\). 
Letting
\(f = ax \in \fg[\![x]\!]\) we see that 
\begin{equation}
    0 = [ax \otimes 1 + 1 \otimes ay, h(x,y) \Omega ] = (x-y)h(x,y) [a\otimes 1,\Omega],
\end{equation}
which implies that \(h = 0\) since \([a\otimes 1,\Omega] \neq 0\) for \(a \neq 0\). In particular,
\begin{equation}%
\label{eq:r_psi_r_i}
    \xi r(\psi(x),\psi(y)) =  -r_{m+1}(x,y) + t(x,y). 
\end{equation}
We can always write
\begin{equation}
    \frac{s(\psi(y))}{\psi(x)-\psi(y)} = \frac{s(\psi(y))}{\psi'(y)(x-y)} + u(x,y)
\end{equation}
for some \(u(x,y) \in (\fg \ot \fg)[\![x,y]\!]\). 
Combining this
with \cref{eq:r_psi_r_i}
we obtain
\begin{equation}
    \xi \frac{s(\psi(y))}{\psi'(y)} = -y^m.
\end{equation}
By taking \( - \xi \)
instead of \( \xi \)
we conclude the proof.
\end{proof}

\begin{remark}
Let
\begin{equation}%
\label{eq:r_matrix_diff_eq}
    \frac{s(y) \Omega}{x-y} + g(x,y),
\end{equation}
where \( s(y) = y^m + a_{m+1}y^{m+1} + \dots \),
\( m \in \{0,1,2\} \),
be a formal \(r\)-matrix.
One can directly see from the proof of \cref{thm:restriction_on_powers}
that the equivalence
\( \psi \in \textnormal{Aut}_{0}F[\![x]\!]\),
used to transform \( s(y) \) into 
\( 1, y \) or \( y^2 \),
is a solution to
the differential equation
\begin{equation}%
\label{eq:diff_equation_remark}
     y^m \psi'(y) = s(\psi(y)).
\end{equation}
Note, that for \( m = 0, 1 \)
\cref{eq:diff_equation_remark}
always has a solution, whereas for
\( m = 2\) a solution exists if and only
if \( a_3 = 0 \).

Conversely, if
\( s(y) = y^m + a_{m+1}y^{m+1} + \dots \),
\(m \in \{0,1,2 \}\) and
\cref{eq:diff_equation_remark} has a
solution, then we can find
and \(r\)-matrix of the form
\cref{eq:r_matrix_diff_eq}.
\end{remark}

\begin{remark}\label{rem:r->dr_is_bijection}
The methods in the proof of \cref{thm:restriction_on_powers} combined with the previous discussion of formal \(r\)-matrices imply that the assignment \(r \mapsto dr\) defines a bijection between the set of formal \(r\)-matrices and the set of topolgical Lie bialgebra structures on \(\fg[\![x]\!]\). 
\end{remark}

\subsection{Equivalences} Two formal \( r \)-matrices \( r \) and 
\( \tilde{r} \)
are called \emph{formally gauge equivalent} if
there is an \( F[\![x]\!] \)-linear automorphism
\( \phi \) of \( \fg[\![x]\!] \) such
that 
\begin{equation*}
  \tilde{r} = (\phi \fot \phi)r.
\end{equation*}
At this stage we have defined three notions
of formal equivalence: between topological twists, Lagrangian
Lie subalgebras and formal \( r \)-matrices.
Now we turn to describing the relation between these notions.

\begin{theorem}%
\label{thm:equivalence}
Let \( \phi \) be an \( F[\![x]\!] \)-linear
automorphism of \( \fg[\![x]\!] \),
\( s \) and \( \tilde{s} \) be
topological twists of the topological Lie bialgebra
structure \( \delta_i \), \( i \in \{ 1,2,3 \} \),
on \( \fg[\![x]\!] \).
The following are equivalent:
\begin{enumerate}
    \item \( (\phi \fot \phi)(\delta_i + ds) = (\delta_i + d\tilde{s}) \phi \);
    \item \( (\phi \times [\phi])(W_s) = W_{\tilde{s}} \);
    \item \((\phi \fot \phi)(-r_i + s) = -r_i + \tilde{s} \).
\end{enumerate}
\end{theorem}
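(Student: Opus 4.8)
The plan is to route the whole equivalence through formal $r$-matrices. Since $f\mapsto dr(f)=[f\fot 1+1\fot f,r]$ is linear in $r$ and $\delta_i=-dr_i$ by \cref{eq:four_main_r_matrices}, we have $\delta_i+ds=d(-r_i+s)$ and $\delta_i+d\tilde s=d(-r_i+\tilde s)$; write $r\coloneqq -r_i+s$ and $\tilde r\coloneqq -r_i+\tilde s$. By the remark following \cref{lem:delta_s_r_s_correspondence} we have $W_s=W_r$ and $W_{\tilde s}=W_{\tilde r}$, while conditions (1) and (3) read $(\phi\fot\phi)\circ dr=d\tilde r\circ\phi$ and $(\phi\fot\phi)r=\tilde r$. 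Recall also that $\phi$ is an $F[\![x]\!]$-linear Lie algebra automorphism (so that $\phi\times[\phi]$ makes sense as a Lie algebra automorphism, cf.\ \cref{thm:splitting_aut}). I will establish (1)$\iff$(3) by a direct computation and (2)$\iff$(3) by transporting the gauge action through the bijection $r\mapsto W_r$ of \cref{cor:bijection_between_matrices_and_W}.

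For (1)$\iff$(3) the key observation is that, because $\phi$ is a Lie algebra homomorphism, for every $f\in\fg[\![x]\!]$ one has $(\phi\fot\phi)\bigl(dr(f)\bigr)=[\phi(f)\fot 1+1\fot\phi(f),\,(\phi\fot\phi)r]$, whereas $d\tilde r(\phi(f))=[\phi(f)\fot 1+1\fot\phi(f),\,\tilde r]$. As $\phi$ is bijective, (1) is thus equivalent to $h\coloneqq(\phi\fot\phi)r-\tilde r$ being annihilated by $[g\fot 1+1\fot g,\,\cdot\,]$ for all $g\in\fg[\![x]\!]$. The implication (3)$\Rightarrow$(1) is then immediate. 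For (1)$\Rightarrow$(3) I will reuse the argument from the proof of \cref{thm:restriction_on_powers}: taking $g=a\in\fg$ forces $h$ to be $\fg$-invariant, hence $h=h_0(x,y)\Omega$ with $h_0\in F(\!(x)\!)[\![y]\!]$ since $\fg$ is simple; taking $g=ax$ then gives $(x-y)h_0(x,y)[a\ot 1,\Omega]=0$, so $h_0=0$ and $h=0$, which is (3).

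For (2)$\iff$(3) I will show that the bijection $r\mapsto W_r$ intertwines the gauge action $r\mapsto(\phi\fot\phi)r$ with $W\mapsto(\phi\times[\phi])W$, i.e.\ $(\phi\times[\phi])W_r=W_{(\phi\fot\phi)r}$. First, $\phi\times[\phi]$ is a Lie algebra automorphism of $\fD(\fg[\![x]\!],\delta_i)=\fg\ot A(i-1,0)$ that fixes $\Delta\cong\fg[\![x]\!]$ and preserves $B_i$: indeed $\kappa(\phi a,\phi b)=\tr(\ad_{\phi a}\ad_{\phi b})=\tr(\ad_a\ad_b)=\kappa(a,b)$ because $\phi$ preserves the bracket, so $\phi$ preserves $\mathcal{K}_i$ and hence $B_i$ (see \cref{rem:B_i}). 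Therefore $\phi\times[\phi]$ carries the Manin triple $(\fD,\Delta,W_r)$ to $(\fD,\Delta,(\phi\times[\phi])W_r)$, and by the remark on isomorphic Manin triples in \cref{sec:top_Man_triples} the bialgebra structure the latter defines on $\Delta$ is $(\phi\fot\phi)\circ dr\circ\phi^{-1}=d\bigl((\phi\fot\phi)r\bigr)$, the last equality being exactly the computation from the previous paragraph. Since $r\mapsto W_r$ is a bijection, this identifies $(\phi\times[\phi])W_r$ with $W_{(\phi\fot\phi)r}$. Consequently (3), namely $(\phi\fot\phi)r=\tilde r$, holds if and only if $W_{(\phi\fot\phi)r}=W_{\tilde r}$, i.e.\ if and only if $(\phi\times[\phi])W_s=W_{\tilde s}$, which is (2).

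I expect the main obstacle to be the clean identification, in the last step, of the bialgebra structure induced by the transported Manin triple $(\fD,\Delta,(\phi\times[\phi])W_r)$ with $d\bigl((\phi\fot\phi)r\bigr)$, together with the verification that $(\phi\fot\phi)r$ lands in the domain of the bijection (a formal $r$-matrix with the singular part of $-r_i$) so that injectivity can be applied. Both points reduce to careful bookkeeping between the $x$- and $y$-expansions used to construct $W_r$ in \cref{subsec:W_r_from_r} and the $F[\![x]\!]$-linearity of $\phi$; alternatively they can be circumvented by checking $(\phi\times[\phi])W_r=W_{(\phi\fot\phi)r}$ directly on the spanning elements $w_{k,\alpha}$. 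The remaining ingredients — the invariance computation and the preservation of $B_i$ — are routine given \cref{thm:restriction_on_powers,thm:splitting_aut}.
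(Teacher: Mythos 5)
Your proposal is correct, and your treatment of (1)\(\iff\)(3) is essentially the paper's own argument (reduce to \( [f\fot 1+1\fot f,\,h]=0 \) for all \( f \), deduce \( h=h_0\Omega \) from \(\fg\)-invariance, then kill \( h_0 \) with the \((x-y)\)-trick). Where you genuinely diverge is (2)\(\iff\)(3): the paper proves this by brute force, writing the two expansions of \(-r_i+s\) at \( y=0 \) and \( x=0 \), applying \(\phi\fot\phi\) coefficientwise to the spanning elements, reading off that the resulting Lagrangian subalgebra sits inside \((\phi\times[\phi])W_s\), and concluding by complementarity plus the bijection of \cref{cor:bijection_between_matrices_and_W}; you instead transport the whole Manin triple by \(\phi\times[\phi]\) and invoke naturality of the correspondences \( W\leftrightarrow r\leftrightarrow\delta \). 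Your route is shorter and makes the structural content visible (the theorem is just equivariance of these bijections under \(F[\![x]\!]\)-linear automorphisms), but it leans on three supporting facts that should be said aloud: (i) \(\phi\times[\phi]\) is a \emph{homeomorphism} of \(\fD\), so that the isomorphism-of-Manin-triples remark in \cref{sec:top_Man_triples} applies; (ii) the triple \((\fD,\Delta,W_r)\) with form \(B_i\) defines the cobracket \(-dr\) (not \(dr\)) on \(\Delta\), as established at the end of \cref{subsec:W_r_from_r} — the sign is harmless since it appears on both sides; and (iii) injectivity of \( r\mapsto dr \) on all of \((\fg\ot\fg)(\!(x)\!)[\![y]\!]\). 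Note also that the "main obstacle" you flag at the end — verifying that \((\phi\fot\phi)r\) has singular part \(-r_i\) so that it lies in the domain of the bijection — is not actually an extra obstacle: by surjectivity of \( r\mapsto W_r \) you may write \((\phi\times[\phi])W_r = W_{r''}\) for some \( r''=-r_i+s'' \) in the domain, the transported triple then defines both \(-dr''\) and \(-d\bigl((\phi\fot\phi)r\bigr)\), and your own kernel-of-\(d\) argument from (1)\(\Rightarrow\)(3) forces \( r''=(\phi\fot\phi)r \), which simultaneously proves the domain claim and the identity \((\phi\times[\phi])W_r=W_{(\phi\fot\phi)r}\). What the paper's computational proof buys in exchange is self-containedness: it never needs the Manin-triple transport or the global injectivity of \( d \), and it exhibits the transformed spanning set \( w_{k,\alpha} \) explicitly, which is the fallback you yourself propose.
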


\begin{proof}
\emph{1 \(\Longleftrightarrow\) 3 : }
Using \( F[\![x]\!] \)-linearity of \( \phi \) and
\cref{lem:delta_s_r_s_correspondence} condition \(1\) reads as
\begin{equation}
\begin{aligned}
(\phi \fot \phi)(\delta_i + ds)(f) &=  (\phi \fot \phi)[f \fot 1 + 1 \fot f, -r_i + s] \\
&= [\phi(f) \fot 1 + 1 \fot \phi(f), (\phi \fot \phi)(-r_i + s)] \\
&= [\phi(f) \fot 1 + 1 \fot \phi(f), -r_i + \tilde{s}].
\end{aligned}
\end{equation}
This is equivalent to
\( [f \fot 1 + 1 \fot f, (-r_i + \tilde{s}) - (\phi \fot \phi)(-r_i + s)] = 0  \) for all \( f \in \fg[\![x]\!] \), which means that
\( (r_i + \tilde{s}) - (\phi \fot \phi)(r_i + s) = 
h \Omega \), for some \( h \in F(\!(x)\!)[\![y]\!] \). Hence,
\begin{equation}
\begin{aligned}
   0 &=  h(x,y) [f(x) \fot 1 + 1 \fot f(y), \Omega] \\
   &= h(x,y) [(f(x) - f(y)) \fot 1, \Omega]
\end{aligned}
\end{equation}
for all \( f \in \fg[\![x]\!] \). 
Since \( [a \ot 1, \Omega] \neq 0 \) for any 
non-zero \( a \in \fg \) we must have \( h = 0 \)
implying the desired equality
\( (\phi \fot \phi)(-r_i + s) = -r_i + \tilde{s} \).
The converse is clear.

\emph{2 \(\Longleftrightarrow\) 3 : }
Write two series expansions of \(-r_i + s\) at \( y = 0 \) and \( x = 0 \)
respectively as follows
\begin{equation}
\label{eq:series_expansion_at_y_and_x}
\begin{dcases}
\sum_{k = 0}^{\infty} \sum_{\alpha=1}^{n} r^{\alpha}_k(x) \ot I_{\alpha} y^k, \\
\sum_{k = 0}^{\infty} \sum_{\alpha = 1}^n I_\alpha x^{k + i -1} \ot I_\alpha y^{-k-1} + \sum_{k = 0}^{\infty} \sum_{\alpha=1}^{n} \widetilde{r}^{\alpha}_k(x) \ot I_{\alpha} y^k.
\end{dcases}
\end{equation}
Let \( \phi(I_\alpha) = \sum_{j = 0}^{\infty} \sum_{\gamma = 1}^n c^{\gamma}_{j, \alpha} I_\gamma x^j\).
Applying \( \phi \fot \phi \) to \cref{eq:series_expansion_at_y_and_x}
we get the series
\begin{equation*}
\begin{dcases}
\sum_{k = 0}^{\infty} \sum_{\alpha=1}^{n} \sum_{j = 0}^{\infty} \sum_{\gamma=1}^{n} c^{\gamma}_{j, \alpha} \phi(r^{\alpha}_k(x)) \ot I_{\gamma} y^{k+j}, \\
\sum_{k = 0}^{\infty} \sum_{\alpha = 1}^n \sum_{j = 0}^{\infty} \sum_{\gamma=1}^{n} c^{\gamma}_{j, \alpha} \phi(I_\alpha x^{k + i -1}) \ot I_\gamma y^{-k-1 + j}
+ \sum_{k = 0}^{\infty} \sum_{\alpha=1}^{n} \sum_{j = 0}^{\infty} \sum_{\gamma=1}^{n} c^{\gamma}_{j, \alpha} \phi(\widetilde{r}^{\alpha}_k(x)) \ot I_{\gamma} y^{k+j}.
\end{dcases}
\end{equation*}
Put \(t \coloneqq k+j\). This gives
\begin{equation*}
\begin{dcases}
\sum_{t = 0}^{\infty} \sum_{\gamma=1}^{n} \left( \sum_{k = 0}^{\infty} \sum_{\alpha=1}^{n} c^{\gamma}_{t-k, \alpha} \phi(r^{\alpha}_k(x)) \right) \ot I_{\gamma} y^{t}, \\
\sum_{k = 0}^{\infty} \sum_{\alpha = 1}^n \sum_{j = 0}^{\infty} \sum_{\gamma=1}^{n} c^{\gamma}_{j, \alpha} \phi(I_\alpha x^{k + i -1}) \ot I_\gamma y^{-k-1 + j}
+ \sum_{t = 0}^{\infty} \sum_{\gamma=1}^{n} \left( \sum_{k = 0}^{\infty} \sum_{\alpha=1}^{n} c^{\gamma}_{t-k, \alpha} \phi(\widetilde{r}^{\alpha}_k(x)) \right) \ot I_{\gamma} y^{t},
\end{dcases}
\end{equation*}
where we let \( c^{\gamma}_{l, \alpha} \coloneqq 0 \)
for \( l < 0\). 
Therefore, the Lagrangian Lie subalgebra
in \(\fg \ot A(i-1, 0)\) corresponding to
\( (\phi \fot \phi)(-r_i + s) = -r_i + \tilde{s} \) is 

\begin{equation*}
\begin{aligned}
W_{\tilde{s}} \coloneqq \Span_F \left\{ \sum_{k=1}^{\infty} \sum_{\alpha=1}^{n} c^{\gamma}_{t-k, \alpha}
\big( \phi(r^{\alpha}_{k}(x)), [\phi]([\widetilde{r}^{\alpha}_{k}(x)]) \big) \,  \big| \, 0 \le t, \, 1 \le \gamma \le n  \right\}.
\end{aligned}
\end{equation*}
It is now clear that \( W_{\tilde{s}} \subseteq (\phi \times [\phi])W_{s} \). Since
\( \fg[\![x]\!] \add W_{\tilde{s}} = \fg[\![x]\!] \add (\phi \times [\phi])W_{s} = \fg \ot A(i-1, 0) \)
we have the equality
\begin{equation}
W_{(\phi \fot \phi)(-r_i + s)} = (\phi \times [\phi])W_{-r_i + s}.
\end{equation}
Together with bijection
\cref{cor:bijection_between_matrices_and_W}
we get the desired statement.
\end{proof}

Therefore, we have shown
that isomorphism classes of topological
twists of 
\( \delta_i, i \in \{1,2,3 \}, \)
are in one-to-one correspondence with
isomorphism classes of the
corresponding Lagrangian
Lie subalgebras
and gauge equivalence classes of
formal
\( r \)-matrices.

\section{Commensurable twists and their classification}%
\label{sec:commensurable_twists}
One natural subclass of topological twists of \(\delta_i\) for \(i \in \{0,1,2,3\}\) that admits a straightforward classification is the
class of topological twists in
\( \fg[x] \ot \fg[y] \subset (\fg \ot \fg)[\![x,y]\!] \).
We say that a subalgebra \( W \subseteq \fD(\fg[\![x]\!],\delta_i) \) is \emph{commensurable} with \( W_i \subseteq \fD \)
if \[ \dim(W_i + W)/(W_i \cap W) < \infty. \]
Adjusting slightly the argument in \cref{thm: W <-> t <-> T correspondence} we get the
following correspondence.

\begin{lemma}\label{lem:commensurable_twists}
Let \((\fg[\![x]\!],\delta_i)\), \(i \in \{0,1,2,3\}\), be the topological Lie bialgebra structures defined in \cref{subsec:realization_of_doubles_polynomials}.
  There are the following one-to-one correspondences:

 \begin{tikzpicture}[every node/.style={inner sep=.15cm,outer sep=0cm}]
  \node[] (A) {\mlnode{
    Lagrangian Lie subalgebras \( W \subseteq \fD \) \\
    complementary to \( \fg[\![x]\!] \) and \\
    commensurable with \( W_i \)
  }};
  \node[below left=1.5cm and -1.5cm of A] (B) {\mlnode{
    Topological twists \( s \) of \( \delta_i \) \\
    lying in \( \fg[x] \ot \fg[y] \)
  }};
    \node[below right=1cm and -1.5cm of A] (C) {\mlnode{
    Linear maps 
    \( T \colon W_i \longrightarrow \fg[\![x]\!] \) such that \\
    \( \dim(\im(T)) < \infty \) and for all \( p_1, p_2, p_3 \in W_i \) \\
    \( B_i(Tp_1, p_2) + B_i(p_1, Tp_2) = 0 \) and \\
    \( B_i([Tp_1 - p_1, Tp_2 - p_2], Tp_3 - p_3) = 0 \) 
  }};
  
\draw[<->] (A) edge (B) (B) edge (C) (C) edge (A);
\end{tikzpicture}

\end{lemma}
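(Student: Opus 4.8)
The plan is to run the argument of \cref{thm: W <-> t <-> T correspondence} essentially verbatim, while keeping track of the relevant finiteness conditions, so that the three-way bijection established there restricts to the three ``finite'' subclasses appearing here. The bridge between the two formulations is the non-degenerate pairing \( B_i \), which by \cref{eq:topological_basis_duality} identifies \( W_i \) with the continuous dual \( \fg\{x\} = \fg[\![x]\!]' \); under this identification a map \( T \colon W_i \to \fg[\![x]\!] \) as in (C) becomes a map \( \fg\{x\} \to \fg[\![x]\!] \) as in the theorem, and the skew-symmetry and subalgebra conditions transcribe word for word with \( B \) replaced by \( B_i \). Thus the only genuinely new content is to verify that the three finiteness conditions --- \( s \in \fg[x] \ot \fg[y] \), commensurability of \( W \) with \( W_i \), and \( \dim(\im T) < \infty \) --- correspond to one another under the already-established bijections.

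For the passage (B) \( \leftrightarrow \) (C) I would use the explicit formulas of \cref{rem:construction_of_Ws}, adapted to the form \( B_i \). Writing a twist as \( s = \sum_{k,\alpha} s_{k,\alpha} \ot I_\alpha y^k \), the associated map is \( T w = \sum_{k,\alpha} B_i(w, b_{k,\alpha}) s_{k,\alpha} \), so that \( \im T = \Span_F\{ s_{k,\alpha} \} \). If \( s \in \fg[x] \ot \fg[y] \), then only finitely many \( s_{k,\alpha} \) are non-zero and each lies in \( \fg[x] \), whence \( \im T \) is finite-dimensional; conversely, the reconstruction \( s = -\sum_i Tp_i \ot Tv_i \) built in the proof of \cref{thm: W <-> t <-> T correspondence} involves only finitely many non-zero summands once \( \im T \) is finite-dimensional, and each summand is a tensor of polynomials, so \( s \in \fg[x] \ot \fg[y] \).

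For the passage (A) \( \leftrightarrow \) (B) I would describe the Lagrangian subalgebra as a graph over the reference \( W_i \): by the remark following \cref{lem:delta_s_r_s_correspondence} the subalgebra attached to the twist \( s \) of \( \delta_i \) is \( W_s = W_{-r_i + s} = \{ Tp - p \mid p \in W_i \} \), so that \( T = 0 \) (i.e.\ \( s = 0 \)) recovers \( W_i \) itself. When \( \dim(\im T) < \infty \), the kernel of \( T \) has finite codimension in \( W_i \), and on \( \ker T \) the graph coincides with \( W_i \); hence \( W_s \) and \( W_i \) agree off a finite-dimensional subspace and together span only a finite-dimensional excess, giving \( \dim\bigl( (W_i + W_s)/(W_i \cap W_s) \bigr) < \infty \). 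The converse reads this implication backwards: commensurability confines the deviation measured by \( T \) to the finite-dimensional quotient \( (W_i + W_s)/(W_i \cap W_s) \), forcing \( \im T \) to be finite-dimensional.

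The main point requiring care --- rather than any new idea --- is the bookkeeping that reconciles the two parametrizations of Lagrangian subalgebras: \cref{thm: W <-> t <-> T correspondence} parametrizes complements of \( \fg[\![x]\!] \) as graphs over \( \fg[\![x]\!]' \), whereas here it is natural to parametrize commensurable complements as graphs over \( W_i \). One must check that the identification \( W_i \cong \fg[\![x]\!]' \) induced by \( B_i \) intertwines these two graph descriptions and carries the finite-support condition on one side to finite-dimensionality of the image on the other. Once this is set up, all three equivalences follow from the finiteness observations above, and the commutativity of the three arrows is inherited directly from \cref{thm: W <-> t <-> T correspondence}.
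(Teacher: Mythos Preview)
Your overall strategy is the paper's strategy: restrict the three-way bijection of \cref{thm: W <-> t <-> T correspondence} by tracking the finiteness conditions, and your treatment of (A) \(\leftrightarrow\) (C) via the graph description \(W_s = \{Tp - p \mid p \in W_i\}\) is correct (indeed \(W_i \cap W_s = \ker T\) and \(W_i + W_s = W_i \oplus \im T\), so commensurability is equivalent to \(\dim(\im T) < \infty\)).

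There is however a real gap in your (C) \(\Rightarrow\) (B) step. You write that ``each summand is a tensor of polynomials'', but nothing you have said forces this. The elements \(Tp_j, Tv_j\) lie a priori only in \(\im T \subseteq \fg[\![x]\!]\), and a finite-dimensional subspace of \(\fg[\![x]\!]\) need not sit inside \(\fg[x]\). So from finiteness alone you only get \(s = -\sum_j Tp_j \otimes Tv_j \in \fg[\![x]\!] \otimes \fg[\![y]\!]\) (algebraic tensor product), not \(s \in \fg[x] \otimes \fg[y]\). This is precisely the step the paper singles out as the one requiring an argument: choose the \(p_j\) so that \(\ker T\) contains \(x^{m+1}\fg\{x\}\) for \(m = \max_j \deg(p_j)\); then the skew-symmetry condition \(B_i(Tv_j, p) = -B_i(v_j, Tp)\) yields \(B_i(Tv_j, x^{m+1}\fg\{x\}) = 0\), forcing each \(Tv_j\) to be a polynomial of degree at most \(m\). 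Hence \(s \in \fg[\![x]\!] \otimes \fg[y]\), and the skew-symmetry of \(s\) itself then gives \(s \in \fg[x] \otimes \fg[y]\). Once you insert this argument your proof is complete and matches the paper's.
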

\begin{proof}
We start with the identification
\begin{equation}
   \fg[\![x]\!] \add \fg\{x\} \cong \begin{cases}
    \fg \otimes A(\infty)& \text{ if } i = 0, \\
    \fg \otimes A(i-1,0) & \text{ if } i \in \{1,2,3\},
    \end{cases}
\end{equation}
under which \( \fg\{x\} \cong W_i \) as Lie algebras.
It is enough to prove that any linear map
\( T \colon W_i \to \fg[\![x]\!] \)
satisfying the assumptions above
gives rise to a polynomial twist \( s \in \fg[x] \ot \fg[y] \).
As in the proof of \cref{thm: W <-> t <-> T correspondence}
let \( \{ Tp_j \}_{j = 1}^n \) be a basis for
\( \im(T) \) and \( \{ v_j \}_{j = 1}^n \subseteq \fg\{ x \} \)
be its dual basis.
The vector space \( \fg\{ x \}  \) decomposes as
\[ \textnormal{span}\{ p_j \mid 1 \le j \le n \} \add \ker(T). \]
Put \( m \coloneqq \max_j \{ \deg(p_j) \} \),
then it is clear that \( x^{m+1} \fg\{x\} \subseteq \ker(T) \). Here we recall that \(\fg\{x\} = \fg[\![x]\!]'\) can be identified with \(\fg[x]\) as a vector space.
Now we observe that for any \( 1 \le j \le n \)
we have the equality
\begin{align*}
  0 = B(v_j, T(\ker(T))) = -B(Tv_j, \ker(T)) = -B(Tv_j, x^{m+1} \fg\{x\}),
\end{align*}
which means that \( Tv_j \) is actually a polynomial of degree at most \( m \).
In other words, the tensor
\( s \coloneqq - \sum_{j=1}^n Tp_j \ot Tv_j \) lies in \( \fg[\![x]\!] \ot \fg[y] \).
By skew-symmetry we get \( s \in \fg[x] \ot \fg[y] \) as we wanted.
\end{proof}
With this result in mind we call topological twists in
\( \fg[x] \ot \fg[y] \) \emph{commensurable twists}.
It is clear that formal isomorphisms of twists do not
necessarily preserve their commensurability. 
Therefore, we will classify commensurable twists within
\( \fD(\fg[\![x]\!], \delta_i) \),
\( 1 \le i \le 3  \) up to \emph{polynomial isomorphisms},
i.e.\ \( F[x] \)-linear automorphisms of
\( \fg[x] \) extended to 
\( \fg[\![x]\!] \).

\subsection{Commensurable twists in \( \fg \ot A(0,0) \)}%
\label{subsec:comm_twists_double_1}
According to the results of 
\cref{sec:formal_r_matrices}
isomorphism classes of
commensurable twists of \( \delta_1 \) 
are in bijection with
gauge equivalence classes
of formal \emph{rational \( r \)-matrices}
\begin{equation}%
\label{eq:com_twist_rational_r}
r_p = r_{\textnormal{Yang}} + p \in (\fg \ot \fg)(\!(x)\!)[\![y]\!], 
\end{equation}
where \( p \in (\fg \ot \fg)[x,y] \) is
a skew-symmetric polynomial.

Let us for a moment
assume \( F = \bC \).
Then \( r_p \)
can be considered as a Taylor series
expansion at \( y = 0\)
of the non-degenerate classical \( r \)-matrix
\begin{equation}%
\label{eq:corresponding_meromorphic_function}
r_p(x,y) = \frac{\Omega}{x-y} + p(x,y).
\end{equation}
By \cite[Proposition 4.8]{abedin2021geometrization}
classical non-degenerate
\( r \)-matrices of the form
\cref{eq:corresponding_meromorphic_function}
are holomorphically gauge equivalent
if and only if their Taylor series
expansions at \( y = 0 \) are
formally gauge equivalent.
Furthermore, the statement of
\cite[Theorem A.3]{Raschid_Stepan_classical_twists}
tells us that two classical \( r \)-matrices of the form
\cref{eq:corresponding_meromorphic_function} are holomorphically gauge equivalent
if and only if they are polynomially
gauge equivalent.
Combining these results we get the
following statement.
\begin{lemma}
Put  \( F = \bC \). Then formal
\( r\)-matrices \( r_{p_1} \)
and \( r_{p_2} \) are formally gauge
equivalent if and only if
they are polynomially gauge equivalent.
More precisely, 
\(
r_{p_2} = (\phi \fot \phi)r_{p_1}
\)
for a \( \phi \in \textnormal{Aut}_{\bC[\![x]\!] \textnormal{-LieAlg}}(\fg[\![x]\!]) \) is equivalent to
\(
r_{p_2} = (\psi \fot \psi)r_{p_1}
\)
for some
\( \psi \in \textnormal{Aut}_{\bC[x] \textnormal{-LieAlg}}(\fg[x]) \).
\end{lemma}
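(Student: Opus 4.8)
The plan is to prove the equivalence as a short chain of biconditionals, using the two results recalled immediately above the statement. The reverse implication is immediate: every $\psi \in \Aut_{\bC[x]\textnormal{-LieAlg}}(\fg[x])$ is $\bC[x]$-linear, hence determined by its values on $\fg \ot 1$, which lie in $\fg \ot \bC[x] \subseteq \fg \ot \bC[\![x]\!]$; reading off the same matrix of polynomial entries over $\bC[\![x]\!]$ produces a $\phi \in \Aut_{\bC[\![x]\!]\textnormal{-LieAlg}}(\fg[\![x]\!])$ (invertible since the inverse of $\psi$ already has polynomial entries) with $(\psi \fot \psi)r_{p_1} = (\phi \fot \phi)r_{p_1}$ as formal power series. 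So polynomial gauge equivalence is a special case of formal gauge equivalence, and only the converse requires work.

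For the nontrivial direction I would first observe that each $r_{p_i} = r_{\textnormal{Yang}} + p_i$ is the Taylor expansion at $y = 0$ of the meromorphic function $\frac{\Omega}{x-y} + p_i(x,y)$, and that this function is a \emph{non-degenerate} classical $r$-matrix: it has a simple pole along $x = y$ whose residue is the Casimir $\Omega$, which is exactly the non-degeneracy condition demanded by both cited theorems. This identification is the point at which $F = \bC$ enters, since it lets us pass freely between the formal and the convergent (holomorphic) descriptions.

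With this in hand the argument is a two-step translation. Assume $r_{p_1}$ and $r_{p_2}$ are formally gauge equivalent. By \cite[Proposition 4.8]{abedin2021geometrization}, which asserts that two non-degenerate $r$-matrices of this shape are holomorphically gauge equivalent precisely when their Taylor expansions at $y = 0$ are formally gauge equivalent, the meromorphic $r$-matrices $\frac{\Omega}{x-y} + p_1$ and $\frac{\Omega}{x-y} + p_2$ are holomorphically gauge equivalent. Then \cite[Theorem A.3]{Raschid_Stepan_classical_twists}, which equates holomorphic with polynomial gauge equivalence for $r$-matrices of this form, yields a $\psi \in \Aut_{\bC[x]\textnormal{-LieAlg}}(\fg[x])$ realizing the equivalence; reading this back through the Taylor expansion gives $r_{p_2} = (\psi \fot \psi)r_{p_1}$, as desired.

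The main obstacle — indeed the only step that is not formal bookkeeping — is checking that the hypotheses of the two cited results genuinely apply, namely that $\frac{\Omega}{x-y} + p$ is non-degenerate in their sense, and that the three notions of gauge equivalence (formal, holomorphic, polynomial) are compatible under the Taylor-expansion and extension-of-scalars identifications, so that the implementing automorphism can be transported between $\fg[x]$, $\fg[\![x]\!]$, and the holomorphic setting without altering the tensor it acts on. Once these compatibilities are recorded, the lemma follows by composing the two biconditionals supplied by \cite[Proposition 4.8]{abedin2021geometrization} and \cite[Theorem A.3]{Raschid_Stepan_classical_twists}.
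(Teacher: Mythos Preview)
Your proposal is correct and matches the paper's own argument: the lemma is stated there as an immediate consequence of combining \cite[Proposition 4.8]{abedin2021geometrization} (formal $\Leftrightarrow$ holomorphic) with \cite[Theorem A.3]{Raschid_Stepan_classical_twists} (holomorphic $\Leftrightarrow$ polynomial), which is exactly the chain of biconditionals you spell out. The paper does not give a separate proof beyond that sentence, so your added bookkeeping about the easy direction and the applicability of the cited hypotheses is just a more explicit version of the same reasoning.
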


Therefore, for \( F = \bC \)
the isomorphism classes of commensurable twists (up to polynomial isomorphisms)
are in bijection with
gauge equivalence classes of formal
\( r \)-matrices of the form
\cref{eq:com_twist_rational_r}.
Using
\cite[Theorem 1]{Stolin_rational}
we reduce the classification of
commensurable twists of 
\( \delta_1 \) to the classification of
particular Lagrangian Lie subalgebras in \( \fg(\!(x^{-1})\!) \).

\begin{theorem}%
\label{thm:stolin_orders}
Formal \( r \)-matrices of the form
\( r_p \) are in bijection with
Lagrangian Lie subalgebras \( W \subset \fg(\!(x^{-1})\!) \) such that
\begin{enumerate}
\item \( \fg[x] \add W = \fg(\!(x^{-1})\!) \);
\item \( x^{-N}g[\![x^{-1}]\!] \subseteq W\) for some integer \( N > 0 \).
\end{enumerate}
Moreover, \( r_{p_2} = (\varphi \ot \varphi)r_{p_1} \)
for some \( \varphi \in \textnormal{Aut}_{\bC[x] \textnormal{-LieAlg}}(\fg[x]) \)
if and only if \( W_2 = \varphi(W_1) \), where \( W_1 \) and \( W_2 \) 
are the corresponding Lagrangian Lie subalgebras.
\end{theorem}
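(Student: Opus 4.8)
The plan is to realise the bijection as the Manin-triple/$r$-matrix dictionary for the completion of $\fg[x,x^{-1}]$ at $x = \infty$, in the same spirit as the construction of $W_r$ in \cref{subsec:W_r_from_r} (which instead uses the completion at $x = 0$). I would equip $\fg(\!(x^{-1})\!)$ with the non-degenerate invariant form $\langle f,g\rangle \coloneqq \textnormal{coeff}_{-1}\{\kappa(f,g)\}$, the residue-at-$\infty$ analogue of the form $\mathcal{K}_1$ in \cref{rem:B_i}, where $\kappa$ is the $F(\!(x^{-1})\!)$-bilinear extension of the Killing form; then $\fg[x]$ is Lagrangian with complementary Lagrangian $x^{-1}\fg[\![x^{-1}]\!]$. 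Given $r_p = r_{\textnormal{Yang}} + p$ with $p \in (\fg \ot \fg)[x,y]$ skew-symmetric, I would expand at $x = \infty$, writing $r_p(x,y) = \sum_{k \ge 0}\sum_\alpha w_{k,\alpha}(x) \ot I_\alpha y^k$ with $w_{k,\alpha} \in \fg(\!(x^{-1})\!)$, and set $W$ to be the closure of $\Span_F\{w_{k,\alpha} \mid k \ge 0,\ 1 \le \alpha \le n\}$ in $\fg(\!(x^{-1})\!)$.

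The first computation to record is the duality $\langle w_{k,\alpha}, x^j I_\beta\rangle = \delta_{jk}\delta_{\alpha\beta}$: the leading term $x^{-k-1}I_\alpha$ of $w_{k,\alpha}$ contributes the Kronecker deltas (using orthonormality of the $I_\alpha$), while the polynomial correction lies in $\fg[x]$ and pairs trivially against $\fg[x]$. This shows simultaneously that $W \cap \fg[x] = 0$ and $\fg[x] \add W = \fg(\!(x^{-1})\!)$, which is condition (1). Since $p$ has bounded degree in $y$, one has $w_{k,\alpha} = x^{-k-1}I_\alpha$ for all $k > \deg_y(p)$, so the closed span contains $x^{-N}\fg[\![x^{-1}]\!]$ with $N = \deg_y(p) + 2$, which is condition (2).

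For the defining properties of $W$ I would argue exactly as in \cref{subsec:W_r_from_r}. Skew-symmetry of $r_p$ is equivalent to symmetry relations on the coefficients of $p$ that force $\langle w_{k,\alpha}, w_{j,\beta}\rangle = 0$, whence $W$ is isotropic and, being complementary to $\fg[x]$, Lagrangian; conversely Lagrangianity of $W$ returns the skew-symmetry of $r_p$. The equivalence of the CYBE for $r_p$ with $W$ being a subalgebra is the standard fact that the canonical element of an isotropic decomposition solves the CYBE iff both summands are subalgebras; concretely, setting $R \coloneqq \sum_{k,\alpha} w_{k,\alpha} \ot x^k I_\alpha$ and rewriting the CYBE as $[R^{12}, R^{13}] = -[R^{12} + R^{13}, R^{23}]$ expresses closedness of the bracket on $W$ against the dual basis $\{x^k I_\alpha\}$ of $\fg[x]$, as in the subalgebra argument there (and $\fg[x]$ is automatically a subalgebra, so only the condition on $W$ is substantive). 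The inverse assignment sends an order $W$ to $r_W(x,y) \coloneqq \sum_i w_i(x) \ot q^i(y)$, where $\{w_i\}$ is a basis of $W$ and $\{q^i\} \subset \fg[x]$ the dual basis under $\langle\cdot,\cdot\rangle$; condition (2) guarantees both that this sum is well defined and that $r_W - r_{\textnormal{Yang}}$ is polynomial, so that $r_W$ is of the form $r_p$, and the two maps are mutually inverse.

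For the \textbf{Moreover} part, \cref{thm:splitting_aut} with $A = \bC[x]$ identifies every $\varphi \in \Aut_{\bC[x]\textnormal{-LieAlg}}(\fg[x])$ with a $\bC[x]$-point of $\Aut(\fg)$. As such, $\varphi$ extends to $\fg(\!(x^{-1})\!)$, fixes $\fg[x]$ setwise, and preserves $\langle\cdot,\cdot\rangle$ since it acts fibrewise by Killing-form-preserving automorphisms; hence it carries orders to orders. Naturality of the construction then gives $r_{\varphi(W)} = (\varphi \ot \varphi)r_W$, and combining this with injectivity of $W \mapsto r_W$ yields $r_{p_2} = (\varphi \ot \varphi)r_{p_1}$ if and only if $W_2 = \varphi(W_1)$. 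I expect the main obstacle to be the completion bookkeeping in this infinite-dimensional setting: justifying the manipulation of the infinite sum defining $R$ so that the CYBE-versus-subalgebra equivalence is rigorous, and verifying that the inverse construction yields a genuinely \emph{polynomial} correction term. Both rely crucially on the tail condition (2), which confines all the non-polynomial behaviour to the fixed Yang part $r_{\textnormal{Yang}}$.
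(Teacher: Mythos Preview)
Your approach is correct and essentially self-contained, but you should know that the paper does not actually prove this theorem: it is quoted from \cite[Theorem 1]{Stolin_rational} and used as a black box to describe the commensurable twists of \(\delta_1\). So there is no ``paper's own proof'' to compare against beyond the citation.

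That said, your sketch is precisely the expansion-at-infinity analogue of the paper's construction of \(W_r\) in \cref{subsec:W_r_from_r}, and it goes through as written. One point worth making explicit in the inverse direction: you say condition (2) guarantees that \(r_W - r_{\textnormal{Yang}}\) is polynomial, but as stated condition (2) only bounds the \(y\)-degree of the correction. To bound the \(x\)-degree you need to combine (2) with the Lagrangian property: \(x^{-N}\fg[\![x^{-1}]\!] \subseteq W = W^\perp\) forces \(W \subseteq (x^{-N}\fg[\![x^{-1}]\!])^\perp = x^{N-2}\fg[\![x^{-1}]\!]\), so each \(w_{k,\alpha}\) differs from \(x^{-k-1}I_\alpha\) by a polynomial of degree at most \(N-2\). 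With this observation your inverse map lands in rational \(r\)-matrices as required, and the two assignments are visibly mutually inverse. The ``Moreover'' argument via \cref{thm:splitting_aut} and invariance of the Killing form is exactly right.
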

The second condition on \( W \) means
that \( W \) is an \emph{order} in \( \fg(\!(x^{-1})\!) \).
Let \(\{\alpha_1, \dots, \alpha_n \}\) be the set of
simple roots of \(\fg\) associated to some triangular
decomposition of \(\fg\) and \(\alpha_0\) be
the corresponding maximal root.
Let \(k_i\) be the unique positive integers such that
\(\alpha_0 = \sum_{i=1}^n k_i \alpha_i\) holds.  
It was shown in \cite{Stolin_rational}
that an order \(W\) can be embedded by
an appropriate polynomial gauge transformation
into a so called \emph{maximal order} \(W' \subset \fg(\!(x^{-1})\!) \)
with the property \(W' + \fg[x] = \fg(\!(x^{-1})\!)\)
and that such maximal orders are in one-to-one correspondence with
roots \(\{ -\alpha_0, \alpha_1, \dots, \alpha_n \}\).
The classification of orders
within maximal orders corresponding to roots
\(-\alpha_0\) and \(\alpha_i\) with \(k_i = 1\)
reduces to the classification 
of pairs \( (L, B) \), where
\( L \) is a subalgebra of \( \fg \) such that \( L + P_k = \fg \)
for the parabolic subalgebra \( P_k \) corresponding to the simple root \( \alpha_k \)
and \( B \) is a \( 2 \)-cocycle on \( L \) non-degenerate on the intersection
\( L \cap P_k \).
In particular, when \(\fg = \mathfrak{sl}(n, \bC)\)
all coefficients \(k_i = 1\) and the classification of orders
reduces completely to the classification of such pairs \((L, B)\);
for more details see \cite{Stolin_rational_sln}.

\begin{remark}
In order to obtain the above-mentioned
description of commensurable
twists we made an assumption that
\( F = \bC \). This was done in order
to apply verbatim the statement of
\cite[Theorem A.3]{Raschid_Stepan_classical_twists}.
However, the geometric methods used
in \cite{Raschid_Stepan_classical_twists} to prove the theorem are applicable
directly to formal \( r \)-matrices
over an arbitrary algebraically
closed field \( F \) of characteristic \( 0 \).

Therefore, the description
of commensurable twists in terms of 
orders \( W \subset \fg(\!(x^{-1})\!) \)
and pairs \( (L,B) \)
is true for any algebraically closed field
\( F \) of characteristic \( 0 \).
\end{remark}

\subsection{Commensurable twists in \( \fg \ot A(1,0) \)}
As in the previous case, isomorphism classes of commensurable
twists of \(\delta_2\) are in one-to-one correspondence 
with gauge equivalence classes of \emph{quasi-trigonometric \( r \)-matrices}, i.e.\
formal \( r \)-matrices of the form
\begin{align*}
  r_2(x,y) + p(x,y) = \frac{y \Omega}{x-y} + p(x,y),
\end{align*}
where \( p \in \fg[x] \ot \fg[y] \). 

In case \( F = \bC \) quasi-trigonometric \( r \)-matrices
were classified up to polynomial gauge equivalence in \cite{Pop_Stolin_Lagrangian_quasi_trig}
and independently in \cite{Raschid_Stepan_classical_twists}.
More precisely, any quasi-trigonometric \( r \)-matrix
\( r_2 + p \) is polynomially gauge equivalent to
\( r_2 + t_Q \), where \( t_Q \) is a twist
defined by a BD quadruple \( Q = (\Gamma_1, \Gamma_2, \gamma, t) \)
such that \( \Gamma_1 \) does not contain the minimal root \(-\alpha_0\)
of \( \fg \).
The explicit construction of \( t_Q \) is presented in 
\cite[Section 4]{Raschid_Stepan_classical_twists}.

\subsection{Commensurable twists in \( \fg \ot A(2,0) \)}\label{sec:type2_commensurable_twists}
In \cite{the_4_structure} \(r\)-matrices of the form
\(r_3 + p\) for some polynomial \( p \in \fg[x] \ot \fg[y] \) were called
\emph{quasi-rational \(r\)-matrices}. It was shown there that these are in bijection with Lagrangian Lie subalgebras \(W \) of \( \fg(\!(x^{-1})\!) \times \fg[x]/x^2\fg[x] \) 
such that
\begin{enumerate}
\item \( W \cap P = 0 \);
\item \( W \oplus P = \fg \ot A(2,0) \);
\item \( x^{-N} \fg[\![x^{-1}]\!] \times \{0\} \subseteq W \) for some \( N > 0 \),
\end{enumerate}
where \(P = \{(f,[f])\mid f \in \fg[z]\}\).
It was shown in \cite{Stolin_Zelmanov_Montaner}
that the classification of such Lagrangian Lie subalgebras
up to polynomial gauge equivalence
coincides with the classification of 
orders within maximal orders in \(\fg(\!(x^{-1})\!)\)
corresponding to \(-\alpha_0\) and \(\alpha_i\)
with coefficient \(1\) in the linear decomposition
\(\alpha_0 = \sum_{i=1}^{n} k_i \alpha_i\) of \(\alpha_0\)
into the sum of simple roots. As in \cref{subsec:comm_twists_double_1}, this reduces to the classification 
of pairs \( (L, B) \), where
\( L \) is a subalgebra of \( \fg \) such that \( L + P_k = \fg \)
for the parabolic subalgebra \( P_k \) corresponding to the simple root \( \alpha_k \)
and \( B \) is a \( 2 \)-cocycle on \( L \) non-degenerate on the intersection
\( L \cap P_k \).
In particular, for \(\fg = \mathfrak{sl}(n, F)\),
the classification of commensurable twists of \(\delta_1\)
coincides with the classification of commensurable
twists of \(\delta_3\).

\subsection{Commensurable twists in \( \fg \ot A(\infty) \)}\label{sec:type3_commensurable_twists}
By \cref{lem:commensurable_twists} commensurable twists of \(\delta_0 = 0\)
are precisely the elements \(s \in \fg[x] \ot \fg[y]\)
such that \(\textnormal{CYB}(s) = 0\). Such \(r\)-matrices
are in one-to-one correspondence with finite-dimensional
quasi-Frobenius Lie subalgebras of \(\fg[x]\).
This problem is known to be ''wild'' and hence we do not
expect to see any classification in this case.

\section{Classification of topological twists}\label{sec:classification}
We now give a classification of
topological twists of \( \delta_i,
i \in \{1,2,3\}\).
Its proof is presented in the
following section.

\subsection{Twisting class \( \fg \ot A(0,0) \)}
In this section we assume that \(F = \mathbb{C}\)
in order to present the most explicit classification. 
However, we establish similar results
over arbitrary algebraically closed fields of characteristic
\(0\) in a series of remarks.

\Cref{thm:equivalence} states that isomorphism classes of
topological twists of \( \delta_1 \) 
are in bijection with gauge equivalence classes of formal \( r \) matrices \( r_1 + s \),
where \( s \in (\fg \ot \fg)[\![x,y]\!] \). Following \cite{abedin2021geometrization},
when \( F = \bC \) 
these formal series are actually Taylor series of appropriate two-parametric meromorphic \(r\)-matrices
and are subject to an adjusted form of the well-known trichotomy for one-parametric \(r\)-matrices achieved by Belavin and Drinfeld \cite[Theorem 1.1]{BD_first}. 


\begin{theorem}[Theorem 4.7, \cite{abedin2021geometrization}]\label{thm:raschid_classification}
Let \( r \) be a formal \( r \)-matrix of the
form \( r_1 + s \) for some topological twist
\( s \in (\fg \ot \fg)[\![x,y]\!] \).
Then it is gauge
equivalent to the Taylor series expansion of a classical \( r \)-matrix
\( \rho \colon \bC^2 \to \fg \ot \fg \)
with respect to the second variable in point \( 0 \),
where \( \rho \) is either 
\begin{description}
\item[Elliptic] 
In this case there is a lattice \( \Lambda \subset \bC^2 \) of rank \(2\), such that
for all \(\lambda_1, \lambda_2 \in \Lambda\)
\[\rho(x + \lambda_1, y + \lambda_2) = \rho(x,y);\]

\item[Trigonometric] 
\begin{equation}\label{eq:trigonometric_rmat}
    \rho(x,y) = \frac{1}{\exp(x-y) - 1} \sum_{k=0}^{|\sigma|-1} \exp\left( \frac{k(x-y)}{|\sigma|} \right)
\Omega_k + t \left(\exp\left( \frac{x}{|\sigma|} \right), \exp\left( \frac{y}{|\sigma|} \right) \right),
\end{equation}
for some finite-order automorphism \( \sigma \in \textnormal{Aut}_{\bC\textnormal{-LieAlg}}(\fg) \)
and an element \( t \in \mathfrak{L}(\fg, \sigma) \ot \mathfrak{L}(\fg, \sigma) \). 
Here \( \mathfrak{L}(\fg, \sigma) \) stands for the loop algebra of \( \fg \)
twisted by \( \sigma \); \\

\item[Rational] 
\begin{equation}\label{eq:rational_rmat}
    \rho(x,y) = r_p(x,y) \coloneqq  \frac{\Omega}{x-y} + p(x,y),
\end{equation}
where \( p \in (\fg \ot \fg)[x,y] \) is skew-symmetric polynomial.
\end{description}
\end{theorem}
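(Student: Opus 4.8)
The plan is to promote the purely formal object $r = r_1 + s = \Omega/(x-y) + s(x,y)$ to the germ of a genuine meromorphic function on $\bC^2$ near the diagonal and then read off the trichotomy from algebraic geometry. First I would use \cref{lem:rmat_defines_delta} and \cref{cor:bijection_between_matrices_and_W} to replace $r$ by its associated Lagrangian Lie subalgebra $W_r \subset \fg \ot A(0,0) = \fg(\!(x)\!)$, complementary to $\fg[\![x]\!]$. The crucial structural input is that $r$ has a single simple pole along $x=y$ with residue $\Omega$ and an otherwise holomorphic remainder; this is exactly the \emph{non-degeneracy} that, on the side of $W_r$, manifests as $W_r$ being an order, and it is what will ultimately pin down the geometry to arithmetic genus one.

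The key step, following \cite{abedin2021geometrization}, is geometrization: one attaches to $W_r$ an algebro-geometric datum $(X,\mathcal{A})$, where $X$ is a reduced projective curve of arithmetic genus one with a marked smooth point and $\mathcal{A}$ is a coherent sheaf of Lie algebras on $X$ whose generic fibre is $\fg$. The hypotheses that $W_r$ is a Lie subalgebra, Lagrangian, and complementary to $\fg[\![x]\!]$ are precisely what guarantee that $\mathcal{A}$ is genuinely a sheaf of Lie algebras equipped with a residue/trace duality, and that $W_r$ is recovered, after a local trivialization at the marked point, as a space of global sections. The pole of $r$ with residue $\Omega$ forces the arithmetic genus to equal one. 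Once $(X,\mathcal{A})$ is in place, $r$ is reconstructed as the Taylor expansion of the canonical geometric CYBE solution attached to $(X,\mathcal{A})$; in particular $r$ automatically becomes the germ of a meromorphic $\rho \colon \bC^2 \to \fg \ot \fg$.

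With $X$ available, the trichotomy is nothing but the classification of reduced Weierstrass cubics: $X$ is a smooth elliptic curve, a nodal cubic, or a cuspidal cubic. Running the construction backwards in each case yields an explicit meromorphic $\rho(x,y)$ whose Taylor expansion at $y=0$ in the second variable is gauge equivalent to $r$. The smooth elliptic curve produces a doubly periodic $\rho$, i.e.\ the elliptic case; the nodal cubic, whose smooth locus is a torus $\bC^{\times}$, produces the form \cref{eq:trigonometric_rmat}, with the finite-order automorphism $\sigma$ and the element $t$ read off from the gluing and the normalization; and the cuspidal cubic, with smooth locus $\bC$, produces the rational form \cref{eq:rational_rmat}. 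That isomorphic geometric data correspond exactly to gauge-equivalent $r$-matrices is supplied by \cref{thm:equivalence}, under which an $F[\![x]\!]$-linear automorphism of $\fg[\![x]\!]$ matches an isomorphism of $(X,\mathcal{A})$.

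I expect the main obstacle to be the geometrization itself: constructing the projective curve $X$ and the coherent sheaf $\mathcal{A}$ from the a priori purely formal Lagrangian $W_r$, and verifying coherence together with the Lie-algebra and duality structures. This is where the passage from formal to meromorphic is secretly accomplished, since coherence on a projective curve yields convergence of $s$ for free. Once one is on a genuine reduced genus-one curve the trichotomy reduces to the classical dichotomy between the smooth Weierstrass cubic and its nodal and cuspidal degenerations, essentially the Belavin--Drinfeld analysis of \cite{BD_first}. A remaining technical point is extracting the explicit closed forms \cref{eq:trigonometric_rmat,eq:rational_rmat} from the sheaf data on the degenerate cubics, in particular identifying $\sigma$ and $t$ from the normalization $\mathbb{P}^1 \to X$ and its gluing parameters.
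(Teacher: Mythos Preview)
Your proposal is correct and follows essentially the same approach as the paper: pass from \(r\) to the Lagrangian subalgebra \(W_r \subset \fg(\!(x)\!)\), geometrize via the multiplier algebra to obtain a pair \((X,\mathcal{A})\) with \(X\) an irreducible cubic plane curve, invoke the smooth/nodal/cuspidal trichotomy, and recover \(r\) as the Taylor expansion of the geometric \(r\)-matrix of Burban--Galinat. One small correction: the arithmetic genus being at most one is not forced directly by the pole but by a Riemann--Roch argument combined with the Killing form on \(\mathcal{A}\) (see \cref{lemm:geometric_genus}), after which one \emph{chooses} a suitable subalgebra \(O \subseteq M_{W_r}\) to make \(\textnormal{h}^1(\mathcal{O}_X) = 1\).
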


\begin{remark}
We will see in \cref{sec:proof} 
that it is possible to assign
an irreducible cubic plane curve \(X\) to a formal \(r\)-matrix \(r = r_1 + s\)
over any algebraically
closed field of characteristic \( 0 \).
It is then well-known that \(X\) is either an elliptic curve or has a unique singular point, which is either nodal or cuspidal. The type of \(X\) is thereby an invariant of the equivalence class of \(r\). Therefore, this procedure splits (equivalence classes of) formal \(r\)-matrices of the form \(r_1 + s\) into three classes. It is shown in \cite{abedin2021geometrization} that for \(F = \mathbb{C}\), \(r\) is elliptic (resp.\ trigonometric, resp.\ rational) if \(X\) is elliptic (resp.\ nodal, resp.\ cuspidal). Therefore, this categorization of \(r\) by its associated curve is the generalization of \cref{thm:raschid_classification} to arbitrary algebraically closed fields of characteristic 0. In particular, we also call a formal \(r\)-matrix of the form \(r_1 + s\) elliptic (resp.\ trigonometric, resp.\ rational) if \(X\) is elliptic (resp.\ nodal, resp.\ cuspidal) even if \(F\) is not the field of complex numbers.
\end{remark}

By \cite[Proposition 4.10]{abedin2021geometrization}
two classical \( r \)-matrices of one of the forms above
are holomorphically gauge equivalent if and only if the corresponding 
Taylor series expansions of these \( r \)-matrices
are formally gauge equivalent.
This reduces the classification of topological twists of \( \delta_1 \)
up to formal isomorphisms to the classification of classical
\( r \)-matrices of the forms given in \cref{thm:raschid_classification}
up to holomorphic equivalence.
The latter classification, in its turn, reduces to the works
by Belavin and Drinfeld \cite{BD_first,BD_second} in the elliptic and trigonometric
cases and to works by Stolin \cite{Stolin_rational_sln,Stolin_rational} in the rational case.
\smallskip 

\paragraph{\textbf{Elliptic}}
It is known from \cite{BD_first} that elliptic \(r\)-matrices
exist only for \(\fg = \mathfrak{sl}(n, \bC)\).
Moreover, their explicit construction involves a choice
of a primitive \(n\)-th root of unity.
Therefore, all classical elliptic \(r\)-matrices are parametrized
by triples \((\lambda_1, \lambda_2, d)\), where
\(\lambda_1, \lambda_2 \in \bC\) span a two-dimensional lattice
and \( n > d > 0\) is an integer coprime with \(n\) that describes
our choice a primitive root of unity.

Extending the notion of equivalence by 
allowing scaling and coordinate transformations 
we can without loss of generality assume \(\lambda_1 = 1\) and
\(\Im(\lambda_2) > 0\).
Furthermore, by \cite[Lemma 3.3]{abedin2021geometrization},
two classical elliptic \(r\)-matrices corresponding to 
\((1, \lambda, d)\) and \((1, \lambda', d)\)
are equivalent if an only if the elliptic curves
\(\bC / \langle 1, \lambda \rangle\) and
\(\bC / \langle 1, \lambda' \rangle\) are isomorphic.
This gives the following statement.

\begin{theorem}
\label{thm:elliptic_W}
Let \(\fg = \mathfrak{sl}(n, \bC)\).
Topological twists of \(\delta_1\) of elliptic type
are parametrized by doubles
\((\lambda, d)\),
where \(\lambda \in \mathbb{H}/\mathrm{SL}(2, \bZ)\)
and \(0 < d < n\) is an integer coprime to \(n\).
\end{theorem}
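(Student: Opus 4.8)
The plan is to assemble the reductions already set up in the preceding sections. First I would invoke \cref{thm:equivalence} together with the discussion following \cref{thm:raschid_classification}: \cref{thm:equivalence} identifies isomorphism classes of topological twists of \(\delta_1\) with formal gauge equivalence classes of formal \(r\)-matrices \(r_1 + s\), and \cite[Proposition 4.10]{abedin2021geometrization} upgrades formal gauge equivalence of the Taylor expansions to holomorphic gauge equivalence of the underlying meromorphic \(r\)-matrices \(\rho \colon \bC^2 \to \fg \ot \fg\). Restricting to the elliptic type, it therefore suffices to classify classical elliptic \(r\)-matrices for \(\fg = \mathfrak{sl}(n,\bC)\) up to holomorphic gauge equivalence, extended by scaling and coordinate transformations.

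Next I would apply the Belavin--Drinfeld classification \cite{BD_first}: elliptic \(r\)-matrices occur only for \(\fg = \mathfrak{sl}(n,\bC)\), and each is specified by a rank-\(2\) lattice \(\langle \lambda_1, \lambda_2 \rangle \subset \bC\) together with an integer \(0 < d < n\) coprime to \(n\) encoding the choice of primitive \(n\)-th root of unity in the construction. Using scaling to normalize \(\lambda_1\) to \(1\), and a coordinate transformation to arrange \(\Im(\lambda_2) > 0\) if necessary, I reduce each elliptic \(r\)-matrix to one attached to a normalized datum \((1, \lambda, d)\) with \(\lambda \in \mathbb{H}\).

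I would then read off the moduli. By \cite[Lemma 3.3]{abedin2021geometrization}, for a fixed \(d\) the \(r\)-matrices attached to \((1, \lambda, d)\) and \((1, \lambda', d)\) are equivalent precisely when the elliptic curves \(\bC/\langle 1, \lambda \rangle\) and \(\bC/\langle 1, \lambda' \rangle\) are isomorphic; since such isomorphism classes are parametrized by \(\mathbb{H}/\mathrm{SL}(2,\bZ)\), the continuous parameter \(\lambda\) descends to a well-defined point of \(\mathbb{H}/\mathrm{SL}(2,\bZ)\). Pairing this with the discrete datum \(d\) then yields the asserted parametrization by pairs \((\lambda, d)\).

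The step I expect to be the main obstacle is confirming that \(d\) is a genuine complete discrete invariant, that is, that no admissible equivalence (holomorphic gauge transformation combined with scaling and coordinate change) can relate elliptic data with distinct values of \(d\). Since \cite[Lemma 3.3]{abedin2021geometrization} addresses equivalence only for a \emph{fixed} \(d\), I would need to argue separately that the value of \(d\) is preserved under every such transformation. The natural route is to trace the root-of-unity datum underlying the Belavin--Drinfeld construction through any equivalence and show that distinct \(d\) give rise to incompatible such data, so that \((\lambda, d)\) indeed constitutes a full set of invariants.
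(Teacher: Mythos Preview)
Your proposal is correct and follows exactly the paper's own route: the paper does not give a separate formal proof but derives the statement from the paragraph preceding it, which (just as you do) recalls the Belavin--Drinfeld description of elliptic \(r\)-matrices by triples \((\lambda_1,\lambda_2,d)\), normalizes \(\lambda_1 = 1\) and \(\Im(\lambda_2)>0\) via scaling and coordinate transformations, and then invokes \cite[Lemma 3.3]{abedin2021geometrization} to identify the \(\lambda\)-parameter with a point of \(\mathbb{H}/\mathrm{SL}(2,\bZ)\).

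Regarding your final concern about whether \(d\) is a genuine complete invariant: this is exactly the right question to raise, but note that the theorem only asserts a \emph{parametrization}, not a bijection. Indeed, the paper's remark immediately following the theorem records that \(r\)-matrices with data \((\lambda,d_1)\) and \((\lambda,d_2)\) are equivalent if and only if \(d_2 = n-d_1\), so the discrete parameter \(d\) is \emph{not} a complete invariant in the sense you worry about. The truly bijective classification (restricting to \(0<d<n/2\)) is stated only in that subsequent remark, not in the theorem itself. So you need not supply the argument you anticipate being difficult; the statement as written does not require it.
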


\begin{remark}
It can be shown that two elliptic \(r\)-matrices corresponding to
triples
\((\lambda, d_1)\) and \((\lambda, d_2)\) are
(gauge) equivalent if and only if \(d_2 = n - d_1\).
Combining this result with \cref{thm:elliptic_W} 
we see that equivalence classes of topological twists of \( \delta_1 \)
of elliptic type are in bijection with
\[
\mathbb{H}/\mathrm{SL}(2, \bZ) \times \{ (n, d) \in \mathbb{Z}_+ \mid \gcd{(n,d)} = 1 \ \text{ and } \ 0 < d < n/2 \}.
\]
\end{remark}

\begin{remark}
For a general algebraically closed field \(F\) of characteristic 0, the methods from \cite{abedin2021geometrization} prove that elliptic formal \(r\)-matrices are parametrized by pairs \((X,\mathcal{A})\) consisting of an elliptic curve \(X\) over \(F\) and an acyclic locally free sheaf \(\mathcal{A}\) of Lie algebras on \(X\) with fiber \(\fg\) at every point of \(X\). 
\end{remark}

\paragraph{\textbf{Trigonometric}}
It was shown in \cite[Theorem 3.4]{Raschid_Stepan_classical_twists}
that any classical \(r\)-matrix of the form \cref{eq:trigonometric_rmat}
is globally holomorphically equivalent to a classical
\(r\)-matrix \(X(u-v)\) depending on the difference of its
parameters and such that the set of poles of \(X(z)\) 
is \(2 \pi i \mathbb{Z}\).
The last fact means that \(X(z)\) is a trigonometric solution
in the sense of \cite{BD_first}.
The well-known classification \cite[Theorem 6.1]{BD_first}
of trigonometric solutions
tells us that \(X(z)\) is
holomorphically equivalent to another trigonometric solution
\(X^\sigma_Q(z)\), described by a finite-order
automorphism \(\sigma \) of \(\fg\) and a Belavin-Drinfeld (BD)
quadruple \(Q = (\Gamma_1, \Gamma_2, \gamma, t)\).

Conversely, any trigonometric solution \(X^\sigma_Q(z)\)
is automatically of the form described in \cref{thm:raschid_classification}
and hence its Taylor series expansion at \( y = 0 \) gives
a normalized formal \( r \)-matrix, which in its turn produces a topological twist.

Therefore, topological twists of \( \delta_1 \) of trigonometric type
are fully described by finite-order automorphisms of \(\fg\) and BD 
quadruples \(Q\); for more details see \cite{Raschid_Stepan_classical_twists,BD_first}.
\\

\begin{remark}
For an arbitrary algebraically closed field \(F\) of characteristic 0, the formula \cref{eq:trigonometric_rmat} is well-defined and a formal \(r\)-matrix is trigonometric if and only if it is equivalent to a series of this form. The classification of trigonometric \(r\)-matrices by Belavin-Drinfeld quadruples relies completely on the structure theory of twisted loop algebras and thus remains valid if \(\mathbb{C}\) is replaced by \(F\).
\end{remark}

\paragraph{\textbf{Rational}}
Classification of twists
in this case coincides
with the 
classification
of commensurable twists
inside
\( \fg \ot A(0,0) \).
The latter is presented in
\cref{subsec:comm_twists_double_1}.

\begin{remark}
For an arbitrary algebraically closed field \(F\) of characteristic 0, the formula \cref{eq:rational_rmat} is well-defined and a formal \(r\)-matrix is rational if and only if it is equivalent to a series of this form. As already mentioned in \cref{subsec:comm_twists_double_1}, the results from \cite{Stolin_rational_sln,Stolin_rational} also apply to this general setting.
\end{remark}

\subsection{Twisting classes \( \fg \ot A(1,0) \) and \( \fg \ot A(2,0)\)}
Let \(i \in \{2,3\}\). 
Lagrangian Lie subalgebras \( W \subseteq \fg(\!(x)\!) \times \fg[x]/x^{i-1}\fg[x]\) complementary 
to the image of \( \fg[\![x]\!] \) in \(\fg(\!(x)\!) \times \fg[x]/x^{i-1}\fg[x]\) are in one-to-one correspondence
with formal \( r \)-matrices of the form \(r_i + s \) for some \(s \in (\fg \ot \fg)[\![x,y]\!]\), defined
in \cref{sec:realization_polynomials}. 

\begin{theorem}%
\label{thm:other_twisting_classes}

Up to formal isomorphism, Lagrangian
Lie subalgebras of \( \fg \ot A(1,0)\) and
\( \fg \ot A(2,0)\) are commensurable.
This implies that all topological twists of \( \delta_2 \) and \( \delta_3 \), up to formal isomorphism, are commensurable and hence their
classification coincides with the one presented in \cref{sec:type2_commensurable_twists} and  \cref{sec:type3_commensurable_twists} respectively. In particular, all \(r\)-matrices of the form
\(r_2 + s\) and \(r_3 + s\), where \(s \in (\fg \ot \fg)[\![x,y]\!]\), are formally gauge equivalent to quasi-trigonometric and quasi-rational \(r\)-matrices respectively.
\end{theorem}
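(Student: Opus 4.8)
The plan is to reduce \cref{thm:other_twisting_classes} to a single geometric statement about an associated cubic curve and then establish that statement by adapting the geometrization of \cite{abedin2021geometrization}. By \cref{cor:bijection_between_matrices_and_W} together with \cref{thm:equivalence}, classifying topological twists of $\delta_i$ up to formal isomorphism is the same as classifying Lagrangian Lie subalgebras $W \subseteq \fg \ot A(i-1,0)$ complementary to $\fg[\![x]\!]$ up to the maps $\phi \times [\phi]$. Moreover, \cref{lem:commensurable_twists} shows that such a $W$ corresponds to a commensurable (polynomial) twist exactly when it is an \emph{order}, i.e.\ $x^{-N}\fg[x^{-1}] \times \{0\} \subseteq W$ for some $N > 0$ with $W/(x^{-N}\fg[x^{-1}] \times \{0\})$ finite-dimensional. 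Hence it suffices to prove that every such $W$ is, after a suitable $\phi \times [\phi]$, an order; the final assertion identifying these twists with quasi-trigonometric and quasi-rational $r$-matrices then follows verbatim from \cref{sec:type2_commensurable_twists,sec:type3_commensurable_twists}.

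First I would attach to the formal $r$-matrix $r = r_i + s$ the algebro-geometric datum of \cite{abedin2021geometrization}, exactly as in the case $i=1$ recalled after \cref{thm:raschid_classification}: an irreducible cubic plane curve $X$, a smooth marked point $p \in X$ whose completed local ring is $F[\![x]\!]$, and an acyclic coherent sheaf of Lie algebras $\cA$ on $X$ that is a form of $\fg \ot \cO_X$, the residue pairing being computed by $\mathcal{K}_i$ of \cref{rem:B_i}. Under this dictionary the factor $\fg(\!(x)\!)$ is the loop algebra at $p$, the diagonally embedded $\fg[\![x]\!]$ is the space of sections regular at $p$, and $W$ is identified with the global sections of $\cA$ over $X \setminus \{p\}$. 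The new feature for $i \in \{2,3\}$ is the finite-dimensional factor $\fg \ot (F[x]/(x^{i-1}))$ of the double: geometrically it records a length-$(i-1)$ infinitesimal gluing at $p$, namely a node for $i=2$ and a cusp for $i=3$.

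The hard part will be to show that this infinitesimal gluing forces $X$ to be \emph{singular} of the prescribed type, so that, in contrast with the trichotomy of \cref{thm:raschid_classification} for $i=1$, the curve is never the smooth (elliptic) cubic: for $i=2$ the extra copy $\fg \cong \fg \ot (F[x]/(x))$ glued along $x=0$ produces a nodal cubic, and for $i=3$ the factor $\fg \ot (F[x]/(x^2))$ produces a cuspidal cubic. Concretely, I expect to check that the bilinear form $B_i$ of \cref{rem:B_i} cannot arise as the residue pairing of an acyclic sheaf on a smooth genus-one curve once the finite factor is present, the minus sign between the two summands of $B_i$ being precisely the residue-theorem relation at the two branches of a node (respectively at the cusp). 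This is the step where $i \ge 2$ genuinely departs from $i=1$, and it is where the algebraic structure of the double $\fD_i$ is converted into a drop of the geometric genus to $0$.

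Once $X$ is known to be a singular, hence rational, cubic, I would finish by a cohomological finiteness argument parallel to the rational case of \cite{abedin2021geometrization}. Since the normalization of $X$ is $\mathbb{P}^1$ and $\cA$ is coherent and acyclic, Riemann--Roch bounds the global sections of $\cA$ on $X \setminus \{p\}$ against the order of pole permitted at $p$, giving $x^{-N}\fg[x^{-1}] \times \{0\} \subseteq W$ for some $N$ and a finite-dimensional quotient. This is exactly commensurability with $W_i$, so after a formal gauge transformation aligning $W$ with $W_i$ the twist $s$ becomes polynomial. Feeding this into the classifications of \cref{sec:type2_commensurable_twists} for $i=2$ and \cref{sec:type3_commensurable_twists} for $i=3$ identifies the twists with quasi-trigonometric and quasi-rational $r$-matrices respectively, completing the proof.
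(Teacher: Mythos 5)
Your overall strategy---geometrize, show the genus drops, deduce commensurability---is the one the paper follows, and your reduction via \cref{cor:bijection_between_matrices_and_W}, \cref{thm:equivalence} and \cref{lem:commensurable_twists} is correct. But two essential steps are missing or run backwards. First, the geometrization of \cref{subsec:geometrization_of_lattices} cannot be applied to \(W\) ``exactly as in the case \(i=1\)'': for \(i\in\{2,3\}\) the subalgebra \(W\) is not a \(\fg\)-lattice in \(\fg(\!(x)\!)\), so what gets geometrized is the projection \(W_+\). The paper then proves in \cref{lemm:geometric_genus} the sharper statement that the multiplier algebra satisfies \(M_{W_+}=F[x^{-1}]\) \emph{exactly}---so the curve attached to \(W_+\) is \(\mathbb{P}^1\) itself---by explicit residue computations (\(x^{i-1}s'\in N^{\perp}\subseteq M\), forcing \(s=x^{-1}\)); the exclusion of the elliptic case, which you flag as the hard part, is obtained much more cheaply, from \cref{lem:preleminary_facts_for_geometrization}.(4) (\(W_+\cap\fg[\![x]\!]\neq 0\) when \(i\ge 2\)) against the Serre-duality vanishing \(\textnormal{h}^0(\mathcal{L})=0\) on an elliptic curve. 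The exact equality \(M_{W_+}=F[x^{-1}]\), not just genus zero, is what is used afterwards: it gives that \(W_+\) is free over \(F[x^{-1}]\) and \(\kappa(W_+,W_+)\subseteq F[x^{-1}]\), which are the inputs of the gluing construction. The nodal (resp.\ cuspidal) cubic is then not derived but built by hand, and the real content is that the datum \((W_-,W)\) defines an acyclic sheaf \(\mathcal{A}\) on it whose Killing form takes values in \(\mathcal{O}_X\)---this is where the Lagrangian property of \(W\) and the minus sign in \(B_i\) actually enter (\cref{lemm:thetas}, \cref{lemm:A_weakly_g_locally_free_qtcase}, \cref{lemm:quasirational_lemma}).

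Second, and more seriously, your concluding step fails as stated: Riemann--Roch and acyclicity cannot give \(x^{-N}\fg[x^{-1}]\times\{0\}\subseteq W\), because this inclusion is not invariant under formal gauge transformations and is simply false for a general \(W\) satisfying the hypotheses---it is precisely the conclusion of the theorem, which holds only \emph{after} applying a suitable \(\varphi\). So you cannot first prove commensurability and then ``align \(W\) with \(W_i\)''; the gauge transformation has to be produced first. The missing ingredient is the fiberwise analysis: one shows that all fibers of the glued sheaf at smooth points are isomorphic to \(\fg\) (\cref{prop:weakly_g_locally_free_modules}.(2), using \(\textnormal{H}^0(\mathcal{A})=0=\textnormal{H}^1(\mathcal{A})\) and the descent of the Killing form), and then invokes the rigidity result \cref{prop:weakly_g_locally_free_modules}.(3), i.e.\ \cite[Theorem 4.12.(2)]{abedin2021geometrization}, to trivialize the sheaf over the affine part; comparing this global trivialization with the formal one \(\zeta\) is what produces \(\varphi\) with \(\varphi(W_+)\subseteq\fg[x,x^{-1}]\), whence commensurability of \((\varphi\times[\varphi])(W)\) with \(W_i\) as in \cref{lemm:concluding_qtcase}. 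Without this fiber computation and rigidity step your argument does not close.
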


\section{Algebro-geometric proof of the classification of topological twists}\label{sec:proof}
Finally, we present an algebro-geometric proof
of the classification stated in
\cref{thm:other_twisting_classes}.

\subsection{Preliminary results} 
\label{lem:preleminary_facts_for_geometrization}
Let \( i \in \{1,2,3 \} \), consider the canonical injection
\begin{equation}
    \begin{aligned}
    \iota \colon \fg[\![x]\!] &\longrightarrow \fg(\!(x)\!) \times \fg[x]/x^{i-1}\fg[x]\\
    f &\longmapsto (f,[f])
    \end{aligned}
\end{equation}
and let \(
W \subseteq \fg \otimes A(i-1,0) = \fg(\!(x)\!) \oplus \fg[x]/x^{i-1}\fg[x]
\)
be a Lagrangian subalgebra satisfying
\begin{equation}
    \iota(\fg[\![x]\!]) \dotplus W = \fg(\!(x)\!) \times \fg[x]/x^{i-1}\fg[x].    
\end{equation}
Furthermore, let \(W_+ \) and \( W_- \) be the projections
of \( W \)
onto \(\fg(\!(x)\!)\) and \( \fg[x]/x^{i-1}\fg[x]\) respectively.
The following results are true:
\begin{enumerate}
    \item \(W_\pm^\bot \subseteq W_\pm\) with respect to the bilinear form \(\mathcal{K}_i\) defined in \cref{rem:B_i};
    
    \item \(\fg(\!(x)\!) = \fg[\![x]\!] + W_+\) and \(\dim(\fg[\![x]\!]\cap W_+) < \infty\);
    
    \item \(W_+/W_+^\bot \times W_-/W_-^\bot = (\iota(\fg[\![x]\!]) \cap (W_+\times W_-)) \dotplus W/(W_+^\bot \times W_-^\bot)\) is a Manin triple satisfying \(\dim(W_+/W_+^\bot) = \dim(W_-/W_-^\bot) < \infty\);
    
    \item When \(i \in \{2,3\}\)
    we have \(\fg[\![x]\!]\cap W_+ \neq \{0\}\).
\end{enumerate}
\begin{proof}
Part (1) follows from the inclusions
\begin{equation}\label{eq:Wbot}
    W_+^\bot \times W_-^\bot = (W_+\times W_-)^\bot \subseteq W^\bot = W\subseteq W_+\times W_-.
\end{equation}
The equality 
\(\fg(\!(x)\!) = \fg[\![x]\!] + W_+\) 
follows easily from
\(\iota(\fg[\![x]\!]) + W = \fg(\!(x)\!) \times \fg[x]/x^{i-1}\fg[x]\).
For the second
part of (2) we have
\(0 = (\fg[\![x]\!] + W_+)^\bot = x^{i-1}\fg[\![x]\!] \cap W_+^\bot\)
which implies that \(\fg[\![x]\!] \cap W_+^\bot\) can be embedded into \(\fg[\![x]\!]/x^{i-1}\fg[\![x]\!]\) and is therefore finite-dimensional.
Consequently, the dimension of
\(\fg[\![x]\!]\cap W_+\) is finite if
the quotient
\((\fg[\![x]\!]\cap W_+) / (\fg[\![x]\!]\cap W_+^{\perp}) \)
is finite-dimensional. The latter space can be identified with a subspace of \(W_+/W_+^\bot\).
Therefore, the second part
of (2) follows from (3).

For (3) observe that the kernel \(K\) of the projection \(W \to W_+\) contains \(\{0\} \times W_-^\bot\) by virtue of \cref{eq:Wbot}. On the other hand, any element of \(K\) is of the form
\((0, a)\) for some \(a \in W_-\), so for all \( (w_+,w_-)\in W \)
\begin{equation}
    0 = B_i((0,a),(w_+,w_-)) = -\mathcal{K}_i(a,w_-)
\end{equation}
holds, implying \( a \in W_-^{\perp} \)
and hence
\( K = \{ 0 \} \times W_-^{\perp}\).
This provides us
with an isomorphism \(W/(W_+^\bot\times W_-^\bot) \to W_+/W_+^\bot\) and, similarly,
\(W/(W_+^\bot\times W_-^\bot) \to W_-/W_-^\bot\).
Composing them we obtain
an isomorphism
\(W_+/W_+^\bot \to W_-/W_-^\bot\).
In particular, \(\dim(W_+/W_+^\bot) = \dim(W_-/W_-^\bot) <\nolinebreak \infty\). 
Since \(W \subseteq W_+ \times W_-\), the identity \(\iota(\fg[\![x]\!]) \dotplus W = \fg(\!(x)\!) \times  \fg[x]/x^{i-1}\fg[x]\) is equivalent to
\begin{equation}\label{eq:W+W-W}
    W_+ \times W_- = (\iota(\fg[\![x]\!]) \cap (W_+\times W_-)) \dotplus W.    
\end{equation}
Quoiting out \(W_+^\bot \times W_-^\bot\) concludes (3).

Let us turn now to (4). Assume that \(i \in \{2,3\}\) and \(\fg[\![x]\!] \cap W_+ = \{0\}\). Then
\begin{equation}
    \iota(\fg[\![x]\!]) \cap (W_+\times W_-) = \{0\},    
\end{equation}
and \cref{eq:W+W-W} imply
\(W = W_+ \times W_- = W_+^\bot \times W_-^\bot\).
Since \(\fg(\!(x)\!)\times \fg[x]/x^{i-1}\fg[x] = \iota(\fg[\![x]\!]) \dotplus (W_+ \times W_-)\) and \(\fg[\![x]\!] \cap W_+ = \{0\}\) we obtain \(W_- =  \fg[x]/x^{i-1}\fg[x]\), which contradicts \(W_-^\bot = W_-\).
\end{proof}

\subsection{Algebro-geometric notation and convention}

\begin{itemize}
    \item In the following, an algebraic variety is an integral scheme of finite type over the algebraically closed field \(F\) of characteristic 0. In this setting, a projective variety \(X\) is an algebraic variety isomorphic to the projective spectrum \(\textnormal{Proj}(R)\) of a finitely-generated graded \(F\)-algebra \(R\) without zero divisors; see e.g.\ \cite[Section II.2]{hartshorne} for the definition of the projective spectrum. In particular, a projective curve is such a projective variety of dimension one.
    
    \item For an algebraic variety \(X\), the sheaf of rings of regular functions of \(X\) is denoted by \(\mathcal{O}_X\) and for any point \(p \in X\), \(\mathfrak{m}_p\) is the maximal ideal of the local ring \(\mathcal{O}_{X,p}\).

    \item For an \(\mathcal{O}_X\)-module \(\mathcal{F}\), we write \(\mathcal{F}_p\) (resp.\ \(\mathcal{F}|_p = \mathcal{F}_p/\mathfrak{m}_p\mathcal{F}_p\)) for the stalk (resp.\ fiber) of \(\mathcal{F}\) in some point \(p \in X\). Furthermore, \(\Gamma(U,\mathcal{F})\) denotes the space of sections of \(\mathcal{F}\) on an open subset \(U \subseteq X\),  \(\textnormal{H}^k(\mathcal{F})\) denotes the \(k\)-th global cohomology class of \(\mathcal{F}\), and \(\textnormal{h}^k(\mathcal{F}) \coloneqq \dim(\textnormal{H}^k(\mathcal{F}))\). In particular, \(\textnormal{H}^0(\mathcal{F}) = \Gamma(X,\mathcal{F})\).
    
    \item We call \(\mathcal{F}\) a sheaf of Lie algebras on \(X\) if it is equipped with a bilinear morphism \(\mathcal{F} \times \mathcal{F} \to \mathcal{F}\) which equips \(\mathcal{F}_p\) with the structure of a Lie algebra for all \( p\in X\). If \(\mathcal{F}\) is also locally free of finite rank, we call the unique bilinear morphism \(\mathcal{F}\times \mathcal{F} \to \mathcal{O}_X\) whose stalk at any point \(p \in X\) corresponds to the Killing form of the free \(\mathcal{O}_{X,p}\)-Lie algebra \(\mathcal{F}_p\), Killing form of \(\mathcal{F}\) (see e.g.\ \cite[Definition 2.4 \& Lemma 2.5]{abedin2021geometrization}).
\end{itemize}

\subsection{Geometrization of lattices.}\label{subsec:geometrization_of_lattices}
The methods to prove \cref{thm:raschid_classification} and \cref{thm:other_twisting_classes} are based on the following geometrization scheme of \(\fg\)-lattices from \cite[Section 2.3]{abedin2021geometrization}.
Recall that a \(\fg\)-lattice \(L \subseteq \fg(\!(x)\!)\) is a subalgebra satisfying 
\begin{equation}\label{eq:lattice}
    \dim(L \cap \fg[\![x]\!]) < \infty \textnormal{ and }\dim(\fg(\!(x)\!)/(\fg[\![x]\!]+L)) < \infty.
\end{equation}
Then for any unital subalgebra \(O \subseteq M_L \coloneqq \{f \in F(\!(x)\!)\mid fL \subseteq L\}\) of finite codimension, the graded \(F\)-algebra
\begin{equation}
    \textnormal{gr}(O) \coloneqq \bigoplus_{j = 0}^\infty t^j\left(O\cap x^{-j}F[\![x]\!]\right) \subseteq O[t]
\end{equation}
defines a projective curve \(X \coloneqq \textnormal{Proj}(\textnormal{gr}(O))\) satisfying
\begin{equation}
    \textnormal{h}^1(\mathcal{O}_X) =\dim(F(\!(x)\!)/(F[\![x]\!]+O)).
\end{equation}
The smooth point \(p = (t) \in X\) satisfies \(\textnormal{D}_+(t) = X\setminus\{p\}\) and is equipped with an isomorphism \(c \colon \widehat{\mathcal{O}}_{X,p} \to F[\![x]\!]\) such that the extension \( c \colon (\widehat{\mathcal{O}}_{X,p}\setminus \{0\})^{-1}\widehat{\mathcal{O}}_{X,p} \to F(\!(x)\!)\) has the property
\begin{equation}
    c(\Gamma(X\setminus \{p\},\mathcal{O}_X)) = O \subseteq F(\!(x)\!) .
\end{equation}
Now the graded \(\textnormal{gr}(O)\)-Lie algebra
\begin{equation}
    \textnormal{gr}(L) \coloneqq \bigoplus_{j \in \bZ} t^j(L \cap x^{-j}\fg[\![x]\!]) \subseteq L[t,t^{-1}]
\end{equation}
defines a coherent sheaf of Lie algebras \(\mathcal{L}\) on \(X\) satisfying 
\begin{equation}\label{eq:cohomology}
    \textnormal{h}^0(\mathcal{L}) = \dim(L \cap \fg[\![x]\!])  \textnormal{ and }\textnormal{h}^1(\mathcal{L}) =\dim(\fg(\!(x)\!)/(\fg[\![x]\!]+L)).
\end{equation} 
As \(\mathcal{O}_X\)-module, \(\mathcal{L}\) is simply the the sheaf associated to a graded module on a projective scheme in e.g.\ \cite[Section II.5]{hartshorne}.
There is a natural isomorphism \(\zeta \colon \widehat{\mathcal{L}}_p \to \fg[\![x]\!]\) such that the extension \((\widehat{\mathcal{O}}_{X,p}\setminus \{0\})^{-1}\widehat{\mathcal{L}}_{p} \to \fg(\!(x)\!)\) has the property \(\zeta(\Gamma(X\setminus \{p\},\mathcal{L})) = L\). 
Let us write 
\begin{equation}\label{eq:geometric_datum}
\mathbb{G}(O,L) \coloneqq ((X,\mathcal{L}),(p,c,\zeta))
\end{equation}
for the geometric datum constructed above.

\subsection{Some results on multipliers.}
\label{lemm:geometric_genus}
Take an \( i \in \{1,2,3 \} \)
and consider a Lagrangian
subalgebra
\begin{equation}
W \subseteq \fg \otimes A(i-1,0) = \fg(\!(x)\!) \times \fg[x]/x^{i-1}\fg[x]    
\end{equation}
complemetary
to \( \iota(\fg[\![x]\!]) \)
 and let \(W_+ \subseteq \fg(\!(x)\!)\) be the projection of \(W\)
 on \( \fg(\!(x)\!) \).

\begin{enumerate}
    \item The integral closure \(N\) of \(M \coloneqq M_{W_+} \coloneqq  \{f\in F(\!(x)\!)\mid f W_+ \subseteq W_+\}\) satisfies
    \begin{equation}
        g \coloneqq \dim(F(\!(x)\!)/(F[\![x]\!]+ N)) \in \{0,1\};    
    \end{equation}
    
    \item When \(g = 1 \) we have \( M = N\) and \(i = 1\);
    
    \item If \( g = 0\) and \(i = 1 \) then \( F[s',s's] \subseteq M\) for some \(s \in x^{-1}F[\![x]\!]^\times\). Here \( s' \) stands
    for the formal derivative
    of \( s \);
    
    \item For \(i \in \{2,3\}\) we have \( N = M = F[x^{-1}]\).
    \end{enumerate}
\begin{proof} The statements from \Cref{subsec:geometrization_of_lattices} imply that \(M\) has Krull dimension one and \(W_+\) is finitely-generated over \(M\). The canonical inclusion \(M \to N\) is finite (see e.g.\ \cite[Chapter I, Theorem 3.9A]{hartshorne}) and \((M\setminus\{0\})^{-1}M = (M\setminus\{0\})^{-1}N\), so \(N\) is a finitely-generated torsion-free \(M\)-module of rank one. 
Therefore, \(N/M\) is a torsion module over \(M\) and \(N/M\) is finite-dimensional since \(M\) has Krull dimension one. The subalgebra \(L \coloneqq NW_+ \subseteq \fg(\!(x)\!)\) is now also a finitely-generated \(M\)-module of the same rank as \(W_+\). Therefore, using the same argument \(L/W_+\) is finite-dimensional.
By (2) of \cref{lem:preleminary_facts_for_geometrization}
the subalgebra \( W_+ \)
is a \(\fg\)-lattice
and hence so is
\( L \coloneqq NW_+\subseteq \fg(\!(x)\!) \).
Let \(\mathbb{G}(N,L) \coloneqq ((X,\mathcal{L}),(p,c,\zeta))\) be the geometrization described in \cref{subsec:geometrization_of_lattices}.
Combining
\cref{eq:cohomology} with
\(\fg(\!(x)\!) = \fg[\![x]\!] + W_+ \subseteq \fg[\![z]\!] + L\)
we get
\(\textnormal{H}^1(\mathcal{L}) = 0\).
%

The Killing form \(K \colon \mathcal{L} \times \mathcal{L} \to \mathcal{O}_X\) induces a morphism \(K^\textnormal{a} \colon \mathcal{L} \to \mathcal{L}^*\) (where \(\mathcal{L}^*\) is the dual as \(\mathcal{O}_X\)-module).
The fiber \(K^\textnormal{a}|_p\) is an isomorphism
because \(K|_p\)
can be identified with the
Killing form of
\(\mathcal{L}|_p \cong \fg\).
Therefore,
\(\textnormal{Ker}(K^\textnormal{a})\) and \(\textnormal{Cok}(K^\textnormal{a})\) are torsion sheaves.
%
In particular, \(\textnormal{H}^1(\textnormal{Cok}(K^\textnormal{a})) = 0\) and \(\textnormal{Ker}(K^\textnormal{a})\) vanishes, since it is a torsion subsheaf of the locally free sheaf \(\mathcal{L}\). This results in the short exact sequence
\begin{align}\label{eq:tildeKexactseq}
    0 \longrightarrow \mathcal{L} \stackrel{K^\textnormal{a}}{\longrightarrow} \mathcal{L}^* \longrightarrow \textnormal{Cok}(K^\textnormal{a})\longrightarrow0.
\end{align}
The associated long exact sequence in cohomology reads
\begin{equation}
    0 \longrightarrow \textnormal{H}^0(\mathcal{L}) {\longrightarrow} \textnormal{H}^0(\mathcal{L}^*) \longrightarrow \textnormal{H}^0(\textnormal{Cok}(K^\textnormal{a}))\longrightarrow \textnormal{H}^1(\mathcal{L}) {\longrightarrow} \textnormal{H}^1(\mathcal{L}^*) \longrightarrow \textnormal{H}^1(\textnormal{Cok}(K^\textnormal{a})) \longrightarrow 0.
\end{equation}
The identities \(\textnormal{H}^1(\mathcal{L}) = 0 = \textnormal{H}^1(\textnormal{Cok}(K^\textnormal{a}))\) imply that \(\textnormal{H}^1(\mathcal{L}^*) = 0\). 

The Riemann-Roch theorem for \(\mathcal{L}\) and \(\mathcal{L}^*\) (e.g.\ in the version of \cite[Chapter 7, Exercise 3.3]{liu}) combined with the fact that \(\textnormal{h}^1(\mathcal{O}_X) = g\) implies 
\begin{align*}
    &0 \le \textnormal{h}^0(\mathcal{L})-\textnormal{h}^1(\mathcal{L}) = \textnormal{deg}(\textnormal{det}(\mathcal{L})) + (1-g)\textnormal{rank}(\mathcal{L}),\\
    &0 \le \textnormal{h}^0(\mathcal{L}^*)-\textnormal{h}^1(\mathcal{L}^*) = -\textnormal{deg}(\textnormal{det}(\mathcal{L})) + (1-g)\textnormal{rank}(\mathcal{L}),
\end{align*}
where we used that \(\textnormal{det}(\mathcal{L}^*) = \textnormal{det}(\mathcal{L})^*\) implies \(\textnormal{deg}(\textnormal{det}(\mathcal{L}^*)) = -\textnormal{deg}(\textnormal{det}(\mathcal{L}))\). We conclude \( g \le 1\).

Assume \(g = 1\), then \(X\) is an elliptic curve. Let \(\Omega^1_{X}\) be the sheaf of regular 1-forms on \(X\).
We have \(\textnormal{H}^1(\Omega^1_{X}) = F \eta\) for some global 1-form \(\eta\) on \(X\) and this choice defines an isomorphism \(\Omega^1_{X} \cong \mathcal{O}_{X}\).
Serre duality (see e.g.\ \cite[Chapter III, Corollary 7.7]{hartshorne}) provides \(0 = \textnormal{h}^1(\mathcal{L}^*) = \textnormal{h}^0(\mathcal{L})\).
In particular, by \cref{eq:cohomology}, \(W_+ \cap \fg[\![x]\!] \subseteq L \cap \fg[\![x]\!] = \{0\}\), so \cref{lem:preleminary_facts_for_geometrization}.(4) implies \(i = 1\).
Moreover, \(W_+ \dotplus \fg[\![x]\!] = \fg(\!(x)\!) =  L \dotplus \fg[\![x]\!]\)
and \(W_+ \subseteq L\) imply \(L = W_+\), so \(M = N\).

Assume \(g= 0\), i.e.\ \(F(\!(x)\!) = F[\![x]\!] + N\). Since \(N \cap F[\![x]\!] = \textnormal{H}^0(\mathcal{O}_X) = F\), we can see that \(N = F[s]\) for the unique \(s \in (x^{-1}+xF[\![x]\!]) \cap N \neq \{0\}\). 
The Killing form \(\kf\) of \(\fg(\!(x)\!)\) as \(F(\!(x)\!)\)-Lie algebra restricts to the Killing form \(L \times L \to N\) of the locally free \(N\)-Lie algebra \(L\). Let \(N^\bot\) be the orthogonal complement of \(N\) with respect to the bilinear form \(R \colon F(\!(x)\!) \times F(\!(x)\!) \to F\) defined by
\begin{equation}
    (f,g) \longmapsto \textnormal{res}_{0} \{ x^{1-i} f g \} = \textnormal{coeff}_{i-2}\{fg\}.
\end{equation}
For all \(f \in N^\bot\) and \(a,b\in W_+\) we have
\begin{equation}
    \mathcal{K}_i(fa,b) = \textnormal{res}_{x = 0} \{ x^{1-i} f(x) {\kf(a(x),b(x))} \} = R(\underbrace{f(x)}_{\in N^\bot}, \underbrace{\kf(a(x),b(x))}_{\in N}) = 0.
\end{equation}
In particular, \(fa \in W_+^\bot \subseteq W_+\). Therefore, \(N^\bot \subseteq \{f \in F(\!(x)\!)\mid f W_+ \subseteq W_+\} = M \subseteq N\). From 
\begin{equation}
    R(x^{i-1}s',s^k) = \textnormal{res}_{x = 0} \{ s'(x)s^k(x) \} = \frac{1}{k+1}\textnormal{res}_{x = 0}\left(s^{k+1}(x)\right)' = 0
\end{equation}
for all \(k \in \mathbb{Z}_{\ge 0}\), we can deduce that
\(x^{i-1}s' \in N^\bot \subseteq M\).

\textbf{Case \(i = 1\).} Since \(R\) is associative 
and \( s' \in N^\bot \) we have 
the inclusion \(s'N \subseteq N^\bot\). Furthermore, since
\(s' \in N^\bot \subseteq N = F[s]\), we obtain 
\(F[s',s's] \subseteq F + s'N\). 
Combining these results we get 
\begin{equation}
    F[s',s's] \subseteq F + s'N \subseteq F+N^\bot \subseteq \{f \in F(\!(z)\!)\mid fW_+ \subseteq W_+\} = M.
\end{equation}

\textbf{Case \(i = 2\).} In this case we have \(xs' \in N^\bot \subseteq F[s]\).
Since we assumed that \( s \) has no
constant part the following equality must hold \(xs' = -s \). By coefficient
comparison we obtain \(s = x^{-1}\).
Therefore, \(N = F[x^{-1}]\) and
\(x^{-1}F[x^{-1}] = N^\bot \subseteq M\)
implying \(M = N = F[x^{-1}]\). 

\textbf{Case \(i = 3\).} The fact that
\(x^2s' \in N^\bot \cap F[\![x]\!] \subseteq N \cap F[\![x]\!] = F\) 
implies that \(s' = -x^{-2}\). Consequently, \(N = F[x^{-1}] = N^\bot \subseteq M\) concluding the proof.
\end{proof}

\subsection{Some results on sheaves of Lie algebras.}
\label{prop:weakly_g_locally_free_modules}
\begin{enumerate}
    \item Let \(\mathcal{A}\) be a locally free sheaf of Lie algebras  on an algebraic variety \(X\) and \(\mathcal{A}|_p\) be finite-dimensional and semi-simple for some closed point \(p \in X\). Then \(\mathcal{A}|_q \cong \mathcal{A}|_p\) for all closed points \(q\) in some open neighbourhood of \(p\);
    
    \item Let \(\mathcal{A}\) be a sheaf of Lie algebras on an irreducible cubic plane curve \(X\) such that: 
    \begin{enumerate}
        \item \(\textnormal{H}^0(\mathcal{A}) = 0 = \textnormal{H}^1(\mathcal{A})\);
        
        \item \(\mathcal{A}|_p\cong \fg\) for some smooth point \(p \in X\);
        
        \item There is a bilinear form
        \(K \colon \mathcal{A} \times \mathcal{A} \to \mathcal{O}_X\) which extends the Killing form of \(\mathcal{A}|_C\), 
        where \(C \subseteq X\) is the set of smooth points on \(X\).
    \end{enumerate}
    Then \(\mathcal{A}|_p \cong \fg\) for all smooth closed points \(p \in X\);
   
    \item Let \(A\) be a \(F[x]\)-Lie algebra such that \(A/(x-a)A \cong \fg\) for all \(a \in F\). Then \(A \cong \fg[x]\) as \(F[x]\)-Lie algebras.
\end{enumerate}

\begin{proof} 
We can assume that \(X\) is an affine variety and that \(\mathcal{A}\) is free of rank \(n\). Let \(X(F) \subseteq X\) be the affine algebraic set of closed points (which coincides with the set of \(F\)-rational points, since \(F\) is algebraically closed) and \(B\subseteq \textnormal{Hom}_F(F^n\otimes F^n,F^n)\) be the affine algebraic subset of all possible Lie brackets on \(F^n\).
Then \(\Gamma(X,\mathcal{A})\) can be identified with the \(\Gamma(X,\mathcal{O}_X)\)-module of all regular maps \(X(F) \to F^n\) equipped with the Lie bracket \(\mu_{\mathcal{A}}\) defined by a regular map \(\theta \colon X(F) \to B\) via \(\mu_{\mathcal{A}}(a \otimes b)(q) = \theta(q)(a(q) \otimes b(q))\)
for all \(a,b\colon X(F) \to F^n\) regular and \(q \in X(F)\). 
The group \(G = \textnormal{GL}(n,F)\) acts on \(B\) via
\begin{align}\label{eq:actionofG}
    &(L \cdot \vartheta)(v \otimes w) = L^{-1}\vartheta(Lv \otimes Lw) &\forall L \in G, \vartheta \in M, v,w \in F^n.
\end{align}
The orbit \(G\cdot\theta(p)\) coincides with the set of Lie brackets on \(F^n\) determining Lie algebra structures isomorphic to \(\mathcal{A}|_p = (F^n,\theta(p))\). Combining \cite[Theorem 7.2]{nijenhuis_richardson} with Whitehead's Lemma and the fact that \(\mathcal{A}|_p\) is semi-simple, 
we see that \(G\cdot \theta(p) \subseteq B\) is open and as a consequence \(U \coloneqq \theta^{-1}(G\cdot \theta(p)) \subseteq X(F)\) is an open neighbourhood of \(p\). The observation \(\mathcal{A}|_q \cong (F^n,\theta(p)) = \mathcal{A}|_p\) for all \(q \in U\) concludes the proof of Part (1).

Let us turn to Part (2).
It is well-known that the dualizing sheaf of any irreducible cubic plane curve is trivial. Therefore, Serre duality implies that \(\textnormal{H}^0(\mathcal{A}^*) = \textnormal{H}^1(\mathcal{A}) = 0\).
Similar to the proof of Lemma \ref{lemm:geometric_genus}, the condition (b) implies that there is an exact sequence
\begin{align}\label{eq:tildeKexactseq}
    0 \longrightarrow \textnormal{H}^0(\mathcal{A}) {\longrightarrow} \textnormal{H}^0(\mathcal{A}^*) \longrightarrow \textnormal{H}^0(\textnormal{Cok}(K^\textnormal{a}))\longrightarrow \textnormal{H}^1(\mathcal{A}) {\longrightarrow} \textnormal{H}^1(\mathcal{A}^*) \longrightarrow \textnormal{H}^1(\textnormal{Cok}(K^\textnormal{a})) \longrightarrow 0,
\end{align}
where \(K^\textnormal{a}\colon\mathcal{A} \to \mathcal{A}^*\) is the morphism induced by \(K\) and \(\textnormal{Cok}(K^\textnormal{a})\) is torsion. Consequently, Serre duality and condition (a) gives \(0 = \textnormal{H}^1(\mathcal{A}) =  \textnormal{H}^0(\mathcal{A}^*) \), so \(\textnormal{H}^0(\textnormal{Cok}(K^\textnormal{a})) = 0\). Since \(\textnormal{Cok}(K^\textnormal{a})\) is a torsion sheaf, we see that \(K^\textnormal{a} \colon \mathcal{A} \to \mathcal{A}^*\) is an isomorphism. Therefore, the fiber of \(K\) at any smooth point \(q \in X\), which coincides with the Killing form of \(\mathcal{A}|_q\) by assumption, is non-degenerate. 
By virtue of Cartan's criterion
\(\mathcal{A}|_q\) is semi-simple.
Using (1) and the fact that the set of smooth points \(C\) is connected, we are left with \(\mathcal{A}|_q \cong \mathcal{A}|_p \cong \fg\) for all closed points \(q \in C\).

Part (3) is a simple reformulation of \cite[Theorem 4.12.(2)]{abedin2021geometrization}. 
\end{proof}

\subsection{Twisting class \( \fg \ot A(0,0) \).} 
Let
\(
W = W_+ \subseteq \fg \otimes A(0,0) = \fg(\!(x)\!)   
\)
be a Lagrangian subalgebra complementary
to \( \fg[\![x]\!] \).
The proof of \cref{thm:raschid_classification} given in \cite{abedin2021geometrization} proceeds in the following steps:

\begin{enumerate}
    \item According to (2) and (3) of \cref{lemm:geometric_genus}
    there is a unital subalgebra \(O \subseteq M_{W}\) such that 
    \begin{equation}
        \textnormal{h}^1(\mathcal{O}_X) = \dim(F(\!(x)\!)/(F[\![x]\!]+ O)) = 1,
    \end{equation}
    where \(X\) is taken from the geometrization \(\mathbb{G}(O,W) = ((X,\mathcal{A}),(p,c,\zeta))\). 
    It is easy to see that \(X\) is an irreducible cubic plane curve (see \cite[Remark 3.9]{abedin2021geometrization}), i.e.\ it is defined by an equation of the form \(v^2 = u^3+au+b\) for some \(a,b \in F\). Moreover, \(X\) is smooth if \(4a^3+27b^2 \neq 0\), in which case its an elliptic curve, and has a unique singularity \(s\) otherwise. This singularity is nodal if \(0 \neq 4a^3 = -27b^2\) and cuspidal otherwise;

    \item The fact that \(W \subseteq \fg(\!(x)\!)\) is a Lagrangian subalgebra
    with respect to \(B_1 = \mathcal{K}_1\) satisfying \(\fg[\![x]\!] \dotplus W = \fg(\!(x)\!)\) implies that \(\mathcal{A}\) satisfies the conditions (a)-(c) in \cref{prop:weakly_g_locally_free_modules}.(2) (see \cite[Subsection 3.2]{abedin2021geometrization}).
    Therefore, \(\mathcal{A}|_q \cong \fg\) for all smooth closed points \(q \in X\). 
    This can be used to describe \(\mathcal{A}|_C\) explicitly in all three cases, where \(C \subseteq X\) is the set of smooth points; see \cite[Subsection 4.2]{abedin2021geometrization}. For instance, if \(X\) is cuspidal, \(C\) is isomorphic to the affine line \(\mathbb{A}^1 = \textnormal{Spec}(F[x])\) and
    then
    \(\Gamma(C,\mathcal{A}) \cong \fg[x]\) according to \cref{prop:weakly_g_locally_free_modules}.(3). 
    The explicit description in the elliptic case was thereby only achieved
    for \(F = \mathbb{C}\), that is why the base field is restricted to the field of complex numbers;
    
    \item In \cite{burban_galinat}, the authors construct a distinguished section \(\rho \in \Gamma(C \times C \setminus \Delta, \mathcal{A}\boxtimes \mathcal{A})\) called geometric \(r\)-matrix. This section has the initial formal \(r\)-matrix \(r\) as Taylor series; see \cite[Theorem 3.17]{abedin2021geometrization}. 
    Combined with the explicit description of \(\mathcal{A}|_C\) in (2) above, 
    this implies that \(r\) is elliptic (resp.\ trigonometric, resp.\ rational) in the sense of
    \cref{thm:raschid_classification} if and only if \(X\) is smooth (resp.\ nodal, resp.\ cuspidal); see \cite[Subsection 4.3]{abedin2021geometrization} for details. 
\end{enumerate}

\subsection{Twisting class \( \fg \ot A(1,0) \)}
Fix a Lagrangian Lie subalgebra
\begin{equation}
W \subseteq \fg \otimes A(1,0) = \fg(\!(x)\!) \times \fg,
\end{equation}
such that 
\( 
\iota(\fg[\![x]\!]) \add W = \fg(\!(x)\!) \times \fg
\)
and let \(W_{\pm}\) be its projection
on the first and second components
respectively.
Then 
\begin{equation}
((Y,\mathcal{W}),(p,c,\zeta)) \coloneqq \mathbb{G}(M_{W_+},W_+)    
\end{equation}
satisfies \(Y = \mathbb{P}^1\) because of \cref{lemm:geometric_genus}.(4). 
Put \(s_-\coloneqq p\) and let \(s_+ \in \mathbb{P}^1\) be the point corresponding to the ideal \((x^{-1}) \subseteq F[x^{-1}]\) via \(c(\Gamma(\mathbb{P}^1\setminus\{s_-\},\mathcal{O}_X)) = F[x^{-1}]\).

\begin{lemma}\label{lemm:thetas}
Let \( \mathcal{W} \) be defined as above.
Let the sheaf of Lie algebras \(\mathcal{V}\) be defined by the pull-back diagram
\begin{equation}
    \xymatrix{\mathcal{V} \ar[r] \ar[d] 
    & W_- \ar[d] \\
    \mathcal{W} \ar[r] &  \mathcal{W}|_{s_-} \cong \fg}
\end{equation}
%
where \(\fg\), \(W_-\), and \(\mathcal{W}|_{s_-}\) are understood as skyscraper sheaves at \(s_-\).
Then the following is true:
\begin{enumerate}
    \item \(\mathcal{V}\) can be identified with a subsheaf of \(\mathcal{W}\);
    
    \item \(\textnormal{H}^0(\mathcal{V}) \cong \iota(\fg[\![x]\!]) \cap (W_+ \times W_-) \), \(\textnormal{H}^1(\mathcal{V}) = 0\)
    and \(\mathcal{V}|_{\mathbb{P}^1\setminus\{s_-\}} = \mathcal{W}|_{\mathbb{P}^1\setminus\{s_-\}}\);
    
    \item Let \(\widetilde{K}\colon \mathcal{V} \times \mathcal{V}\to \mathcal{O}_{\mathbb{P}^1}\) be the restriction of the Killing form of \(\mathcal{W}\) to \(\mathcal{V}\). 
    There exist canonical surjective morphisms \(\mathcal{V}|_{s_\pm} \to W_\pm/W_\pm^\bot\)
    intertwining the corresponding forms
    if
    \(\mathcal{V}|_{s_\pm}\) is equipped with the bilinear form \(\widetilde{K}|_{s_\pm}\). 
\end{enumerate}
\end{lemma}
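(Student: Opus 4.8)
The plan is to exhibit $\mathcal{V}$ as an explicit elementary modification of $\mathcal{W}$ supported at the single point $s_-$, and then to deduce all three assertions from the resulting short exact sequence together with the cohomological data recorded in \cref{subsec:geometrization_of_lattices} and \cref{lemm:geometric_genus}.(4). For (1), the evaluation $\mathcal{W} \to \mathcal{W}|_{s_-} \cong \fg$ (a skyscraper at $s_-$) and the inclusion $W_- \hookrightarrow \fg$ exhibit $\mathcal{V}$ as their fibre product; since $W_- \hookrightarrow \fg$ is injective, the projection $\mathcal{V} \to \mathcal{W}$ is a monomorphism with cokernel the skyscraper $(\fg/W_-)_{s_-}$, so
\[ 0 \longrightarrow \mathcal{V} \longrightarrow \mathcal{W} \longrightarrow (\fg/W_-)_{s_-} \longrightarrow 0 \]
is exact and $\mathcal{V}$ agrees with $\mathcal{W}$ away from $s_-$. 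This already yields (1) and the last claim of (2).

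For the cohomology in (2), I would run the long exact sequence. Using $\zeta$ one identifies $\textnormal{H}^0(\mathcal{W}) \cong W_+ \cap \fg[\![x]\!]$ with evaluation $f \mapsto f(0) \in \fg$; imposing $f(0) \in W_-$ then identifies $\textnormal{H}^0(\mathcal{V})$ with $\{f \in \fg[\![x]\!] : f \in W_+,\ f(0) \in W_-\}$, which is exactly $\iota(\fg[\![x]\!]) \cap (W_+ \times W_-)$ because $\iota(f) = (f, f(0))$. Since $(\fg/W_-)_{s_-}$ is a skyscraper its $\textnormal{H}^1$ vanishes, and $\textnormal{H}^1(\mathcal{W}) = 0$ by \cref{eq:cohomology} together with $\fg(\!(x)\!) = \fg[\![x]\!] + W_+$ from \cref{lem:preleminary_facts_for_geometrization}.(2); hence $\textnormal{H}^1(\mathcal{V})$ is the cokernel of $\textnormal{H}^0(\mathcal{W}) \to \fg/W_-$. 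This map is surjective by the complementarity $\iota(\fg[\![x]\!]) \add W = \fg(\!(x)\!) \times \fg$: writing $(0,v) = \iota(h) + (w_+, w_-)$ forces $h = -w_+ \in W_+ \cap \fg[\![x]\!]$ and $v \equiv h(0) \pmod{W_-}$, so every class is hit and $\textnormal{H}^1(\mathcal{V}) = 0$.

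For (3) I would treat the two points separately. Away from $s_-$ the sheaf $\mathcal{W}$ is the one attached to the $F[x^{-1}]$-module $W_+$ (here $M = N = F[x^{-1}]$ by \cref{lemm:geometric_genus}.(4)), so $\mathcal{V}|_{s_+} = \mathcal{W}|_{s_+} = W_+/x^{-1}W_+$ and $\widetilde{K}|_{s_+}$ is the reduction modulo $(x^{-1})$ of the $F[x^{-1}]$-valued Killing form $\kappa(a,b)$ of $W_+$. The decisive point is that this reduction equals $\mathcal{K}_2(a,b) = \textnormal{coeff}_0\{\kappa(a,b)\}$ and that $x^{-1}W_+ \subseteq W_+^\bot$: for $a,b \in W_+$ one has $\kappa(a,b) \in F[x^{-1}]$ (it is the trace form of the locally free Lie algebra $\mathcal{W}$), whence $\mathcal{K}_2(x^{-1}a, b) = \textnormal{coeff}_0\{x^{-1}\kappa(a,b)\} = 0$. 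Consequently the radical of $\widetilde{K}|_{s_+}$ is $W_+^\bot/x^{-1}W_+$ and the quotient by the radical is the canonical surjection $\mathcal{V}|_{s_+} \twoheadrightarrow W_+/W_+^\bot$ carrying $\widetilde{K}|_{s_+}$ to the induced non-degenerate form. At $s_-$ I would use that $\widehat{\mathcal{V}}_{s_-} = W_- + x\fg[\![x]\!]$ is free over $F[\![x]\!]$, so $\mathcal{V}|_{s_-} \to \mathcal{W}|_{s_-} = \fg$ has image $W_-$; as $\mathcal{K}_2$ restricts to the Killing form of $\fg$ on $\fg[x]/x\fg[x]$, the form $\widetilde{K}|_{s_-}$ is $\kappa|_{W_-}$ with the transverse directions in its radical, and quotienting gives the desired $\mathcal{V}|_{s_-} \twoheadrightarrow W_-/W_-^\bot$ intertwining the forms.

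The main obstacle is the part of (3) asserting that the residue form $\mathcal{K}_2$, which is localized at $s_-$, nonetheless computes the fibre form of $\mathcal{W}$ at the \emph{opposite} point $s_+$. This rests on the integrality $\kappa(W_+, W_+) \subseteq F[x^{-1}]$, i.e.\ that the restriction of the Killing form of $\fg(\!(x)\!)$ to the lattice $W_+$ takes values in the multiplier ring, which I would justify via the trace description of the Killing form of the locally free sheaf $\mathcal{W}$, and on matching $\textnormal{coeff}_0\{\kappa(a,b)\}$ with reduction modulo $(x^{-1})$. Once this identification and the inclusion $x^{-1}W_+ \subseteq W_+^\bot$ are in place, the remaining steps are bookkeeping with the long exact sequence and with the explicit $F[\![x]\!]$-bases at $s_-$.
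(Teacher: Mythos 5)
Your proposal is correct and takes essentially the same route as the paper's own proof: the same pull-back identification of \(\mathcal{V}\) as a subsheaf of \(\mathcal{W}\) (your kernel sequence \(0\to\mathcal{V}\to\mathcal{W}\to(\fg/W_-)_{s_-}\to 0\) is just the paper's \(0\to\mathcal{V}\to\mathcal{W}\oplus W_-\to\fg\to 0\) rearranged), the same long exact sequence computation of \(\mathrm{H}^0\) and \(\mathrm{H}^1\) using \(\mathrm{H}^1(\mathcal{W})=0\), and the same key facts for (3), namely \(\kappa(W_+,W_+)\subseteq F[x^{-1}]\) giving \(x^{-1}W_+\subseteq W_+^\bot\) and the identification of the fiber form at \(s_+\) with \(\mathcal{K}_2\), together with the canonical projection to \(W_-\) at \(s_-\). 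If anything, you make explicit two points the paper only asserts: the surjectivity of \(\mathrm{H}^0(\mathcal{W})\oplus W_-\to\fg\) via the complementarity \(\iota(\fg[\![x]\!])\dotplus W\), and the stalk computation \(\widehat{\mathcal{V}}_{s_-}=W_-+x\fg[\![x]\!]\) underlying the intertwining at \(s_-\).
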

\begin{proof}
By definition, \(\mathcal{V}\) fits into the short exact sequence
\begin{equation} \label{eq:short_exseq_VW}
    0 \longrightarrow \mathcal{V} \longrightarrow \mathcal{W} \oplus W_- \longrightarrow \fg \longrightarrow 0.
\end{equation}
The morphism \(W_- \to \fg\) is injective, so \(\mathcal{V}\to\mathcal{W}\) is too and we can identify \(\mathcal{V}\) with a subsheaf of \(\mathcal{W}\), proving (1).

Restricting \cref{eq:short_exseq_VW} to \(\mathbb{P}^1\setminus\{s_-\}\) yields \(\mathcal{V}|_{\mathbb{P}^1\setminus\{s_-\}} = \mathcal{W}|_{\mathbb{P}^1\setminus\{s_-\}}\).
Since the first cohomology group of torsion sheaves vanishes and \(\fg[\![x]\!] + W_+ = \fg(\!(x)\!)\) implies \(\textnormal{H}^1(\mathcal{W}) = 0\), the long exact sequence of \cref{eq:short_exseq_VW} in cohomology reads
\begin{equation}\label{eq:long_exact_cohomology_V}
    0 \longrightarrow \textnormal{H}^0(\mathcal{V}) \longrightarrow \textnormal{H}^0(\mathcal{W}) \oplus W_- \longrightarrow \fg \longrightarrow \textnormal{H}^1(\mathcal{V}) \longrightarrow 0.
\end{equation}
The fact that \(\textnormal{H}^0(\mathcal{W})  \cong \fg[\![x]\!]\cap W_+\) gives \[\textnormal{H}^0(\mathcal{V}) \cong \iota(\fg[\![x]\!]) \cap (W_+ \times W_-).\]
The image \(\overline{W}_+\) of \(\fg[\![x]\!]\cap W_+\) under the evaluation \(x = 0\) has the property \(\overline{W}_+ + W_- = \fg\). Therefore, the map \(\textnormal{H}^0(\mathcal{W}) \oplus W_- \to \fg\) in 
\cref{eq:long_exact_cohomology_V}
is surjective and hence \(\textnormal{H}^1(\mathcal{V}) = 0\). This concludes the proof of (2). 

Let us turn to the proof of the last
statement.
\Cref{lemm:geometric_genus}.(4) implies that \(W_+\) is a free Lie algebra over \(F[x^{-1}]\), so \(\kappa(a,b) \in F[x^{-1}]\) for all \(a,b \in W_+ \subseteq \fg(\!(x)\!)\). This implies that
\begin{equation}
    B_2(x^{-1}a,b) = \textnormal{res}_0 \{ \underbrace{x^{-2}\kappa(a,b)}_{\in  x^{-2}F[x^{-1}]} \} = 0
\end{equation}
for all \(a,b \in W_+\).
Therefore, \(x^{-1}W_+ \subseteq W_+^\bot\) and we have a surjective morphism 
\[
\theta_+\colon \mathcal{V}|_{s_+} \cong W_+/x^{-1}W_+ \longrightarrow W_+/W_+^\bot
\] 
intertwining the corresponding forms.
By construction of \(\mathcal{V}\) we have a canonical morphism \(\mathcal{V} \to W_-\) which is surjective since
\(\mathcal{W} \to \mathcal{W}|_{s_+}\)
is surjective. This morphism factors through
a surjective morphism \(\mathcal{V}|_{s_-} \to W_-\) which intertwines the forms. The latter induces the desired morphism \(\theta_-\). 
\end{proof}

\begin{lemma}\label{lemm:A_weakly_g_locally_free_qtcase}
Let \(X\) be an irreducible cubic plane curve with nodal singularity \(s\) and chose the normalization \(\nu \colon \mathbb{P}^1 \to X\) in such a way that \(\nu^{-1}(s) = \{s_+,s_-\}\). 
Let \(\mathcal{A}\) be defined by the pull-back diagram 
\begin{equation}
    \xymatrix{\mathcal{A}\ar[r]\ar[d]&
    W/(W_+^\bot \times W_-^\bot)\ar[d]\\ \nu_*\mathcal{V}\ar[r]_-{\theta}&W_+/W_+^\bot \times W_-/W_-^\bot}
\end{equation}
where \(W/(W_+^\bot \times W_-^\bot)\) and \( W_+/W_+^\bot \times W_-/W_-^\bot\) are viewed as skyscraper sheaves at \(s\in X\) and \(\theta\) is the direct image under \(\nu\) of the morphism 
\begin{equation}
     \mathcal{V} \longrightarrow \mathcal{V}|_{s_+}\times \mathcal{V}|_{s_-} \stackrel{(\theta_+,\theta_-)}\longrightarrow W_+/W_+^\bot \times W_-/W_-^\bot
\end{equation}
for \(\theta_\pm\) from  \cref{lemm:thetas}.
Then \(\mathcal{A}|_q \cong \fg\) for all smooth closed points \(q\in X\).
\end{lemma}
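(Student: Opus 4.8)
The plan is to show that $\mathcal{A}$ meets the three hypotheses (a)--(c) of \cref{prop:weakly_g_locally_free_modules}.(2); since $X$ is an irreducible cubic plane curve, that result then yields $\mathcal{A}|_q \cong \fg$ at every smooth closed point $q$. Writing $\mathcal{T}_1 \coloneqq W/(W_+^\bot \times W_-^\bot)$ and $\mathcal{T}_2 \coloneqq W_+/W_+^\bot \times W_-/W_-^\bot$ for the two skyscraper sheaves at the node $s$, the defining pull-back realizes $\mathcal{A}$ as a sheaf of Lie algebras fitting into the exact sequence
\[
0 \longrightarrow \mathcal{A} \longrightarrow \nu_*\mathcal{V} \oplus \mathcal{T}_1 \xrightarrow{\ (\theta_*,\, -\iota)\ } \mathcal{T}_2 \longrightarrow 0,
\]
where $\iota \colon \mathcal{T}_1 \hookrightarrow \mathcal{T}_2$ is the inclusion of the Lagrangian summand from the Manin triple of \cref{lem:preleminary_facts_for_geometrization}.(3) and $\theta = (\theta_+,\theta_-)$ is as in \cref{lemm:thetas}; the right-hand map is surjective because $\theta_\pm$ are. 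This sequence drives all three verifications.

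For hypothesis (b) I would argue away from $s$. The pull-back modifies $\nu_*\mathcal{V}$ only at the node, so over $X \setminus \{s\}$ we have $\mathcal{A} = \nu_*\mathcal{V}$, and $\mathcal{V}|_{\mathbb{P}^1 \setminus \{s_-\}} = \mathcal{W}|_{\mathbb{P}^1 \setminus \{s_-\}}$ by \cref{lemm:thetas}.(2). The geometrization $\mathbb{G}(M_{W_+},W_+)$ of \cref{subsec:geometrization_of_lattices} produces a locally free $\mathcal{W}$ (by \cref{lemm:geometric_genus}.(4) the algebra $W_+$ is free over $F[x^{-1}]$) with $\mathcal{W}|_{s_-} \cong \fg$ semisimple; hence \cref{prop:weakly_g_locally_free_modules}.(1) gives an open $U \ni s_-$ in $\mathbb{P}^1$ on which $\mathcal{W}|_{q'} \cong \fg$. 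Any $q' \in U \setminus \{s_+,s_-\}$ maps to a smooth point $q$ of $X$ with $\mathcal{A}|_q \cong \mathcal{W}|_{q'} \cong \fg$.

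For hypothesis (a) I would take cohomology. Finiteness of $\nu$ gives $\textnormal{H}^\bullet(X,\nu_*\mathcal{V}) = \textnormal{H}^\bullet(\mathbb{P}^1,\mathcal{V})$, and \cref{lemm:thetas}.(2) supplies $\textnormal{H}^0(\mathcal{V}) \cong \iota(\fg[\![x]\!]) \cap (W_+ \times W_-)$ together with $\textnormal{H}^1(\mathcal{V}) = 0$; the skyscrapers have vanishing $\textnormal{H}^1$. Thus the long exact sequence collapses to
\[
0 \longrightarrow \textnormal{H}^0(\mathcal{A}) \longrightarrow \big(\iota(\fg[\![x]\!]) \cap (W_+ \times W_-)\big) \oplus \mathcal{T}_1 \xrightarrow{\ \phi\ } \mathcal{T}_2 \longrightarrow \textnormal{H}^1(\mathcal{A}) \longrightarrow 0 .
\]
The crux is to identify $\phi$. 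Tracing the isomorphisms of \cref{lemm:thetas}, the global section attached to $(f,f(0))$ has $\theta_+$-value $f + W_+^\bot$ and $\theta_-$-value $f(0) + W_-^\bot$, so $\theta_*$ is precisely the image of $\iota(\fg[\![x]\!]) \cap (W_+ \times W_-)$ under the quotient $W_+ \times W_- \to \mathcal{T}_2$. Since \cref{lem:preleminary_facts_for_geometrization}.(3) asserts $\mathcal{T}_2 = (\iota(\fg[\![x]\!]) \cap (W_+ \times W_-)) \dotplus \mathcal{T}_1$, the map $\phi$ is an isomorphism and $\textnormal{H}^0(\mathcal{A}) = \textnormal{H}^1(\mathcal{A}) = 0$.

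Finally, for hypothesis (c) I would glue the form $\widetilde{K}$ of \cref{lemm:thetas}.(3). On $X \setminus \{s\}$ the pushforward of $\widetilde{K}$ is the Killing form of $\mathcal{A}|_C$, so the only issue is regularity at the node, i.e.\ whether $\widetilde{K}(v,v)$ takes equal values at $s_+$ and $s_-$ (the local ring $\mathcal{O}_{X,s}$ consisting of the functions on $\mathbb{P}^1$ matching at the two branches). This is exactly where the Lagrangian property of $W$ enters: for a local section of $\mathcal{A}$ one has $\theta(v) = \iota(w) \in \mathcal{T}_1$, and since $\theta_\pm$ intertwine $\widetilde{K}|_{s_\pm}$ with the forms on $W_\pm/W_\pm^\bot$ while $\mathcal{T}_1$ is isotropic for the descent of $B_2$ to $\mathcal{T}_2$, the relation $\langle\theta_+ v, \theta_+ v\rangle_+ = \langle\theta_- v, \theta_- v\rangle_-$ forces $\widetilde{K}(v,v)(s_+) = \widetilde{K}(v,v)(s_-)$. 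I expect this node-compatibility to be the main obstacle: it requires matching the sign convention in the definition of $B_2$ (see \cref{rem:B_i}) against the fibre forms $\widetilde{K}|_{s_\pm}$ and checking that the glued form genuinely extends the Killing form on $C$ rather than a scalar multiple of it. With (a)--(c) verified, \cref{prop:weakly_g_locally_free_modules}.(2) finishes the proof.
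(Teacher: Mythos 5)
Your proposal is correct and follows essentially the same route as the paper's proof: both verify hypotheses (a)--(c) of \cref{prop:weakly_g_locally_free_modules}.(2) via the defining short exact sequence, using the Manin-triple decomposition of \cref{lem:preleminary_facts_for_geometrization}.(3) to make the middle arrow of the long exact cohomology sequence an isomorphism, \cref{prop:weakly_g_locally_free_modules}.(1) together with \(\mathcal{W}|_{s_-} \cong \fg\) for (b), and the Lagrangian property of \(W\) with respect to \(B_2\) to show the pushed-forward form \(\nu_*\widetilde{K}\) matches on the two branches at the node for (c). The ``node-compatibility'' you flag as a potential obstacle is resolved exactly as you indicate, since the sign in \(B_2\) from \cref{rem:B_i} is precisely the difference \(\mathcal{K}_2(a_+,b_+) - \mathcal{K}_2(a_-,b_-)\), so isotropy of the image of \(W\) forces equality of the two branch values.
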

\begin{proof}
It suffices to show that \(\mathcal{A}\) satisfies the conditions (a)-(c) of \cref{prop:weakly_g_locally_free_modules}.(2).

\emph{\(\bullet\) \(\mathcal{A}\) satisfies condition (a) of \cref{prop:weakly_g_locally_free_modules}.(2): }
The long exact sequence in cohomology of
\begin{equation}
    0 \longrightarrow \mathcal{A} \longrightarrow \nu_*\mathcal{V} \oplus (W/(W_+^\bot \times W_-^\bot)) \longrightarrow W_+/W_+^\bot \times W_-/W_-^\bot \longrightarrow 0,
\end{equation}
which is the short exact sequence that defines \(\mathcal{A}\), is given by
\begin{equation}\label{eq:long_exact_sequence_cohomology_A_qt}
    0 \longrightarrow \textnormal{H}^0(\mathcal{A}) \longrightarrow \textnormal{H}^0(\mathcal{V}) \oplus (W/(W_+^\bot \times W_-^\bot)) \longrightarrow W_+/W_+^\bot \times W_-/W_-^\bot \longrightarrow \textnormal{H}^1(\mathcal{A})\longrightarrow 0.
\end{equation}
Here, we used that the first cohomology group of torsion sheaves vanishes and \(\textnormal{H}^1(\mathcal{V}) = 0\); see \cref{lemm:thetas}.(2). 
The canonical map \(\textnormal{H}^0(\mathcal{V})\to W_+/W_+^\bot \times W_-/W_-^\bot\) thereby coincides with the inclusion 
\[
\iota(\fg[\![x]\!]) \cap (W_+ \times W_-) \longrightarrow W_+/W_+^\bot \times W_-/W_-^\bot
\]
under the identification \(\textnormal{H}^0(\mathcal{V}) \cong \iota(\fg[\![x]\!]) \cap (W_+ \times W_-)\).
Therefore, \cref{lem:preleminary_facts_for_geometrization}.(3) implies that the middle arrow in \cref{eq:long_exact_sequence_cohomology_A_qt} is an isomorphism. Consequently,
\(\textnormal{H}^0(\mathcal{A}) = 0 = \textnormal{H}^1(\mathcal{A})\);

\emph{\(\bullet\) \(\mathcal{A}\) satisfies condition (b) of \cref{prop:weakly_g_locally_free_modules}.(2): }
\Cref{prop:weakly_g_locally_free_modules}.(1) and \(\mathcal{W}|_{s_-} \cong \fg\) imply that there exists a closed point \(p \in \mathbb{P}^1\setminus\{s_+,s_-\}\) such that \(\fg \cong \mathcal{W}|_p \cong \mathcal{A}|_{\nu(p)}\);

\emph{\(\bullet\) \(\mathcal{A}\) satisfies condition (c) of \cref{prop:weakly_g_locally_free_modules}.(2): }
Let us identify \(\mathcal{A}\) with a subsheaf of \(\nu_*\mathcal{V}\) and let \(K\colon \mathcal{A}\times \mathcal{A}\to \nu_*\mathcal{O}_{\mathbb{P}^1}\) be the restriction of \(\nu_*\widetilde{K}\) to \(\mathcal{A}\), where we recall that \(\widetilde{K}\colon \mathcal{V}\times \mathcal{V} \to \mathcal{O}_{\mathbb{P}^1}\) is the restriction of the Killing form of \(\mathcal{W}\) to \(\mathcal{V}\). We want to see that \(K\) actually takes values in \(\mathcal{O}_X \subseteq \nu_*\mathcal{O}_{\mathbb{P}^1}\). Let \(a,b\in\mathcal{A}|_s\) and \(a_\pm,b_\pm \in W_\pm\) be representatives of the images of \(a,b\) under the canonical maps \(\mathcal{A}|_s \to \mathcal{V}|_{s_\pm} \to W_\pm/W_\pm^\bot\). Then
\begin{equation}
    K|_s(a,b) = (\mathcal{K}_2(a_+,b_+),\mathcal{K}_2(a_-,b_-)) \in F \times F \cong \nu_*\mathcal{O}_{\mathbb{P}^1}|_s
\end{equation}
holds, since \(\theta_\pm \colon \mathcal{V}|_{s_\pm} \to W_\pm/W_\pm^\bot\) intertwine
the forms. 
The definition of \(\mathcal{A}\) implies that
\begin{equation}
    (a_+,a_-),(b_+,b_-) \in W,
\end{equation}
and the Lagrangian property of \(W\) gives
\begin{equation}
    0 = B_2((a_+,a_-),(b_+,b_-)) = \mathcal{K}_2(a_+,b_+) - \mathcal{K}_2(a_-,b_-).
\end{equation}
We obtain \(K|_s(a,b)\in \{(\lambda,\lambda)\mid \lambda \in F\}\). This implies that \(K\) takes values in \(\mathcal{O}_X\). The restriction of \(K \colon \mathcal{A}\times \mathcal{A} \to \mathcal{O}_X\) to the set \(C \coloneqq X\setminus\{s\}\) of smooth points of \(X\) coincides with the Killing form of \(\mathcal{A}|_C\) since \(\mathcal{A}|_C = \nu_*(\mathcal{V}|_{\mathbb{P}^1\setminus\{s_+,s_-\}}) = \nu_*(\mathcal{W}|_{\mathbb{P}^1\setminus\{s_+,s_-\}})\).
\end{proof}

\begin{lemma}\label{lemm:concluding_qtcase}
There exists an
\(F[\![x]\!]\)-linear Lie algebra automorphism \(\varphi \colon \fg[\![x]\!] \to \fg[\![x]\!]\) such that \(\varphi(W_+) \subseteq \fg[x,x^{-1}]\). Furthermore, \((\varphi\times [\varphi])(W) \subseteq \fg(\!(x)\!)\times \fg\) is commensurable with \(W_2\).
\end{lemma}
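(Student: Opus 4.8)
The plan is to read the desired automorphism directly off the geometrization $\mathbb{G}(M_{W_+},W_+) = ((Y,\mathcal{W}),(p,c,\zeta))$, which by \cref{lemm:geometric_genus}.(4) has $Y = \mathbb{P}^1$ and $M_{W_+} = F[x^{-1}]$. Writing $s_- \coloneqq p$ and letting $s_+$ be the point attached to the ideal $(x^{-1})$, the isomorphism $c$ identifies $x$ with a rational coordinate on $\mathbb{P}^1$, so that $\mathbb{P}^1\setminus\{s_+\} = \textnormal{Spec}(F[x])$, $\mathbb{P}^1\setminus\{s_-\} = \textnormal{Spec}(F[x^{-1}])$ and $\mathbb{P}^1\setminus\{s_+,s_-\} = \textnormal{Spec}(F[x,x^{-1}])$, with $\zeta$ the Laurent expansion at $s_-$ and $\zeta(\Gamma(\mathbb{P}^1\setminus\{s_-\},\mathcal{W})) = W_+$. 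First I would record the fibers: the proof of \cref{lemm:A_weakly_g_locally_free_qtcase} shows $\mathcal{A}|_C = \nu_*(\mathcal{W}|_{\mathbb{P}^1\setminus\{s_+,s_-\}})$, so $\mathcal{W}|_q\cong\fg$ for every closed point $q\in\mathbb{P}^1\setminus\{s_+,s_-\}$, while $\mathcal{W}|_{s_-}\cong\fg$ because $\zeta$ identifies $\widehat{\mathcal{W}}_{s_-}$ with $\fg[\![x]\!]$. Hence $\mathcal{W}|_q\cong\fg$ for all closed $q\neq s_+$.

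Next I would produce $\varphi$. Put $A\coloneqq\Gamma(\mathbb{P}^1\setminus\{s_+\},\mathcal{W})$, an $F[x]$-Lie algebra with $A/(x-a)A = \mathcal{W}|_{q_a}\cong\fg$ for every $a\in F$; \cref{prop:weakly_g_locally_free_modules}.(3) then yields an $F[x]$-linear Lie isomorphism $\Phi\colon\fg[x]\xrightarrow{\sim}A$. Completing at the ideal $(x)$ turns $\Phi$ into an $F[\![x]\!]$-linear isomorphism $\widehat{\Phi}\colon\fg[\![x]\!]\to A\ot_{F[x]}F[\![x]\!]$, while $\zeta$ induces an $F[\![x]\!]$-linear isomorphism $\zeta_*\colon A\ot_{F[x]}F[\![x]\!] = \widehat{\mathcal{W}}_{s_-}\to\fg[\![x]\!]$ restricting to $\zeta$ on $A$. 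Setting $\varphi\coloneqq\widehat{\Phi}^{-1}\circ\zeta_*^{-1}$ defines an $F[\![x]\!]$-linear Lie algebra automorphism of $\fg[\![x]\!]$ that sends $B\coloneqq\zeta(A)\subseteq\fg[\![x]\!]$ onto $\fg[x]$. Extending $\varphi$ $F(\!(x)\!)$-linearly to $\fg(\!(x)\!)$ gives $\varphi(B[x^{-1}]) = \fg[x,x^{-1}]$; since the restriction $\Gamma(\mathbb{P}^1\setminus\{s_-\},\mathcal{W})\hookrightarrow\Gamma(\mathbb{P}^1\setminus\{s_+,s_-\},\mathcal{W}) = A[x^{-1}]$ is injective and $\zeta$-compatible, it yields $W_+\subseteq B[x^{-1}]$, and therefore $\varphi(W_+)\subseteq\fg[x,x^{-1}]$.

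For the commensurability statement I would set $W'\coloneqq(\varphi\times[\varphi])(W)$, so that $W'_+ = \varphi(W_+)$ and $W'_- = [\varphi](W_-)\subseteq\fg$. Because $\varphi$ is $F(\!(x)\!)$-linear and $W_+$ is a finitely generated free module of rank $n$ over $M_{W_+} = F[x^{-1}]$ (see the proof of \cref{lemm:geometric_genus}), the image $W'_+$ is again such a module, hence a rank-$n$ $F[x^{-1}]$-lattice inside $\fg[x,x^{-1}]$ spanning $\fg(\!(x)\!)$ over $F(\!(x)\!)$. The projection $(W_2)_+ = \fn_+\dotplus\fh\dotplus x^{-1}\fg[x^{-1}]$ is a lattice of the same kind. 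Standard commensurability of full-rank lattices over the principal ideal domain $F[x^{-1}]$ then gives $\dim_F((W_2)_+ + W'_+)/((W_2)_+\cap W'_+) < \infty$. I would transfer this to the subalgebras through the projection $\pi_+\colon\fg(\!(x)\!)\times\fg\to\fg(\!(x)\!)$: its kernel meets $W_2 + W'$ in a subspace of $\{0\}\times\fg$, and over each $v\in(W_2)_+\cap W'_+$ the two lifts lying in $W_2$ and in $W'$ differ by an element of $\{0\}\times\fg$. As $\fg$ is finite-dimensional, both defects are finite, whence $\dim_F(W_2 + W')/(W_2\cap W') < \infty$, i.e.\ $W'$ is commensurable with $W_2$.

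The main obstacle is the geometric identification $A\cong\fg[x]$ together with the compatible completion isomorphism $\zeta_*$: the whole construction of $\varphi$ and the lattice structure of $W'_+$ hinge on recognizing $\Gamma(\mathbb{P}^1\setminus\{s_+\},\mathcal{W})$ as the split current algebra via \cref{prop:weakly_g_locally_free_modules}.(3). By contrast, the transfer of lattice commensurability from the projections to $W$ and $W_2$ is routine bookkeeping, finite only because the second factor $\fg$ is finite-dimensional.
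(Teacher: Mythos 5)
Your proposal is correct and takes essentially the same route as the paper: identify $A=\Gamma(\mathbb{P}^1\setminus\{s_+\},\mathcal{W})$ with $\fg[x]$ via \cref{prop:weakly_g_locally_free_modules}.(3) (using the fiber computation from \cref{lemm:A_weakly_g_locally_free_qtcase}), complete $(x)$-adically to obtain $\varphi$, and use restriction of sections to $\mathbb{P}^1\setminus\{s_+,s_-\}$ to get $\varphi(W_+)\subseteq\fg[x,x^{-1}]$. The only (harmless) variation is the final step, where you deduce commensurability of the $+$-projections from the general theory of full-rank finitely generated $F[x^{-1}]$-lattices, whereas the paper instead asserts the sandwich $x^{-N}\fg[x^{-1}]\subseteq\varphi(W_+)\subseteq x^{N}\fg[x^{-1}]$; both variants then transfer to $W$ itself via the finite-dimensionality of the second factor $\fg$.
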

\begin{proof}
\Cref{lemm:A_weakly_g_locally_free_qtcase} implies \( \mathcal{W}|_{q}\cong \mathcal{A}|_{\nu(q)} \cong \fg\) for all \(q \in \mathbb{P}^1\setminus\{s_+,s_-\}\). Combined with \(\mathcal{W}|_{s_-} \cong \fg\), this implies that \(A \coloneqq \zeta(\Gamma(\mathbb{P}^1\setminus\{s_+\},\mathcal{W}))\subseteq \fg[\![x]\!]\) is a free \(F[x] = c(\Gamma(\mathbb{P}^1\setminus\{s_+\},\mathcal{O}_{\mathbb{P}^1}))\)-Lie algebra satisfying \(A/(x-a)A \cong \fg\) for all \(a \in F\). Therefore, \cref{prop:weakly_g_locally_free_modules}.(3) provides an isomorphism \(A \cong \fg[x]\). Completing said automorphism in the \((x)\)-adic topology yields \(\varphi \in \textnormal{Aut}_{F[\![x]\!]\textnormal{-LieAlg}}(\fg[\![x]\!])\) with the property \(\varphi(A) = \fg[x]\). Since \(\mathcal{W}\) is a sheaf, we have
\begin{equation}
    \varphi(W_+) = \varphi(\zeta(\Gamma(\mathbb{P}^1\setminus\{s_-\},\mathcal{W})) \subseteq \varphi(\zeta(\Gamma(\mathbb{P}^1\setminus\{s_+,s_-\},\mathcal{W}))) = \varphi(A)[x^{-1}] = \fg[x,x^{-1}].
\end{equation}
Combining \(\dim(\varphi(W_+) \cap \fg[\![x]\!]) < \infty\) with \(\varphi(W_+) \subseteq \fg[x,x^{-1}]\) results in 
\begin{equation}
    x^{-N}\fg[x^{-1}] \subseteq \varphi(W_+) \subseteq x^{N}\fg[x^{-1}]    
\end{equation}
for a sufficiently large integer \(N\). This implies that
\( (\varphi\times [\varphi])(W) \subseteq \fg(\!(x)\!) \times \fg \) is commensurable with \(W_2\) defined in \cref{subsec:realization_of_doubles_polynomials}.
\end{proof}

\subsection{Twisting class \( \fg \ot A(2,0) \)}
As in the previous case
we fix a Lagrangian Lie subalgebra
\begin{equation}
W \subseteq \fg \otimes A(2,0) = \fg(\!(x)\!) \times \fg[x]/x^2\fg[x]    
\end{equation}
complementary to
\( \iota(\fg[\![x]\!]) \)
with projections
\( W_{\pm} \) on the components
\( \fg(\!(x)\!)\) and
\(\fg[x]/x^2\fg[x]\) respectively.

\begin{lemma}\label{lemm:quasirational_lemma}
The following facts are true.
\begin{enumerate}
    \item \(W = W_+ \times W_-\);
    
    \item \(W_+ \cap x^2\fg[\![x]\!] = \{0\}\), so \(W_+\cap \fg[\![x]\!]\) can be identified with a subalgebra of \(\fg[x]/x^2\fg[x]\);
    
    \item \((W_+ \cap \fg[\![x]\!]) \dotplus W_- = \fg[x]/x^2\fg[x] \).
\end{enumerate}
\end{lemma}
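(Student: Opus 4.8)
The plan is to prove (1) first, since (2) and (3) then follow by short diagram chases using the complementarity hypothesis $\iota(\fg[\![x]\!]) \dotplus W = \fg(\!(x)\!) \times \fg[x]/x^2\fg[x]$ together with the orthogonality relations of \cref{lem:preleminary_facts_for_geometrization}.

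For (1) the essential point is that $W_+$ is \emph{Lagrangian} in $(\fg(\!(x)\!), \mathcal{K}_3)$. I would begin from \cref{lemm:geometric_genus}.(4), which gives $M_{W_+} = F[x^{-1}]$ in the case $i = 3$. Since $W_+$ is a $\fg$-lattice by \cref{lem:preleminary_facts_for_geometrization}.(2), it is free of full rank $\dim\fg$ over the PID $F[x^{-1}]$, and as a subalgebra it is an $F[x^{-1}]$-Lie algebra whose Killing form is the restriction of $\kappa$; hence $\kappa(a,b) \in F[x^{-1}]$ for all $a,b \in W_+$, exactly as in \cref{lemm:thetas}. Because $\mathcal{K}_3(a,b) = \textnormal{coeff}_1\{\kappa(a,b)\}$ and $F[x^{-1}]$ contains no positive powers of $x$, this forces $\mathcal{K}_3(a,b) = 0$, so $W_+ \subseteq W_+^\bot$. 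Combined with $W_+^\bot \subseteq W_+$ from \cref{lem:preleminary_facts_for_geometrization}.(1), this gives $W_+ = W_+^\bot$. Then \cref{lem:preleminary_facts_for_geometrization}.(3) yields $\dim(W_-/W_-^\bot) = \dim(W_+/W_+^\bot) = 0$, so $W_- = W_-^\bot$ as well. Finally \cref{lem:preleminary_facts_for_geometrization}.(1) reads $W_+^\bot \times W_-^\bot = (W_+ \times W_-)^\bot \subseteq W^\bot = W \subseteq W_+ \times W_-$, and substituting $W_\pm^\bot = W_\pm$ collapses this chain to the desired equality $W = W_+ \times W_-$.

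For (2) I would first compute $\fg[\![x]\!]^\bot$ inside $(\fg(\!(x)\!), \mathcal{K}_3)$: testing $\mathcal{K}_3(ax^k, g) = \kappa(a, g_{1-k})$ against all $a \in \fg$ and $k \ge 0$ and invoking non-degeneracy of $\kappa$ on $\fg$ shows $\fg[\![x]\!]^\bot = x^2\fg[\![x]\!]$. Taking orthogonals in the identity $\fg(\!(x)\!) = \fg[\![x]\!] + W_+$ of \cref{lem:preleminary_facts_for_geometrization}.(2) and using non-degeneracy of $\mathcal{K}_3$ gives $0 = \fg(\!(x)\!)^\bot = \fg[\![x]\!]^\bot \cap W_+^\bot = x^2\fg[\![x]\!] \cap W_+$, which is the claim. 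The identification of $W_+ \cap \fg[\![x]\!]$ with a subalgebra of $\fg[x]/x^2\fg[x]$ is then immediate, as the projection $\fg[\![x]\!] \to \fg[x]/x^2\fg[x]$ has kernel $x^2\fg[\![x]\!]$ and restricts to an injective Lie algebra morphism on $W_+ \cap \fg[\![x]\!]$.

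For (3), write $H \coloneqq W_+ \cap \fg[\![x]\!]$ and let $[H]$ be its image in $\fg[x]/x^2\fg[x]$. Directness $[H] \cap W_- = \{0\}$ follows because any $f \in H$ with $[f] \in W_-$ gives $\iota(f) = (f,[f]) \in \iota(\fg[\![x]\!]) \cap (W_+ \times W_-) = \iota(\fg[\![x]\!]) \cap W = \{0\}$, where the second equality uses part (1) and the last uses complementarity; hence $f = 0$. For the spanning $[H] + W_- = \fg[x]/x^2\fg[x]$, given any class $[v]$ I would use complementarity and $W = W_+ \times W_-$ to write $(0,[v]) = \iota(f) + (w_+, w_-)$; the first component forces $f = -w_+ \in W_+ \cap \fg[\![x]\!] = H$, and the second gives $[v] = [f] + w_- \in [H] + W_-$. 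Thus $[H] \dotplus W_- = \fg[x]/x^2\fg[x]$. I expect the main obstacle to be part (1), and specifically the verification that $W_+$ is isotropic for $\mathcal{K}_3$; once $W_+ = W_+^\bot$ is established, everything else is bookkeeping with the relations of \cref{lem:preleminary_facts_for_geometrization} and the complementarity hypothesis.
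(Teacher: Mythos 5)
Your proof is correct and takes essentially the same route as the paper's: part (1) via the isotropy of \(W_+\) deduced from \(\kappa(W_+,W_+) \subseteq F[x^{-1}]\) (which rests on \cref{lemm:geometric_genus}.(4)), then \(W_+ = W_+^\bot\), \(W_- = W_-^\bot\) by \cref{lem:preleminary_facts_for_geometrization}.(3), and the collapse of the chain \(W_+^\bot \times W_-^\bot \subseteq W \subseteq W_+ \times W_-\); part (2) via \(\{0\} = (\fg[\![x]\!]+W_+)^\bot = x^2\fg[\![x]\!] \cap W_+\); and part (3) from (1), (2) and complementarity. The only difference is expository: you make explicit the computation \(\fg[\![x]\!]^\bot = x^2\fg[\![x]\!]\) and the directness/spanning bookkeeping in (3), which the paper leaves implicit.
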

 \begin{proof}
For (1) observe that \(\kappa(a,b) \in F[x^{-1}]\) for all \(a,b\in W_+\), since \(W_+\) is a free Lie algebra over \(F[x^{-1}]\) by virtue of \cref{lemm:geometric_genus}.(4). Therefore, \(x^{-2}\kappa(a,b) \in x^{-2}F[x^{-1}]\) implies
\begin{equation}
    B_3(a,b) = \textnormal{res}_{x = 0} \{ x^{-2}\kappa(a(x),b(x)) \} = 0
\end{equation}
and hence \(W_+ \subseteq W_+^\bot\). Together with \(W_+^\bot \subseteq W_+\) we arrive at \(W_+ = W_+^\bot\). \Cref{lem:preleminary_facts_for_geometrization}.(3) implies \(W_- = W_-^\bot\), so \(W_+^\bot \times W_-^\bot \subseteq W \subseteq W_+ \times W_-\) concludes the proof of (1).

The identities \(\{0\} = (\fg[\![x]\!] + W_+)^\bot = x^2\fg[\![x]\!] \cap W_+^\bot = x^2\fg[\![x]\!] \cap W_+\) imply (2). Part (3) now follows from (2) and \(\iota(\fg[\![x]\!]) \dotplus (W_+ \times W_-) = \fg(\!(x)\!) \times \fg[x]/x^2\fg[x]\).
\end{proof}

\noindent
Consider \(((Y,\mathcal{W}),(p,c,\zeta)) \coloneqq \mathbb{G}(M_{W_+},W_+)\). As in the last section we have \(Y = \mathbb{P}^1\). Let us write \(0 \coloneqq p\) and \(\infty \in \mathbb{P}^1\) for the point corresponding to the ideal \((x^{-1})\) in \(F[x^{-1}] = c(\Gamma(\mathbb{P}^1\setminus\{0\},\mathcal{O}_X))\).

\begin{lemma}
Let \(X\) be an irreducible plane cubic curve with cuspidal singularity \(s\) and chose the normalization \(\nu \colon \mathbb{P}^1 \to X\) in such a way that \(\nu(s) = 0\). 

\begin{enumerate}
    \item The isomorphism \(\zeta \colon \widehat{\mathcal{W}}_p \to \fg[\![x]\!]\) induces a surjective morphism \(\nu_*\mathcal{W} \to \fg[x]/x^2\fg[x]\);
    
    \item Let the sheaf of Lie algebras \(\mathcal{A}\) be defined by the pull-back of
    \begin{equation}
        \xymatrix{\mathcal{A} \ar[r]\ar[d] & W_- \ar[d] \\ \nu_*\mathcal{W} \ar[r] & \fg[x]/x^2\fg[x]}
    \end{equation}
    where \(\fg[x]/x^2\fg[x]\) and \(W_-\) are understood as skyscraper sheaves at \(s\). Then \(\mathcal{A}|_q \cong \fg\) for all smooth points \(q \in X\).
\end{enumerate}
\end{lemma}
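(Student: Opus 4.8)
The plan is to transport the argument of the nodal case (\cref{lemm:thetas} and \cref{lemm:A_weakly_g_locally_free_qtcase}) to the cuspidal setting, the essential difference being that the cusp is unibranch: $\nu^{-1}(s)$ is the single (non-reduced) point $p$, and the completed cuspidal local ring sits inside $F[\![x]\!]$ as $F + x^2 F[\![x]\!]$, with conductor $x^2 F[\![x]\!]$. Part (2) will rest on part (1).

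For part (1), I would build the morphism straight from $\zeta$. Because $\nu^{-1}(s)=\{p\}$, the stalk $(\nu_*\mathcal{W})_s$ equals $\mathcal{W}_p$, and I would compose the reduction $\mathcal{W}_p\to\mathcal{W}_p/\mathfrak{m}_p^2\mathcal{W}_p$ with the isomorphism $\mathcal{W}_p/\mathfrak{m}_p^2\mathcal{W}_p\cong\widehat{\mathcal{W}}_p/x^2\widehat{\mathcal{W}}_p\cong\fg[\![x]\!]/x^2\fg[\![x]\!]=\fg[x]/x^2\fg[x]$ induced by $\zeta$. This yields a map of $\nu_*\mathcal{W}$ onto the skyscraper sheaf $\fg[x]/x^2\fg[x]$ supported at $s$; surjectivity is immediate on the stalk at $s$ and vacuous elsewhere.

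For part (2), I would verify the hypotheses (a)--(c) of \cref{prop:weakly_g_locally_free_modules}.(2) for $\mathcal{A}$ and read off the conclusion. Condition (a) follows from the short exact sequence
\[
0\longrightarrow\mathcal{A}\longrightarrow \nu_*\mathcal{W}\oplus W_-\longrightarrow \fg[x]/x^2\fg[x]\longrightarrow 0
\]
defining the pull-back (with $W_-$ and $\fg[x]/x^2\fg[x]$ as skyscrapers at $s$). Since $\nu$ is finite, $\textnormal{H}^\bullet(X,\nu_*\mathcal{W})=\textnormal{H}^\bullet(\mathbb{P}^1,\mathcal{W})$, and \cref{lem:preleminary_facts_for_geometrization}.(2) together with \cref{eq:cohomology} gives $\textnormal{H}^1(\mathcal{W})=0$ and $\textnormal{H}^0(\mathcal{W})\cong W_+\cap\fg[\![x]\!]$. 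The long exact sequence then collapses to
\[
0\longrightarrow\textnormal{H}^0(\mathcal{A})\longrightarrow (W_+\cap\fg[\![x]\!])\oplus W_-\stackrel{\Phi}{\longrightarrow}\fg[x]/x^2\fg[x]\longrightarrow \textnormal{H}^1(\mathcal{A})\longrightarrow 0,
\]
where $\Phi$ is the difference of the reduction modulo $x^2$ and the inclusion of $W_-$. By \cref{lemm:quasirational_lemma}.(2) the reduction embeds $W_+\cap\fg[\![x]\!]$ into $\fg[x]/x^2\fg[x]$, and by \cref{lemm:quasirational_lemma}.(3) its image is complementary to $W_-$; hence $\Phi$ is an isomorphism and $\textnormal{H}^0(\mathcal{A})=0=\textnormal{H}^1(\mathcal{A})$. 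Condition (b) is immediate: the fiber $\mathcal{W}|_p\cong\fg$ is semisimple, so \cref{prop:weakly_g_locally_free_modules}.(1) provides a neighbourhood of $p$ on which all fibers remain $\cong\fg$; choosing $q\neq p$ in it, the point $\nu(q)$ is smooth and $\mathcal{A}|_{\nu(q)}\cong\mathcal{W}|_q\cong\fg$.

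The main obstacle is condition (c). I would let $K$ be the restriction to $\mathcal{A}\subseteq\nu_*\mathcal{W}$ of the push-forward of the Killing form $\widetilde{K}$ of $\mathcal{W}$; away from $s$ this is manifestly the Killing form of $\mathcal{A}|_C$, so the issue is to show that $K$ lands in $\mathcal{O}_X\subseteq\nu_*\mathcal{O}_{\mathbb{P}^1}$ at $s$, i.e. in the cuspidal local ring $\mathcal{O}_{X,s}$, whose completion inside $F[\![x]\!]$ is $F+x^2F[\![x]\!]$. Representing germs $a,b$ of $\mathcal{A}$ at $s$ via $\zeta$ as $\alpha,\beta\in\fg[\![x]\!]$ whose classes modulo $x^2$ lie in $W_-$, the value $\widetilde{K}(a,b)=\kappa(\alpha,\beta)\in F[\![x]\!]$ has $x^1$-coefficient equal to $\mathcal{K}_3(\bar\alpha,\bar\beta)$, which vanishes precisely because $W_-=W_-^\bot$ is Lagrangian for $B_3$ by \cref{lemm:quasirational_lemma}.(1). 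Thus $\widetilde{K}(a,b)$ has no linear term and lies in $\mathcal{O}_{X,s}$, giving (c). The delicate point is exactly this matching between the definition of $\mathcal{K}_3$ as the coefficient of $x^{i-2}=x^1$, the cusp's conductor $x^2F[\![x]\!]$, and the Lagrangian property of $W_-$; once it is in place, \cref{prop:weakly_g_locally_free_modules}.(2) yields $\mathcal{A}|_q\cong\fg$ for every smooth point $q\in X$.
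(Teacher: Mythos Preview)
Your proposal is correct and follows essentially the same approach as the paper: verify (a)--(c) of \cref{prop:weakly_g_locally_free_modules}.(2) using the defining short exact sequence, \cref{lemm:quasirational_lemma}, and the Lagrangian property of $W_-$. The only cosmetic difference is in condition~(c): the paper computes the restriction of $K$ on the fiber $\mathcal{A}|_s$ with values in $\nu_*\mathcal{O}_{\mathbb{P}^1}|_s \cong F[x]/(x^2)$ and shows the $[x]$-coefficient $\kappa(a_1,b_2)+\kappa(a_2,b_1)$ vanishes, whereas you work on completed stalks and show directly that $\kappa(\alpha,\beta)$ lies in $F + x^2F[\![x]\!] = \widehat{\mathcal{O}}_{X,s}$; both arguments reduce to the same vanishing via $W_- = W_-^\bot$.
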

\begin{proof}
The isomorphism \(\zeta \colon \widehat{\mathcal{W}}_p \to \fg[\![x]\!]\) implies that \(\nu_*\mathcal{W}|_s \cong \zeta(\widehat{\mathcal{W}}_0)/x^2\zeta(\widehat{\mathcal{W}}_0) = \fg[x]/x^2\fg[x]\). This yields a surjective morphism \(\nu_* \mathcal{W} \to \fg[x]/x^2\fg[x]\).
In order to prove the second part of the statement, it suffices to show that \(\mathcal{A}\) satisfies the conditions (a)-(c) of \cref{prop:weakly_g_locally_free_modules}.(2).

\emph{\(\bullet\) \(\mathcal{A}\) satisfies condition (a) of \cref{prop:weakly_g_locally_free_modules}.(2): }
Globally, the morphism \(\nu_* \mathcal{W} \to \fg[x]/x^2\fg[x]\) coincides with the canonical morphism \(\fg[\![x]\!]\cap W_+ \to \fg[x]/x^2\fg[x]\) after identifying \(\textnormal{H}^0(\mathcal{W})\) with \(\fg[\![x]\!] \cap W_+\). Therefore, the middle arrow in the long exact sequence in cohomology 
\begin{equation}\label{eq:long_exact_sequence_cohomology_A_qr}
    0 \longrightarrow \textnormal{H}^0(\mathcal{A}) \longrightarrow \textnormal{H}^0(\mathcal{W}) \oplus W_- \longrightarrow \fg[x]/x^2\fg[x] \longrightarrow \textnormal{H}^1(\mathcal{A})\longrightarrow 0,
\end{equation}
of the short exact sequence
\begin{equation}
    0 \longrightarrow \mathcal{A} \longrightarrow \nu_*\mathcal{W} \oplus W_- \longrightarrow \fg[x]/x^2\fg[x] \longrightarrow 0,
\end{equation}
which defines \(\mathcal{A}\), is an isomorphism by virtue of \cref{lemm:quasirational_lemma}.(3). 
Here we used again that the first cohomology group of torsion sheaves vanishes and that \(\textnormal{H}^1(\mathcal{W}) = 0\) by virtue of \cref{lem:preleminary_facts_for_geometrization}.(2). Consequently, \(\textnormal{H}^0(\mathcal{A}) = 0 = \textnormal{H}^1(\mathcal{A})\);

\emph{\(\bullet\) \(\mathcal{A}\) satisfies condition (b) of \cref{prop:weakly_g_locally_free_modules}.(2): }
Part (1) in \Cref{prop:weakly_g_locally_free_modules}
and \(\mathcal{W}|_{0} \cong \fg\) imply
that there exists 
\(p \in \mathbb{P}^1\setminus\{0\}\) 
such that 
\(\fg \cong \mathcal{W}|_p \cong \mathcal{A}|_{\nu(p)}\);

\emph{\(\bullet\) \(\mathcal{A}\) satisfies condition (c) of \cref{prop:weakly_g_locally_free_modules}.(2): }
Let \(\widetilde{K}\) be the Killing form of \(\mathcal{W}\). Identify \(\mathcal{A}\) with a subsheaf of \(\nu_*\mathcal{W}\) and let \(K\colon \mathcal{A}\times \mathcal{A} \to \nu_*\mathcal{O}_{\mathbb{P}^1}\) be the the restriction of \(\nu_*\widetilde{K}\) to \(\mathcal{A}\). For any \(a,b \in \mathcal{A}|_s\) we have  
\begin{equation}
    K|_{s}(a,b) = \kappa(a_1,b_1) + [x](\kappa(a_1,b_2) +  \kappa(a_2,b_1))  \in F[x]/(x^2),
\end{equation}
where \(a_1 + [x]a_2\) and \(b_1 + [x]b_2 \in \fg[x]/x^2\fg[x]\) are the images of \(a \) and \(b\) respectively
under 
\[\mathcal{A}|_s \to \nu_*\mathcal{W}|_s \cong \fg[x]/x^2\fg[x].
\]
By definition of \(\mathcal{A}\), \(a_1 + [x]a_2\), \(b_1 + [x]b_2 \in W_- \) and \(\kappa(a_1,b_2) +  \kappa(a_2,b_1) = 0\) since \(W_-\subseteq \fg[x]/x^2\fg[x]\) is Lagrangian. Therefore, \(K|_{s}(a,b) = \kappa(a_1,b_1)\in F\),
implying that \(K\) takes values \(\mathcal{O}_X \subseteq \nu_*\mathcal{O}_{\mathbb{P}^1}\). The restriction of the bilinear form \(K \colon \mathcal{A}\times \mathcal{A} \to \mathcal{O}_X\) to the set \(C \coloneqq X\setminus\{s\}\) of smooth points of \(X\) coincides with the Killing form of \(\mathcal{A}|_C\) since \(\mathcal{A}|_C = \nu_*(\mathcal{W}|_{\mathbb{P}^1\setminus\{0\}})\).
\end{proof}

\noindent
The proof of the following statement is completely analogous to the proof of \cref{lemm:concluding_qtcase}.

\begin{lemma}
There exists an \(F[\![x]\!]\)-linear Lie algebra automorphism \(\varphi \colon \fg[\![x]\!] \to \fg[\![x]\!]\) such that \(\varphi(W_+) \subseteq \fg[x,x^{-1}]\). Furthermore, \((\varphi \times [\varphi])W \subseteq \fg(\!(x)\!)\times \fg[x]/x^2\fg[x]\) is commensurable with \(W_3\).
\end{lemma}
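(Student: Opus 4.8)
The plan is to follow \emph{verbatim} the strategy of \cref{lemm:concluding_qtcase}, feeding in the cuspidal analogue established in the lemma immediately preceding this one in place of its nodal counterpart. \textbf{First}, I would transfer the fibrewise information from \(\mathcal{A}\) on the cuspidal cubic \(X\) back to \(\mathcal{W}\) on the normalization \(\mathbb{P}^1\). Since \(\nu(0) = s\) is the unique singular point and \(\mathcal{A}|_C = \nu_*(\mathcal{W}|_{\mathbb{P}^1 \setminus \{0\}})\) on the smooth locus \(C = X \setminus \{s\}\), the preceding lemma gives \(\mathcal{W}|_q \cong \mathcal{A}|_{\nu(q)} \cong \fg\) for every \(q \in \mathbb{P}^1 \setminus \{0\}\); combined with \(\mathcal{W}|_0 \cong \fg\), which follows from the isomorphism \(\zeta \colon \widehat{\mathcal{W}}_0 \to \fg[\![x]\!]\), this yields \(\mathcal{W}|_q \cong \fg\) for all \(q \in \mathbb{P}^1\), in particular on all of \(\mathbb{P}^1 \setminus \{\infty\}\).

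\textbf{Next}, using \(F[x] = c(\Gamma(\mathbb{P}^1 \setminus \{\infty\}, \mathcal{O}_{\mathbb{P}^1}))\), I would set \(A \coloneqq \zeta(\Gamma(\mathbb{P}^1 \setminus \{\infty\}, \mathcal{W}))\). As \(\mathcal{W}\) is locally free, \(A\) is a free \(F[x]\)-Lie algebra with \(A/(x-a)A \cong \mathcal{W}|_{q_a} \cong \fg\) for the point \(q_a\) given by \(x = a\), for every \(a \in F\). Then \cref{prop:weakly_g_locally_free_modules}.(3) yields an isomorphism \(A \cong \fg[x]\) of \(F[x]\)-Lie algebras. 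Completing this isomorphism in the \((x)\)-adic topology—legitimate because \(\zeta\) identifies the completion of \(A\) at \(0\) with \(\fg[\![x]\!]\)—produces an automorphism \(\varphi \in \textnormal{Aut}_{F[\![x]\!]\textnormal{-LieAlg}}(\fg[\![x]\!])\) with \(\varphi(A) = \fg[x]\). The sheaf property then gives the chain
\begin{equation*}
  \varphi(W_+) = \varphi(\zeta(\Gamma(\mathbb{P}^1 \setminus \{0\}, \mathcal{W}))) \subseteq \varphi(\zeta(\Gamma(\mathbb{P}^1 \setminus \{0, \infty\}, \mathcal{W}))) = \varphi(A)[x^{-1}] = \fg[x, x^{-1}],
\end{equation*}
since deleting the extra point \(0\) corresponds to inverting \(x\).

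\textbf{Finally}, I would combine \(\varphi(W_+) \subseteq \fg[x, x^{-1}]\) with \(\dim(\varphi(W_+) \cap \fg[\![x]\!]) < \infty\) and \(\fg(\!(x)\!) = \fg[\![x]\!] + W_+\) (from \cref{lem:preleminary_facts_for_geometrization}.(2), transported by the \(F(\!(x)\!)\)-linear extension of \(\varphi\)) to sandwich \(x^{-N}\fg[x^{-1}] \subseteq \varphi(W_+) \subseteq x^{N}\fg[x^{-1}]\) for \(N\) large, exactly as in \cref{lemm:concluding_qtcase}. This sandwiching shows that \((\varphi \times [\varphi])W \subseteq \fg(\!(x)\!) \times \fg[x]/x^2\fg[x]\) is commensurable with \(W_3\) from \cref{subsec:realization_of_doubles_polynomials}. \emph{The one point genuinely requiring care}—and the sole departure from the nodal argument—is that the cusp is unibranch, so its single preimage \(0\) coincides with the distinguished geometrization point \(p\) and lies \emph{outside} the smooth locus \(C\). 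I must therefore be sure to supply \(\mathcal{W}|_0 \cong \fg\) from \(\zeta\) rather than from the smooth-locus identification \(\mathcal{A}|_C = \nu_*(\mathcal{W}|_{\mathbb{P}^1\setminus\{0\}})\), so that the fibre hypothesis of \cref{prop:weakly_g_locally_free_modules}.(3) holds on \emph{all} of \(\mathbb{A}^1 = \textnormal{Spec}(F[x]) = \mathbb{P}^1 \setminus \{\infty\}\), including the point \(x = 0\). Once this bookkeeping is in place, every remaining step is the literal transcription of the quasi-trigonometric proof with \(\{s_+, s_-\}\) replaced by \(\{0, \infty\}\) and the second factor \(\fg\) replaced by \(\fg[x]/x^2\fg[x]\).
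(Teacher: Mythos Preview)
Your proposal is correct and matches precisely what the paper has in mind: the paper explicitly states that the proof is ``completely analogous to the proof of \cref{lemm:concluding_qtcase}'' and omits the details. Your extra care in sourcing \(\mathcal{W}|_0 \cong \fg\) from \(\zeta\) rather than from the smooth-locus identification is exactly the right bookkeeping adjustment for the unibranch cusp.
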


\newpage

\appendix
\section{Different types of equivalence}%
\label{sec:appendix}
Now we give a brief description of different
equivalence notions used in this paper.
We denote by \(F\) an algebraically closed field
of characteristic \(0\).
\vspace{5px}

\begin{adjustbox}{center}
\begin{tabular}[c]{ |c|c|c|  }
 \hline 
 \((L, \delta_1) \cong (L, \delta_2)\) & \makecell{Isomorphism of two \\ topological Lie bialgebra \\ structures} & \makecell{There exists \(\varphi \in \textnormal{Aut}_{F\textnormal{-LieAlg}}(L)\) 
\\ such that \(\varphi\) and its dual \(\varphi' \) are  \\ homeomorphisms and \\
 \( (\varphi \fot \varphi)\delta_1 = \delta_2 \varphi\)}  \\
 \hline
 \((L, \delta_1) \sim (L, \delta_2)\) & \makecell{Equivalence of two \\ topological Lie bialgebra \\ structures} & \makecell{There exists a constant
 \(\xi \in F^\times\)\\ such that \( (L, \xi \delta_1) \cong (L, \delta_2)\) }  \\
\hline
\( (L, L_+, L_-) \cong (M, M_+, M_-)\) & \makecell{Isomorphism of two \\ topological Manin triples}  & \makecell{There exists a Lie algebra  \\ isomorphism \(\varphi \colon L \to M\) such that \\ \(\varphi\) is a
homeomorphism,   \\ \(\varphi(L_\pm) = M_\pm\) and it intertwines \\
the corresponding forms}  \\
\hline
\( (L, L_+, L_-) \sim (M, M_+, M_-)\) & \makecell{Equivalence of two \\ topological Manin triples}  & \makecell{There exists a constant
 \(\xi \in F^\times\)\\ such that the Manin triple  \\ \((L, L_+, L_-)\) with form \(\xi B_L\)  \\ is isomorphic to \((M, M_+, M_-)\) \\ with form \(B_M\)}  \\
\hline
\( (A_1,t_1) \sim (A_2,t_2)\) & \makecell{Equivalence of two \\ trace extensions of \(F[\![x]\!]\)}  & \makecell{There exists an algebra isomorphism \\ \(T \colon A_1 \to A_2\),
identical on \(F[\![x]\!]\), \\ and 
a constant \(\xi \in F^\times\) such that \\
\(t_2 \circ T = \xi t_1 \)} \\
\hline
\end{tabular}
\end{adjustbox}

\vspace{5px}
Lie bialgebra isomorphisms of \(\fg[\![x]\!]\),
given by \(x \mapsto a_1x + a_2x^2 + \dots\) with \(a_i \in F \) and \(a_1 \neq 0\),
are called \emph{coordinate transformations}.
We show in \cref{thm:splitting_aut} that 
any Lie algebra automorphism of \(\fg[\![x]\!]\)
decomposes uniquely into a coordinate transformation
and an \(F[\![x]\!]\)-linear Lie algebra automorphism of \(\fg[\![x]\!]\).

In general, scaling and coordinate transformations
do not preserve the topological double of a topological
Lie bialgebra: they change the corresponding form.
These equivalences are used to place each topological Lie bialgebra
structure into a
particular topological double.
After that we work primarily with \emph{formal isomorphisms},
i.e.\  \(F[\![x]\!]\)-linear
Lie algebra automorphisms of \(\fg[\![x]\!]\) because they leave
topological doubles invariant.

Topological twists -- topological
analogue of classical twists --
are introduced in 
\cref{sec:topological_twists}.
This notion allows to reduce
the classification
of topological Lie bialgebra
structures to the classification
of twists within certain 
topological doubles.
We call two topological twists \(s_1, s_2 \in (\fg \ot \fg)[\![x,y]\!]\)
of \(\delta\)
\emph{formally isomorphic}
if the corresponding Lie bialgebra structures
\(\delta + ds_1\) and \(\delta + ds_2\) are formally isomorphic.

In \cref{sec:classification_of_doubles_MSZ}
we explain that there are only
three non-trivial topological doubles
of topological Lie bialgebra
structures on \( \fg[\![x]\!] \).
They are denoted by
\( \fD(\fg[\![x]\!], \delta_i) \),
\( i \in \{1,2,3 \} \).
Each topological twist of \( \delta_i\)
is completely determined 
by a certain Lagrangian Lie subalgebra of \( \fD(\fg[\![x]\!], \delta_i) \).
Conversely, every Lagrangian Lie
subalgebra of \( \fD(\fg[\![x]\!], \delta_i) \), complementary
to \( \fg[\![x]\!] \),
defines a topological twist
of \( \delta_i \).
Two Lagrangian Lie subalgebras
\( W_1, W_2 \subseteq \fD(\fg[\![x]\!], \delta_i) \) 
are said to be \emph{formally isomorphic}
if there is an 
\( F[\![x]\!] \)-linear
automorphism
\( \phi\) of \( \fg[\![x]\!]\)
such that
\[ 
W_2 = (\phi \times [\phi])(W_1).
\]

Another important notion
tightly related to
topological twists
is the notion of a formal 
\( r \)-matrix; see \cref{sec:formal_r_matrices}.
Again, each topological
twist of \(\delta_i\)
gives rise to a formal
\(r\)-matrix \(-r_i + t\) 
and conversely,
any formal
\(r\)-matrix of the form 
\(-r_i + t\) defines a
topological twist of \(\delta_i\).
Two formal \(r\)-matrices \(r_1\) and \(r_2\) are called
\emph{formally} (resp.\ \emph{polynomially}) \emph{gauge equivalent} if there is an element \(\varphi\) in
\(\Aut_{F[\![x]\!]\textnormal{-LieAlg}}(\fg[\![x]\!])\) (resp.\ in \(\Aut_{F[x]\textnormal{-LieAlg}}(\fg[x])\))
such that
\[(\varphi(x) \ot \varphi(y))r_1(x,y) = r_2(x,y).\] 
Similarly, classical \(r\)-matrices \(r_1\) and \(r_2\) are called
\emph{holomorphically gauge equivalent} if there is a holomorphic
function \(\varphi \colon U \subseteq \bC \to \Aut(\fg)\) such that
\((\varphi(x) \ot \varphi(y))r_1(x,y) = r_2(x,y)\).

The statement of
\cref{thm:equivalence} tells us
that the relations between
the three objects mentioned above
preserve equivalences.

\newpage
\printbibliography\end{document}